\title{A Trefftz Continuous Galerkin method\\ for Helmholtz problems}
\author[1]{Nicola Galante\footnote{nicola.galante@inria.fr}}
\author[2]{Bruno Després\footnote{bruno.despres@inria.fr}}
\author[1]{Emile Parolin\footnote{emile.parolin@inria.fr}}
\affil[1]{Sorbonne Université, Université Paris Cité, CNRS, INRIA, Laboratoire Jacques-Louis Lions, LJLL, EPC ALPINES, Paris, F-75005, France}
\affil[2]{Sorbonne Université, Université Paris Cité, CNRS, INRIA, Laboratoire Jacques-Louis Lions, LJLL, Laboratoire de Probabilités, Statistique et Modélisation, LPSM, EPC MEGAVOLT, Paris, F-75005, France}
\pgfplotsset{compat=newest}
\newtheorem{theorem}{Theorem}[section]
\newtheorem{proposition}[theorem]{Proposition}
\newtheorem{lemma}[theorem]{Lemma}
\newtheorem{corollary}[theorem]{Corollary}
\newtheorem{definition}[theorem]{Definition}
\newtheorem{assumption}[theorem]{Assumption}
\newtheorem{remark}[theorem]{Remark}
\begin{document}

\maketitle

\begin{abstract}

This work introduces a novel Trefftz Continuous Galerkin (TCG) method for 2D Helm\-holtz problems based on evanescent plane waves (EPWs).
We construct a new globally-conforming discrete space, departing from standard discontinuous Trefftz formulations, and investigate its approximation properties, providing wavenumber-explicit best-approximation error estimates.
The mesh is defined by intersecting the domain with a Cartesian grid, and the basis functions are continuous in the whole computational domain, compactly supported, and can be expressed as simple linear combinations of EPWs within each element. This ensures they remain local solutions to the Helmholtz equation and allows the system matrix to be assembled in closed form for polygonal domains.
The discrete space provides stable approximations with bounded coefficients and spectral accuracy for analytic Helmholtz solutions.
The approximation error is proved to decay exponentially both at a fixed frequency, with respect to the discretization parameters, and along suitable sequences of increasing wavenumbers, with the number of degrees of freedom scaling linearly with the frequency.
Numerical results confirm these theoretical estimates for the full Galerkin error.

\end{abstract}

\bigskip
\noindent\textbf{Keywords:}
Helmholtz equation, Plane waves, Evanescent waves, Trefftz method, Stable approximation, Galerkin scheme, Conforming method

\medskip
\noindent\textbf{AMS subject classification:}
35J05, 
41A30, 
42C15, 
65N15, 
65N30, 
65N35 

\clearpage

\tableofcontents

\section{Introduction}

Wave propagation problems pose significant numerical challenges, especially in the high-frequency regime. In the 2D acoustic setting, a prototypical model is the \emph{homogeneous Helmholtz equation}
\begin{equation} \label{eq:helmholtz_equation}
-\Delta u - \kappa^2 u = 0, \qquad \quad \text{in a bounded Lipschitz domain } \Omega \subset \mathbb{R}^2,
\end{equation}
with wavenumber $\kappa > 0$. As $\kappa$ increases, solutions become highly oscillatory and conventional discretizations require very fine resolution. When the wavelength $\lambda = 2\pi/\kappa$ is small compared to the size of $\Omega$, standard $h$-version finite element methods (FEM) demand an excessive number of degrees of freedom (DOFs) and suffer from dispersion errors \cite{Babuska2000}; for instance with linear elements the $H^1$-numerical error scales like $\sim \kappa^3h^2$ \cite{Ihlenburg1995}. Several strategies have been proposed to mitigate this difficulty, among which high-order polynomial methods such as $hp$-FEM \cite{Ihlenburg1997,Ainsworth2004,MelenkSauter2010} and Trefftz methods \cite{Hiptmair2016} are the most prominent; see \cite{Lieu2016} for a comparison.

\paragraph{Plane wave-based Trefftz methods.}

\emph{Trefftz methods} offer an attractive alternative by building discrete spaces from local solutions of the Helmholtz equation itself. This design naturally incorporates oscillatory behavior into the approximation, thereby reducing dispersion and achieving high accuracy with substantially fewer DOFs~\cite{Gittelson2014}.
Notable examples include the Ultra Weak Variational Formulation (UWVF)~\cite{Cessenat1998}, Trefftz Discontinuous Galerkin (TDG) methods~\cite{Gittelson2014,Gittelson2009}, the Variational Theory of Complex Rays (VTCR) \cite{Riou2011}, the Discontinuous Enrichment Method (DEM) \cite{Farhat2001}, the Fokas method \cite{Davis2014,Spence2015}, and the Wave-Based Method (WBM)~\cite{Deckers2014,Desmet1998}.

A common choice of basis functions in Trefftz methods are \emph{propagative plane waves} (PPWs), defined as $\mathbf{x} \mapsto e^{\imath  \kappa \mathbf{d} \cdot \mathbf{x}}$ with $\mathbf{d}\in \mathbb{S}^1$.
An important advantage of PPW-based formulations is that integration over flat subdomains can be carried out in closed form with $\kappa$-independent effort \cite[Sec.\ 4.1]{Hiptmair2016}, avoiding numerical quadrature and contributing to the overall efficiency and accuracy of the method.
While PPWs are highly effective in theory, offering local spectral convergence as the number of basis functions increases \cite{Melenk1995,Moiola2011}, their practical use is hampered by severe ill-conditioning of the discrete linear systems \cite{Barucq2021,Barucq2024,huybrechs2019,Luostari2013}. This instability stems from the nature of PPW expansions: representing high-frequency components necessarily requires large coefficients and cancellation, causing stagnation due to round-off errors~\cite[Th.\ 4.3]{Parolin2023}. Indeed, recent insights from approximation theory~\cite{Adcock2019,Adcock2020} have clarified that, in the presence of ill-conditioning, controlling the coefficient norm, not just the best-approximation error, is crucial for numerical stability.

A promising strategy to overcome these limitations is to enrich the approximation space with \emph{evanescent plane waves} (EPWs). EPWs share the exponential form of PPWs, $\mathbf{x} \mapsto e^{\imath \kappa \mathbf{d} \cdot \mathbf{x}}$, but use complex-valued direction vectors $\mathbf{d} \in \mathbb{C}^2$ satisfying $\mathbf{d} \cdot \mathbf{d} = 1$. They combine oscillatory behavior with exponential decay while preserving closed-form integration at a $\kappa$-independent cost. Recent studies provide numerical evidence that EPW spaces yield accurate, stable approximations with bounded coefficients~\cite{Parolin2023,Robert2024,Galante2024}. EPWs also feature in WBM, DEM, and Fokas methods. Nevertheless, a key challenge remains the construction of EPW sets that provably ensure both accuracy and stability, as, to the best of our knowledge, no rigorous convergence or coefficient-norm estimates are currently available in a Trefftz-like setting.

\paragraph{A new conforming EPW-based Trefftz space.}

We introduce a new Trefftz space for the numerical approximation of the Helmholtz equation \eqref{eq:helmholtz_equation}, built on a mesh obtained by intersecting the domain $\Omega$ with a Cartesian grid $\mathcal{T}_h$ of mesh size $h>0$.

The construction of the Trefftz basis functions starts from local modes defined on a reference rectangle. An $L^2$-Hilbert basis is chosen on a single edge, extended by zero to the remaining edges, and then lifted to the rectangle by solving the Helmholtz–Dirichlet boundary value problem, so that each resulting function has trace supported only on the chosen edge. This construction resembles that of the WBM method in \cite[Sec.\ 2.5.2]{Desmet1998}, where basis functions are obtained in the same fashion by considering a Helmholtz–Neumann problem on a rectangular geometry. These modes \cite[Eq.\ (2.85)]{Desmet1998} solve a homogeneous mixed problem with impedance/sound-hard  conditions on opposite faces. In both approaches, the functions arise from separation of variables and admit simple representations as linear combinations of EPWs. On the other hand, unlike the WBM basis, the single-edge Helmholtz modes used here enjoy several orthogonality properties.

From these local modes, two complementary families of global basis functions are constructed, associated with the edges and nodes of $\mathcal{T}_h$.
For each edge, a family of basis functions is defined, supported on the two adjacent cells and extended by zero elsewhere.
On these cells, each function coincides with a reference mode -- oriented so that its nonzero trace lies along the edge itself -- such that it is continuous across the edge, and hence $\Omega$.
Node functions are defined analogously, on a coarser mesh obtained by doubling the mesh size in only one coordinate direction, and only for edges aligned with this coarsening direction.
The resulting discrete space, spanned by $N_{\mathbf{e}}$ edge and $N_{\mathbf{n}}$ node basis functions, is denoted $V_{\mathbf{N}}(\mathcal{T}_h)$, with $\mathbf{N}=(N_{\mathbf{e}},N_{\mathbf{n}})$.

As a consequence, $V_{\mathbf{N}}(\mathcal{T}_h)$ defines a globally $H_\kappa^1(\Omega)$-conforming Trefftz space, in contrast to standard Trefftz methods, which typically employ discontinuous variational formulations. This structure allows for a continuous Galerkin discretization. Since $V_{\mathbf{N}}(\mathcal{T}_h)$ is redundant by construction, a least-squares Petrov–Galerkin formulation with a regularized SVD is adopted, in the spirit of \cite{Adcock2019,Adcock2020,Parolin2023,Galante2024}. This approach ensures stability and robustness despite its higher computational cost.
Numerical results for the full Galerkin error align with the derived best-approximation estimates.

\paragraph{Main approximation results.}

We establish wavenumber-explicit stability and accuracy estimates for the discrete space $V_{\mathbf{N}}(\mathcal{T}_h)$ on domains $\Omega$ tessellated by a Cartesian grid $\mathcal{T}_h$.
Specifically, we provide best-approximation error bounds in standard $\kappa$-dependent Sobolev norms -- see Definition~\ref{def: kappa-dependent Sobolev norms} -- in terms of the numbers of DOFs, $N_{\mathbf{e}}$ and $N_{\mathbf{n}}$, namely
\emph{
\begin{itemize}
    \item \emph{(Algebraic convergence in $N_{\mathbf{e}}$ -- analytic solutions).} For any $N_{\mathbf{n}}\geq 1$, there exists an explicit constant $C>0$, independent of $N_{\mathbf{e}}$, such that, for any Helmholtz solution $u$ analytic on $\overline{\Omega}$,
    \begin{equation*}
        \inf_{v \in V_{\mathbf{N}}(\mathcal{T}_h)}\|u-v\|_{H_{\kappa}^1(\Omega)}\leq C N_{\mathbf{e}}^{-(2N_{\mathbf{n}}-1/2)}\|u\|_{H_\kappa^{2N_{\mathbf{n}}+1}(\Omega)}, \qquad \quad N_{\mathbf{e}}\gtrsim\kappa h.
    \end{equation*}
    \item \emph{(Geometric convergence in $N_{\mathbf{n}}$ -- analytic solutions).} If $N_{\mathbf{e}}$ grows linearly with $N_{\mathbf{n}}$, with a sufficiently large factor, then there exist explicit constants $C>0$ and $\tau\in(0,1)$, independent of $N_{\mathbf{e}}$ and $N_{\mathbf{n}}$, such that, for any Helmholtz solution $u$ analytic on $\overline{\Omega}$,
    \begin{equation*}
        \inf_{v \in V_{\mathbf{N}}(\mathcal{T}_h)}\|u-v\|_{H_{\kappa}^1(\Omega)}\leq C N_{\mathbf{n}}^{7/2}\,\tau^{2N_{\mathbf{n}}-1/2}\|u\|_{H_\kappa^{2N_{\mathbf{n}}+1}(\Omega)}, \qquad  \quad N_{\mathbf{n}}\gtrsim\kappa h.
    \end{equation*}
    \item \emph{(Square root convergence in $N_{\mathbf{e}}$ -- solutions in $H^2_\kappa(\Omega)$).} There exists an explicit constant $C>0$, independent of $N_{\mathbf{e}}$, such that, for any Helmholtz solution $u \in H^2_\kappa(\Omega)$,
    \begin{equation*}
        \inf_{v \in V_{\mathbf{N}}(\mathcal{T}_h)}\|u-v\|_{H_{\kappa}^1(\Omega)}\leq C N_{\mathbf{e}}^{-1/2}\|u\|_{H_\kappa^{2}(\Omega)}, \qquad  \quad N_{\mathbf{e}}\gtrsim\kappa h.
    \end{equation*}
\end{itemize}
}
In addition, we establish several stability results. Particularly, we show that the coefficient norms in the discrete representation are uniformly bounded with respect to $N_{\mathbf{e}}$ and grow at most quadratically in $N_{\mathbf{n}}$; see Section \ref{sec: Stability with respect to edge and node parameters}.

The analysis also addresses the high-frequency regime, where the method shows its strengths:
\emph{
\begin{itemize}
\item \emph{(High-frequency regime -- analytic solutions).} Under suitable shape-regularity assumptions and assuming that the parameters $N_{\mathbf{e}}$ and $N_{\mathbf{n}}$ scale linearly with $\kappa h$ with sufficiently large factors, there exists an unbounded set $\mathcal{K} \subset (0,+\infty)$ and explicit constants $C, \eta > 0$, independent of $\kappa h$, such that, for any Helmholtz solution $u$ analytic on $\overline{\Omega}$,
\begin{equation*}
        \inf_{v \in V_{\mathbf{N}}(\mathcal{T}_h)}\|u-v\|_{H_{\kappa}^1(\Omega)}\leq C|\mathcal{T}_h|(\kappa h)^5 e^{-\eta \kappa h}\|u\|_{H_\kappa^{2N_{\mathbf{n}}+1}(\Omega)}, \qquad \quad 1\ll\kappa h \in \mathcal{K}.
\end{equation*}
\end{itemize}
}
Equivalently, for any fixed mesh $\mathcal{T}_h$, there exists an unbounded sequence of wavenumbers along which \textit{the best-approximation error decays exponentially}.

On the one hand, this indicates that achieving a $\kappa$-uniform approximation error requires only the number of DOFs to scale linearly with the wavenumber.
In comparison, in 2D, standard $hp$-FEM methods typically require $\text{\#DOF} \sim \kappa^2$ to maintain constant accuracy~\cite{MelenkSauter2010,MelenkSauter2011}.
An intermediate scaling is given by the Gaussian Coherent State (GCS) method~\cite{Chaumont-Frelet2024}, where $\text{\#DOF} \sim \kappa^{3/2}$.
On the other hand, the approximation does not merely maintain accuracy: the error actually decays exponentially for analytic Helmholtz solutions. Although the previous result is rigorously proved only for suitable diverging sequences that avoid resonance in the mesh cells, numerical evidence suggests that this behavior extends beyond such a theoretical restriction.

\paragraph{Outline of the paper.}
In Section \ref{sec: Local edge-based Trefftz space}, we introduce a local Trefftz space of Helmholtz solutions on a reference rectangular cell, focusing on functions with trace supported on a single edge. We prove that a relevant set of Helmholtz solutions lies in the closure of the span of these basis functions, which leads to a local exponential approximation result.

In Section \ref{sec: Global edge-based Trefftz space}, we extend this construction globally to a domain partitioned into rectangular cells. By defining $C^0(\overline{\Omega})$-continuous edge-based basis functions, we build a $H^1_\kappa(\Omega)$-conforming Trefftz space. We then prove global approximation properties, but show that this space alone is insufficient to approximate all Helmholtz solutions, motivating further enrichment.

To address these limitations, Section \ref{sec: Enrichment with node-based functions} introduces a complementary $H^1_\kappa(\Omega)$-conforming Tref\-ftz space associated with the mesh nodes. These additional functions are constructed to capture solution components that cannot be expressed by edge contributions alone. We also establish key properties of this nodal space that will be needed to derive best-approximation results.

In Section \ref{sec: Combined edge-and-node Trefftz space}, we analyze the accuracy and stability of the full discrete space obtained by combining edge- and node-based functions. We derive convergence estimates confirming its effectiveness in stably approximating Helmholtz solutions, thus validating the enrichment strategy.

In Section \ref{sec: Discrete scheme and numerical results}, we introduce the discrete scheme used to compute numerical solutions within the proposed Trefftz space. A regularized least-squares Petrov–Galerkin method is employed to address the redundancy of the approximation space and ensure stability.
The section conclude with some numerical experiments that validate the method and support the theoretical analysis.

Throughout the paper, we use the notation $\mathbb{N} := \{0, 1, 2, \dots\}$ for the set of non-negative integers, and $\mathbb{N}^* := \mathbb{N} \setminus \{0\}$ for the set of positive integers.

\section{Local edge-based Trefftz space} \label{sec: Local edge-based Trefftz space}

We introduce a local Trefftz space of Helmholtz solutions on a reference rectangular cell, with nonzero trace restricted to a single boundary edge. Using norm estimates for the basis functions, we show that their span is rich enough to approximate a wide class of Helmholtz solutions. This leads to a first local approximation result, establishing exponential convergence.

\subsection{Single-edge Helmholtz mode}

To ground the analysis, consider a reference rectangle $\widehat{K}$ and  its top edge $\hat{\mathbf{s}}$, defined by
\begin{equation} \label{eq: reference cell and edge}
    \widehat{K}:=(0,\widehat{h}_1)\times (0,\widehat{h}_2), \qquad \hat{\mathbf{s}} := [0,\widehat{h}_1] \times \{\widehat{h}_2\}, \qquad \widehat{h}_1,\widehat{h}_2>0.
\end{equation}
Moreover, we introduce the set of $\Delta$-Neumann eigenvalues on $\widehat{K}$, denoted by
\begin{equation} \label{eq: set Neumann reference}
    \sigma(\widehat{h}_1,\widehat{h}_2):=\left\{ \left( \frac{n\pi}{\widehat{h}_1} \right)^2  + \left( \frac{m\pi}{\widehat{h}_2} \right)^2 \right\}_{n, m \in \mathbb{N}}.
\end{equation}

We adopt the following standing assumption for Section \ref{sec: Local edge-based Trefftz space}:

\begin{assumption} \label{A0}
    $\kappa^2$ is assumed not to belong to the set $\sigma(\widehat{h}_1,\widehat{h}_2)$.
\end{assumption}

The set $\sigma(\widehat{h}_1,\widehat{h}_2)$ in \eqref{eq: set Neumann reference} includes, in particular, all $\Delta$-Dirichlet eigenvalues, corresponding to $n, m \in \mathbb{N}^*$. Hence, Assumption~\ref{A0} ensures the well-posedness of the Helmholtz–Dirichlet problem on $\widehat{K}$; see for instance \textup{\cite[Sec.~6.1]{Spence2015}}.

\begin{remark}
For any given wavenumber $\kappa$, Assumption \textup{\ref{A0}} can always be fulfilled by a suitable choice of the numerical parameters $(\widehat{h}_1,\widehat{h}_2)$.
\end{remark}

To the edge $\hat{\mathbf{s}}$ we associate the following family of solutions to the Helmholtz equation.

\begin{definition}[Single-edge Helmholtz mode]
For any $n \in \mathbb{N}^*$, we define
\begin{equation} \label{eq:edge_functions}
    \widehat{\varphi}_n(x,y; \widehat{h}_1,\widehat{h}_2) :=
    \sin\left( \kappa \widehat{\nu}_n x\right) \, 
    \frac{\sin\left(  y\kappa \sqrt{1 - \widehat{\nu}_n^2} \right)}{\sin\left(  \widehat{h}_2\kappa  \sqrt{1 - \widehat{\nu}_n^2} \right)}, \qquad (x,y)\in \widehat{K}, \qquad \text{where} \qquad \widehat{\nu}_n:=\frac{n\pi}{\kappa \widehat{h}_1},
\end{equation}
and $\sqrt{\,\cdot\,}$ denotes the principal square root.
\end{definition}

Assumption \ref{A0} guarantees that these functions are well defined, while the normalization ensures that the amplitude of each $\widehat{\varphi}_n$ on $\hat{\mathbf{s}}$ is uniformly controlled.
The real or imaginary nature of $\sqrt{1 - \widehat{\nu}_n^2}$ in \eqref{eq:edge_functions} governs the qualitative behavior of the functions $\widehat{\varphi}_n$:
\begin{itemize}
    \item If $\widehat{\nu}_n < 1$, the square root is real, and the function is \emph{propagative}, exhibiting oscillations in both the horizontal and vertical directions.
    \item If $\widehat{\nu}_n > 1$, the square root is imaginary, and the function is \emph{evanescent}, oscillating in the horizontal directions, and decaying exponentially in the vertical direction for decreasing $y$.
\end{itemize}
This behavior is illustrated in Figure \ref{fig: local basis}, where the left panel shows a propagative mode and the right panel shows an evanescent one.

\begin{figure}
\centering
\includegraphics[trim=120 220 120 180,clip,width=.26\textwidth]{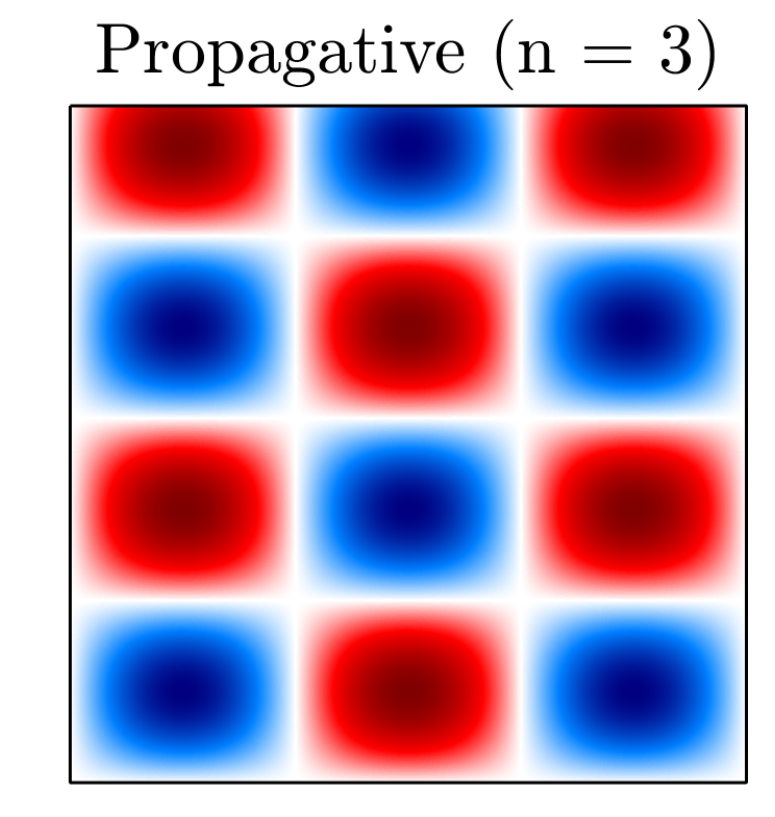}
\hspace{.2\textwidth}
\includegraphics[trim=120 220 120 180,clip,width=.26\textwidth]{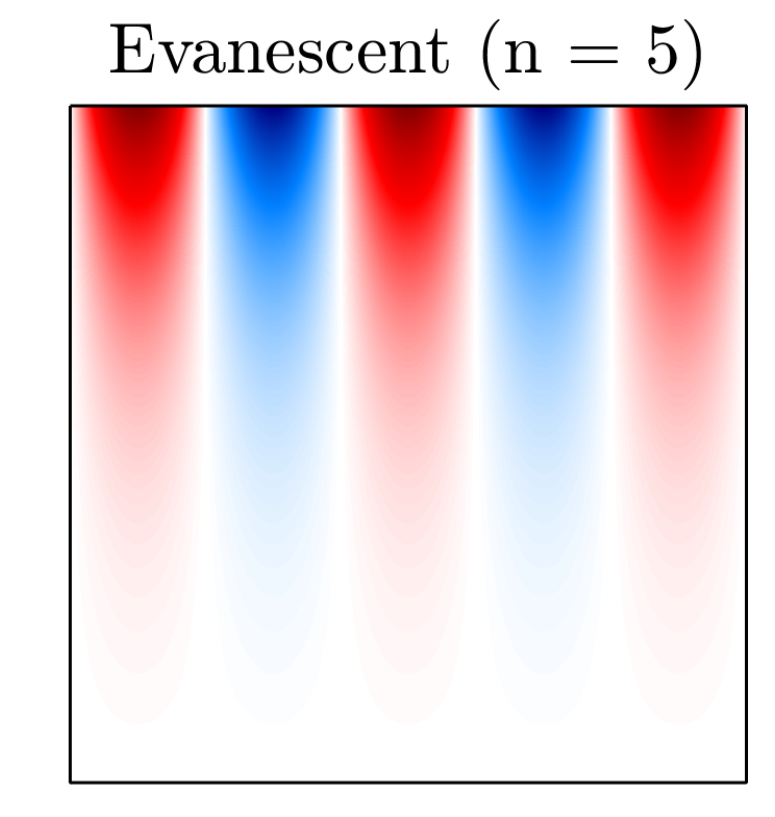}
\caption{Basis functions $\widehat{\varphi}_n$ associated with the top edge $\hat{\mathbf{s}}$ of $\widehat{K}$;
$\kappa=15$, $\widehat{h}_1=\widehat{h}_2=1$.}
\label{fig: local basis}
\end{figure}

\subsection{Fundamental properties}

Since we aim for wavenumber-explicit error bounds, we use $\kappa$-dependent Sobolev norms.

\begin{definition}[$\kappa$-dependent Sobolev norms] \label{def: kappa-dependent Sobolev norms}
Let $\Omega \subset \mathbb{R}^d$ be a bounded Lipschitz domain, with $d \in \{1,2\}$, and let $M \in \mathbb{N}$. We define the $\kappa$-weighted Sobolev inner product and norm by
\begin{equation*}
    \left(u,v\right)_{H^M_{\kappa}(\Omega)}:=\sum_{|\boldsymbol{\alpha}|\leq M}\kappa^{-2|\boldsymbol{\alpha}|}\left(\partial^{\boldsymbol{\alpha}}u,\partial^{\boldsymbol{\alpha}}v\right)_{L^2(\Omega)}, \qquad \|u\|^2_{H^M_{\kappa}(\Omega)}:=\left(u,u\right)_{H^M_{\kappa}(\Omega)},
\end{equation*}
where $\boldsymbol{\alpha} \in \mathbb{N}^d$ is a multi-index.  
The Sobolev space is $H^M_\kappa(\Omega) := \left\{ u \in L^2(\Omega) : \|u\|_{H^M_\kappa(\Omega)} < \infty \right\}$.
\end{definition}

We first derive orthogonality properties and norm bounds for the single-edge modes $\widehat{\varphi}_n$.

\begin{lemma} \label{lem: orthogonality L2}
The two families
    \begin{equation} \label{eq: dirichlet and neumann on s}
        \left\{ \sqrt{2/\widehat{h}_1} \sin\left( \kappa \widehat{\nu}_n t\right) \right\}_{n \in \mathbb{N}^*}
\qquad \text{and} \qquad
\left\{ 1/\sqrt{\widehat{h}_1}\right\}\cup \left\{\sqrt{2/\widehat{h}_1} \cos\left( \kappa \widehat{\nu}_n t\right) \right\}_{n \in \mathbb{N}^*}
    \end{equation}
    are Hilbert bases of $L^2(\hat{\mathbf{s}})$. Besides, the former coincides (up to normalization) with 
    $\{ \widehat{\varphi}_n|_{\hat{\mathbf{s}}}\}_{n \in \mathbb{N}^*}$.
\end{lemma}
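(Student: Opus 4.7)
The plan is to observe that after substituting $\kappa\widehat{\nu}_n = n\pi/\widehat{h}_1$, the edge $\hat{\mathbf{s}}$ can be parametrized by $t\in[0,\widehat{h}_1]$ and the two families reduce to the classical normalized Fourier sine and cosine systems
\[
    \bigl\{\sqrt{2/\widehat{h}_1}\,\sin(n\pi t/\widehat{h}_1)\bigr\}_{n\in\mathbb{N}^*}, \qquad \bigl\{1/\sqrt{\widehat{h}_1}\bigr\}\cup\bigl\{\sqrt{2/\widehat{h}_1}\,\cos(n\pi t/\widehat{h}_1)\bigr\}_{n\in\mathbb{N}^*}
\]
on the interval $(0,\widehat{h}_1)$. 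The statement then follows from the well-known fact that these are Hilbert bases of $L^2(0,\widehat{h}_1)$.

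More explicitly, I would carry out three short verifications. First, \emph{orthonormality}: a direct computation of $\int_0^{\widehat{h}_1}\sin(n\pi t/\widehat{h}_1)\sin(m\pi t/\widehat{h}_1)\,dt$ and its cosine analog using product-to-sum identities gives $\widehat{h}_1/2$ when $n=m$ and $0$ otherwise; the constant $1/\sqrt{\widehat{h}_1}$ is orthogonal to each $\cos(n\pi t/\widehat{h}_1)$ because $\int_0^{\widehat{h}_1}\cos(n\pi t/\widehat{h}_1)\,dt=0$ for $n\ge 1$. The chosen prefactors therefore give unit $L^2(\hat{\mathbf{s}})$-norms. Second, \emph{completeness}: the sine family consists of the $L^2$-normalized eigenfunctions of $-d^2/dt^2$ on $(0,\widehat{h}_1)$ with homogeneous Dirichlet boundary conditions, and analogously for the cosine family with Neumann conditions; the spectral theorem for this compact self-adjoint resolvent yields a Hilbert basis. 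Alternatively one invokes density of trigonometric polynomials on $(-\widehat{h}_1,\widehat{h}_1)$ and restricts to odd, respectively even, extensions. Third, \emph{identification with the traces}: evaluating \eqref{eq:edge_functions} at $y=\widehat{h}_2$ causes the $y$-dependent factor to collapse to $1$, leaving $\widehat{\varphi}_n(x,\widehat{h}_2)=\sin(\kappa\widehat{\nu}_n x)=\sin(n\pi x/\widehat{h}_1)$, which coincides with the sine system up to the scalar factor $\sqrt{2/\widehat{h}_1}$.

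There is no real obstacle here; the lemma is essentially a restatement of the classical Fourier-basis theorem in a form convenient for the subsequent sections. The only point meriting a little care is that Assumption~\ref{A0} plays no role in this lemma itself (the traces are defined without reference to it), but will be needed later to make sense of $\widehat{\varphi}_n$ in the interior of $\widehat{K}$; I would mention this to avoid confusion. The rest is a bookkeeping exercise matching normalization constants.
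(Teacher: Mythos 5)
Your proposal is correct and follows essentially the same route as the paper, which simply identifies the two families as the Dirichlet and Neumann eigenfunctions of $-\Delta$ on $(0,\widehat{h}_1)$ and cites the classical completeness result; your added verifications of orthonormality and of the trace identity $\widehat{\varphi}_n(x,\widehat{h}_2)=\sin(\kappa\widehat{\nu}_n x)$ are just the details the paper leaves implicit.
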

\begin{proof}
The result easily follows as the two families in \eqref{eq: dirichlet and neumann on s} are the Dirichlet and Neumann eigenfunctions of $-\Delta$ on $(0,\widehat{h}_1)$, respectively; see \cite[Sec.\ 6.5.1]{Evans2010}.
\end{proof}

\begin{lemma} \label{lem:orthogonality lemma}
    Let $M \in \mathbb{N}$. The functions $\{\widehat{\varphi}_n\}_{n \in \mathbb{N}^*}$ are orthogonal in $H^{M}_{\kappa}(\widehat{K})$. Besides,
    \begin{align} 
        \|\widehat{\varphi}_n\|^2_{H^M_{\kappa}(\widehat{K})}&\leq\frac{\widehat{h}_1\pi^2(M+1)^2}{8\kappa^2 \widehat{h}_2\widehat{\mathrm{d}}^2}, && \qquad  \text{if}\quad \widehat{\nu}_n<1, \label{eq: bound phi_norm}\\
        \|\widehat{\varphi}_n\|^2_{H^M_{\kappa}(\widehat{K})}&\leq\frac{\widehat{h}_1(M+1)^2}{2\kappa \tanh(\kappa \widehat{h}_2)}\,\frac{\widehat{\nu}_n^{2M}}{\sqrt{\widehat{\nu}_n^2-1}}, && \qquad  \text{if}\quad \widehat{\nu}_n>1, \label{eq: bound phi_norm2}
    \end{align}
    where $\widehat{\nu}_n$ is defined in \eqref{eq:edge_functions}, for any $n \in \mathbb{N}^*$, and
    \begin{equation} \label{eq: definition d_m}
        \widehat{\mathrm{d}}:=\inf_{\substack{m \in \mathbb{N},n \in \mathbb{N}^*:\\\widehat{\nu}_n<1}}\left|\sqrt{1-\widehat{\nu}_n^2}-\frac{m\pi}{\kappa \widehat{h}_2}\right|>0.
    \end{equation}
\end{lemma}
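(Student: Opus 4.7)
The natural approach exploits the tensor-product structure inherent in~\eqref{eq:edge_functions}: $\widehat{\varphi}_n(x,y)=\sin(n\pi x/\widehat{h}_1)\,h_n(y)$, where $h_n$ denotes the univariate $y$-profile and $\kappa\widehat{\nu}_n=n\pi/\widehat{h}_1$. Cartesian derivatives act separately in $x$ and $y$, so every $\partial^{\boldsymbol{\alpha}}\widehat{\varphi}_n$ retains the tensor form $(n\pi/\widehat{h}_1)^{\alpha_1}\tilde{t}_{\alpha_1}(n\pi x/\widehat{h}_1)\,\partial_y^{\alpha_2}h_n(y)$, with $\tilde{t}_{\alpha_1}\in\{\pm\sin,\pm\cos\}$ fixed by the parity of $\alpha_1$. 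For orthogonality, I would expand $(\widehat{\varphi}_n,\widehat{\varphi}_m)_{H^M_\kappa(\widehat{K})}$ as the $\kappa^{-2|\boldsymbol{\alpha}|}$-weighted sum of $L^2(\widehat{K})$-inner products over $|\boldsymbol{\alpha}|\leq M$; the tensor form then splits each term into an $x$-integral of two sines or two cosines at different frequencies $n\pi/\widehat{h}_1\neq m\pi/\widehat{h}_1$, which vanishes by Lemma~\ref{lem: orthogonality L2}, proving the orthogonality statement term by term.

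For the norm bounds I would treat the two regimes separately. In the \emph{propagative} case $\widehat{\nu}_n<1$, set $\beta_n=\kappa\sqrt{1-\widehat{\nu}_n^2}\in(0,\kappa)$, so $h_n(y)=\sin(\beta_n y)/\sin(\beta_n\widehat{h}_2)$ and $\partial_y^{\alpha_2}h_n=\beta_n^{\alpha_2}\tilde{s}_{\alpha_2}(\beta_n y)/\sin(\beta_n\widehat{h}_2)$ with $|\tilde{s}_{\alpha_2}|\leq 1$. Combining $\|\sin(n\pi\cdot/\widehat{h}_1)\|^2_{L^2(0,\widehat{h}_1)}=\|\cos(n\pi\cdot/\widehat{h}_1)\|^2_{L^2(0,\widehat{h}_1)}=\widehat{h}_1/2$ with the crude pointwise bound $\tilde{s}_{\alpha_2}^2\leq 1$ gives
\begin{equation*}
\kappa^{-2|\boldsymbol{\alpha}|}\|\partial^{\boldsymbol{\alpha}}\widehat{\varphi}_n\|^2_{L^2(\widehat{K})}\leq\widehat{\nu}_n^{2\alpha_1}(1-\widehat{\nu}_n^2)^{\alpha_2}\,\frac{\widehat{h}_1\widehat{h}_2}{2\sin^2(\beta_n\widehat{h}_2)}\leq\frac{\widehat{h}_1\widehat{h}_2}{2\sin^2(\beta_n\widehat{h}_2)},
\end{equation*}
since $\widehat{\nu}_n,\sqrt{1-\widehat{\nu}_n^2}\in(0,1)$. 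The elementary estimate $|\sin t|\geq(2/\pi)\,\mathrm{dist}(t,\pi\mathbb{Z})$ together with~\eqref{eq: definition d_m} yields $\sin^2(\beta_n\widehat{h}_2)\geq(2\kappa\widehat{h}_2\widehat{\mathrm{d}}/\pi)^2$; summing over the $\binom{M+2}{2}\leq(M+1)^2$ multi-indices then produces~\eqref{eq: bound phi_norm}.

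In the \emph{evanescent} case $\widehat{\nu}_n>1$, the principal square root gives $\sqrt{1-\widehat{\nu}_n^2}=\imath\sqrt{\widehat{\nu}_n^2-1}$, so with $\gamma_n=\kappa\sqrt{\widehat{\nu}_n^2-1}$ one has $h_n(y)=\sinh(\gamma_n y)/\sinh(\gamma_n\widehat{h}_2)$ and $\partial_y^{\alpha_2}h_n$ inherits a $\sinh$ or $\cosh$ numerator. Explicit computation of $\int_0^{\widehat{h}_2}\sinh^2(\gamma_n y)\,dy$ and $\int_0^{\widehat{h}_2}\cosh^2(\gamma_n y)\,dy$ via $\sinh(2t)=2\sinh t\cosh t$, combined with $\cosh\geq\sinh$, delivers $\|\partial_y^{\alpha_2}h_n\|^2_{L^2(0,\widehat{h}_2)}\leq\gamma_n^{2\alpha_2-1}/\tanh(\gamma_n\widehat{h}_2)$ up to absolute constants. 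Collecting the $x$-factor $\widehat{h}_1/2$ and the $\kappa^{-2|\boldsymbol{\alpha}|}$ weight, and using $\gamma_n^{2\alpha_2-1}/\kappa^{2\alpha_2}=(\widehat{\nu}_n^2-1)^{\alpha_2-1/2}/\kappa$, each term reduces to $\widehat{\nu}_n^{2\alpha_1}(\widehat{\nu}_n^2-1)^{\alpha_2-1/2}\widehat{h}_1/(2\kappa\tanh(\gamma_n\widehat{h}_2))$. Bounding $\widehat{\nu}_n^{2\alpha_1}(\widehat{\nu}_n^2-1)^{\alpha_2-1/2}\leq\widehat{\nu}_n^{2M}/\sqrt{\widehat{\nu}_n^2-1}$ (using $\widehat{\nu}_n>1$) and summing over multi-indices generates the prefactor $(M+1)^2$.

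The main technical hurdle is the final replacement of $\tanh(\gamma_n\widehat{h}_2)$ by $\tanh(\kappa\widehat{h}_2)$ required by~\eqref{eq: bound phi_norm2}. For $\widehat{\nu}_n\geq\sqrt{2}$ one has $\gamma_n\geq\kappa$ and plain monotonicity of $\tanh$ suffices; for $1<\widehat{\nu}_n<\sqrt{2}$, I would exploit that $x\mapsto\tanh(x)/x$ is strictly decreasing on $(0,\infty)$ (since $\sinh(2x)>2x$ for $x>0$), which yields $\tanh(\gamma_n\widehat{h}_2)\geq(\gamma_n/\kappa)\tanh(\kappa\widehat{h}_2)=\sqrt{\widehat{\nu}_n^2-1}\,\tanh(\kappa\widehat{h}_2)$; the extra factor $1/\sqrt{\widehat{\nu}_n^2-1}$ must then be absorbed into the envelope $\widehat{\nu}_n^{2M}/\sqrt{\widehat{\nu}_n^2-1}$ via $\sqrt{\widehat{\nu}_n^2-1}\leq\widehat{\nu}_n^2$. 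The careful accounting of constants across these two sub-cases, together with the multi-index counting, is the delicate point that pins down the exact constant in~\eqref{eq: bound phi_norm2}.
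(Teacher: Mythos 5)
Your proposal follows essentially the same route as the paper: exploit the tensor-product structure, reduce everything to one-dimensional integrals, use the orthogonality of the $x$-factors (Lemma~\ref{lem: orthogonality L2}) for the first claim, the bound $|\sin t|\geq \tfrac{2}{\pi}\,\mathrm{dist}(t,\pi\mathbb{Z})$ (which is exactly Lemma~\ref{lem: coth inequality}) for the propagative case, and the estimate $\int_0^{\widehat h_2}\cosh^2(\gamma_n y)\,dy/\sinh^2(\gamma_n\widehat h_2)\leq \coth(\gamma_n\widehat h_2)/\gamma_n$ together with $\tanh$-monotonicity for the evanescent case. The orthogonality argument, the propagative bound \eqref{eq: bound phi_norm} (your constants land exactly on the stated ones), and the evanescent case for $\widehat{\nu}_n\geq\sqrt{2}$ are all correct and match the paper.

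The one genuine problem is your treatment of the sub-case $1<\widehat{\nu}_n<\sqrt{2}$. The monotonicity of $x\mapsto\tanh(x)/x$ does give $\tanh(\gamma_n\widehat h_2)\geq\sqrt{\widehat{\nu}_n^2-1}\,\tanh(\kappa\widehat h_2)$, but substituting this turns your bound into
\begin{equation*}
\frac{\widehat h_1(M+1)^2}{2\kappa\tanh(\kappa\widehat h_2)}\,\frac{\widehat{\nu}_n^{2M}}{\widehat{\nu}_n^2-1},
\end{equation*}
which exceeds the target \eqref{eq: bound phi_norm2} by the factor $1/\sqrt{\widehat{\nu}_n^2-1}$. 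This factor is unbounded as $\widehat{\nu}_n\to 1^+$, and the inequality $\sqrt{\widehat{\nu}_n^2-1}\leq\widehat{\nu}_n^2$ you invoke goes in the wrong direction to absorb it: what you would need is a uniform lower bound on $\sqrt{\widehat{\nu}_n^2-1}$, which does not exist. Closing this sub-case requires keeping the exact $y$-integral $\tfrac{\widehat h_2}{2}\bigl|\coth(t)/t-\cos(j\pi)/\sinh^2(t)\bigr|$ with $t=\gamma_n\widehat h_2$ and exploiting the cancellation for even derivative orders (the difference stays bounded as $t\to0$) together with the compensating factor $(\widehat{\nu}_n^2-1)^{\ell-j}$ for odd ones, rather than the crude bound $2\coth(t)/t$. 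To be fair, you have put your finger on a real subtlety: the paper's own proof justifies this step only by "monotonicity of $\tanh$", which likewise covers only $\widehat{\nu}_n\geq\sqrt{2}$, and every downstream use of \eqref{eq: bound phi_norm2} (Propositions~\ref{prop: completness} and~\ref{th: main local th_}) assumes $\widehat{\nu}_n\geq\sqrt{2}$ anyway; but as a proof of the lemma as stated, your resolution of the remaining regime does not work.
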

\begin{proof}
    Since $\widehat{\varphi}_n$ are separable functions for any $n \in \mathbb{N}^*$, let us introduce the notations
    \begin{equation*}
        \widehat{\xi}_{n,j}(t):=\sin\left(\kappa \widehat{\nu}_n t +\frac{j\pi}{2}\right), \qquad \widehat{\chi}_{n,j}(s):=\frac{\sin\left(  \kappa s  \sqrt{1-\widehat{\nu}_n^2}+\frac{j\pi}{2} \right)}{\sin\left(\kappa  \widehat{h}_2  \sqrt{1 - \widehat{\nu}_n^2} \right)},
    \end{equation*}
    with $0\leq j\leq M$, $t \in [0,\widehat{h}_1]$ and $s \in [0,\widehat{h}_2]$. It easy to check that
    \begin{equation} \label{eq: step0}
        \left(\widehat{\xi}_{n,j},\widehat{\xi}_{m,j}\right)_{L^2(0,\widehat{h}_1)}=\frac{\widehat{h}_1}{2}\delta_{nm}, \qquad \|\widehat{\chi}_{n,j}\|^2_{L^2(0,\widehat{h}_2)}=\frac{\widehat{h}_2}{2}\left|\frac{\cot(\kappa \widehat{h}_2\sqrt{1-\widehat{\nu}_n^2})}{\kappa \widehat{h}_2\sqrt{1-\widehat{\nu}_n^2}}-\frac{\cos(j\pi)}{\sin^2(\kappa \widehat{h}_2\sqrt{1-\widehat{\nu}_n^2})}\right|,
    \end{equation}
    where the orthogonality of the functions $\{\widehat{\xi}_{n,j}\}_{n \in \mathbb{N}^*}$ follows directly from Lemma \ref{lem: orthogonality L2}.
    Then,
    \begin{equation*}
        \left(\widehat{\varphi}_n,\widehat{\varphi}_m\right)_{H^M_{\kappa}(\widehat{K})}=\!\!\!\sum_{|\boldsymbol{\alpha}|\leq M}\kappa^{-2|\boldsymbol{\alpha}|}\left(\partial^{\boldsymbol{\alpha}}\widehat{\varphi}_n,\partial^{\boldsymbol{\alpha}}\widehat{\varphi}_m\right)_{L^2(\widehat{K})}\!\!=\sum_{\ell=0}^M\sum_{j=0}^\ell\kappa^{-2\ell}\left(\partial_1^j\partial_2^{\ell-j}\widehat{\varphi}_n,\partial_1^j\partial_2^{\ell-j}\widehat{\varphi}_m\right)_{L^2(\widehat{K})}.
    \end{equation*}
    Since $\partial_1^{j}\partial_2^{\ell-j}\widehat{\varphi}_n\!=\!\widehat{\nu}_n^{2j}(1-\widehat{\nu}_n^2)^{\ell-j}\widehat{\xi}_{n,j}\widehat{\chi}_{n,\ell-j}$ for any $n \in \mathbb{N}^*$, $0\leq j\leq \ell \leq M$, and from \eqref{eq: step0}, one has
    \begin{align}
        \left(\widehat{\varphi}_n,\widehat{\varphi}_m\right)_{H^M_{\kappa}(\widehat{K})}&=\sum_{\ell=0}^M\sum_{j=0}^\ell\widehat{\nu}_n^{2j}|1-\widehat{\nu}_n^2|^{\ell-j}\left(\widehat{\xi}_{n,j},\widehat{\xi}_{m,j}\right)_{L^2(0,\widehat{h}_1)}\left(\widehat{\chi}_{n,\ell-j},\widehat{\chi}_{m,\ell-j}\right)_{L^2(0,\widehat{h}_2)} \nonumber\\
        &=\delta_{nm}\frac{\widehat{h}_1\widehat{h}_2}{4}\!\sum_{\ell=0}^M\sum_{j=0}^\ell\widehat{\nu}_n^{2j}|1-\widehat{\nu}_n^2|^{\ell-j}\!\left|\frac{\cot(\kappa \widehat{h}_2\sqrt{1-\widehat{\nu}_n^2})}{\kappa \widehat{h}_2\sqrt{1-\widehat{\nu}_n^2}}\!-\!\frac{\cos((\ell-j)\pi)}{\sin^2(\kappa \widehat{h}_2\sqrt{1-\widehat{\nu}_n^2})}\right|,\! \label{eq: 1step}
    \end{align}
    which proves orthogonality.
    
    We now prove the bound \eqref{eq: bound phi_norm} assuming $\widehat{\nu}_n<1$. From \eqref{eq: 1step}, one has that
    \begin{equation*}
        \|\widehat{\varphi}_n\|^2_{H^M_{\kappa}(\widehat{K})}=\frac{\widehat{h}_1\widehat{h}_2}{4}\sum_{\ell=0}^M\sum_{j=0}^\ell\widehat{\nu}_n^{2j}|1-\widehat{\nu}_n^2|^{\ell-j}\left|\frac{\cot(\kappa \widehat{h}_2\sqrt{1-\widehat{\nu}_n^2})}{\kappa \widehat{h}_2\sqrt{1-\widehat{\nu}_n^2}}-\frac{\cos((\ell-j)\pi)}{\sin^2(\kappa \widehat{h}_2\sqrt{1-\widehat{\nu}_n^2})}\right|.
    \end{equation*}
    Due to Assumption \ref{A0}, $\kappa \widehat{h}_2\sqrt{1-\widehat{\nu}_n^2}\notin \pi \mathbb{Z}$, and hence from Lemma \ref{lem: coth inequality} one obtains
    \begin{align*}
        \|\widehat{\varphi}_n\|^2_{H^M_{\kappa}(\widehat{K})}&\leq\frac{\widehat{h}_1\widehat{h}_2}{4}\sum_{\ell=0}^M\sum_{j=0}^\ell\widehat{\nu}_n^{2j}(1-\widehat{\nu}_n^2)^{\ell-j}\left(\left|\frac{\cot(\kappa \widehat{h}_2\sqrt{1-\widehat{\nu}_n^2})}{\kappa \widehat{h}_2\sqrt{1-\widehat{\nu}_n^2}}\right|+\frac{1}{\sin^2(\kappa \widehat{h}_2\sqrt{1-\widehat{\nu}_n^2})}\right)\\
        &\leq \frac{\widehat{h}_1\widehat{h}_2(M+1)^2}{2\sin^2(\kappa \widehat{h}_2\sqrt{1-\widehat{\nu}_n^2})}\leq \frac{\widehat{h}_1\widehat{h}_2\pi^2(M+1)^2}{8\min_{m \in \mathbb{N}}|\kappa \widehat{h}_2\sqrt{1-\widehat{\nu}_n^2}-m\pi|^2}.
    \end{align*}
    Finally, recalling the definition of $\widehat{\mathrm{d}}$ in \eqref{eq: definition d_m}, it follows
    \begin{equation*}
        \|\widehat{\varphi}_n\|^2_{H^M_{\kappa}(\widehat{K})}\leq \frac{\widehat{h}_1\pi^2(M+1)^2}{8\kappa^2 \widehat{h}_2\min_{m \in \mathbb{N},n \in \mathbb{N}^*: \widehat{\nu}_n<1}|\sqrt{1-\widehat{\nu}_n^2}-m\pi/\kappa\widehat{h}_2|^2}=\frac{\widehat{h}_1\pi^2(M+1)^2}{8\kappa^2 \widehat{h}_2\widehat{\mathrm{d}}^2}.
    \end{equation*}
    
    We now prove the inequality \eqref{eq: bound phi_norm2} assuming $\widehat{\nu}_n>1$.
    It follows that $\sqrt{1-\widehat{\nu}_n^2}=\imath\sqrt{\widehat{\nu}_n^2-1}$, and from \eqref{eq: 1step} one obtains:
    \begin{equation*}
        \|\widehat{\varphi}_n\|^2_{H^M_{\kappa}(\widehat{K})}=\frac{\widehat{h}_1\widehat{h}_2}{4}\sum_{\ell=0}^M\sum_{j=0}^\ell\widehat{\nu}_n^{2j}|\widehat{\nu}_n^2-1|^{\ell-j}\left|\frac{\coth(\kappa \widehat{h}_2\sqrt{\widehat{\nu}_n^2-1})}{\kappa \widehat{h}_2\sqrt{\widehat{\nu}_n^2-1}}-\frac{\cos((\ell-j)\pi)}{\sinh^2(\kappa \widehat{h}_2\sqrt{\widehat{\nu}_n^2-1})}\right|.
    \end{equation*}
    Hence,
    \begin{align*}
        \|\widehat{\varphi}_n\|^2_{H^M_{\kappa}(\widehat{K})}&\leq\frac{\widehat{h}_1\widehat{h}_2}{4}\sum_{\ell=0}^M(\ell+1)\widehat{\nu}_n^{2\ell}\left(\frac{\coth(\kappa \widehat{h}_2\sqrt{\widehat{\nu}_n^2-1})}{\kappa \widehat{h}_2\sqrt{\widehat{\nu}_n^2-1}}+\frac{1}{\sinh^2(\kappa \widehat{h}_2\sqrt{\widehat{\nu}_n^2-1})}\right)\\
        &\leq \frac{\widehat{h}_1\widehat{h}_2(M+1)^2\widehat{\nu}_n^{2M}}{4}\left(\frac{2\coth(\kappa \widehat{h}_2\sqrt{\widehat{\nu}_n^2-1})}{\kappa \widehat{h}_2\sqrt{\widehat{\nu}_n^2-1}}\right)\leq \frac{\widehat{h}_1(M+1)^2}{2\kappa \tanh(\kappa \widehat{h}_2)}\,\frac{\widehat{\nu}_n^{2M}}{\sqrt{\widehat{\nu}_n^2-1}},
    \end{align*}
    where the second inequality follows from the elementary bound $1/{\sinh^2(t)}\leq \coth(t)/{t}$, for $t>0$, and the last from the monotonicity of $\tanh$.
\end{proof}

\begin{remark}
    The quantity $\widehat{\mathrm{d}}$ defined in \eqref{eq: definition d_m} is strictly positive by Assumption \emph{\ref{A0}}, and measures the distance from $\kappa^2$ to the resonant frequencies with $n \in \mathbb{N}^*$ and $m \in \mathbb{N}$ in $\sigma(\widehat{h}_1,\widehat{h}_2)$ from \eqref{eq: set Neumann reference}.
\end{remark}

We now introduce a set of Helmholtz solutions, which will be well approximated by the single-edge Helmholtz modes $\{\widehat{\varphi}_n\}_{n \in \mathbb{N}^*}$.

\begin{definition} \label{def: local set}
    Let $N,M\in \mathbb{N}^*$ such that $N\leq \lceil M/2\rceil$.
    We define the set
    \begin{equation*}
\mathcal{S}^{N,M}_{\kappa}(\widehat{K},\hat{\mathbf{s}}) :=
\left\{ u \in H^1_\kappa(\widehat{K}) \;\middle|\;
\begin{aligned}
  & -\Delta u - \kappa^2 u = 0 && \text{in } \widehat{K}, \\
  &  u|_{\hat{\mathbf{s}}} \in H_\kappa^{M}(\hat{\mathbf{s}}), &&
     u|_{\partial \widehat{K} \setminus \hat{\mathbf{s}}} = 0, \\
  & \partial_{1}^{2n} ( u|_{\hat{\mathbf{s}}})({\partial\hat{\mathbf{s}}}) = 0, && 0 \leq n \leq N-1
\end{aligned}
\right\}.
\end{equation*}
\end{definition}

Let us first state the following technical lemma.

\begin{lemma} \label{lem: integration by parts}
    Let $N, M \in \mathbb{N}^*$ with $N \leq \lceil M/2 \rceil$, and $\mu \in \mathbb{N}$ with $\mu \leq \min(2N, M)$. Then, for any $u \in \mathcal{S}^{N,M}_{\kappa}(\widehat{K},\hat{\mathbf{s}})$, it holds
    \begin{equation} \label{eq: integration by parts}
        \left|( u,  \widehat{\varphi}_n)_{L^2(\hat{\mathbf{s}})}\right|=(\kappa \widehat{\nu}_n)^{-\mu}\left|\int_0^{\widehat{h}_1}\partial^{\mu}_1 u(t) \sin(\kappa \widehat{\nu}_n t+\mu\pi/2)\,\textup{d}t\right|, \qquad n\in \mathbb{N}^*.
    \end{equation}
\end{lemma}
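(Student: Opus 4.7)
My plan is to express the inner product as a one-dimensional integral along $\hat{\mathbf{s}}\simeq[0,\widehat{h}_1]$ and perform $\mu$ successive integrations by parts in the edge variable, transferring derivatives from the real-valued trace $\widehat{\varphi}_n|_{\hat{\mathbf{s}}}(t)=\sin(\kappa\widehat{\nu}_n t)$ onto $u|_{\hat{\mathbf{s}}}$. Each step antidifferentiates the trigonometric factor, which produces a factor $1/(\kappa\widehat{\nu}_n)$ and shifts its phase by $\pi/2$; after $\mu$ iterations the integrand becomes $(\kappa\widehat{\nu}_n)^{-\mu}\partial_1^{\mu}u(t)\sin(\kappa\widehat{\nu}_n t+\mu\pi/2)$, up to a global sign that disappears once the absolute value is taken. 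This directly yields the identity \eqref{eq: integration by parts}.

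The crux is to ensure that every boundary contribution at $t\in\partial\hat{\mathbf{s}}=\{0,\widehat{h}_1\}$ vanishes along the way. Because $\kappa\widehat{\nu}_n \widehat{h}_1=n\pi$, one has $\sin(\kappa\widehat{\nu}_n t)=0$ and $\cos(\kappa\widehat{\nu}_n t)=\pm 1$ at both endpoints. A direct bookkeeping shows that after the $j$-th integration by parts the boundary term is proportional to $\partial_1^{j-1}u|_{\hat{\mathbf{s}}}$ times $\cos(\kappa\widehat{\nu}_n t)$ for $j$ odd and times $\sin(\kappa\widehat{\nu}_n t)$ for $j$ even. The even-$j$ contributions therefore vanish automatically, while the odd-$j$ ones require $\partial_1^{j-1}u|_{\hat{\mathbf{s}}}(\partial\hat{\mathbf{s}})=0$ with $j-1$ an even integer. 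For $1\leq j\leq\mu$ these are exactly the conditions $\partial_1^{2k}u|_{\hat{\mathbf{s}}}(\partial\hat{\mathbf{s}})=0$ for $0\leq k\leq\lceil \mu/2\rceil-1$, which are furnished by the defining constraints of $\mathcal{S}^{N,M}_{\kappa}(\widehat{K},\hat{\mathbf{s}})$ precisely when $\mu\leq 2N$.

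The remaining hypotheses of the lemma secure the regularity needed for the manipulations: the assumption $\mu\leq M$ yields $\partial_1^{\mu}u|_{\hat{\mathbf{s}}}\in L^2(\hat{\mathbf{s}})\subset L^1(\hat{\mathbf{s}})$, so the final integral is well defined, while $N\leq\lceil M/2\rceil$ gives $M\geq 2N-1$ and hence $\partial_1^{2k}u|_{\hat{\mathbf{s}}}\in H^1(\hat{\mathbf{s}})\subset C^0(\overline{\hat{\mathbf{s}}})$ for $0\leq k\leq N-1$, legitimizing the pointwise evaluations of the boundary conditions at $\partial\hat{\mathbf{s}}$. I do not anticipate any substantial obstacle: the argument is essentially mechanical and can be organized as a short induction on $\mu$, the only care being to track signs and parities cleanly throughout the iteration.
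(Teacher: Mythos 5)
Your proposal is correct and follows essentially the same route as the paper: $\mu$ successive integrations by parts in the edge variable, with the even-indexed boundary terms vanishing because $\sin(\kappa\widehat{\nu}_n t)$ is zero at $t=0,\widehat{h}_1$, and the odd-indexed ones killed by the conditions $\partial_1^{2k}(u|_{\hat{\mathbf{s}}})(\partial\hat{\mathbf{s}})=0$ for $0\leq k\leq \lceil\mu/2\rceil-1\leq N-1$. Your parity bookkeeping and regularity checks match (indeed, they are more explicit than) the paper's argument.
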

\begin{proof}
    As $u \in \mathcal{S}^{N,M}_{\kappa}(\widehat{K},\hat{\mathbf{s}})$ and $\mu\leq M$, the quantities in \eqref{eq: integration by parts} are well-defined.
    Integrating by parts $\mu$ times, it follows
    \begin{align*}
        \left|( u,  \widehat{\varphi}_n)_{L^2(\hat{\mathbf{s}})}\right|&=\left|\int_0^{\widehat{h}_1}\!\!\! u(t) \sin(\kappa \widehat{\nu}_n t)\textup{d}t\right|=(\kappa \widehat{\nu}_n)^{-1}\!\left|\int_0^{\widehat{h}_1}\!\!\!\partial_1 u(t) \cos(\kappa \widehat{\nu}_n t)\textup{d}t\right|\\
        &=(\kappa \widehat{\nu}_n)^{-2}\!\left|\int_0^{\widehat{h}_1}\!\!\!\partial^2_1 u(t) \sin(\kappa \widehat{\nu}_n t)\textup{d}t\right|=\dots=(\kappa \widehat{\nu}_n)^{-\mu}\!\left|\int_0^{\widehat{h}_1}\!\!\!\partial^{\mu}_1 u(t) \sin(\kappa \widehat{\nu}_n t\!+\!\mu\pi/2)\textup{d}t\right|.
    \end{align*}
    Integration by parts is performed only up to order $\mu \leq 2N$, as higher orders introduce non-vanishing boundary terms and fail to produce additional powers of $\widehat{\nu}_n^{-1}$. \qedhere
    
\end{proof}

The next proposition shows that any function in $\mathcal{S}^{N,M}_{\kappa}(\widehat{K},\hat{\mathbf{s}})$ can be written as a linear combination of the single-edge modes, with coefficients depending only on their trace over $\widehat{\mathbf{s}}$.

\begin{proposition} \label{prop: completness}
    Let $N, M \in \mathbb{N}^*$ with $N \leq \lceil M/2 \rceil$, and $\mu \in \mathbb{N}$ with $\mu \leq \min(2N, M)$. Then,
    \begin{equation*}
        \mathcal{S}^{N,M}_{\kappa}(\widehat{K},\hat{\mathbf{s}})\subset \overline{\textup{span}\{\widehat{\varphi}_n\}_{n \in \mathbb{N}^*}}^{H^{\mu}_\kappa(\widehat{K})}.
    \end{equation*}
    Besides, for any $u \in \mathcal{S}^{N,M}_{\kappa}(\widehat{K},\hat{\mathbf{s}})$, its expansion coefficients depend only on its trace on $\hat{\mathbf{s}}$:
    \begin{equation} \label{eq:coefficient series}
        \frac{(u, \widehat{\varphi}_n)_{H_\kappa^{\mu}(\widehat{K})}}{\|\widehat{\varphi}_n\|_{H^{\mu}_\kappa(\widehat{K})}}=\frac{\|\widehat{\varphi}_n\|_{H^{\mu}_\kappa(\widehat{K})}}{\| \widehat{\varphi}_n\|^2_{L^2(\hat{\mathbf{s}})}}( u,  \widehat{\varphi}_n)_{L^2(\hat{\mathbf{s}})}, \qquad n \in \mathbb{N}^*.
    \end{equation}
\end{proposition}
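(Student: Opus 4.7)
The plan is to construct the expansion of $u$ explicitly from its trace $g := u|_{\hat{\mathbf{s}}}$. Since $u|_{\partial\widehat{K}\setminus\hat{\mathbf{s}}} = 0$ and $u \in H^1_\kappa(\widehat{K})$, the trace $g$ vanishes at the endpoints $\partial\hat{\mathbf{s}}$, and Lemma~\ref{lem: orthogonality L2} yields the Fourier-sine expansion
\begin{equation*}
    g = \sum_{n \geq 1} g_n \sin(\kappa \widehat{\nu}_n t) \quad \text{in } L^2(\hat{\mathbf{s}}), \qquad g_n = \frac{2}{\widehat{h}_1}(u,\widehat{\varphi}_n)_{L^2(\hat{\mathbf{s}})},
\end{equation*}
where I have used that $\widehat{\varphi}_n|_{\hat{\mathbf{s}}}(t) = \sin(\kappa\widehat{\nu}_n t)$ and $\|\widehat{\varphi}_n\|^2_{L^2(\hat{\mathbf{s}})} = \widehat{h}_1/2$. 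Setting $u_N := \sum_{n=1}^N g_n \widehat{\varphi}_n$, it suffices to prove that the series converges in $H^\mu_\kappa(\widehat{K})$ and identify its limit with $u$.

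The heart of the argument is a summability estimate. By the $H^\mu_\kappa(\widehat{K})$-orthogonality of the modes (Lemma~\ref{lem:orthogonality lemma}), showing that $\{u_N\}$ is Cauchy reduces to establishing
\begin{equation*}
    \sum_{n \geq 1} |g_n|^2 \|\widehat{\varphi}_n\|^2_{H^\mu_\kappa(\widehat{K})} < \infty.
\end{equation*}
The bounds \eqref{eq: bound phi_norm}--\eqref{eq: bound phi_norm2} give $\|\widehat{\varphi}_n\|^2_{H^\mu_\kappa(\widehat{K})} \lesssim \widehat{\nu}_n^{2\mu - 1}$ for large $n$, while the assumptions $g \in H^M_\kappa(\hat{\mathbf{s}})$ together with $\partial_1^{2k}g(\partial\hat{\mathbf{s}}) = 0$ for $0 \leq k \leq N - 1$ allow, in the spirit of Lemma~\ref{lem: integration by parts}, $\mu \leq \min(2N, M)$ integrations by parts in each $g_n$ with all boundary terms vanishing. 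This yields a Parseval-type bound $\sum_n \widehat{\nu}_n^{2\mu}|g_n|^2 \lesssim \|\partial_1^\mu g\|^2_{L^2(\hat{\mathbf{s}})}$. Combining the two estimates gives the required summability for $\mu \leq \min(2N,M)$.

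Having verified convergence, I would identify the limit $u_\infty$ with $u$ via well-posedness. Each partial sum $u_N$ is a Helmholtz solution on $\widehat{K}$, vanishes on $\partial\widehat{K}\setminus\hat{\mathbf{s}}$, and has trace $\sum_{n=1}^N g_n \sin(\kappa\widehat{\nu}_n \cdot)$ on $\hat{\mathbf{s}}$. Passing to the limit in $H^1_\kappa(\widehat{K})$ (which controls traces in $H^{1/2}(\partial\widehat{K})$), the limit $u_\infty$ is a Helmholtz solution with the same Dirichlet data on $\partial\widehat{K}$ as $u$. Well-posedness of the Helmholtz--Dirichlet problem on $\widehat{K}$ under Assumption~\ref{A0} then forces $u_\infty = u$. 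The coefficient identity \eqref{eq:coefficient series} is now immediate: by $H^\mu_\kappa(\widehat{K})$-orthogonality, $(u, \widehat{\varphi}_n)_{H^\mu_\kappa(\widehat{K})} = g_n \|\widehat{\varphi}_n\|^2_{H^\mu_\kappa(\widehat{K})}$, and substituting $g_n = (u, \widehat{\varphi}_n)_{L^2(\hat{\mathbf{s}})}/\|\widehat{\varphi}_n\|^2_{L^2(\hat{\mathbf{s}})}$ and dividing by $\|\widehat{\varphi}_n\|_{H^\mu_\kappa(\widehat{K})}$ yields the claimed formula.

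The main obstacle is the delicate balance between the growth rate $\widehat{\nu}_n^{2\mu - 1}$ of the mode norms and the decay of $|g_n|$. The boundary conditions $\partial_1^{2k}g(\partial\hat{\mathbf{s}}) = 0$ for $0 \leq k \leq N - 1$ are precisely what kills the boundary contributions arising at even orders of integration by parts in the sine basis, so that $\mu = \min(2N, M)$ is exactly the largest order at which the Parseval-type bound still holds with vanishing boundary terms. Making this count match the norm growth, so that the series remains summable at the optimal $\mu$, is the crucial technical point.
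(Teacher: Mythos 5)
Your proposal is correct and follows essentially the same route as the paper's proof: expand the trace in the sine basis of Lemma~\ref{lem: orthogonality L2}, combine the norm bound $\|\widehat{\varphi}_n\|^2_{H^\mu_\kappa(\widehat{K})}\lesssim \widehat{\nu}_n^{2\mu-1}$ with the integration-by-parts decay of Lemma~\ref{lem: integration by parts} to get summability in $H^\mu_\kappa(\widehat{K})$, and identify the limit with $u$ via well-posedness of the Helmholtz--Dirichlet problem under Assumption~\ref{A0}. The only differences (partial sums plus a Cauchy argument instead of directly defining the series, and naming the trace coefficients $g_n$) are purely presentational.
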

\begin{proof}
    Consider $u \in \mathcal{S}^{N,M}_{\kappa}(\widehat{K},\hat{\mathbf{s}})$.
    We first prove that the series defined by the coefficients in \eqref{eq:coefficient series} converges.
    From the bound \eqref{eq: bound phi_norm2}, there exists a constant $C>0$
    independent of $n$ such that $\|\widehat{\varphi}_n\|^2_{H^\mu_\kappa(\widehat{K})}\leq C\widehat{\nu}_n^{2\mu-1}$, for any $n \in \mathbb{N}^*$. Besides, $\| \widehat{\varphi}_n\|^2_{L^2(\hat{\mathbf{s}})}=\widehat{h}_1/2$, and hence
    \begin{equation} \label{eq: s1}
        \sum_{n \in \mathbb{N}^*}\frac{\|\widehat{\varphi}_n\|^2_{H^\mu_\kappa(\widehat{K})}}{\| \widehat{\varphi}_n\|^4_{L^2(\hat{\mathbf{s}})}}\left|( u,  \widehat{\varphi}_n)_{L^2(\hat{\mathbf{s}})}\right|^2\leq \frac{4C}{\widehat{h}_1^2}\sum_{n \in \mathbb{N}^*}\widehat{\nu}_n^{2\mu-1}\left|( u,  \widehat{\varphi}_n)_{L^2(\hat{\mathbf{s}})}\right|^2.
    \end{equation}
    Therefore, due to Lemmas \ref{lem: orthogonality L2} and \ref{lem: integration by parts},
    \begin{equation} \label{eq: M norm phi_n}
        \sum_{n \in \mathbb{N}^*}\frac{\|\widehat{\varphi}_n\|^2_{H^\mu_\kappa(\widehat{K})}}{\| \widehat{\varphi}_n\|^4_{L^2(\hat{\mathbf{s}})}}\left|( u,  \widehat{\varphi}_n)_{L^2(\hat{\mathbf{s}})}\right|^2
        \!\leq\! \frac{4C}{\kappa^{2\mu}\widehat{h}_1^2}\sum_{n \in \mathbb{N}^*}\widehat{\nu}_n^{-1}\left|\int_0^{\widehat{h}_1}\!\!\!\partial^{\mu}_1 u(t) \sin \!\left(\kappa \widehat{\nu}_n t+\frac{\mu\pi}{2}\right)\textup{d}t\right|^2\!\!
        \!\!\leq\!\frac{2\kappa C}{\pi}| u|^2_{H_\kappa^\mu(\mathbf{\hat{s}})}.
    \end{equation}

    Now consider
    \begin{equation*}
        u_\mathrm{H}:=\sum_{n \in \mathbb{N}^*}\left(\frac{\|\widehat{\varphi}_n\|_{H^\mu_\kappa(\widehat{K})}}{\| \widehat{\varphi}_n\|^2_{L^2(\hat{\mathbf{s}})}}( u,  \widehat{\varphi}_n)_{L^2(\hat{\mathbf{s}})}\right)\frac{\widehat{\varphi}_n}{\|\widehat{\varphi}_n\|_{H^\mu_\kappa(\widehat{K})}},
    \end{equation*}
    which is well-defined, since, on one hand, the functions $\{\widehat{\varphi}_n\}_{n \in \mathbb{N}^*}$ are orthogonal in $H^{\mu}_\kappa(\widehat{K})$ by Lemma \ref{lem:orthogonality lemma} and, on the other hand, from \eqref{eq: M norm phi_n} it follows that
    \begin{equation*}
        u_\mathrm{H}\in \overline{\textup{span}\{\widehat{\varphi}_n\}_{n \in \mathbb{N}^*}}^{H^{\mu}_\kappa(\widehat{K})}, \qquad \text{and}\qquad \|u_{\mathrm{H}}\|^2_{H_\kappa^\mu(\widehat{K})}=\sum_{n \in \mathbb{N}^*}\frac{\|\widehat{\varphi}_n\|^2_{H^\mu_\kappa(\widehat{K})}}{\| \widehat{\varphi}_n\|^4_{L^2(\hat{\mathbf{s}})}}\left|( u,  \widehat{\varphi}_n)_{L^2(\hat{\mathbf{s}})}\right|^2<+\infty.
    \end{equation*}
    In particular, the uniqueness of the expansion coefficients implies:
    \begin{equation*}
        \frac{(u_\mathrm{H}, \widehat{\varphi}_n)_{H_\kappa^{\mu}(\widehat{K})}}{\|\widehat{\varphi}_n\|_{H^{\mu}_\kappa(\widehat{K})}}=\frac{\|\widehat{\varphi}_n\|_{H^{\mu}_\kappa(\widehat{K})}}{\| \widehat{\varphi}_n\|^2_{L^2(\hat{\mathbf{s}})}}( u,  \widehat{\varphi}_n)_{L^2(\hat{\mathbf{s}})}.
    \end{equation*}
    To conclude the proof, it remains to show that $u = u_\mathrm{H}$.
    By Lemma \ref{lem: orthogonality L2}, the set $\{ \widehat{\varphi}_n|_{\hat{\mathbf{s}}}\}_{n \in \mathbb{N}^*}$ forms a complete orthogonal system in $L^2(\hat{\mathbf{s}})$, and thus
    \begin{equation*}
         u|_{\hat{\mathbf{s}}}=\sum_{n \in \mathbb{N}^*}\frac{( u,  \widehat{\varphi}_n)_{L^2(\hat{\mathbf{s}})}}{\| \widehat{\varphi}_n\|^2_{L^2(\hat{\mathbf{s}})}} \widehat{\varphi}_n|_{\hat{\mathbf{s}}}.
    \end{equation*}
    Since $ u= \widehat{\varphi}_n=0$ on $\partial \widehat{K}\setminus\hat{\mathbf{s}}$ for any $n \in \mathbb{N}$, it follows:
    \begin{equation*}
         u_\mathrm{H}|_{\partial \widehat{K}}=\left(\sum_{n \in \mathbb{N}^*}\frac{( u,  \widehat{\varphi}_n)_{L^2(\hat{\mathbf{s}})}}{\| \widehat{\varphi}_n\|^2_{L^2(\hat{\mathbf{s}})}} \widehat{\varphi}_n\right)\Bigg\vert\rule{0pt}{5ex}_{\raisebox{-2.9ex}{$\scriptstyle \partial \widehat{K}$}}=\sum_{n \in \mathbb{N}^*}\frac{( u,  \widehat{\varphi}_n)_{L^2(\hat{\mathbf{s}})}}{\| \widehat{\varphi}_n\|^2_{L^2(\hat{\mathbf{s}})}} \widehat{\varphi}_n|_{\partial \widehat{K}}= u|_{\partial \widehat{K}}.
    \end{equation*}
    As $u_\mathrm{H}$ is a Helmholtz solution by construction and the Helmholtz--Dirichlet problem is well-posed under Assumption  \ref{A0}, we conclude that $u = u_\mathrm{H}$, as claimed.
\end{proof}

\subsection{Local approximation results}

We conclude this section by studying the approximation properties of the local Trefftz space
\begin{equation*}
    \widehat{V}_{N_{\mathbf{e}}}:=\textup{span}\{\widehat{\varphi}_n\}_{n=1}^{N_{\mathbf{e}}},
\end{equation*}
spanned by the first $N_{\mathbf{e}} \in \mathbb{N}^*$ single-edge Helmholtz modes introduced in \eqref{eq:edge_functions}.
Let us define the $H^1_\kappa(\widehat{K})$-orthogonal projection onto $\widehat{V}_{N_{\mathbf{e}}}$:
\begin{equation} \label{eq: local projection}
    \boldsymbol{\Pi}_{\hat{\mathbf{s}}} : H^1_\kappa(\widehat{K}) \rightarrow H^1_\kappa(\widehat{K}), \qquad 
    u \mapsto \sum_{n = 1}^{N_{\mathbf{e}}} \frac{(u, \widehat{\varphi}_n)_{H_\kappa^{1}(\widehat{K})}}{\|\widehat{\varphi}_n\|^2_{H^{1}_\kappa(\widehat{K})}}\widehat{\varphi}_n.
\end{equation}
\begin{proposition}[Local edge-based approximation result] \label{th: main local th_}
    Let $N, M \in \mathbb{N}^*$ such that $N \leq \lceil M/2 \rceil$, and $r, \mu \in \mathbb{N}$ with $r \leq \mu \leq \min(2N, M)$.
    Assume $N_\mathbf{e} \in \mathbb{N}^*$ such that $\widehat{\nu}_{N_\mathbf{e}}=N_\mathbf{e} \pi/(\kappa \widehat{h}_1)\geq \sqrt{2}$.
    Then, for any $u \in \mathcal{S}^{N,M}_{\kappa}(\widehat{K},\hat{\mathbf{s}})$, the best-approximation error in
    $\widehat{V}_{N_{\mathbf{e}}}$ satisfies:
    \begin{equation} \label{eq: local approximation result useful}
        \inf_{v \in \widehat{V}_{N_{\mathbf{e}}}}\|u-v\|_{H_{\kappa}^r(\widehat{K})}=\|u-\boldsymbol{\Pi}_{\hat{\mathbf{s}}}u\|_{H_{\kappa}^r(\widehat{K})}\leq  \frac{\sqrt{2}(r+1)}{\sqrt{\kappa\tanh(\kappa \widehat{h}_2)}} \widehat{\nu}^{r-(\mu+1/2)}_{N_{\mathbf{e}}}|  u|_{H_{\kappa}^{\mu}(\hat{\mathbf{s}})}.
    \end{equation}
    If moreover $u \in H_{\kappa}^{M+1}(\widehat{K})$, then:
    \begin{equation} \label{eq: local approximation result}
        \inf_{v \in \widehat{V}_{N_{\mathbf{e}}}}\|u-v\|_{H_{\kappa}^r(\widehat{K})}=\|u-\boldsymbol{\Pi}_{\hat{\mathbf{s}}}u\|_{H_{\kappa}^r(\widehat{K})}\leq \frac{2(r+1)\max(1,\kappa \widehat{h}_2)}{\sqrt{\kappa \widehat{h}_2\tanh(\kappa \widehat{h}_2)}} \widehat{\nu}^{r-(\mu+1/2)}_{N_{\mathbf{e}}}\| u\|_{H_{\kappa}^{\mu+1}(\widehat{K})}.
    \end{equation}       
\end{proposition}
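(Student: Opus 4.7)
The plan is to combine three ingredients already in place: the $H^r_\kappa(\widehat{K})$-orthogonality of the single-edge modes from Lemma~\ref{lem:orthogonality lemma}, the explicit expansion of functions in $\mathcal{S}^{N,M}_\kappa(\widehat{K},\hat{\mathbf{s}})$ in closed form from Proposition~\ref{prop: completness}, and the integration-by-parts identity from Lemma~\ref{lem: integration by parts}. The first observation is that, as a consequence of \eqref{eq:coefficient series}, the coefficients $c_n=(u,\widehat{\varphi}_n)_{L^2(\hat{\mathbf{s}})}/\|\widehat{\varphi}_n\|^2_{L^2(\hat{\mathbf{s}})}$ in the expansion $u=\sum_{n\geq 1}c_n\widehat{\varphi}_n$ do not depend on the target $H^\mu_\kappa(\widehat{K})$ inner product. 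Combined with the orthogonality of the basis in any $H^r_\kappa(\widehat{K})$ norm (Lemma~\ref{lem:orthogonality lemma}), this shows that $\sum_{n=1}^{N_{\mathbf{e}}}c_n\widehat{\varphi}_n$ simultaneously equals $\boldsymbol{\Pi}_{\hat{\mathbf{s}}}u$ and realizes the $H^r_\kappa(\widehat{K})$-best approximation in $\widehat{V}_{N_{\mathbf{e}}}$, so that
\[
\|u-\boldsymbol{\Pi}_{\hat{\mathbf{s}}}u\|^2_{H^r_\kappa(\widehat{K})}=\sum_{n>N_{\mathbf{e}}}|c_n|^2\,\|\widehat{\varphi}_n\|^2_{H^r_\kappa(\widehat{K})}.
\]

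The second step is to estimate the tail. The hypothesis $\widehat{\nu}_{N_{\mathbf{e}}}\geq\sqrt{2}$ guarantees $\widehat{\nu}_n>1$ and $\sqrt{\widehat{\nu}_n^2-1}\geq\widehat{\nu}_n/\sqrt{2}$ for every $n\geq N_{\mathbf{e}}$, so the evanescent bound \eqref{eq: bound phi_norm2} applies in the simplified form $\|\widehat{\varphi}_n\|^2_{H^r_\kappa(\widehat{K})}\leq \sqrt{2}\,\widehat{h}_1(r+1)^2\widehat{\nu}_n^{2r-1}/(2\kappa\tanh(\kappa\widehat{h}_2))$. Using $\|\widehat{\varphi}_n\|^2_{L^2(\hat{\mathbf{s}})}=\widehat{h}_1/2$ together with Lemma~\ref{lem: integration by parts} then rewrites the coefficient as
\[
|c_n|^2=\frac{4}{\widehat{h}_1^2(\kappa\widehat{\nu}_n)^{2\mu}}\Bigl|\int_0^{\widehat{h}_1}\!\partial_1^\mu u(t)\sin(\kappa\widehat{\nu}_n t+\mu\pi/2)\,\textup{d}t\Bigr|^2.
\]
Since $r\leq\mu$, the exponent $2r-2\mu-1$ is negative; I factor out $\widehat{\nu}_{N_{\mathbf{e}}}^{2r-2\mu-1}$, the largest value in the tail, and then apply Bessel's inequality to the remaining sum. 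According to the parity of $\mu$, the trigonometric family $\{\sqrt{2/\widehat{h}_1}\sin(\kappa\widehat{\nu}_n\cdot+\mu\pi/2)\}_{n\in\mathbb{N}^*}$ is exactly the Dirichlet or (up to the constant mode) the Neumann $L^2(\hat{\mathbf{s}})$-Hilbert basis from Lemma~\ref{lem: orthogonality L2}, so Bessel yields $\sum_{n\geq 1}|\int_0^{\widehat{h}_1}\partial_1^\mu u\,\sin(\cdot)\,\textup{d}t|^2\leq (\widehat{h}_1/2)\|\partial_1^\mu u\|^2_{L^2(\hat{\mathbf{s}})}$. Recognizing $\kappa^{-2\mu}\|\partial_1^\mu u\|^2_{L^2(\hat{\mathbf{s}})}=|u|^2_{H^\mu_\kappa(\hat{\mathbf{s}})}$ and taking the square root gives \eqref{eq: local approximation result useful}.

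For \eqref{eq: local approximation result}, the edge seminorm must be traded for a norm on $\widehat{K}$ via a $\kappa$-scaled trace inequality. From the elementary identity $v(x,\widehat{h}_2)=v(x,y)+\int_y^{\widehat{h}_2}\partial_2 v(x,s)\,\textup{d}s$, squaring, using Cauchy--Schwarz, and integrating in $y$ over $(0,\widehat{h}_2)$ gives $\|v\|^2_{L^2(\hat{\mathbf{s}})}\leq 2\widehat{h}_2^{-1}\|v\|^2_{L^2(\widehat{K})}+2\widehat{h}_2\|\partial_2 v\|^2_{L^2(\widehat{K})}$. Applying this to $v=\kappa^{-\mu}\partial_1^\mu u$ and bundling the resulting factors into $\max(1,\kappa\widehat{h}_2)^2$ produces the estimate $|u|_{H^\mu_\kappa(\hat{\mathbf{s}})}\leq \sqrt{2}\,\max(1,\kappa\widehat{h}_2)\,\widehat{h}_2^{-1/2}\,\|u\|_{H^{\mu+1}_\kappa(\widehat{K})}$; inserting it into \eqref{eq: local approximation result useful} delivers \eqref{eq: local approximation result}.

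The conceptual content is light once the preceding lemmas are available; the genuine difficulty is the $\kappa$-bookkeeping. Integration by parts produces factors $(\kappa\widehat{\nu}_n)^{-1}$ per derivative, the evanescent norm bound supplies $\widehat{\nu}_n^{2r-1}$, and the $\kappa$-weighted Sobolev seminorms carry their own $\kappa^{-\mu}$ scaling; the additional gain of $1/2$ in the exponent of $\widehat{\nu}_{N_{\mathbf{e}}}$, coming from the $\sqrt{\widehat{\nu}_n^2-1}$ factor in \eqref{eq: bound phi_norm2}, is the sharpest feature of the estimate and must be preserved through the Bessel step. Matching constants against the stated $\sqrt{2}(r+1)$ and $2(r+1)\max(1,\kappa\widehat{h}_2)$ is then a routine, if delicate, accounting exercise.
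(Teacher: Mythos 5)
Your proposal is correct and follows essentially the same route as the paper's proof: identify $\boldsymbol{\Pi}_{\hat{\mathbf{s}}}u$ with the truncated expansion from Proposition~\ref{prop: completness}, use the orthogonality and the evanescent norm bound \eqref{eq: bound phi_norm2} together with Lemma~\ref{lem: integration by parts} to estimate the tail, conclude by Bessel's inequality via Lemma~\ref{lem: orthogonality L2}, and pass from the edge seminorm to the volume norm by the trace inequality (which the paper delegates to Lemma~\ref{lem: trace inequality} rather than re-deriving inline). The only differences are cosmetic: your bound $\sqrt{\widehat{\nu}_n^2-1}\geq\widehat{\nu}_n/\sqrt{2}$ yields a marginally sharper constant $2^{1/4}(r+1)$ than the stated $\sqrt{2}(r+1)$, and your trace argument integrates the fundamental-theorem identity over all $y$ instead of invoking a mean-value point.
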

\begin{proof} 
From Proposition \ref{prop: completness}, since $u \in \mathcal{S}^{N,M}_{\kappa}(\widehat{K},\hat{\mathbf{s}})$, the function $\boldsymbol{\Pi}_{\hat{\mathbf{s}}}u$ also coincides with the $H_\kappa^r(\widehat{K})$-orthogonal projection of $u$ onto $\widehat{V}_{N_{\mathbf{e}}}$, for any $r\leq \min(2N,M)$. As a result, one has that
    \begin{equation*}
        \inf_{v \in \widehat{V}_{N_{\mathbf{e}}}}\!\!\|u-v\|^2_{H_{\kappa}^r(\widehat{K})}\!=\|u-\boldsymbol{\Pi}_{\hat{\mathbf{s}}}u\|^2_{H_{\kappa}^r(\widehat{K})}\!=\!\!\!\sum_{n >N_{\mathbf{e}}}\frac{\left|(u, \widehat{\varphi}_n)_{H_\kappa^r(\widehat{K})}\right|^2}{\| \widehat{\varphi}_n\|^2_{H_\kappa^r(\widehat{K})}}
        \!=\!\!\!\sum_{n > N_{\mathbf{e}}}\frac{\|\widehat{\varphi}_n\|^2_{H^r_\kappa(\widehat{K})}}{\| \widehat{\varphi}_n\|^4_{L^2(\hat{\mathbf{s}})}}\left|( u,  \widehat{\varphi}_n)_{L^2(\hat{\mathbf{s}})}\right|^2\!\!.
    \end{equation*}
Moreover, by Lemma \ref{lem: integration by parts} and the bound \eqref{eq: bound phi_norm2} -- which applies for any $n > N_{\mathbf{e}}$, as $\widehat{\nu}_n > \widehat{\nu}_{N_{\mathbf{e}}} \geq \sqrt{2}$ -- together with the fact that $\| \widehat{\varphi}_n\|^2_{L^2(\hat{\mathbf{s}})}=\widehat{h}_1/2$, it follows
\begin{align*}
    \frac{\kappa \widehat{h}_1 \tanh(\kappa \widehat{h}_2)}{2(r+1)^2}\|u-\boldsymbol{\Pi}_{\hat{\mathbf{s}}}u\|^2_{H_{\kappa}^r(\widehat{K})}&\leq \frac{1}{\kappa^{2\mu}}\sum_{n>N_{\mathbf{e}}}\frac{\widehat{\nu}_n^{2(r-\mu)}}{\sqrt{\widehat{\nu}_n^2-1}}\left|\int_0^{\widehat{h}_1}\partial^{\mu}_1 u(t) \sin(\kappa \widehat{\nu}_n t+\mu\pi/2)\,\textup{d}t\right|^2\\
    &\leq \frac{2}{\kappa^{2\mu}}\sum_{n>N_{\mathbf{e}}}\widehat{\nu}_n^{2(r-\mu)-1}\left|\int_0^{\widehat{h}_1}\partial^{\mu}_1 u(t) \sin(\kappa \widehat{\nu}_n t+\mu\pi/2)\,\textup{d}t\right|^2\\
    &\leq \frac{2\widehat{\nu}_{N_{\mathbf{e}}}^{2(r-\mu)-1}}{\kappa^{2\mu}}\sum_{n>N_{\mathbf{e}}}\left|\int_0^{\widehat{h}_1}\partial^{\mu}_1 u(t) \sin(\kappa \widehat{\nu}_n t+\mu\pi/2)\,\textup{d}t\right|^2\\
    &\leq\widehat{h}_1\widehat{\nu}_{N_{\mathbf{e}}}^{2(r-\mu)-1}| u|^2_{H_\kappa^{\mu}(\hat{\mathbf{s}})},
\end{align*}
with the last inequality derived using Lemma \ref{lem: orthogonality L2}.
Since $u \in \mathcal{S}^{N,M}_{\kappa}(\widehat{K}, \hat{\mathbf{s}})$ with $\mu \leq M$, we have $ u|_{\hat{\mathbf{s}}} \in H_\kappa^{\mu}(\hat{\mathbf{s}})$, and integration by parts is stopped at $\mu \leq 2N$ due to non-vanishing boundary terms and lack of gain in decay rates; see Lemma \ref{lem: integration by parts}.
Hence, one obtains the first bound \eqref{eq: local approximation result useful}, and the second one \eqref{eq: local approximation result} follows from Lemma \ref{lem: trace inequality}.
\end{proof}

We conclude this section with a few remarks concerning particular cases.

\paragraph{Pre-asymptotic regime.}
The convergence predicted by estimate~\eqref{eq: local approximation result} becomes effective if the number of basis functions $N_{\mathbf{e}}$ is sufficiently large. This behavior is reflected in the condition $\widehat{\nu}_{N_{\mathbf{e}}}\geq \sqrt{2}$, which marks the onset of the asymptotic convergence regime.

\paragraph{Degeneracy as $\widehat{h}_2 \to 0$.}
As $\widehat{h}_2$ tends to zero, the domain $\widehat{K}$ degenerates into an edge. This geometric collapse is captured in the bound by the blow-up of the factor $(\kappa \widehat{h}_2\tanh(\kappa \widehat{h}_2))^{-1/2} \sim (\kappa \widehat{h}_2)^{-1}$ as $\widehat{h}_2 \to 0$. Consequently, the constant in the estimate becomes arbitrarily large, leading to deterioration in the approximation quality.

\paragraph{High-frequency regime: $\kappa \widehat{h}_1 \to +\infty$.}
When $\kappa \widehat{h}_1 \gg 1$, the number of basis functions $N_{\mathbf{e}}$ required to achieve a target relative error $\epsilon > 0$ scales linearly with $\kappa \widehat{h}_1$. More precisely, estimate~\eqref{eq: local approximation result} guarantees that
\begin{equation*}
N_{\mathbf{e}} \geq \frac{\kappa \widehat{h}_1}{\pi} \left(\frac{2(r+1)\max(1,\kappa \widehat{h}_2)}{\epsilon \sqrt{\kappa \widehat{h}_2 \tanh(\kappa \widehat{h}_2)}}\right)^{\frac{2}{\mu+1-r}} 
\quad \Longrightarrow \qquad
\frac{ \| u - \boldsymbol{\Pi}_{\hat{\mathbf{s}}} u \|_{H_{\kappa}^r(\widehat{K})} }{ \| u \|_{H_{\kappa}^{\mu+1}(\widehat{K})} } \leq \epsilon.
\end{equation*}
This illustrates that the relative error is independent of $\kappa \widehat{h}_1$ provided that the number of basis functions $N_{\mathbf{e}}$ scales linearly with $\kappa \widehat{h}_1$.

\paragraph{Low-frequency regime: $\kappa \widehat{h}_1 \to 0$.}
Note that in this regime all the basis functions $\widehat{\varphi}_n$ correspond to evanescent modes. Assuming $\kappa \widehat{h}_1 \ll 1$, it follows that $\widehat{\nu}_{N_{\mathbf{e}}} \geq \sqrt{2}$, so estimate \eqref{eq: local approximation result} applies.
Hence, choosing for instance $N_{\mathbf{e}}=1$, the approximation bound in \eqref{eq: local approximation result} simplifies to
\begin{equation*}
\inf_{v \in \widehat{V}_{N_{\mathbf{e}}}}\|u-v\|_{H_{\kappa}^r(\widehat{K})}=\| u - \boldsymbol{\Pi}_{\hat{\mathbf{s}}} u \|_{H_{\kappa}^r(\widehat{K})}
\leq \frac{2(r+1)\max(1,\kappa \widehat{h}_2)}{\sqrt{\kappa \widehat{h}_2\tanh(\kappa \widehat{h}_2)}}\, \left(\frac{\kappa \widehat{h}_1}{\pi}\right)^{\mu+1/2-r} \!\!\!\!\!\!\!\!\!\!\| u \|_{H_{\kappa}^{\mu+1}(\widehat{K})},
\end{equation*}
which shows spectral convergence in $\kappa \widehat{h}_1$.

\section{Global edge-based Trefftz space} \label{sec: Global edge-based Trefftz space}

We extend the previous construction to a domain tessellated by rectangular cells.
For each mesh edge, we introduce basis functions that coincide in the neighboring cells with the reference modes from \eqref{eq:edge_functions}, oriented so that their nonzero trace lies on the edge and vanishing elsewhere. This construction produces $C^0(\overline{\Omega})$-continuous functions and defines a $H^1_\kappa(\Omega)$-conforming Trefftz space.
The set $\mathcal{S}_\kappa^{N,M}(\widehat{K},\hat{\mathbf{s}})$ is then generalized to the global setting, and the local result in Proposition \ref{th: main local th_} is used to derive global approximation properties. However, this edge-based space alone cannot approximate all Helmholtz solutions of interest, motivating the enrichment introduced in Section \ref{sec: Enrichment with node-based functions}.

\subsection{Construction of the edge-based Trefftz space}

Consider a domain $\Omega \subset \mathbb{R}^2$ that can be tessellated using a mesh composed of uniform rectangular open cells of fixed dimensions $h_1,h_2>0$.
We define the mesh size and shape parameter as
\begin{equation} \label{eq: shape regularity}
h := \max(h_1, h_2), \qquad \rho := \frac{h}{\min(h_1, h_2)} \geq 1.
\end{equation}
Here, $h$ denotes the length of the longest side of each cell, while $\rho$ quantifies the aspect ratio. In particular, $\rho = 1$ for square cells.
Next, we define the mesh, its skeleton, and its set of nodes as
\begin{equation*}
\mathcal{T}_h := \bigcup {K}, \qquad
\Sigma_h := \bigcup {\mathbf{s}}, \qquad
\mathcal{N}_h := \bigcup {\mathbf{p}},
\end{equation*}
where $K$ ranges over the mesh cells, $\mathbf{s}$ over the edges, and $\mathbf{p}$ over the mesh nodes, both interior and boundary.
Throughout Section \ref{sec: Global edge-based Trefftz space}, we adopt the following assumption:

\begin{assumption} \label{A1}
    Recall the definition of the set $\sigma$ in \eqref{eq: set Neumann reference}. We assume $\kappa^2 \not\in\sigma(h_1,h_2)$.
\end{assumption}

\begin{remark} \label{rem: resonant frequencies edge}
If $\kappa$ does not satisfy Assumption \textup{\ref{A1}}, one possible strategy is to adjust the mesh in order to avoid the resonant frequencies.
In practice, this can be achieved by choosing suitable subdivisions of $h_1$ and $h_2$ that exclude the corresponding integer solutions, or by introducing slight perturbations of the mesh sizes, which prevent exact matches with the resonant frequencies.
\end{remark}

We now introduce some basis functions, denoted by $\{ \phi_{\mathbf{s}, n} \}_{\mathbf{s} \in \Sigma_h,  n \in \mathbb{N}^*}$, associated with the edges of the mesh $\Sigma_h$.

\begin{definition}[Edge basis function] \label{def: edge basis functions}
    Let $\mathbf{s} \in \Sigma_h$ and $(x_{\mathbf{s}},y_{\mathbf{s}})\in \mathbb{R}^2$.
    \begin{itemize}
\item If $\mathbf{s} = [x_{\mathbf{s}}, x_{\mathbf{s}} + h_1] \times \{y_{\mathbf{s}}\}$ is horizontal, its adjacent cells are
  \begin{equation} \label{eq: adj cell hor edge functions}
      K_{\mathbf{s}}^+ = (x_{\mathbf{s}}, x_{\mathbf{s}} + h_1) \times (y_{\mathbf{s}}, y_{\mathbf{s}} + h_2), \qquad
      K_{\mathbf{s}}^- = (x_{\mathbf{s}}, x_{\mathbf{s}} + h_1) \times (y_{\mathbf{s}} - h_2, y_{\mathbf{s}}),
  \end{equation}
  and, for any $n \in \mathbb{N}^*$, the corresponding function $ \phi_{\mathbf{s},n} : \Omega \to \mathbb{R} $ is defined as
  \begin{equation} \label{eq: hor edge functions}
  \phi_{\mathbf{s},n}(x,y) := 
  \begin{cases}
  \widehat{\varphi}_n(x - x_{\mathbf{s}}, h_2 \pm (y_{\mathbf{s}} - y); h_1, h_2), & (x,y) \in \overline{K_{\mathbf{s}}^\pm}, \\
  0, & \text{otherwise}.
  \end{cases}
  \end{equation}
\item If $\mathbf{s} = \{x_{\mathbf{s}}\} \times [y_{\mathbf{s}}, y_{\mathbf{s}} + h_2]$ is vertical, its adjacent cells are
  \begin{equation*}
    K_{\mathbf{s}}^+ = (x_{\mathbf{s}}, x_{\mathbf{s}} + h_1) \times (y_{\mathbf{s}}, y_{\mathbf{s}} + h_2), \qquad
    K_{\mathbf{s}}^- = (x_{\mathbf{s}} - h_1, x_{\mathbf{s}}) \times (y_{\mathbf{s}}, y_{\mathbf{s}} + h_2),
  \end{equation*}
  and, for any $n \in \mathbb{N}^*$, the corresponding function $ \phi_{\mathbf{s},n} : \Omega \to \mathbb{R} $ is defined as
  \begin{equation} \label{eq: ver edge functions}
  \phi_{\mathbf{s},n}(x,y) := 
  \begin{cases}
  \widehat{\varphi}_n(y - y_{\mathbf{s}}, h_1 \pm (x_{\mathbf{s}} - x); h_2, h_1), & (x,y) \in \overline{K_{\mathbf{s}}^\pm}, \\
  0, & \text{otherwise}.
  \end{cases}
  \end{equation}
    \end{itemize}
\end{definition}

Thanks to Assumption \ref{A1}, these functions are well-defined. Besides, $\phi_{\mathbf{s},n} \in C^0(\overline{\Omega})$ for any $\mathbf{s} \in \Sigma_h$ and $n \in \mathbb{N}^*$.
Moreover, they are compactly supported in $\Omega$, and are local solutions to the homogeneous Helmholtz equation \eqref{eq:helmholtz_equation} in each $K \in \mathcal{T}_h$, but not globally, as their normal trace jumps across $K^{\pm}_{\mathbf{s}}$ interfaces.

The functions $\phi_{\mathbf{s},n}$ inherit the same qualitative behavior as the local basis functions $\widehat{\varphi}_n$: they exhibit either oscillatory (propagative) or exponentially decaying (evanescent) behavior in the direction orthogonal to the edge $\mathbf{s}$, depending on the value of $n$. This is illustrated in Figure \ref{fig: basis edge functions}, where the left plot shows a propagative mode and the right plot an evanescent one.

\begin{figure}
\centering
\includegraphics[trim=120 220 120 180,clip,width=.3\textwidth]{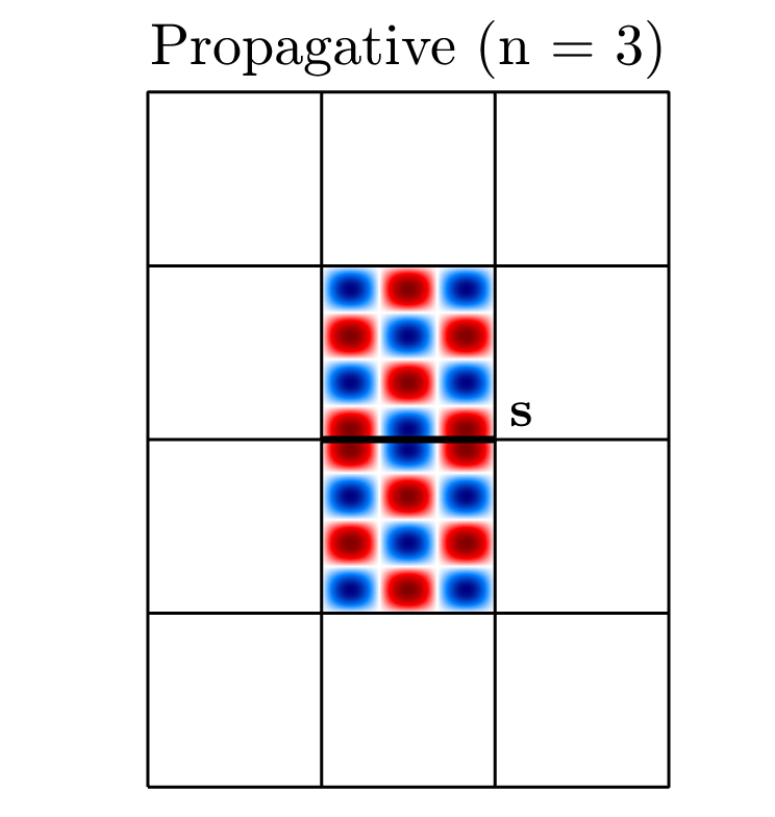}
\hspace{.15\textwidth}
\includegraphics[trim=120 220 120 180,clip,width=.3\textwidth]{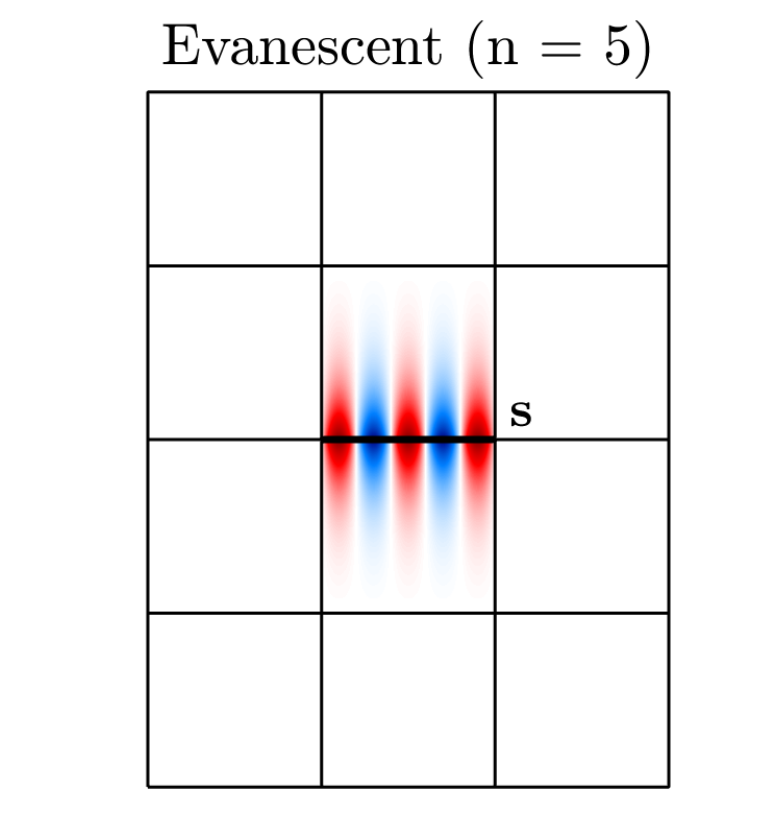}
\caption{Basis functions $\phi_{\mathbf{s},n}$ associated with an horizontal edge $\mathbf{s} \in \Sigma_h$;
$\kappa=15$, $h_1=h_2=1$.}
\label{fig: basis edge functions}
\end{figure}

We now introduce the following Trefftz space on $\Omega$.

\begin{definition}[Edge-based Trefftz space]
For any $N_{\mathbf{e}}\in \mathbb{N}^*$, define
\begin{equation} \label{eq: edge-based Trefftz space}
V_{N_{\mathbf{e}}}(\Sigma_h) := \textup{span} \left\{ \phi_{\mathbf{s}, n} \right\}_{\mathbf{s} \in \Sigma_h, \; 1 \leq n \leq N_{\mathbf{e}}}.
\end{equation}
where the functions $\phi_{\mathbf{s}, n}$ are introduced in \eqref{eq: hor edge functions} and \eqref{eq: ver edge functions}.
\end{definition}

It is clear that the dimension of $V_{N_\mathbf{e}}(\Sigma_h)$ is $N_\mathbf{e} \times |\Sigma_h|$, as the space is generated by the functions associated with all edges in $\Sigma_h$.

The orthogonality properties of the basis functions on the reference cell $\widehat{K}$, stated in Lemma \ref{lem:orthogonality lemma}, are not preserved in this construction; in fact, functions associated with different edges are generally not orthogonal, leading to redundancy in the approximation space.
Moreover, since any function in $V_{N_\mathbf{e}}(\Sigma_h)$ is solution to the Helmholtz equation \eqref{eq:helmholtz_equation} in any $K \in \mathcal{T}_h$ and is globally continuous in $\Omega$, $V_{N_\mathbf{e}}(\Sigma_h)$ is a $H_\kappa^1(\Omega)$-conforming Trefftz space, namely
\begin{equation} \label{eq: edge-based Trefftz space conformity}
    V_{N_\mathbf{e}}(\Sigma_h)\subset \{u \in H_\kappa^1(\Omega): -\Delta u -\kappa^2 u=0 \quad\!\! \text{in any}\quad\!\!\!\! K \in \mathcal{T}_h\}.
\end{equation}

\subsection{Global approximation results} 

We introduce broken Sobolev spaces on the mesh $\mathcal{T}_h$ and its skeleton $\Sigma_h$, equipped with $\kappa$-dependent norms.

\begin{definition}[$\kappa$-dependent broken Sobolev norms]
For any $M \in \mathbb{N}$, we define the broken Hilbert spaces equipped with the $\kappa$-dependent norms:
\begin{align*}
    &\left(H_{\kappa}^M(\mathcal{T}_h),\|\cdot\|_{H_{\kappa}^M(\mathcal{T}_h)}\right): &  &H_{\kappa}^M(\mathcal{T}_h):=\bigoplus_{K \in \mathcal{T}_h}H_{\kappa}^M(K), &   &\|\cdot\|_{H_{\kappa}^M(\mathcal{T}_h)}^2:=\sum_{K \in \mathcal{T}_h}\|\cdot\|_{H_{\kappa}^M(K)}^2,\\
    &\left(H_{\kappa}^M(\Sigma_h),\|\cdot\|_{H_{\kappa}^M(\Sigma_h)}\right):  & &H_{\kappa}^M(\Sigma_h):=\bigoplus_{\mathbf{s} \in \Sigma_h}H_{\kappa}^M(\mathbf{s}), &   &\|\cdot\|_{H_{\kappa}^M(\Sigma_h)}^2:=\sum_{\mathbf{s} \in \Sigma_h}\|\cdot\|_{H_{\kappa}^M(\mathbf{s})}^2.
\end{align*}
\end{definition}

To analyze the approximation properties of the edge-based Trefftz space $V_{N_\mathbf{e}}(\Sigma_h)$, we present a global counterpart to the local set described in Definition \ref{def: local set}.

\begin{definition} \label{def: global set}
    Let $N,M\in \mathbb{N}^*$ such that $N\leq \lceil M/2\rceil$.
    We define the set
    $$
\mathcal{S}^{N,M}_{\kappa}(\mathcal{T}_h) :=
\left\{ u \in H^1_\kappa(\Omega) \;\middle|\;
\begin{aligned}
  & -\Delta u - \kappa^2 u = 0 && \text{in every } K \in \mathcal{T}_h, \\
  & \,\,\,\,\, u|_{\mathbf{s}} \in H_\kappa^{M}(\mathbf{s}) && \text{for all } \mathbf{s} \in \Sigma_h, \\
  & \partial_{1}^{2n} ( u|_{\mathbf{s}})(\partial \mathbf{s}) = 0 && \text{for all } \mathbf{s} \in \Sigma_h \text{ horizontal},\; 0 \leq n \leq N-1,\\
  & \partial_{2}^{2n} ( u|_{\mathbf{s}})(\partial \mathbf{s}) = 0 && \text{for all } \mathbf{s} \in \Sigma_h \text{ vertical},\; 0 \leq n \leq N-1
\end{aligned}
\right\}.
$$
\end{definition}

For any $N_{\mathbf{e}} \in \mathbb{N}^*$, we introduce the global edge-based interpolant into $V_{N_{\mathbf{e}}}(\Sigma_h)$ as
\begin{equation} \label{eq: global interpolant}
    \boldsymbol{\mathcal{E}} : H_\kappa^1(\Omega) \rightarrow H_\kappa^1(\Omega), \qquad 
    u \mapsto \sum_{\mathbf{s}\in \Sigma_h}\sum_{n=1}^{N_{\mathbf{e}}}\frac{( u,\phi_{\mathbf{s},n})_{L^2(\mathbf{s})}}{\| \phi_{\mathbf{s},n}\|^2_{L^2(\mathbf{s})}}\phi_{\mathbf{s},n}.
\end{equation}

\begin{proposition}[Global edge-based approximation result] \label{th: main theorem_2}
    Let $N, M \in \mathbb{N}^*$ such that $N \leq \lceil M/2 \rceil$, and let $r,\mu \in \mathbb{N}$ with $r \leq \mu \leq \min(2N, M)$.
    Assume $N_\mathbf{e} \in \mathbb{N}^*$ such that
    \begin{equation} \label{eq: nuN}
        \nu_{N_\mathbf{e}}:=\frac{N_\mathbf{e}\pi}{\kappa h}\geq \sqrt{2}.
    \end{equation}
    Then, for any $u \in \mathcal{S}^{N,M}_{\kappa}(\mathcal{T}_h)$, the best-approximation error in $V_{N_{\mathbf{e}}}(\Sigma_h)$ satisfies:
    \begin{equation} \label{eq: global approximation result useful}
        \inf_{v \in V_{N_{\mathbf{e}}}(\Sigma_h)}\|u-v\|_{H_{\kappa}^r(\mathcal{T}_h)}\leq\|u-\boldsymbol{\mathcal{E}}u\|_{H_{\kappa}^r(\mathcal{T}_h)}\leq \frac{2\sqrt{2}(r+1)}{\sqrt{\kappa\tanh(\kappa h/\rho)}} \nu^{r-(\mu+1/2)}_{N_{\mathbf{e}}}|  u|_{H_{\kappa}^{\mu}(\Sigma_h)}.
    \end{equation}
    If moreover $u \in H_{\kappa}^{M+1}(\mathcal{T}_h)$, then:
    \begin{equation} \label{eq: global approximation result}
        \inf_{v \in V_{N_{\mathbf{e}}}(\Sigma_h)}\|u-v\|_{H_{\kappa}^r(\mathcal{T}_h)}\leq\|u-\boldsymbol{\mathcal{E}}u\|_{H_{\kappa}^r(\mathcal{T}_h)}\leq C_{1}\left(\frac{r+1}{2}\right) \nu^{r-(\mu+1/2)}_{N_{\mathbf{e}}}\| u\|_{H_{\kappa}^{\mu+1}(\mathcal{T}_h)},
    \end{equation}
    where
    \begin{equation} \label{eq: constants main theorem 1}
        C_{1}:=\frac{8\sqrt{\rho}\max(1,\kappa h)}{\sqrt{\kappa h\tanh(\kappa h/\rho)}}.
    \end{equation}
\end{proposition}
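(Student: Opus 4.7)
The plan is to reduce the global approximation problem to a collection of local approximation problems on each cell, where we can apply Proposition \ref{th: main local th_} to each of the four single-edge traces separately. The orthogonality machinery developed in Section \ref{sec: Local edge-based Trefftz space} will do the heavy lifting; the global step is essentially bookkeeping of constants and a double-counting argument over mesh edges.

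\textbf{Step 1 (Cellwise splitting).} For every $K \in \mathcal{T}_h$ with four edges $\mathbf{s}_1,\dots,\mathbf{s}_4 \subset \partial K$, define $u_i^K$ as the unique Helmholtz solution on $K$ with trace $u|_{\mathbf{s}_i}$ on $\mathbf{s}_i$ and vanishing trace on $\partial K\setminus\mathbf{s}_i$. Assumption \ref{A1} guarantees well-posedness of this Dirichlet problem on the scaled cell (this is exactly the role of Assumption \ref{A0} after affine mapping to $\widehat{K}$). Linearity gives $u|_K = \sum_{i=1}^4 u_i^K$, and the trace regularity $u|_{\mathbf{s}_i}\in H_\kappa^M(\mathbf{s}_i)$ plus the vanishing-derivative conditions at $\partial\mathbf{s}_i$ imposed by Definition \ref{def: global set} ensure that $u_i^K$ lies (after transport to the reference rectangle) in $\mathcal{S}^{N,M}_\kappa(\widehat{K},\hat{\mathbf{s}})$.

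\textbf{Step 2 (Identifying the interpolant as a sum of local projections).} The key observation is that $\phi_{\mathbf{s},n}$ has trace supported only on $\mathbf{s}$, so for every $j\neq i$ one has $(u,\phi_{\mathbf{s}_i,n})_{L^2(\mathbf{s}_i)} = (u_i^K,\phi_{\mathbf{s}_i,n})_{L^2(\mathbf{s}_i)}$, and the four contributions $\boldsymbol{\mathcal{E}}u$ restricted to $K$ decouple. After affine change of variables mapping $K$ to $\widehat{K}$ and each $\mathbf{s}_i$ to the top edge $\hat{\mathbf{s}}$, the contribution from $\mathbf{s}_i$ coincides with the local projection $\boldsymbol{\Pi}_{\hat{\mathbf{s}}}u_i^K$ defined in \eqref{eq: local projection}. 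Consequently
\begin{equation*}
\|u-\boldsymbol{\mathcal{E}}u\|_{H_\kappa^r(K)} \leq \sum_{i=1}^4 \|u_i^K - \boldsymbol{\Pi}_{\hat{\mathbf{s}}}u_i^K\|_{H_\kappa^r(K)}.
\end{equation*}

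\textbf{Step 3 (Applying the local estimate).} The hypothesis $\widehat{\nu}_{N_\mathbf{e}}\geq\sqrt{2}$ of Proposition \ref{th: main local th_} is implied by $\nu_{N_\mathbf{e}}\geq\sqrt{2}$, since the relevant horizontal scale $\widehat{h}_1$ for each edge is bounded by $h=\max(h_1,h_2)$. For each $i$, the local bound \eqref{eq: local approximation result useful} yields a factor $1/\sqrt{\kappa\tanh(\kappa\widehat{h}_2)}$; the worst case over the four edge orientations corresponds to the smaller side, hence $\widehat{h}_2\geq h/\rho$, which gives the $\sqrt{\kappa\tanh(\kappa h/\rho)}$ denominator. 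Trace continuity $|u_i^K|_{H_\kappa^\mu(\mathbf{s}_i)} = |u|_{H_\kappa^\mu(\mathbf{s}_i)}$ then produces a bound purely in terms of $|u|_{H_\kappa^\mu(\mathbf{s}_i)}$ and $\nu_{N_\mathbf{e}}^{r-(\mu+1/2)}$.

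\textbf{Step 4 (Summation over the mesh).} Squaring, summing over cells via $\|\cdot\|_{H_\kappa^r(\mathcal{T}_h)}^2=\sum_K\|\cdot\|_{H_\kappa^r(K)}^2$, applying Cauchy--Schwarz to the four-term sum inside each cell, and exchanging the order of summation $\sum_K\sum_{i}\cdots=\sum_{\mathbf{s}}\#\{K:\mathbf{s}\subset\partial K\}\cdots$ (each edge adjacent to at most two cells) delivers the first bound \eqref{eq: global approximation result useful}, with all the combinatorial factors absorbed in the constant $2\sqrt{2}$. The second bound \eqref{eq: global approximation result} is then obtained by invoking the multiplicative trace inequality \ref{lem: trace inequality} on each cell to replace $|u|_{H_\kappa^\mu(\Sigma_h)}$ by $\|u\|_{H_\kappa^{\mu+1}(\mathcal{T}_h)}$; the factor $\sqrt{\rho}\max(1,\kappa h)$ appearing in $C_1$ in \eqref{eq: constants main theorem 1} arises precisely from this scaled trace step.

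The main technical nuisance -- not a deep obstacle but the part requiring care -- is the affine change of variables between each physical cell and the reference rectangle $\widehat{K}$. One must verify that mapping $\mathbf{s}_i$ to the top edge $\hat{\mathbf{s}}$ and applying Definition \ref{def: edge basis functions} (with the appropriate swap of $(h_1,h_2)$ for vertical edges and the $\pm$ reflection for the two adjacent cells) indeed produces the local modes $\widehat{\varphi}_n$ with $\widehat{h}_1$ equal to the edge length and $\widehat{h}_2$ equal to the perpendicular size, so that Proposition \ref{th: main local th_} applies verbatim and the worst-case denominator $\sqrt{\kappa\tanh(\kappa h/\rho)}$ correctly tracks the smaller of the two dimensions across all edge orientations.
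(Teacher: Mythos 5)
Your proposal follows essentially the same route as the paper's proof: the cellwise decomposition $u|_K=\sum_{\mathbf{s}\subset\partial K}u_{\mathbf{s}}$ via well-posed single-edge Dirichlet lifts, the identification $\boldsymbol{\mathcal{E}}u|_K=\sum_{\mathbf{s}\subset\partial K}\boldsymbol{\Pi}_{\mathbf{s}}u_{\mathbf{s}}$ through Proposition~\ref{prop: completness}, the local estimate \eqref{eq: local approximation result useful} with the worst-case aspect-ratio bounds, and the final double-counting of edges. The only divergence is in the constant bookkeeping: your triangle-inequality-plus-Cauchy--Schwarz step inside each cell yields a prefactor $4(r+1)$ rather than the stated $2\sqrt{2}(r+1)$, which does not affect the substance of the argument.
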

\begin{proof}
    Consider $u \in \mathcal{S}^{N,M}_{\kappa}(\mathcal{T}_h)$ and fix an element $K \in \mathcal{T}_h$. For any edge $\mathbf{s} \subset \partial K$, the trace satisfies $ u|_{\mathbf{s}} \in H^M_{\kappa}(\mathbf{s})$ with zero boundary conditions $ u|_{\mathbf{s}}(\partial \mathbf{s}) = 0$.
    Therefore, the zero extension to the whole boundary $\widetilde{ u|_{\mathbf{s}}}\in H^1_\kappa(\partial K)$. By the well-posedness of the Helmholtz-Dirichlet problem under Assumption \ref{A1}, there exists a Helmholtz solution $u_{\mathbf{s}} \in H^1_\kappa(K)$ such that $ u_{\mathbf{s}}|_{\partial K} = \widetilde{ u|_{\mathbf{s}}}$. In particular, $u_{\mathbf{s}}\in \mathcal{S}^{N,M}_{\kappa}(K,\mathbf{s})$ for any $\mathbf{s}\subset \partial K$, and moreover
    \begin{equation} \label{eq: u decomposition u_s}
        u|_K=\sum_{\mathbf{s}\subset \partial K}u_{\mathbf{s}}.
    \end{equation}
    The operator $\boldsymbol{\Pi}_{\hat{\mathbf{s}}}$ from \eqref{eq: local projection} can be extended to any edge $\mathbf{s} \subset \partial K$ by using the functions $\phi_{\mathbf{s},n}$, whose restriction to $K$ coincides -- up to a rigid transformation -- with the reference modes $\widehat{\varphi}_n$ on $\widehat{K}$ taking $(\widehat{h}_1,\widehat{h}_2)=(h_1,h_2)$ or $(\widehat{h}_1,\widehat{h}_2)=(h_2,h_1)$; see Definition~\ref{def: edge basis functions}.
    Specifically, for any $\mathbf{s} \subset \partial K$, we introduce the corresponding $H^1_\kappa(K)$-orthogonal projection
    \begin{equation*}
    \boldsymbol{\Pi}_{\mathbf{s}} : H^1_\kappa(K) \rightarrow H^1_\kappa(K), \qquad 
    u \mapsto \sum_{n = 1}^{N_{\mathbf{e}}} \frac{(u, \phi_{\mathbf{s},n})_{H_\kappa^{1}(K)}}{\|\phi_{\mathbf{s},n}\|^2_{H^{1}_\kappa(K)}}\phi_{\mathbf{s},n}|_K.
    \end{equation*} 
    Since $u_{\mathbf{s}} \in \mathcal{S}^{N,M}_{\kappa}(K,\mathbf{s})$ for every $\mathbf{s} \subset \partial K$, Proposition \ref{prop: completness} implies that:
    \begin{align}
        \boldsymbol{\mathcal{E}}u|_K&=\sum_{\mathbf{s}\subset \partial K}\sum_{n=1}^{N_\mathbf{e}}\frac{( u, \phi_{\mathbf{s},n})_{L^2(\mathbf{s})}}{\| \phi_{\mathbf{s},n}\|^2_{L^2(\mathbf{s})}}\phi_{\mathbf{s},n}|_K=\sum_{\mathbf{s}\subset \partial K}\sum_{n=1}^{N_\mathbf{e}}\frac{( u_{\mathbf{s}}, \phi_{\mathbf{s},n})_{L^2(\mathbf{s})}}{\| \phi_{\mathbf{s},n}\|^2_{L^2(\mathbf{s})}}\phi_{\mathbf{s},n}|_K \nonumber\\
        &=\sum_{\mathbf{s}\subset \partial K}\sum_{n=1}^{N_\mathbf{e}}\frac{(u_{\mathbf{s}},\phi_{\mathbf{s},n})_{H_\kappa^1(K)}}{\| \phi_{\mathbf{s},n}\|^2_{H_\kappa^1(K)}}\phi_{\mathbf{s},n}|_K=\sum_{\mathbf{s}\subset \partial K}\boldsymbol{\Pi}_{\mathbf{s}}u_{\mathbf{s}}. \label{eq: global local projection property}
    \end{align}    
    From \eqref{eq: u decomposition u_s} and \eqref{eq: global local projection property}, it follows
    \begin{equation*}
        \|u-\boldsymbol{\mathcal{E}}u\|^2_{H^r_\kappa(\mathcal{T}_h)}=\sum_{K \in \mathcal{T}_h}\|u-\boldsymbol{\mathcal{E}}u\|^2_{H^r_\kappa(K)}\leq
        \sum_{K \in \mathcal{T}_h} \sum_{\mathbf{s}\subset \partial K}\|u_{\mathbf{s}}-\boldsymbol{\Pi}_{\mathbf{s}}u_{\mathbf{s}}\|^2_{H^r_\kappa(K)}.
    \end{equation*}
    Since in \eqref{eq: nuN} we are assuming $\frac{N_\mathbf{e}\pi}{\kappa \min(h_1,h_2)}\geq\nu_{N_\mathbf{e}}\geq \sqrt{2}$,
    Proposition \ref{th: main local th_} can be applied to control the local error $\|u_{\mathbf{s}}-\boldsymbol{\Pi}_{\mathbf{s}}u_{\mathbf{s}}\|_{H^r_\kappa(K)}$.
    So, combining the local estimate \eqref{eq: local approximation result useful} with the bounds on $h_1$ and $h_2$ in terms of the mesh size $h$ and the shape parameter $\rho$ from \eqref{eq: shape regularity}, and exploiting the fact that $ u_{\mathbf{s}}|_{\mathbf{s}} =  u|_{\mathbf{s}}$, one obtains
    \begin{equation} \label{eq: midstep edges proof}
        \|u-\boldsymbol{\mathcal{E}}u\|^2_{H^r_\kappa(\mathcal{T}_h)}\leq \sum_{K \in \mathcal{T}_h} \sum_{\mathbf{s}\subset \partial K} \frac{4(r+1)^2}{\kappa\tanh(\kappa h/\rho)} \nu^{2(r-\mu)-1}_{N_{\mathbf{e}}}|  u|^2_{H_{\kappa}^{\mu}(\mathbf{s})}.
    \end{equation}
    Hence, from \eqref{eq: midstep edges proof} we derive the estimate \eqref{eq: global approximation result useful}, and by additionally applying Lemma~\ref{lem: trace inequality} to each $H^\mu_\kappa(\mathbf{s})$-seminorm in \eqref{eq: midstep edges proof}, we obtain the approximation bound \eqref{eq: global approximation result}.
\end{proof}

\subsection{Approximation limitations} \label{sec: approximation limitations}

Having established global approximation results in Proposition \ref{th: main theorem_2} under the assumption $u \in \mathcal{S}^{N,M}_{\kappa}(\mathcal{T}_h)$, we now test its sharpness with a numerical experiment.
Consider the orthogonal projection of a propagative plane wave onto the discrete space $V_{N_\mathbf{e}}(\Sigma_h)$, using the $H_\kappa^1(\Omega)$-inner product. This test case is particularly meaningful, as such waves should be well approximated by any effective Helmholtz-adapted discrete space.

The domain is set to $\Omega = (0,1)^2$, with wavenumber $\kappa = 30$, and a mesh $\mathcal{T}_h$ consisting of four square cells. The target function is
\begin{equation} \label{eq: PPW numerical experiment}
u(\mathbf{x}) = e^{i \kappa \mathbf{d} \cdot \mathbf{x}}, \qquad \mathbf{x} \in \Omega, \qquad \text{where} \qquad \mathbf{d} = \left(\tfrac{1}{\sqrt{2}}, \tfrac{1}{\sqrt{2}}\right) \in \mathbb{S}^1.
\end{equation}
This function is an analytic solution of the homogeneous Helmholtz equation, but $u \not \in \mathcal{S}^{N,M}_{\kappa}(\mathcal{T}_h)$, as it does not satisfy the even-order vanishing condition required in Definition \ref{def: global set}, namely
\begin{equation} \label{eq: zeroing assumption}
\begin{aligned}
  & \partial_{1}^{2n} ( u|_{\mathbf{s}})(\partial \mathbf{s}) \neq 0 && \text{for all} && \mathbf{s} \in \Sigma_h \text{ horizontal}, && n \in \mathbb{N},\\
  & \partial_{2}^{2n} ( u|_{\mathbf{s}})(\partial \mathbf{s}) \neq 0 && \text{for all} && \mathbf{s} \in \Sigma_h \text{ vertical}, && n \in \mathbb{N}.
\end{aligned}
\end{equation}

\begin{figure}
\centering
\includegraphics[trim=130 240 60 190,clip,width=.3\textwidth]{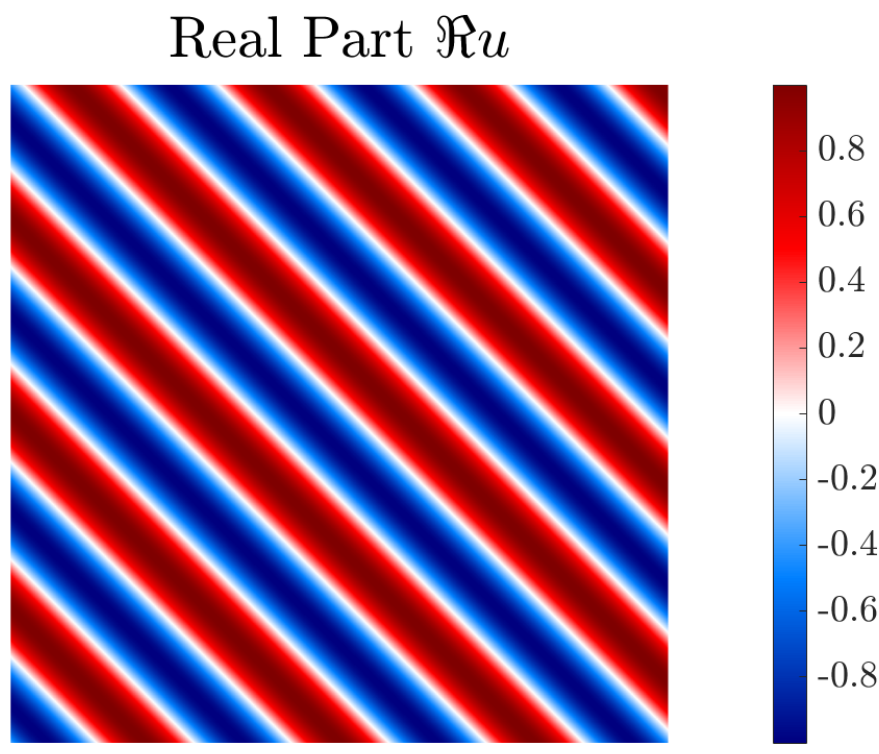}
\hspace{.01\textwidth}
\includegraphics[trim=130 240 60 190,clip,width=.3\textwidth]{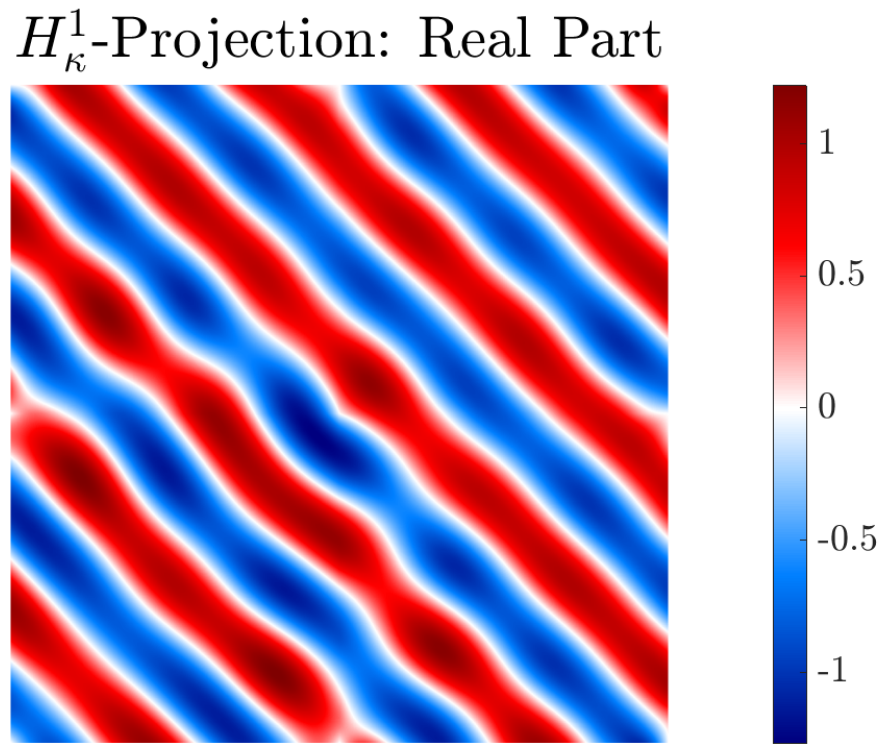}
\hspace{.01\textwidth}
\includegraphics[trim=130 240 60 190,clip,width=.3\textwidth]{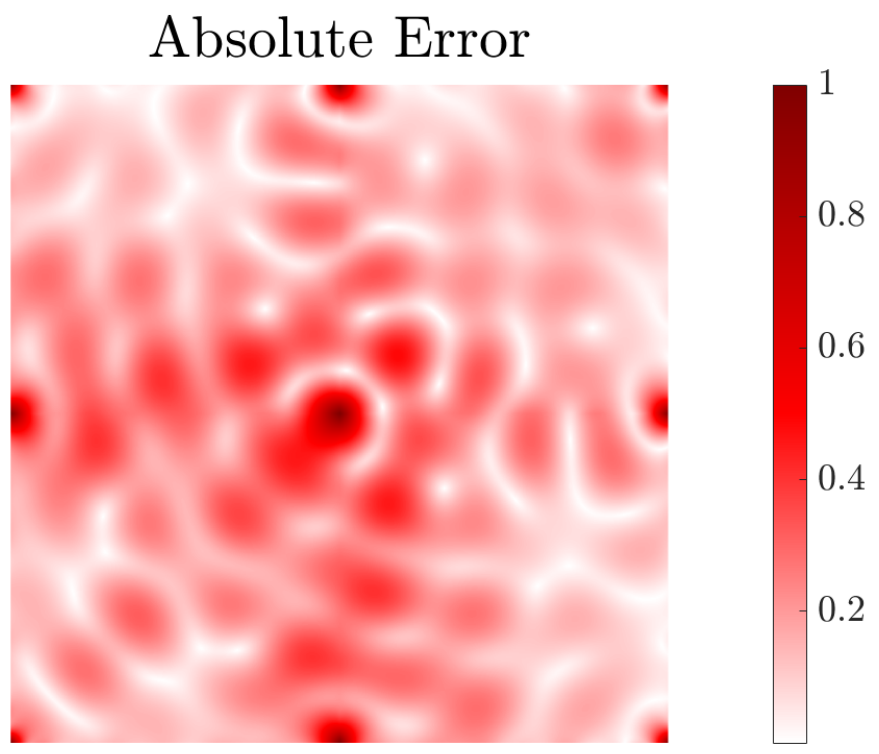}
\caption{$H_\kappa^1(\Omega)$-orthogonal projection of the plane wave $u(\mathbf{x}) = e^{\imath \kappa \mathbf{d}\cdot \mathbf{x}}$ with $\mathbf{d} = (1/\sqrt{2}, 1/\sqrt{2})$; $\Omega=(0,1)^2$, wavenumber $\kappa=30$. Left: plane wave real part $\Re u$. Center and right: real part of the plane wave $H_\kappa^1(\Omega)$-projection and absolute error for $N_{\mathbf{e}} = 12$.}
\label{fig: plane wave H1}
\end{figure}

The numerical results are illustrated in Figure \ref{fig: plane wave H1}.
Notably, the maximum absolute error attains the value 1 precisely at the mesh nodes $\mathcal{N}_h$, since all the edge basis functions $\phi_{\mathbf{s},n}$ vanish at every node $\mathbf{p} \in \mathcal{N}_h$.
This highlights an intrinsic limitation of the discrete space $V_{N_\mathbf{e}}(\Sigma_h)$, which, by construction, lacks degrees of freedom at the nodes and thus cannot capture critical aspects of the exact solution near them. Consequently, even analytic Helmholtz solutions such as plane waves cannot be accurately approximated, if the property \eqref{eq: zeroing assumption} is not satisfied.

This result motivates the need to enrich the discrete space $V_{N_\mathbf{e}}(\Sigma_h)$ with additional functions that can convey information at the mesh nodes $\mathcal{N}_h$.

\section{Enrichment with node-based functions} \label{sec: Enrichment with node-based functions}

In this section, we address the limitations of the discrete edge-based Trefftz space $V_{N_{\mathbf{e}}}(\Sigma_h)$ in \eqref{eq: edge-based Trefftz space}, which, as shown in Section \ref{sec: approximation limitations}, is not rich enough to approximate all relevant Helmholtz solutions. To remedy this, we introduce a complementary space spanned by functions associated with the nodes of the mesh $\mathcal{N}_h$.
We begin by defining some basis functions, localized around the mesh nodes, satisfying the homogeneous Helmholtz equation \eqref{eq:helmholtz_equation} in each cell, and constructed to ensure $H_\kappa^1(\Omega)$-conformity. We then present a series of results that characterize the properties of this node-based space and provide the tools required to establish the approximation results that will be presented in Section \ref{sec: Combined edge-and-node Trefftz space}.
The constructions presented in this section are intended for theoretical analysis only and are not part of a practical numerical method.

\subsection{Construction of the node-based Trefftz space}

The idea is to construct basis functions associated with the mesh nodes $\mathbf{p} \in \mathcal{N}_h$ that are still globally continuous and compactly supported. These functions are defined in the same spirit as the edge basis functions introduced in Definition \ref{def: edge basis functions}, but with odd indices $n \in \mathbb{N}^*$ and linked to the horizontal edges of a coarser mesh with doubled cell base $2h_1$.
This is formalized below.

\begin{definition}[Node basis function] \label{def: ndoe basis functions}
  Let $\mathbf{p}=(x_{\mathbf{p}},y_{\mathbf{p}}) \in \mathcal{N}_h$.
  Its adjacent coarse cells are
  \begin{equation} \label{eq: adjacent cells nodal}
      K_{\mathbf{p}}^+ = (x_{\mathbf{p}}-h_1, x_{\mathbf{p}} + h_1) \times (y_{\mathbf{p}}, y_{\mathbf{p}} + h_2), \qquad
      K_{\mathbf{p}}^- = (x_{\mathbf{p}}-h_1, x_{\mathbf{p}} + h_1) \times (y_{\mathbf{p}} - h_2, y_{\mathbf{p}}),
  \end{equation}
  and, for any $n \in \mathbb{N}^*$, the corresponding function $ \psi_{\mathbf{p},n} : \Omega \to \mathbb{R} $ is defined as
  \begin{equation} \label{eq: definition basis nodes}
  \psi_{\mathbf{p},n}(x,y) := 
  \begin{cases}
  \widehat{\varphi}_{2n-1}(h_1-(x_{\mathbf{p}} - x), h_2 \pm (y_{\mathbf{p}} - y); 2h_1, h_2), & (x,y) \in \overline{K_{\mathbf{p}}^\pm}, \\
  0, & \text{otherwise}.
  \end{cases}
  \end{equation}
\end{definition}

To ensure the functions in \eqref{eq: definition basis nodes} are well-defined, for the remainder of the paper we assume:

\begin{assumption} \label{A1+A2}
    Recall the definition of the set $\sigma$ in \eqref{eq: set Neumann reference}. We assume $\kappa^2 \not\in\sigma(2h_1,h_2)$.
\end{assumption}

\begin{remark} \label{rem: asimmetry}
The family $\{\psi_{\mathbf{p},n}\}_{\mathbf{p}\in \mathcal{N}_h,n\in \mathbb{N}^*}$ in \eqref{eq: definition basis nodes} consists of functions of the same type as those in \eqref{eq: hor edge functions}, now associated with horizontal edges of length $2h_1$. This explains the asymmetry between $2h_1$ and $h_2$ in Assumption~\textup{\ref{A1+A2}}. One could instead define the functions $\psi_{\mathbf{p},n}$ to be associated with the vertical edges of the mesh obtained by doubling $h_2$, or even include both horizontal and vertical families; in either case Assumption~\textup{\ref{A1+A2}} would need to be adjusted accordingly. Our analysis shows that the horizontal family \eqref{eq: definition basis nodes} alone is sufficient to derive good approximation estimates.
\end{remark}

For any $\mathbf{p}\in \mathcal{N}_h$, the nodal basis functions $\psi_{\mathbf{p},n}\in C^0(\overline{\Omega})$ and are compactly supported on the four cells surrounding the node $\mathbf{p}$. Like their edge-based counterparts $\phi_{\mathbf{s},n}$ from Definition \ref{def: edge basis functions}, they satisfy the homogeneous Helmholtz equation \eqref{eq:helmholtz_equation} exactly on each cell $K \in \mathcal{T}_h$. Qualitatively, $\psi_{\mathbf{p},n}$ share the same analytical features, exhibiting either propagative (oscillatory) or evanescent (exponentially decaying) profiles along the vertical direction depending on the index $n$. This behavior is illustrated in Figure \ref{fig: basis node functions}, where both modes are depicted.

\begin{figure}
\centering
\includegraphics[trim=120 220 120 180,clip,width=.31\textwidth]{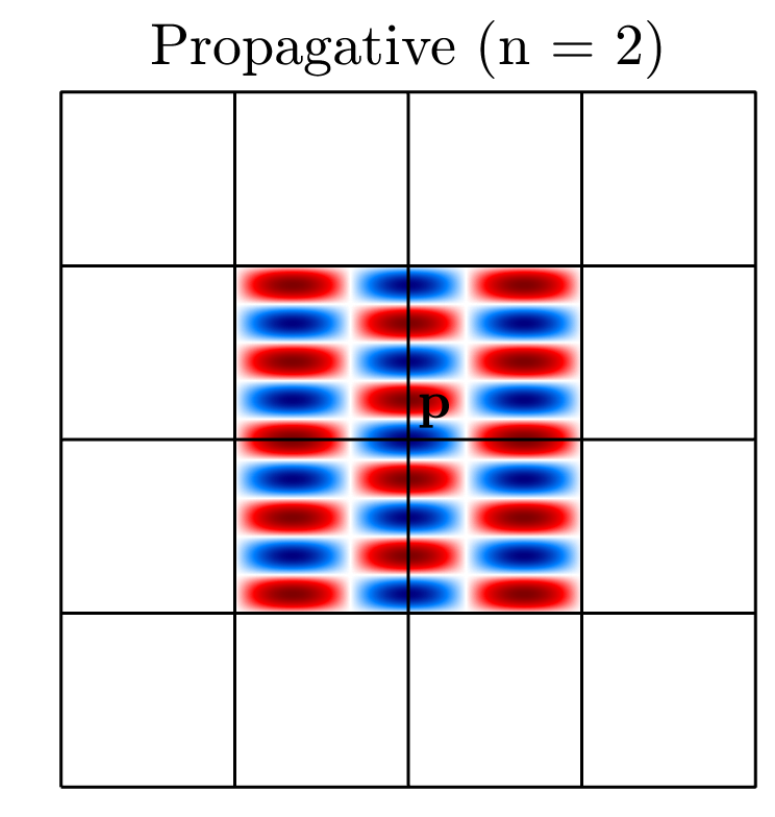}
\hspace{.2\textwidth}
\includegraphics[trim=120 220 120 180,clip,width=.31\textwidth]{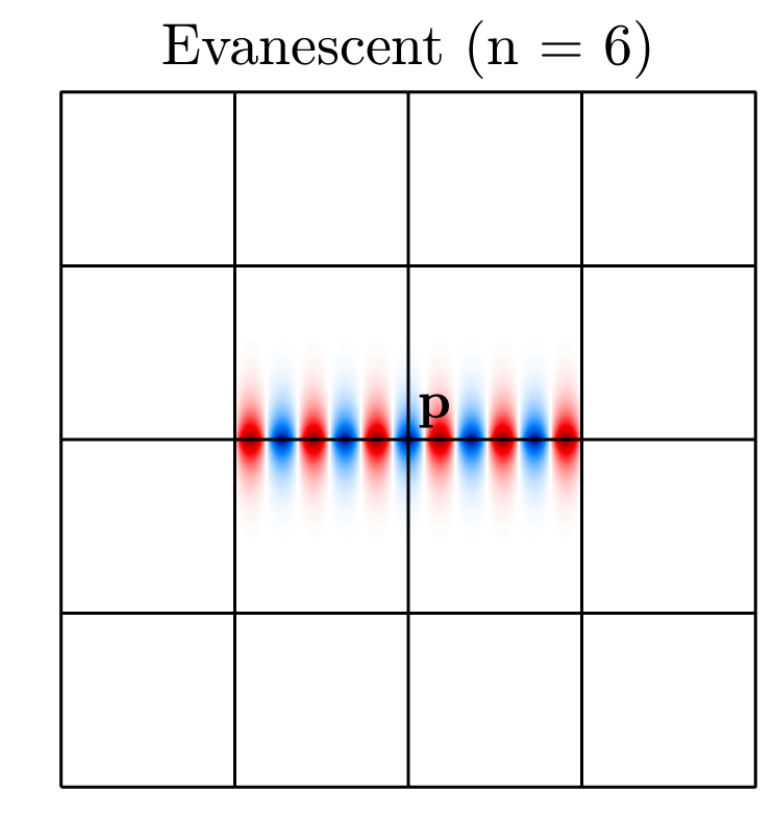}
\caption{Basis functions $\psi_{\mathbf{p},n}$ associated with a node $\mathbf{p} \in \mathcal{N}_h$;
$\kappa=15$, $h_1=h_2=1$.}
\label{fig: basis node functions}
\end{figure}

We now introduce the nodal Trefftz space on $\Omega$.

\begin{definition}[Node-based Trefftz space]
For any $N_{\mathbf{n}}\in \mathbb{N}^*$, define
\begin{equation} \label{eq: node-based Trefftz space}
V_{N_{\mathbf{n}}}(\mathcal{N}_h) := \textup{span} \left\{ \psi_{\mathbf{p}, n} \right\}_{\mathbf{p} \in \mathcal{N}_h, \; 1 \leq n \leq N_{\mathbf{n}}},
\end{equation}
where the functions $\psi_{\mathbf{p}, n}$ are introduced in \eqref{eq: definition basis nodes}.
\end{definition}

The dimension of the space $V_{N_\mathbf{n}}(\mathcal{N}_h)$ is given by $N_\mathbf{n} \times |\mathcal{N}_h|$, since it is spanned by $N_\mathbf{n}$ basis functions for each node in $\mathcal{N}_h$. As in the edge-based case, the orthogonality of the reference basis is lost: in general, functions associated with different nodes are not mutually orthogonal. By construction, every function in $V_{N_\mathbf{n}}(\mathcal{N}_h)$ solves the Helmholtz equation \eqref{eq:helmholtz_equation} locally on each cell $K \in \mathcal{T}_h$ and is globally continuous over $\Omega$. Hence the space is $H_\kappa^1(\Omega)$-conforming and consists entirely of local Helmholtz solutions, namely
\begin{equation} \label{eq: node-based Trefftz space conformity}
    V_{N_\mathbf{n}}(\mathcal{N}_h)\subset \{u \in H_\kappa^1(\Omega): -\Delta u -\kappa^2 u=0 \quad\!\! \text{in any}\quad\!\!\!\! K \in \mathcal{T}_h\}.
\end{equation}

\subsection{Fundamental properties}

We now investigate the properties of the node-based Trefftz space \eqref{eq: node-based Trefftz space}.
The next lemma shows that, for any $\mathbf{p}\in \mathcal{N}_h$ and $m \in \mathbb{N}^*$, the basis functions $\psi_{\mathbf{p},m}$ admit continuous even-order partial derivatives, and that their even $\partial_1$-derivatives vanish at all nodes in $\mathcal{N}_h$ except $\mathbf{p}$.

\begin{lemma} \label{lem: psi_n properties}
For any $n\in \mathbb{N}$, $m \in \mathbb{N}^*$, $\mathbf{p},\mathbf{q}\in \mathcal{N}_h$ and $i=1,2$, $\partial_i^{2n}\psi_{\mathbf{p},m}\in C^0(\overline{\Omega})$. Moreover,
\begin{equation} \label{eq: psi_n properties}
    (\kappa^{-1}\partial_1)^{2n}\psi_{\mathbf{p},m}(\mathbf{q})=-\delta_{\mathbf{pq}} (-1)^{n+m} \widetilde{\nu}_{m}^{2n}, \qquad \text{where} \qquad \widetilde{\nu}_{m}:=\frac{(2m-1)\pi}{2\kappa h_1}.
\end{equation}
\end{lemma}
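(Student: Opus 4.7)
The plan is to exploit the separable product structure of $\psi_{\mathbf{p},m}$ on each cell of its support, combined with the local Helmholtz equation, to show that all \emph{even-order} partial derivatives $\partial_i^{2n}\psi_{\mathbf{p},m}$ reduce to scalar multiples of $\psi_{\mathbf{p},m}$ itself. From there, both the continuity claim and the pointwise formula follow essentially by inspection.

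First, from Definition~\ref{def: ndoe basis functions} and the explicit form of $\widehat{\varphi}_{2m-1}$ in \eqref{eq:edge_functions}, on each of the two cells $K_{\mathbf{p}}^\pm$ in the support the function $\psi_{\mathbf{p},m}$ factorizes as $\sin\!\bigl(\kappa\widetilde\nu_m(h_1 + x - x_{\mathbf{p}})\bigr)\,F_\pm(y)$, where the $x$-factor is an eigenfunction of $\partial_1^2$ with eigenvalue $-(\kappa\widetilde\nu_m)^2$. Hence cell-wise $\partial_1^{2n}\psi_{\mathbf{p},m} = (-1)^n(\kappa\widetilde\nu_m)^{2n}\psi_{\mathbf{p},m}$. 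Using that $\psi_{\mathbf{p},m}$ solves $-\Delta u -\kappa^2 u = 0$ in each cell (Definition~\ref{def: ndoe basis functions} together with Assumption~\ref{A1+A2}), one has $\partial_2^2 = -\kappa^2 - \partial_1^2$ acting on $\psi_{\mathbf{p},m}$, and iterating yields $\partial_2^{2n}\psi_{\mathbf{p},m} = (\kappa^2(\widetilde\nu_m^2-1))^n\psi_{\mathbf{p},m}$ cell-wise. Thus on the interior of each cell of the support, $\partial_i^{2n}\psi_{\mathbf{p},m}$ is a constant multiple of $\psi_{\mathbf{p},m}$ for $i=1,2$.

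Next, I would establish the global $C^0$-regularity. The function $\psi_{\mathbf{p},m}$ is continuous across the interface $y = y_{\mathbf{p}}$ between $K_{\mathbf{p}}^+$ and $K_{\mathbf{p}}^-$ by construction, and since both sides give the \emph{same} scalar multiple of $\psi_{\mathbf{p},m}$, the even-order derivatives $\partial_i^{2n}\psi_{\mathbf{p},m}$ inherit this continuity (whereas odd-order $\partial_2$-derivatives do jump there, which is precisely why only even orders appear in the statement). To deal with the outer boundary of the support, I would check directly from \eqref{eq: definition basis nodes} that $\psi_{\mathbf{p},m}$ vanishes on each edge of $\partial(K_{\mathbf{p}}^+\cup K_{\mathbf{p}}^-)$: the $y$-factor vanishes on the top and bottom edges since the argument $h_2 \mp (y-y_{\mathbf{p}})$ is zero there, while the $x$-factor vanishes on the left and right edges since $\sin(0)=0$ and $\sin((2m-1)\pi)=0$ respectively. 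The identity $\partial_i^{2n}\psi_{\mathbf{p},m} = c_{i,n}\,\psi_{\mathbf{p},m}$ on the support then forces the interior trace of $\partial_i^{2n}\psi_{\mathbf{p},m}$ on the boundary of the support to be zero, matching the exterior value; hence $\partial_i^{2n}\psi_{\mathbf{p},m}\in C^0(\overline\Omega)$.

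Finally, I would evaluate at an arbitrary node $\mathbf{q}\in\mathcal{N}_h$. If $\mathbf{q}\neq\mathbf{p}$, then $\mathbf{q}$ lies either outside $\overline{K_{\mathbf{p}}^+\cup K_{\mathbf{p}}^-}$ or on one of its bounding edges; in either case $\psi_{\mathbf{p},m}(\mathbf{q})=0$, and the scalar-multiple identity gives $(\kappa^{-1}\partial_1)^{2n}\psi_{\mathbf{p},m}(\mathbf{q})=0$. If $\mathbf{q}=\mathbf{p}$, I evaluate the two factors at $(x_{\mathbf{p}},y_{\mathbf{p}})$: the $x$-factor gives $\sin(\kappa\widetilde\nu_m h_1)=\sin((2m-1)\pi/2)=(-1)^{m-1}$, and the $y$-factor normalizes to $1$. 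Combined with $(\kappa^{-1}\partial_1)^{2n}\psi_{\mathbf{p},m}=(-1)^n\widetilde\nu_m^{2n}\psi_{\mathbf{p},m}$, this yields $(-1)^n\widetilde\nu_m^{2n}(-1)^{m-1}=-(-1)^{n+m}\widetilde\nu_m^{2n}$, which is exactly \eqref{eq: psi_n properties}. The main subtlety in the whole argument is the interface regularity across $y=y_{\mathbf{p}}$; the collapse of even-order derivatives to scalar multiples of $\psi_{\mathbf{p},m}$ is the structural observation that tames it, and it also explains why only even-order nodal conditions appear in the definition of $\mathcal{S}^{N,M}_\kappa(\mathcal{T}_h)$.
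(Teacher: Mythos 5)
Your proposal is correct and follows essentially the same route as the paper: both exploit that the even-order derivatives are global scalar multiples of $\psi_{\mathbf{p},m}$ (the paper via the eigenfunction identity $\partial_1^{2n}\widehat{\varphi}_{2m-1}=(-1)^n(\kappa\widetilde{\nu}_m)^{2n}\widehat{\varphi}_{2m-1}$ and piecewise analyticity), deduce continuity from that of $\psi_{\mathbf{p},m}$ itself, and evaluate $\psi_{\mathbf{p},m}(\mathbf{p})=\sin(\kappa\widetilde{\nu}_m h_1)=(-1)^{m-1}$ to get the sign. Your use of the Helmholtz equation to reduce $\partial_2^{2n}$ to $\partial_1^{2n}$ is a harmless minor variant of the paper's ``similar argument'' for the $y$-direction, and your more explicit boundary/interface checks are consistent with the paper's terser statement.
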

\begin{proof}
Fix $m \in \mathbb{N}^*$ and $\mathbf{p} \in \mathcal{N}_h$.
From \eqref{eq:edge_functions}, for all $n \in \mathbb{N}$, one has
\begin{equation*}
    \partial_1^{2n}\widehat{\varphi}_{2m-1}(x,y;2h_1,h_2)=(-1)^n(\kappa \widetilde{\nu}_{m})^{2n}\widehat{\varphi}_{2m-1}(x,y;2h_1,h_2), \qquad (x,y)\in \widehat{K}.
\end{equation*}
By \eqref{eq: definition basis nodes}, the function $\psi_{\mathbf{p},m}$ is piecewise analytic. Therefore, on each region where it is analytic, the same identity holds:
\begin{equation} \label{eq: cont1}
    \partial_1^{2n}\psi_{\mathbf{p},m}=(-1)^n(\kappa \widetilde{\nu}_{m})^{2n}\psi_{\mathbf{p},m}.
\end{equation}
As $\psi_{\mathbf{p},m}\in C^0(\overline{\Omega})$, this implies $\partial_1^{2n}\psi_{\mathbf{p},m}\in C^0(\overline{\Omega})$ as well.
A similar argument can be used to show that $\partial_2^{2n}\psi_{\mathbf{p},m}\in C^0(\overline{\Omega})$.
Moreover, by construction, $\psi_{\mathbf{p},m}(\mathbf{q})=0$ for any $\mathbf{q} \neq \mathbf{p}$, and
\begin{equation*}
    \psi_{\mathbf{p},m}(\mathbf{p})=\widehat{\varphi}_{2m-1}(h_1,h_2;2h_1,h_2)=\sin(\kappa \widetilde{\nu}_{m}h_1)
    =(-1)^{m-1},
\end{equation*}
so the identity in \eqref{eq: psi_n properties} follows directly from \eqref{eq: cont1}.
\end{proof}

Building on this localization property, the following proposition constructs a nodal collocation system ensuring that, for any sufficiently regular function $u$, there exists a unique function in the Trefftz space $V_{N}(\mathcal{N}_h)$ that matches the first $N$ even $\partial_1$-derivatives of $u$ at all mesh nodes.

\begin{proposition}[Nodal collocation system] \label{th: nodal collocation}
    Let $N \in \mathbb{N}^*$, and $u\in H_\kappa^{2N}(\Omega)$. There exists a unique coefficient vector $\boldsymbol{\beta}\in \mathbb{C}^{N |\mathcal{N}_h|}$ solution to the collocation system
    \begin{equation} \label{eq: local collocation operator condition}
        \sum_{\mathbf{p}\in \mathcal{N}_h}\sum_{m=1}^N \beta_{\mathbf{p},m}(\kappa^{-1}\partial_1)^{2n}\psi_{\mathbf{p},m}(\mathbf{q})=(\kappa^{-1}\partial_1)^{2n}u(\mathbf{q}), \qquad 0\leq n\leq N-1, \quad \mathbf{q}\in \mathcal{N}_h,
    \end{equation}
    where $\boldsymbol{\beta}=(\beta_{\mathbf{p},m})_{\mathbf{p}\in \mathcal{N}_h,1\leq m \leq N}$.
    Besides, the following bound holds
    \begin{equation} \label{eq:beta_pn0_p}
        \|\boldsymbol{\beta}\|_{\infty}\!\leq\! \frac{2\rho \max(1,\kappa h)^2}{h}\chi_{N}\|u\|_{H_\kappa^{2N}(\Omega)}, \quad\;\; \text{where} \quad\;\; \chi_{N}\!:=\!\min\!\left(\!\frac{\rho e^{\kappa h}}{\kappa h},e\max(1,\kappa h)^{2(N-1)}\!\right).
    \end{equation}
\end{proposition}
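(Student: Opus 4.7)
The plan is to decouple the global collocation system \eqref{eq: local collocation operator condition} into independent local problems using the localization of the nodal basis, invert each of them via Lagrange interpolation, and finally combine a Sobolev embedding with two complementary estimates on the Lagrange coefficients to recover \eqref{eq:beta_pn0_p}.

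First I would apply Lemma \ref{lem: psi_n properties}: since $(\kappa^{-1}\partial_1)^{2n}\psi_{\mathbf{p},m}(\mathbf{q})$ vanishes whenever $\mathbf{p}\ne\mathbf{q}$, the system reduces to $|\mathcal{N}_h|$ independent $N\times N$ subsystems, one per node. For each fixed $\mathbf{q}\in\mathcal{N}_h$, after absorbing the alternating sign $(-1)^{m+1}$ into a redefined unknown, the local system becomes a standard Vandermonde problem at the distinct nodes $\widetilde{\nu}_1^2<\cdots<\widetilde{\nu}_N^2$; its non-degeneracy yields existence and uniqueness of $\boldsymbol{\beta}$. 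Inverting it gives the Lagrange representation
\begin{equation*}
\beta_{\mathbf{q},m}=-(-1)^{m}\sum_{n=0}^{N-1}\ell_{m,n}\,(\kappa^{-1}\partial_1)^{2n}u(\mathbf{q}),
\end{equation*}
where $\ell_{m,n}$ are the coefficients of the Lagrange polynomial $L_m(t)=\prod_{k\ne m}(t+\widetilde{\nu}_k^2)/(\widetilde{\nu}_k^2-\widetilde{\nu}_m^2)$ of degree $N-1$.

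Next I would control each pointwise derivative $(\kappa^{-1}\partial_1)^{2n}u(\mathbf{q})$ using the two-dimensional $\kappa$-weighted Sobolev embedding $H^2_\kappa(K)\hookrightarrow L^\infty(K)$ on a cell $K$ of dimensions $h_1\times h_2$ adjacent to $\mathbf{q}$; this is legitimate since $2n+2\le 2N$. A standard scaling argument to $\hat{K}=(0,1)^2$ (with anisotropy encoded through $\rho$) produces an embedding constant of the form $\rho\max(1,\kappa h)^2/h$ multiplying $\|(\kappa^{-1}\partial_1)^{2n}u\|_{H^2_\kappa(K)}\le\|u\|_{H^{2N}_\kappa(\Omega)}$. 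This yields the overall prefactor $\frac{2\rho\max(1,\kappa h)^2}{h}$ appearing in \eqref{eq:beta_pn0_p}, the constant 2 absorbing minor numerical factors from the embedding and the summation over the two cells on either side of $\mathbf{q}$.

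What remains is the bound $\sum_n|\ell_{m,n}|\le\chi_N$, which I would establish through two complementary strategies whose minimum gives $\chi_N$. A direct Cauchy-type estimate on the coefficients of $L_m$ using its explicit product form on a disk of radius $\sim\widetilde{\nu}_N^2\sim(\kappa h)^2$ produces the polynomial bound $e\max(1,\kappa h)^{2(N-1)}$, sharpest when $N$ is small or $\kappa h$ is small. The second, $N$-uniform bound of order $\rho e^{\kappa h}/(\kappa h)$ exploits the half-integer structure $\widetilde{\nu}_m=(2m-1)\pi/(2\kappa h_1)$: classical identities for the denominator $\prod_{k\ne m}((2k-1)^2-(2m-1)^2)$ combined with a generating-function-type summation of the numerators allow one to extract an explicit $e^{\kappa h}$ factor, independently of $N$. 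The main obstacle will be precisely this second estimate: obtaining a clean exponential constant uniform in $N$ requires a delicate combinatorial balance between the quadratic spacing of the $\widetilde{\nu}_m^2$'s and the polynomial growth of the numerators of $L_m$, whereas the polynomial bound and the decoupling/Sobolev steps are essentially routine.
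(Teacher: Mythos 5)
Your proposal follows essentially the same route as the paper: decoupling via Lemma~\ref{lem: psi_n properties} into $|\mathcal{N}_h|$ independent $N\times N$ Vandermonde systems at the distinct nodes $\widetilde{\nu}_m^2$, an $L^\infty$-type embedding on the adjacent cells (the paper's Lemma~\ref{lem: Linfty trace inequality}) to bound the right-hand side by $\tfrac{2\rho\max(1,\kappa h)^2}{h}\|u\|_{H^{2N}_\kappa(\Omega)}$, and two complementary bounds on the inverse whose minimum yields $\chi_N$. The one step you leave genuinely open --- the $N$-uniform bound $\rho e^{\kappa h}/(\kappa h)$ on the Lagrange coefficient sums --- is precisely where the paper invests its effort: it uses Gautschi's closed-form expression for $\|\mathbf{A}^{-1}\|_\infty$, rewrites the resulting products as ratios of Gamma functions, and extracts the exponential via $|\Gamma(1/2+\imath x)|^2=\pi/\cosh(\pi x)$; the polynomial bound $e\max(1,\kappa h)^{2(N-1)}$ is obtained there not by a Cauchy estimate on a disk (note $\widetilde{\nu}_N^2\sim N^2/(\kappa h)^2$, not $(\kappa h)^2$) but by rescaling with the diagonal matrix $\mathbf{K}_{nn}=(\kappa h_1)^{2(n-1)}$, which reduces the system to the case $\kappa h_1=1$ and reuses the first estimate.
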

\begin{proof}
For any $0 \le n \le N-1$, Lemma~\ref{lem: psi_n properties} ensures that the values $\partial_1^{2n} \psi_{\mathbf{p},m}(\mathbf{q})$ are well-defined. Moreover, since $u \in H_\kappa^{2N}(\Omega)$, the derivatives $\partial_1^{2n} u(\mathbf{q})$ are also well-defined.
Using \eqref{eq: psi_n properties}, the linear system in \eqref{eq: local collocation operator condition} decouples into $|\mathcal{N}_h|$ independent collocation systems, one for each node. Specifically, for any $\mathbf{q} \in \mathcal{N}_h$, one obtains
\begin{equation} \label{eq: collocation sys}
    \mathbf{A} \boldsymbol{\beta}_{\mathbf{q}} = \mathbf{b}_{\mathbf{q}}, \qquad \text{with} \qquad \boldsymbol{\beta}_{\mathbf{q}} = (\beta_{\mathbf{q},m})_{m=1}^N,
\end{equation}
where, shifting the index $n$ from $0 \leq n \leq N-1$ to $1 \leq n \leq N$ for notational convenience,
\begin{equation} \label{eq: matrix A}
    \mathbf{A}_{nm} := (-1)^{n+m} \widetilde{\nu}_{m}^{2(n-1)}, \qquad (\mathbf{b}_{\mathbf{q}})_n := (\kappa^{-1} \partial_1)^{2(n-1)} u(\mathbf{q}), \qquad 1 \leq n, m \leq N.
\end{equation}
with $\widetilde{\nu}_{m}$ introduced in \eqref{eq: psi_n properties}.
The matrix $\mathbf{A}$ is invertible, as it is unitarily similar to a Vandermonde matrix with $N$ distinct nodes $\widetilde{\nu}_{m}^2$, for $1 \leq m \leq N$, defined in \eqref{eq: psi_n properties}. As a result, $\boldsymbol{\beta} \in \mathbb{C}^{N|\mathcal{N}_h|}$ is uniquely determined.

    From the definition of the right-hand side in \eqref{eq: matrix A} and Lemma \ref{lem: Linfty trace inequality}, it follows that
    \begin{align}
        \left\|\mathbf{b}_{\mathbf{q}}\right\|^2_{\infty}&=\max_{1\leq n\leq N}\left|(\kappa^{-1} \partial_1)^{2(n-1)} u(\mathbf{q})\right|^2\leq \max_{1\leq n \leq N}\sum_{K \in \mathcal{T}_h}\left\|(\kappa^{-1}\partial_1)^{2(n-1)} u\right\|^2_{L^{\infty}(\partial K)} \nonumber\\
        &\leq \left(\frac{2 \max(1,\kappa h_1,\kappa h_2)^2}{\min( h_1, h_2)}\right)^2\max_{1\leq n \leq N}\sum_{K \in \mathcal{T}_h}\left\|(\kappa^{-1}\partial_1)^{2(n-1)}u\right\|^2_{H_\kappa^{2}(K)} \nonumber\\
        &\leq \left(\frac{2\rho \max(1,\kappa h)^2}{h} \right)^2\left\|u\right\|^2_{H_\kappa^{2N}(\Omega)}, \label{eq: bound b}
    \end{align}
    where we bounded $h_1$ and $h_2$ in terms of the mesh size $h$ and the shape parameter $\rho$ from \eqref{eq: shape regularity}.
    
    The next step is to bound the norm $\left\|\mathbf{A}^{-1}\right\|_{\infty}$.
    In particular, since $\mathbf{A}$ has the structure given in \eqref{eq: matrix A}, the explicit formula from \cite[Eq.\ (3.1)]{Gautschi1962} applies with nodes $\widetilde{\nu}_{m}^2$, that is
    \begin{equation}\label{eq:norm vandermonde_}
        \left\|\mathbf{A}^{-1}\right\|_{\infty}=\max_{1\leq n\leq N}\prod_{\substack{m=1 \\ m \ne n}}^N
        \frac{1+\widetilde{\nu}_{m}^2}{|\widetilde{\nu}_{n}^2-\widetilde{\nu}_{m}^2|}=\max_{1\leq n\leq N}\prod_{\substack{m=1 \\ m \ne n}}^N\frac{1/4+\left(\kappa h_1/\pi\right)^2+m(m-1)}{|n-m|(n+m-1)}.
    \end{equation}
    Using standard factorial identities, it easy to verify that, for any $1\leq n \leq N$,
    \begin{equation*}
        \prod_{\substack{m=1 \\ m \ne n}}^N |n-m|=(n-1)!(N-n)!, \qquad \text{and} \qquad  \prod_{\substack{m=1 \\ m \ne n}}^N (n+m-1)=\frac{(N+n-1)!}{(2n-1)(n-1)!},
    \end{equation*}
    thus the denominator in \eqref{eq:norm vandermonde_} can be rewritten as
    \begin{equation}\label{eq:norm proof 1_}
        \prod_{\substack{m=1 \\ m \ne n}}^N |n-m| (n+m-1)=\frac{(N+n-1)!(N-n)!}{2n-1}.
    \end{equation}
    As for the numerator, one can obtain the identity
\begin{equation} \label{eq:norm proof A_}
    \prod_{\substack{m=1 \\ m \ne n}}^N \left(\frac{1}{4}\!+\!\left(\frac{\kappa h_1}{\pi}\right)^2\!\!\!+m(m-1)\right)=\frac{\prod_{m=1}^N\left(m-1/2+\imath\kappa h_1/\pi\right)\left(m-1/2-\imath \kappa h_1/\pi\right)}{1/4+\left(\kappa h_1/\pi\right)^2+n(n-1)},
\end{equation}
extracting the $n$-th term and factorizing each remaining term as the product of two complex conjugates.
Moreover, iteratively applying the relation $\Gamma(z+1)=z \Gamma(z)$ for $z \in \mathbb{C}$ yields
\begin{equation} \label{eq:gamma relation_}
    \prod_{m=1}^N(m+z)=\frac{\Gamma(N+z+1)}{\Gamma(z+1)}, \qquad z \in \mathbb{C}.
\end{equation}
Combining \eqref{eq:norm proof A_} with \eqref{eq:gamma relation_}, and using the identity $\Gamma(z)\Gamma(\overline{z})=|\Gamma(z)|^2$ for $z \in \mathbb{C}$, one deduces that
\begin{equation*} 
    \prod_{\substack{m=1 \\ m \ne n}}^N \left(\frac{1}{4}\!+\!\left(\frac{\kappa h_1}{\pi}\right)^2\!\!\!+m(m-1)\right)=\left(\frac{1}{4}\!+\!\left(\frac{\kappa h_1}{\pi}\right)^2\!\!\!+n(n-1)\right)^{\!\!-1}\left|\frac{\Gamma(N+1/2+\imath \kappa h_1/\pi)}{\Gamma(1/2+\imath \kappa h_1/\pi)}\right|^2.
\end{equation*}
Therefore, substituting into \eqref{eq:norm vandermonde_} and using \eqref{eq:norm proof 1_}, the norm can be expressed as
\begin{equation} \label{eq:norm proof 2_}
    \left\|\mathbf{A}^{-1}\right\|_{\infty}=\left|\frac{\Gamma(N+1/2+\imath \kappa h_1/\pi)}{\Gamma(1/2+\imath \kappa h_1/\pi)}\right|^2 \max_{1\leq n\leq N}\frac{2n-1}{1/4+(\kappa h_1/\pi)^2+n(n-1)}\;\frac{1}{(N+n-1)!(N-n)!}.
\end{equation}
It is straightforward to check that
\begin{equation*}
    \sup_{x\in \mathbb{R}} \frac{2x-1}{1/4+(\kappa h_1/\pi)^2+x(x-1)}=\frac{\pi}{\kappa h_1}, \qquad \text{and} \qquad \inf_{n \in \mathbb{N}^*} (N+n-1)!(N-n)!=N!(N-1)!,
\end{equation*}
and hence, since $\Gamma(N)=(N-1)!$, the norm in \eqref{eq:norm proof 2_} can be bounded as
\begin{equation} \label{eq:bound norm}
    \left\|\mathbf{A}^{-1}\right\|_{\infty}\leq \frac{\pi}{N\kappa h_1}\left|\frac{\Gamma(N+1/2+\imath \kappa h_1/\pi)}{\Gamma(N)\Gamma(1/2+\imath \kappa h_1/\pi)}\right|^2.
\end{equation}
Then, using \cite[Eqs.\ (5.4.4), (5.6.4) and (5.6.6)]{DLMF}, namely
\begin{equation*}
    \left|\Gamma\left(\frac{1}{2}+\imath x\right)\right|^2=\frac{\pi}{\cosh(\pi x)}, \qquad \frac{\Gamma(x+1/2)}{\Gamma(x+1)}<\frac{1}{x^{1/2}}, \qquad |\Gamma(x+\imath y)|\leq |\Gamma(x)|, \quad \qquad x,y \in \mathbb{R},
\end{equation*}
and since $\Gamma(N+1)=N\Gamma(N)$, we estimate
\begin{equation*}
    \frac{1}{\cosh(\kappa h_1)}\left|\frac{\Gamma(N+1/2+\imath \kappa h_1/\pi)}{\Gamma(N)\Gamma(1/2+\imath \kappa h_1/\pi)}\right|^2\leq\frac{1}{\pi} \left|\frac{\Gamma(N+1/2)}{\Gamma(N)}\right|^2\leq \frac{N}{\pi},
\end{equation*}
which implies the following bound for \eqref{eq:bound norm}:
\begin{equation} \label{eq: bound A}
    \left\|\mathbf{A}^{-1}\right\|_{\infty}\leq \frac{\pi}{N\kappa h_1}\,\frac{N\cosh(\kappa h_1)}{\pi}\leq\frac{e^{\kappa h_1}}{\kappa h_1}.
\end{equation}
Hence, combining this estimate with the one in \eqref{eq: bound b}, we obtain:
\begin{equation} \label{eq: bound1}
    \left\|\boldsymbol{\beta}\right\|_{\infty}=\max_{\mathbf{q}\in \mathcal{N}_h}\left\|\boldsymbol{\beta}_{\mathbf{q}}\right\|_{\infty}\leq \left\|\mathbf{A}^{-1}\right\|_{\infty}\max_{\mathbf{q}\in \mathcal{N}_h}\left\|\mathbf{b}_{\mathbf{q}}\right\|_{\infty}\leq \frac{2\rho \max(1,\kappa h)^2}{h}\frac{\rho e^{\kappa h}}{\kappa h}\|u\|_{H_\kappa^{2N}(\Omega)},
\end{equation}
where in the last inequality $h_1$ is bounded in terms of $h$ and $\rho$.

Besides, if we multiply the system in \eqref{eq: collocation sys} on the left by the diagonal matrix $\mathbf{K}$ defined as
\begin{equation*}
    \mathbf{K}_{nn} = (\kappa h_1)^{2(n-1)}, \qquad 1\leq n\leq N,
\end{equation*}
then the matrix $\mathbf{KA}$ has the same structure as $\mathbf{A}$ in \eqref{eq: matrix A}, with the particular choice $\kappa h_1\!=\!1$, i.e.\
\begin{equation*}
    \left(\mathbf{KA}\right)_{nm}=(-1)^{n+m}\left[\frac{(2n-1)\pi}{2}\right]^{2(n-1)}.
\end{equation*}
Hence, the estimate in \eqref{eq: bound A} applies directly to $\|(\mathbf{KA})^{-1}\|_{\infty}$, yielding
\begin{equation} \label{eq: bound2}
    \left\|\boldsymbol{\beta}\right\|_{\infty}\leq \left\|(\mathbf{KA})^{-1}\right\|_{\infty}\|\mathbf{K}\|_{\infty}\max_{\mathbf{q}\in \mathcal{N}_h}\left\|\mathbf{b}_{\mathbf{q}}\right\|_{\infty}\leq\frac{2\rho \max(1,\kappa h)^2}{h}e\max(1,\kappa h)^{2(N-1)}\|u\|_{H_\kappa^{2N}(\Omega)},
\end{equation}
where in the last inequality $h_1$ has been bounded in terms of $h$ and $\rho$.
Finally, taking the minimum between the bounds in \eqref{eq: bound1} and \eqref{eq: bound2}, we obtain the estimate in \eqref{eq:beta_pn0_p}. \qedhere
\end{proof}

For any $N \in \mathbb{N}^*$, we construct the global nodal collocation operator onto $V_N(\mathcal{N}_h)$ as
\begin{equation} \label{eq: collocation operator}
    \boldsymbol{\mathcal{V}}:H_\kappa^{2N}(\Omega)\rightarrow H_\kappa^{1}(\Omega), \qquad u \mapsto \sum_{\mathbf{p}\in \mathcal{N}_h}\sum_{m=1}^N\beta_{\mathbf{p},m}\psi_{\mathbf{p},m},
\end{equation}
where $\boldsymbol{\beta}=(\beta_{\mathbf{p},m})_{\mathbf{p},m}$ is the solution of the collocation system \eqref{eq: local collocation operator condition}. Proposition~\ref{th: nodal collocation} ensures that, by construction, $\partial_1^{2n}\boldsymbol{\mathcal{V}}u(\mathbf{p})=\partial_1^{2n} u(\mathbf{p})$ for any $u \in H^{2N}_\kappa(\Omega)$, $0\leq n\leq N-1$, and $\mathbf{p}\in \mathcal{N}_h$.

\begin{lemma} \label{lem: V continuity}
    Let $N \in \mathbb{N}^*$ and $M \in \mathbb{N}$. The operator $\boldsymbol{\mathcal{V}}:H_\kappa^{2N}(\Omega)\rightarrow H_\kappa^{M}(\mathcal{T}_h)$ is bounded and
    \begin{equation} \label{eq: continuity collocation operator}
        \|\boldsymbol{\mathcal{V}}u\|_{H_\kappa^{M}(\mathcal{T}_h)}\leq C_{2}\chi_{N}(M+1)\left[N+C_{3}N^{\frac{3}{2}}(\rho\nu_{N})^{M-1/2} \right]\|u\|_{H_\kappa^{2N}(\Omega)},
    \end{equation}
    where $\nu_N$ and $\chi_{N}$ are defined in \eqref{eq: nuN} and \eqref{eq:beta_pn0_p}, respectively, and
    \begin{equation} \label{eq: constants main theorem 2 and 3}
        C_{2}:=\frac{2\pi|\mathcal{N}_h|\rho^{3/2} \max(1,\kappa h)^2}{\sqrt{2}\kappa h\widetilde{\mathrm{d}}}, \qquad C_{3}:=\frac{2\widetilde{\mathrm{d}}}{\sqrt{\pi \rho \tanh(\kappa h/\rho)\widetilde{\mathrm{d}}_0}},
    \end{equation}
    with
    \begin{equation} \label{eq: definition d_m tilde}
        \widetilde{\mathrm{d}}:=\inf_{\substack{m \in \mathbb{N},n \in \mathbb{N}^*:\\\widetilde{\nu}_n<1}}\left|\sqrt{1-\widetilde{\nu}_n^2}-\frac{m\pi}{\kappa h_2}\right|, \qquad \widetilde{\mathrm{d}}_0:=\inf_{\substack{n \in \mathbb{N}^*:\\\widetilde{\nu}_n>1}}\sqrt{\widetilde{\nu}_n^2-1},
    \end{equation}
    and $\widetilde{\nu}_n$ introduced in \eqref{eq: psi_n properties}.
\end{lemma}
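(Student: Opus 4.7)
The plan is to estimate $\|\boldsymbol{\mathcal{V}}u\|_{H_\kappa^M(\mathcal{T}_h)}$ directly from the defining expansion $\boldsymbol{\mathcal{V}}u = \sum_{\mathbf{p}\in\mathcal{N}_h}\sum_{m=1}^{N}\beta_{\mathbf{p},m}\psi_{\mathbf{p},m}$ by combining three ingredients: the triangle inequality, the norm bounds of Lemma~\ref{lem:orthogonality lemma} applied to the reference mode underlying each $\psi_{\mathbf{p},m}$, and the $\ell^\infty$-bound on $\boldsymbol{\beta}$ supplied by Proposition~\ref{th: nodal collocation}. Triangle inequality gives $\|\boldsymbol{\mathcal{V}}u\|_{H_\kappa^M(\mathcal{T}_h)} \leq \|\boldsymbol{\beta}\|_{\infty}\sum_{\mathbf{p},m}\|\psi_{\mathbf{p},m}\|_{H_\kappa^M(\mathcal{T}_h)}$, so once $\|\boldsymbol{\beta}\|_{\infty}$ is replaced by the bound from \eqref{eq:beta_pn0_p}, the remaining work is to control the double sum of norms.

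Each $\psi_{\mathbf{p},m}$ is supported on the two coarse cells $K_{\mathbf{p}}^{\pm}$ (each of dimensions $2h_1\times h_2$) and, by Definition~\ref{def: ndoe basis functions}, coincides on each with the reference mode $\widehat{\varphi}_{2m-1}$ on the rectangle $\widehat{K}$ of sides $(2h_1,h_2)$. Hence $\|\psi_{\mathbf{p},m}\|^2_{H_\kappa^M(\mathcal{T}_h)} = 2\|\widehat{\varphi}_{2m-1}\|^2_{H_\kappa^M(\widehat{K})}$, and Lemma~\ref{lem:orthogonality lemma} applies with the identifications $\widehat{\nu}_{2m-1}=\widetilde{\nu}_m$ and $\widehat{\mathrm{d}}=\widetilde{\mathrm{d}}$. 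I would split the sum over $m$ into the propagative ($\widetilde{\nu}_m<1$) and evanescent ($\widetilde{\nu}_m>1$) indices, each bounded by the corresponding inequality of Lemma~\ref{lem:orthogonality lemma}: a uniform estimate $\|\psi_{\mathbf{p},m}\|_{H_\kappa^M(\mathcal{T}_h)}\lesssim (M+1)\sqrt{\rho}/(\kappa\widetilde{\mathrm{d}})$ on the propagative side (after using $h_1/h_2\leq \rho$), and $\|\psi_{\mathbf{p},m}\|_{H_\kappa^M(\mathcal{T}_h)}\lesssim (M+1)\sqrt{h_1/(\kappa\tanh(\kappa h_2))}\,\widetilde{\nu}_m^M(\widetilde{\nu}_m^2-1)^{-1/4}$ on the evanescent side.

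The propagative contribution is then straightforward: at most $N$ indices per node times $|\mathcal{N}_h|$ nodes, combined with the $\|\boldsymbol{\beta}\|_\infty$ bound and the shape-regularity identities $h_1\leq h$ and $h_2\geq h/\rho$, reproduce exactly the $C_2\chi_N(M+1)N$ term in \eqref{eq: continuity collocation operator}. The evanescent contribution hinges on controlling $\sum_{m:\widetilde{\nu}_m>1}\widetilde{\nu}_m^M(\widetilde{\nu}_m^2-1)^{-1/4}$; bounding $\widetilde{\nu}_m\leq\widetilde{\nu}_N\leq\rho\nu_N$ from the shape regularity and $(\widetilde{\nu}_m^2-1)^{-1/2}\leq\widetilde{\mathrm{d}}_0^{-1}$ from \eqref{eq: definition d_m tilde}, and then invoking Cauchy--Schwarz on the index $m$, gives a control of the form $N\widetilde{\nu}_N^M/\sqrt{\widetilde{\mathrm{d}}_0}$; redistributing a factor $(\rho\nu_N)^{1/2}=\sqrt{\rho\pi N/(\kappa h)}$ between the $N$ and the $\widetilde{\nu}_N^M$ converts this into the stated form $N^{3/2}(\rho\nu_N)^{M-1/2}$ up to a $\sqrt{\kappa h/(\rho\pi)}$ factor that is absorbed in the prefactor. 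Summing over $\mathbf{p}$, inserting the $\|\boldsymbol{\beta}\|_\infty$ bound, and replacing $h_1$ and $\tanh(\kappa h_2)$ by their worst-case values produce the second term $C_2C_3\chi_N(M+1)N^{3/2}(\rho\nu_N)^{M-1/2}$.

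The main technical obstacle is the accounting in the evanescent sum: the bound $(\widetilde{\nu}_m^2-1)^{-1/2}\leq\widetilde{\mathrm{d}}_0^{-1}$ is sharp only for modes just above the cutoff $\widetilde{\nu}_m=1$, whereas $\widetilde{\nu}_m\leq\widetilde{\nu}_N$ is sharp only for the largest modes, and striking the right compromise through Cauchy--Schwarz, together with the redistribution of a power of $\rho\nu_N$ described above, is exactly what yields the asymmetric factor $N^{3/2}(\rho\nu_N)^{M-1/2}$. Once this balance is established, the remaining prefactors $|\mathcal{N}_h|,\rho^{3/2},\max(1,\kappa h)^2,\widetilde{\mathrm{d}},\widetilde{\mathrm{d}}_0,\tanh(\kappa h/\rho)$ regroup into precisely the constants $C_2$ and $C_3$ displayed in \eqref{eq: constants main theorem 2 and 3}.
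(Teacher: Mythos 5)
Your argument follows the paper's proof essentially step for step: expand $\boldsymbol{\mathcal{V}}u$, pull out $\|\boldsymbol{\beta}\|_\infty$ via Proposition~\ref{th: nodal collocation}, reduce each $\psi_{\mathbf{p},m}$ to the reference mode $\widehat{\varphi}_{2m-1}$ on the $(2h_1,h_2)$ rectangle, split the sum over $m$ into propagative and evanescent parts using Lemma~\ref{lem:orthogonality lemma}, and control the evanescent tail through $\widetilde{\nu}_m\le\rho\nu_N$ and $\widetilde{\mathrm{d}}_0$. The only (immaterial) deviation is bookkeeping: you use the triangle inequality on norms where the paper uses Cauchy--Schwarz on squared norms, and your crude bound $\widetilde{\nu}_m\le\rho\nu_N$ on all $2M$ factors yields a constant off from the paper's $C_3$ by at most $\sqrt{\rho}$ (the paper keeps one factor as $N\pi/(\kappa h_1)$ to cancel the $h_1$ exactly), which does not affect the form of the estimate.
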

\begin{proof}
    Observe that $\widetilde{\mathrm{d}}>0$ and $\widetilde{\mathrm{d}}_0>0$ thanks to Assumption \ref{A1+A2}.
    From the definitions of the node basis functions \eqref{eq: definition basis nodes} and the operator \eqref{eq: collocation operator}, we have
    \begin{align}
        \|\boldsymbol{\mathcal{V}}u\|^2_{H_\kappa^{M}(\mathcal{T}_h)}&=\sum_{K \in \mathcal{T}_h}\sum_{|\boldsymbol{\alpha}|\leq M}\kappa^{-2|\boldsymbol{\alpha}|}\left\|\sum_{\mathbf{p}\in \mathcal{N}_h}\sum_{m=1}^N\beta_{\mathbf{p},m}\partial^{\boldsymbol{\alpha}}\psi_{\mathbf{p},m}\right\|^2_{L^2(K)} \nonumber\\
        &\leq N|\mathcal{N}_h|\|\boldsymbol{\beta}\|^2_{\infty}\sum_{\mathbf{p}\in \mathcal{N}_h}\sum_{m=1}^N\sum_{K \in \mathcal{T}_h}\|\psi_{\mathbf{p},m}\|^2_{H_\kappa^M(K)} \nonumber\\
        &= N|\mathcal{N}_h|\|\boldsymbol{\beta}\|^2_{\infty}\sum_{\mathbf{p}\in \mathcal{N}_h}\sum_{m=1}^N\left(\|\psi_{\mathbf{p},m}\|^2_{H_\kappa^M(K^+_{\mathbf{p}}\cap\, \Omega)}+\|\psi_{\mathbf{p},m}\|^2_{H_\kappa^M(K^-_{\mathbf{p}}\cap\, \Omega)}\right) \nonumber\\
        &\leq 2N|\mathcal{N}_h|^2\|\boldsymbol{\beta}\|^2_{\infty}\sum_{m=1}^N\|\widehat{\varphi}_{2m-1}\|^2_{H_\kappa^M(\widehat{K})}, \label{eq: STST1}
    \end{align}
    where, in the last inequality, we exploit the fact that, for any $\mathbf{p}\in \mathcal{N}_h$, the restriction of $\psi_{\mathbf{p},m}$ to its adjacent cells $K^{\pm}_{\mathbf{p}}$ in \eqref{eq: adjacent cells nodal} coincides, up to a rigid transformation, with  the reference function $\widehat{\varphi}_{2m-1}$ on $\widehat{K}$ taking $(\widehat{h}_1,\widehat{h}_2)=(2h_1,h_2)$.
    From the upper bounds in Lemma \ref{lem:orthogonality lemma}, one obtains
    \begin{align*}
        \sum_{m=1}^N\|\widehat{\varphi}_{2m-1}\|^2_{H_\kappa^M(\widehat{K})}&\leq \sum_{\substack{m=1  \\ \widetilde{\nu}_m<1}}^N\|\widehat{\varphi}_{2m-1}\|^2_{H_\kappa^M(\widehat{K})}+\sum_{\substack{m=1  \\ \widetilde{\nu}_m>1}}^N\|\widehat{\varphi}_{2m-1}\|^2_{H_\kappa^M(\widehat{K})}\\
        &\leq\frac{Nh_1\pi^2(M+1)^2}{4\kappa^2h_2\widetilde{\mathrm{d}}^2}+\frac{h_1(M+1)^2}{\kappa \tanh(\kappa h_2)}\sum_{\substack{m=1  \\ \widetilde{\nu}_m>1}}^N\frac{\widetilde{\nu}^{2M}_m}{\sqrt{\widetilde{\nu}^{2}_m-1}}\\
        &\leq\frac{N\pi^2\rho(M+1)^2}{4\kappa^2\widetilde{\mathrm{d}}^2}+\frac{N^2\pi(M+1)^2(\rho \nu_N)^{2M-1}}{\kappa^2 \tanh(\kappa h/\rho)\widetilde{\mathrm{d}}_0},
    \end{align*}
    where $\widetilde{\mathrm{d}}$ and $\widetilde{\mathrm{d}}_0$ are defined in \eqref{eq: definition d_m tilde}, and the last inequality follows from the bound $\widetilde{\nu}_m^{2M}\leq \tfrac{N\pi}{\kappa h_1}(\rho\nu_N)^{2M-1}$ for any $1\leq m\leq N$, with $\rho$ the shape-regularity constant from \eqref{eq: shape regularity}.
    Hence,
    \begin{equation} \label{eq: STST2}
        \frac{\kappa^2}{\pi(M+1)^2}\sum_{m=1}^N\|\widehat{\varphi}_{2m-1}\|^2_{H_\kappa^M(\widehat{K})}\!\leq\! \frac{N\pi \rho}{4\widetilde{\mathrm{d}}^2}\!+\!\frac{N^2(\rho \nu_N)^{2M-1}}{\tanh(\kappa h/\rho)\widetilde{\mathrm{d}}_0}\!\leq\! \Bigg(\frac{\sqrt{N\pi \rho}}{2\widetilde{\mathrm{d}}}\!+\!\frac{N(\rho \nu_N)^{M-1/2}}{\sqrt{\tanh(\kappa h/\rho)\widetilde{\mathrm{d}}_0}}\Bigg)^{\!\!2},
    \end{equation}
    and the bound in \eqref{eq: continuity collocation operator} is derived by combining \eqref{eq: STST1} and \eqref{eq: STST2} with the coefficient bound \eqref{eq:beta_pn0_p}.
\end{proof}

\begin{remark}
Similarly to $\widehat{\mathrm{d}}$ in \eqref{eq: definition d_m}, the quantities $\widetilde{\mathrm{d}}$ and $\widetilde{\mathrm{d}}_0$ introduced in \eqref{eq: definition d_m tilde} provide a measure of the gap between the wavenumber $\kappa$ and the node-associated resonant frequencies
\begin{equation} \label{eq: resonant freq nodes}
\left\{ \left( \frac{(2n-1)\pi}{2h_1} \right)^2  + \left( \frac{m\pi}{h_2} \right)^2 \right\}_{n \in \mathbb{N}^*, m \in \mathbb{N}},
\end{equation}
which are excluded under Assumption \emph{\ref{A1+A2}}.
If, on the other hand, $\kappa^2$ coincides with one of these frequencies, one may adjust the mesh to avoid such resonances, as discussed in Remark \emph{\ref{rem: resonant frequencies edge}}.
An alternative strategy is to remove from the nodal approximation set $\{\psi_{\mathbf{p},n}\}_{\mathbf{p}\in \mathcal{N}_h,1\leq n \leq N}$ those basis functions for which there exists $m\in\mathbb{N}$ such that $\kappa^2 = ((2n-1)\pi/(2h_1))^2 + \big(m\pi/h_2)^2$, since in this case the corresponding functions are not well defined. These functions can then be replaced by an equal number of additional node basis functions with indices $n>N$ for which no such resonance occurs. The resulting set remains linearly independent, and thus the collocation system \eqref{eq: local collocation operator condition} is still solvable.
However, a detailed analysis of the resulting estimates -- as in Proposition \emph{\ref{th: nodal collocation}} and Lemma \emph{\ref{lem: V continuity}} -- becomes more involved in this case and it is not addressed here.
\end{remark}

\subsection{Relations between node-based and edge-based spaces}

Let $N \in \mathbb{N}^*$ and $u \in H^{2N}_\kappa(\Omega)$. By Proposition~\ref{th: nodal collocation}, we have that $\partial_1^{2n}\boldsymbol{\mathcal{V}}u(\mathbf{p}) = \partial_1^{2n} u(\mathbf{p})$ for all $0 \le n \le N-1$ and all nodes $\mathbf{p} \in \mathcal{N}_h$. The next lemma shows that, if $u$ is furthermore a homogeneous Helmholtz solution, the same holds for the even $\partial_2$-derivatives, namely $\partial_2^{2n}\boldsymbol{\mathcal{V}}u(\mathbf{p}) = \partial_2^{2n} u(\mathbf{p})$.

\begin{lemma} \label{lem: zero_nodes_lemma}
    Let $N \in \mathbb{N}^*$, $K$ be a rectangle, and $u \in H_\kappa^{2N}(K)$ be an homogeneous Helmholtz solution. Then, for any $\mathbf{p}\in \overline{K}$ and any $0 \leq n \leq N-1$, the following equivalence holds:
    \begin{equation} \label{eq: zero_nodes_}
        \partial_1^{2n}u(\mathbf{p})=0 \quad  \Longleftrightarrow \quad  \partial_2^{2n}u(\mathbf{p})=0.
    \end{equation}
\end{lemma}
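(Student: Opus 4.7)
The strategy is to use the Helmholtz equation to trade one direction of differentiation for the other, and then to recognize the resulting relation as an invertible linear system between the two tuples of even derivatives at $\mathbf{p}$. Concretely, the equation $-\Delta u - \kappa^2 u = 0$ rewrites as the operator identity $\partial_1^{2} = -\kappa^2 - \partial_2^{2}$ when acting on the Helmholtz solution $u$ (and on any of its partial derivatives, which are also Helmholtz solutions since the equation has constant coefficients).

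The first step is to prove by induction on $n \in \{0,1,\dots,N-1\}$ the pointwise identity
\begin{equation*}
    \partial_1^{2n} u \;=\; (-1)^n \sum_{k=0}^{n} \binom{n}{k} \kappa^{2(n-k)}\, \partial_2^{2k} u \qquad \text{on } K.
\end{equation*}
The base case $n=0$ is trivial. For the inductive step, I would apply $\partial_1^{2}=-\kappa^{2}-\partial_2^{2}$ to the expansion at level $n-1$, use the fact that the operators $\partial_1^{2}$ and $\partial_2^{2k}$ commute (since $u\in H^{2N}_\kappa(K)$ and elliptic regularity makes $u$ smooth in the interior), and collect terms via Pascal's rule. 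The binomial coefficients $\binom{n-1}{k}+\binom{n-1}{k-1}=\binom{n}{k}$ exactly produce the claimed formula.

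The second step is to evaluate the identity at $\mathbf{p}\in\overline{K}$: setting $x_n := \partial_1^{2n} u(\mathbf{p})$ and $y_n := \partial_2^{2n} u(\mathbf{p})$ for $0\leq n\leq N-1$, one has
\begin{equation*}
    x_n \;=\; (-1)^n \sum_{k=0}^{n} \binom{n}{k} \kappa^{2(n-k)}\, y_k, \qquad 0\leq n\leq N-1,
\end{equation*}
which is a lower-triangular linear system in $(y_0,\dots,y_{N-1})$ with diagonal entries $(-1)^n\neq 0$. The associated matrix is therefore invertible over $\mathbb{C}$, and since it maps the zero vector to itself, the tuple $(x_0,\dots,x_{N-1})$ vanishes if and only if $(y_0,\dots,y_{N-1})$ does. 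This yields the equivalence \eqref{eq: zero_nodes_} in the sense of simultaneous vanishing of the full families indexed by $0\leq n\leq N-1$, which is precisely what is needed in the sequel to propagate from $\partial_1^{2n}$-collocation to $\partial_2^{2n}$-collocation through $\boldsymbol{\mathcal{V}}u-u$.

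I do not expect a real obstacle: the argument is purely algebraic once the Helmholtz identity is in hand. The only mild technical point is ensuring that $\partial_i^{2k}u$ admits pointwise values at $\mathbf{p}\in\overline{K}$. This is handled by combining interior elliptic regularity for Helmholtz solutions (which gives $u\in C^\infty(K)$) with the assumption $u\in H_\kappa^{2N}(K)$ and Sobolev traces to reach boundary points of the rectangle $K$.
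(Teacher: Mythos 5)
Your proposal is correct and follows essentially the same route as the paper: the authors also rewrite the Helmholtz equation as $(\kappa^{-1}\partial_1)^{2n}u=\left[-(\kappa^{-2}\partial_2^2+\mathrm{Id})\right]^n u$, expand binomially to get a lower-triangular matrix $\mathbf{L}$ with entries $(-1)^n\binom{n}{m}$ relating the two tuples of even derivatives at $\mathbf{p}$, and conclude from its invertibility that the full vectors vanish simultaneously. Your explicit caveat that the equivalence is one of simultaneous vanishing of the whole families (rather than termwise for each individual $n$) matches exactly what the paper's proof actually establishes and what is used in Proposition~\ref{prop: u-Vu}.
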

\begin{proof}
    Since $u \in H_\kappa^{2N}(K)$, the derivatives $\partial_i^{2n} u(\mathbf{p})$ are well-defined for any $\mathbf{p}\in \overline{K}$, $0 \leq n \leq N-1$, and $i=1,2$.
    Moreover, $u$ solves the homogeneous Helmholtz equation in $K$, so it follows that $(\kappa^{-1}\partial_1)^{2n}u = \left[-\left(\kappa^{-2}\partial_2^2 + \textup{Id}\right)\right]^{n}u$, for any $n \in \mathbb{N}$.
    Hence, one obtains the binomial identity
    \begin{equation} \label{eq:zero1_}
        \mathbf{b}^{(1)}_\mathbf{p} = \mathbf{L} \mathbf{b}^{(2)}_{\mathbf{p}}, \qquad \mathbf{p} \in \overline{K},
    \end{equation}
    where the components, indexed by $0\leq n,m \leq N-1$, are defined as
    \begin{equation*} 
        \mathbf{L}_{nm} := \begin{cases}
            (-1)^{n} \binom{n}{m} & \text{if } n \geq m \\
            0 & \text{if } n < m
        \end{cases},
        \qquad \qquad \left(\mathbf{b}^{(i)}_{\mathbf{p}}\right)_n:=\left(\kappa^{-1}\partial_i\right)^{2n}u(\mathbf{p}), \qquad i=1,2.
    \end{equation*}
    Since $\mathbf{L}$ is invertible, \eqref{eq:zero1_} yields that $\mathbf{b}^{(1)}_\mathbf{p}=\mathbf{0}$ if and only if $\mathbf{b}^{(2)}_\mathbf{p}=\mathbf{0}$.
\end{proof}

We now prove that for every sufficiently regular Helmholtz solution $u$, the function $u-\boldsymbol{\mathcal{V}}u$ belongs to the global set $\mathcal{S}_\kappa^{N,M}(\mathcal{T}_h)$. This result is fundamental in connecting the two approximation spaces and sets the stage for the subsequent convergence results.

\begin{proposition} \label{prop: u-Vu}
    Let $N,M\in \mathbb{N}^*$ such that $N\leq \lceil M/2\rceil$, and $u \in H_\kappa^{M+1}(\Omega)$ an homogeneous Helmholtz solution. Then
    \begin{equation*}
        u-\boldsymbol{\mathcal{V}}u \in \mathcal{S}_\kappa^{N,M}(\mathcal{T}_h)\cap H_\kappa^{M+1}(\mathcal{T}_h).
    \end{equation*}
\end{proposition}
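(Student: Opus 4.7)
The plan is to verify one by one each of the four defining properties of the set $\mathcal{S}_\kappa^{N,M}(\mathcal{T}_h)$ from Definition \ref{def: global set} for $v := u - \boldsymbol{\mathcal{V}}u$, and then address the additional regularity $v \in H_\kappa^{M+1}(\mathcal{T}_h)$.

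First, observe that since $N \leq \lceil M/2\rceil$ one has $2N \leq M+1$, so $u \in H_\kappa^{M+1}(\Omega) \subset H_\kappa^{2N}(\Omega)$, and hence $\boldsymbol{\mathcal{V}}u$ is well-defined via \eqref{eq: collocation operator}. Moreover $\boldsymbol{\mathcal{V}}u \in V_N(\mathcal{N}_h) \subset H^1_\kappa(\Omega)$, so $v \in H^1_\kappa(\Omega)$. Regarding the PDE, $u$ is a global Helmholtz solution and, by the Trefftz property \eqref{eq: node-based Trefftz space conformity}, $\boldsymbol{\mathcal{V}}u$ satisfies the homogeneous Helmholtz equation in every $K \in \mathcal{T}_h$; hence so does $v$. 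As for the broken regularity, Lemma \ref{lem: V continuity} (applied with $M$ replaced by $M+1$) yields $\boldsymbol{\mathcal{V}}u \in H_\kappa^{M+1}(\mathcal{T}_h)$, so $v \in H_\kappa^{M+1}(\mathcal{T}_h)$, and the trace of $v$ on every $\mathbf{s} \in \Sigma_h$ lies in $H_\kappa^M(\mathbf{s})$ by a standard trace theorem.

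It remains to verify the nodal vanishing conditions for $0 \leq n \leq N-1$. For a horizontal edge $\mathbf{s}$ with an endpoint $\mathbf{p} \in \partial \mathbf{s} \subset \mathcal{N}_h$, since $\partial_1$ is tangential to $\mathbf{s}$, one has $\partial_1^{2n}(v|_\mathbf{s})(\mathbf{p}) = \partial_1^{2n} v(\mathbf{p})$, which equals zero directly from the defining collocation condition \eqref{eq: local collocation operator condition} stating that $\partial_1^{2n}\boldsymbol{\mathcal{V}}u(\mathbf{p}) = \partial_1^{2n} u(\mathbf{p})$ for all $\mathbf{p} \in \mathcal{N}_h$ and $0 \leq n \leq N-1$.

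The key step — and what I expect to be the only nontrivial point — is the analogous vanishing for \emph{vertical} edges, where we need $\partial_2^{2n} v(\mathbf{p}) = 0$ even though $\boldsymbol{\mathcal{V}}u$ is only designed to match $\partial_1^{2n} u$ at the nodes. The idea is to convert tangential information between the two coordinate directions using the Helmholtz equation, exactly as encoded by Lemma \ref{lem: zero_nodes_lemma}. Fix any cell $K \in \mathcal{T}_h$ having $\mathbf{p}$ in its closure: on $K$ the function $v$ solves the homogeneous Helmholtz equation, lies in $H_\kappa^{2N}(K)$, and already satisfies $\partial_1^{2n} v(\mathbf{p}) = 0$ for all $0 \leq n \leq N-1$ by the argument above. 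Applying Lemma \ref{lem: zero_nodes_lemma} to $v|_K$ at the corner $\mathbf{p}$ gives $\partial_2^{2n} v(\mathbf{p}) = 0$, and since $\partial_2$ is tangential to the vertical edge $\mathbf{s}$ this is exactly the required identity $\partial_2^{2n}(v|_\mathbf{s})(\mathbf{p}) = 0$. Collecting all four properties concludes $v \in \mathcal{S}_\kappa^{N,M}(\mathcal{T}_h)$, and combined with the broken $H_\kappa^{M+1}$ regularity established above, the proof is complete.
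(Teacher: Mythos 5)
Your proposal is correct and follows essentially the same route as the paper: decompose the verification into the four defining conditions of $\mathcal{S}_\kappa^{N,M}(\mathcal{T}_h)$, get the $\partial_1$-vanishing at nodes from the collocation property of $\boldsymbol{\mathcal{V}}$ (Proposition \ref{th: nodal collocation}), and transfer it to the $\partial_2$-derivatives via Lemma \ref{lem: zero_nodes_lemma}, which is exactly the paper's key step. The only cosmetic differences are that the paper obtains the broken $H_\kappa^{M+1}$ regularity from piecewise analyticity of $\boldsymbol{\mathcal{V}}u$ rather than from Lemma \ref{lem: V continuity}, and cites its own trace lemma (Lemma \ref{lem: trace inequality}) where you invoke a standard trace theorem.
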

\begin{proof}
     Since $u \in H_\kappa^{M+1}(\Omega)$ and $\boldsymbol{\mathcal{V}}u$ is piecewise analytic, we have that $u - \boldsymbol{\mathcal{V}}u \in H_\kappa^{M+1}(\mathcal{T}_h)$.
     
     We now verify that $u - \boldsymbol{\mathcal{V}}u \in \mathcal{S}_\kappa^{N,M}(\mathcal{T}_h)$ by checking the conditions in Definition \ref{def: global set}.   
     As $u \in H_\kappa^{M+1}(\Omega)$ is a homogeneous Helmholtz solution in $\Omega$ and $\boldsymbol{\mathcal{V}}u$ belongs to the $H^1_\kappa(\Omega)$-conforming Trefftz space $V_N(\mathcal{N}_h)$, it follows that $u - \boldsymbol{\mathcal{V}}u\in H^1_\kappa(\Omega)$, and satisfies the homogeneous Helmholtz equation locally in each element $K \in \mathcal{T}_h$.     
     Moreover, Lemma \ref{lem: trace inequality} ensures that $(u-\boldsymbol{\mathcal{V}}u)|_{\mathbf{s}}\in H_\kappa^M(\mathbf{s})$ for any $\mathbf{s}\in \Sigma_h$.    
     Thanks to Lemma \ref{lem: psi_n properties} and the fact that $u \in H_\kappa^{M+1}(\Omega) \subset H_\kappa^{2N}(\Omega)$, we have $\partial_i^{2n}(u - \boldsymbol{\mathcal{V}}u) \in C^0(\overline{\Omega})$ for all $0 \leq n \leq N-1$ and $i = 1,2$.
     Moreover, by Proposition \ref{th: nodal collocation}, we know that $\partial_1^{2n}(u - \boldsymbol{\mathcal{V}}u)(\mathbf{p}) = 0$ for every $\mathbf{p} \in \mathcal{N}_h$ and $0 \leq n \leq N-1$.
     Since $u - \boldsymbol{\mathcal{V}}u \in H_\kappa^{2N}(\mathcal{T}_h)$ satisfies the homogeneous Helmholtz equation on each element $K \in \mathcal{T}_h$, Lemma \ref{lem: zero_nodes_lemma} implies that also $\partial_2^{2n}(u - \boldsymbol{\mathcal{V}}u)(\mathbf{p}) = 0$ for all $\mathbf{p} \in \mathcal{N}_h$ and $0 \leq n \leq N-1$.
     Therefore, the vanishing derivative conditions in Definition \ref{def: global set} are satisfied, and $u - \boldsymbol{\mathcal{V}}u \in \mathcal{S}_\kappa^{N,M}(\mathcal{T}_h)$.
\end{proof}

\section{Combined edge-and-node Trefftz space} \label{sec: Combined edge-and-node Trefftz space}

This section introduces the combined edge-and-node Trefftz space and its associated interpolation operator, providing error and stability estimates. These include bounds for fixed $\kappa h$ as the edge and node parameters, $N_\mathbf{e}$ and $N_\mathbf{n}$, vary, as well as results for the high-frequency regime.

\subsection{Construction of the combined Trefftz space}

Let us introduce the combined Trefftz space, obtained by summing the edge-based space $V_{N_{\mathbf{e}}}(\Sigma_h)$ and the node-based space $V_{N_{\mathbf{n}}}(\mathcal{N}_h)$, presented in \eqref{eq: edge-based Trefftz space} and \eqref{eq: node-based Trefftz space}, respectively.

\begin{definition}[Combined edge-and-node Trefftz space] \label{def: edge-and-node Trefftz space}
For any $N_{\mathbf{e}},N_{\mathbf{n}}\in \mathbb{N}^*$, define
\begin{equation} \label{eq: combined Trefftz space}
    V_{\mathbf{N}}(\mathcal{T}_h) :=  V_{N_{\mathbf{e}}}(\Sigma_h) \oplus V_{N_\mathbf{n}}(\mathcal{N}_h), \qquad \text{with} \qquad \mathbf{N} := (N_{\mathbf{e}}, N_\mathbf{n}).
\end{equation}
\end{definition}

Owing to the properties of these two approximation spaces -- see \eqref{eq: edge-based Trefftz space conformity} and \eqref{eq: node-based Trefftz space conformity} -- the combined space $V_{\mathbf{N}}(\mathcal{T}_h)$ is itself a Trefftz space and inherits global conformity. In particular, it consists of globally $H_\kappa^1(\Omega)$-regular functions that satisfy the homogeneous Helmholtz equation element-wise:
\begin{equation*}
    V_{\mathbf{N}}(\mathcal{T}_h)\subset \{u \in H_\kappa^1(\Omega): -\Delta u -\kappa^2 u=0 \quad\!\! \text{in any}\quad\!\!\!\! K \in \mathcal{T}_h\}.
\end{equation*}

The combined interpolation operator onto $V_{\mathbf{N}}(\mathcal{T}_h)$ is constructed by combining edge and node contributions.
Let $M \in \mathbb{N}^*$ and $u \in H^{M+1}_\kappa(\Omega)$ be an homogeneous Helmholtz solution.
The operator is designed so that $u - \boldsymbol{\mathcal{V}} u$ can be effectively approximated within the edge-based space $V_{N_{\mathbf{e}}}(\Sigma_h)$.
Since at most $N_{\mathbf{n}}$ node basis functions are available, and $u$ has at most Sobolev-regularity $M+1$, the collocation system \eqref{eq: local collocation operator condition} -- and hence $\boldsymbol{\mathcal{V}}u$ -- is well-defined for at most
\begin{equation*}
    N=\min(N_{\mathbf{n}},\lceil M/2\rceil).
\end{equation*}
This allows us to define the combined interpolant as
\begin{equation*}
    \boldsymbol{\mathcal{I}}:H_\kappa^{M+1}(\Omega)\rightarrow H_\kappa^{1}(\Omega), \qquad \boldsymbol{\mathcal{I}}:=\boldsymbol{\mathcal{E}}\left(\textrm{Id}-\boldsymbol{\mathcal{V}}\right)+\boldsymbol{\mathcal{V}},
\end{equation*}
with $\boldsymbol{\mathcal{E}}$ the edge-based interpolant \eqref{eq: global interpolant} and $\boldsymbol{\mathcal{V}}$ the nodal operator \eqref{eq: collocation operator}. More explicitly, we have
\begin{align}
    &\boldsymbol{\mathcal{I}}u=\boldsymbol{\mathcal{E}}(\textrm{Id}-\boldsymbol{\mathcal{V}})u+\boldsymbol{\mathcal{V}}u=\sum_{\mathbf{s}\in \Sigma_h}\sum_{n=1}^{N_{\mathbf{e}}}\alpha_{\mathbf{s},n}\phi_{\mathbf{s},n}+\sum_{\mathbf{p}\in \mathcal{N}_h}\sum_{m=1}^{N}\beta_{\mathbf{p},m}\psi_{\mathbf{p},m}, \qquad u \in H^{M+1}_\kappa(\Omega), \nonumber \\
    &\text{where}\qquad \qquad \alpha_{\mathbf{s},n}:=\frac{( (u-\boldsymbol{\mathcal{V}}u),\phi_{\mathbf{s},n})_{L^2(\mathbf{s})}}{\| \phi_{\mathbf{s},n}\|^2_{L^2(\mathbf{s})}}, \qquad \qquad\beta_{\mathbf{p},m} \text{ solutions to \eqref{eq: local collocation operator condition}}. \qquad \qquad \label{eq: coefficient final alpha}
\end{align}

\subsection{Accuracy with respect to edge and node parameters} \label{sec: Accuracy with respect to edge and node parameters}

We begin by presenting best-approximation error estimates, focusing on their dependence on the parameters $N_{\mathbf{e}}, N_{\mathbf{n}} \in \mathbb{N}^*$. The results are stated in full generality and then specialized to two main cases: a Helmholtz solution analytic on $\overline{\Omega}$ and one of low Sobolev-regularity in $H^2_\kappa(\Omega)$.

\begin{theorem}[Edge-and-node based approximation result] \label{th: best approximation}
    Let $N_\mathbf{e},N_\mathbf{n},M\in \mathbb{N}^*\!$, $r\in\{0,1\}$, and assume that \eqref{eq: nuN} holds.
    Then, for any homogeneous Helmholtz solution $u\in H_\kappa^{M+1}(\Omega)$, the best-approximation error in the combined edge-and-node based Trefftz space $V_{\mathbf{N}}(\mathcal{T}_h)$ satisfies: 
        \begin{align*}
        &\inf_{v \in V_{\mathbf{N}}(\mathcal{T}_h)}\|u-v\|_{H_{\kappa}^r(\Omega)}\leq\|u-\boldsymbol{\mathcal{I}}u\|_{H_\kappa^r(\Omega)}\\
        & \qquad\leq  C_{1}\left[1+C_{2}\chi_{\lceil \mu/2 \rceil}(\mu+2)\left(\lceil \tfrac{\mu}{2} \rceil+C_{3}\lceil \tfrac{\mu}{2} \rceil^{\frac{3}{2}}\left(\frac{\pi \rho\lceil \tfrac{\mu}{2} \rceil}{\kappa h}\right)^{\mu+\frac{1}{2}} \right)\right]\nu^{r-(\mu+1/2)}_{N_{\mathbf{e}}}\|u\|_{H_\kappa^{\mu+1}(\Omega)},
        \end{align*}
    where $\mu=\min(2N_\mathbf{n},M)$, and $\chi_{\lceil \mu/2 \rceil}$ and $C_{i}>0$ for $i=1,2,3$ are defined in \eqref{eq:beta_pn0_p}, \eqref{eq: constants main theorem 1} and \eqref{eq: constants main theorem 2 and 3}.
\end{theorem}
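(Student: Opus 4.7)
The strategy is to reduce the analysis to the edge-based estimate of Proposition~\ref{th: main theorem_2} applied to the corrected function $u - \boldsymbol{\mathcal{V}}u$. Indeed, from the definition $\boldsymbol{\mathcal{I}} = \boldsymbol{\mathcal{E}}(\mathrm{Id}-\boldsymbol{\mathcal{V}}) + \boldsymbol{\mathcal{V}}$, one has the key decomposition
\[
u - \boldsymbol{\mathcal{I}}u \;=\; (u-\boldsymbol{\mathcal{V}}u) - \boldsymbol{\mathcal{E}}(u-\boldsymbol{\mathcal{V}}u),
\]
so that the combined interpolation error is nothing but an edge-based approximation error for $u - \boldsymbol{\mathcal{V}}u$. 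The plan is therefore threefold: (i) verify that $u-\boldsymbol{\mathcal{V}}u$ lies in the class $\mathcal{S}_\kappa^{N,M}(\mathcal{T}_h) \cap H_\kappa^{M+1}(\mathcal{T}_h)$ for an appropriate $N$, so that Proposition~\ref{th: main theorem_2} is applicable; (ii) apply that edge-based estimate; (iii) control the resulting $H^{\mu+1}_\kappa$-norm of $u-\boldsymbol{\mathcal{V}}u$ through the continuity bound on $\boldsymbol{\mathcal{V}}$ from Lemma~\ref{lem: V continuity}.

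For step (i), set $\mu := \min(2N_\mathbf{n}, M)$ and $N := \lceil \mu/2 \rceil$. A direct case distinction shows $N = \min(N_\mathbf{n}, \lceil M/2 \rceil)$, so that the nodal collocation system \eqref{eq: local collocation operator condition} defining $\boldsymbol{\mathcal{V}}u$ is solvable by Proposition~\ref{th: nodal collocation}, the $N$ nodal basis functions actually involved belong to $V_{N_\mathbf{n}}(\mathcal{N}_h)$, and thus $\boldsymbol{\mathcal{I}}u \in V_\mathbf{N}(\mathcal{T}_h)$. Since $u \in H_\kappa^{M+1}(\Omega)$ is a homogeneous Helmholtz solution and $N \leq \lceil M/2 \rceil$, Proposition~\ref{prop: u-Vu} gives $u - \boldsymbol{\mathcal{V}}u \in \mathcal{S}_\kappa^{N,M}(\mathcal{T}_h) \cap H^{M+1}_\kappa(\mathcal{T}_h)$. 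Since also $r \in \{0,1\} \leq \mu \leq \min(2N,M)$, estimate \eqref{eq: global approximation result} of Proposition~\ref{th: main theorem_2} applied to $u-\boldsymbol{\mathcal{V}}u$ yields
\[
\|u - \boldsymbol{\mathcal{I}}u\|_{H^r_\kappa(\mathcal{T}_h)} \;\leq\; C_1\,\tfrac{r+1}{2}\, \nu^{r-(\mu+1/2)}_{N_\mathbf{e}}\, \|u-\boldsymbol{\mathcal{V}}u\|_{H^{\mu+1}_\kappa(\mathcal{T}_h)}.
\]

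For step (iii), the triangle inequality gives $\|u-\boldsymbol{\mathcal{V}}u\|_{H^{\mu+1}_\kappa(\mathcal{T}_h)} \leq \|u\|_{H^{\mu+1}_\kappa(\Omega)} + \|\boldsymbol{\mathcal{V}}u\|_{H^{\mu+1}_\kappa(\mathcal{T}_h)}$, and Lemma~\ref{lem: V continuity} with Sobolev index $\mu+1$ and node parameter $N = \lceil \mu/2 \rceil$ handles the second summand. Here one uses that $2N \leq \mu+1$, hence $\|u\|_{H^{2N}_\kappa(\Omega)} \leq \|u\|_{H^{\mu+1}_\kappa(\Omega)}$, which lets us factor out a common $\|u\|_{H^{\mu+1}_\kappa(\Omega)}$ and isolate the bracketed prefactor of the statement. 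Substituting into the bound from step (ii), recalling that $\boldsymbol{\mathcal{I}}u \in V_\mathbf{N}(\mathcal{T}_h) \subset H_\kappa^1(\Omega)$ so that $\|u-\boldsymbol{\mathcal{I}}u\|_{H^r_\kappa(\mathcal{T}_h)} = \|u-\boldsymbol{\mathcal{I}}u\|_{H^r_\kappa(\Omega)}$ for $r\in\{0,1\}$, and using the trivial inequality $\inf_{v \in V_\mathbf{N}(\mathcal{T}_h)}\|u-v\|_{H^r_\kappa(\Omega)} \leq \|u-\boldsymbol{\mathcal{I}}u\|_{H^r_\kappa(\Omega)}$, yields the claimed bound.

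The main difficulty is not analytical but combinatorial: one must carefully reconcile the three pairs of admissibility constraints imposed by Proposition~\ref{prop: u-Vu} ($N \leq \lceil M/2\rceil$), Proposition~\ref{th: main theorem_2} ($r \leq \mu \leq \min(2N,M)$), and Lemma~\ref{lem: V continuity} ($u \in H^{2N}_\kappa(\Omega)$ with $2N \leq \mu+1$). The choices $\mu = \min(2N_\mathbf{n},M)$ and $N = \lceil \mu/2 \rceil$ are dictated by this index-matching; once they are fixed, the proof is a direct chaining of the three already-established ingredients.
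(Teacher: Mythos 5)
Your proposal is correct and follows essentially the same route as the paper's own proof: the same decomposition $u-\boldsymbol{\mathcal{I}}u=(u-\boldsymbol{\mathcal{V}}u)-\boldsymbol{\mathcal{E}}(u-\boldsymbol{\mathcal{V}}u)$, membership of $u-\boldsymbol{\mathcal{V}}u$ in $\mathcal{S}_\kappa^{N,M}(\mathcal{T}_h)\cap H_\kappa^{M+1}(\mathcal{T}_h)$ via Proposition~\ref{prop: u-Vu}, the global edge-based estimate of Proposition~\ref{th: main theorem_2}, and Lemma~\ref{lem: V continuity} to absorb $\|\boldsymbol{\mathcal{V}}u\|_{H_\kappa^{\mu+1}(\mathcal{T}_h)}$, with the same index bookkeeping $N=\lceil\mu/2\rceil=\min(N_\mathbf{n},\lceil M/2\rceil)$ and $2N\le\mu+1$. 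The only cosmetic difference is that you retain the factor $(r+1)/2\le 1$, which the paper simply drops.
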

\begin{proof}
    Since $N=\min(N_{\mathbf{n}},\lceil M/2 \rceil)\leq N_{\mathbf{n}}$, it follows that $\boldsymbol{\mathcal{V}}u\in V_{N}(\mathcal{N}_h) \subset V_{N_{\mathbf{n}}}(\mathcal{N}_h)$, and hence $\boldsymbol{\mathcal{I}}u\in V_\mathbf{N}(\mathcal{T}_h)$, with $\mathbf{N}=(N_{\mathbf{e}},N_{\mathbf{n}})$. Therefore:
    \begin{equation} \label{eq: main proof step 1}
        \inf_{v \in V_{\mathbf{N}}(\mathcal{T}_h)}\|u-v\|_{H_{\kappa}^r(\Omega)}\leq\|u-\boldsymbol{\mathcal{I}}u\|_{H_{\kappa}^r(\Omega)}=\|(u-\boldsymbol{\mathcal{V}}u)-\boldsymbol{\mathcal{E}}(u-\boldsymbol{\mathcal{V}}u)\|_{H_{\kappa}^r(\mathcal{T}_h)}.
    \end{equation}
    As $u \in H_\kappa^{M+1}(\Omega)$ is a Helmholtz solution and $N \leq \lceil M/2 \rceil$, Proposition \ref{prop: u-Vu} implies that $u-\boldsymbol{\mathcal{V}}u \in \mathcal{S}_\kappa^{N,M}(\mathcal{T}_h)\cap H_\kappa^{M+1}(\mathcal{T}_h)$.
    Moreover $r\leq \mu= \min(2N_{\mathbf{n}},M)=\min(2N,M)$, so, applying Proposition~\ref{th: main theorem_2} to $u-\boldsymbol{\mathcal{V}}u$, from \eqref{eq: main proof step 1} it follows
    \begin{equation*}
        \|u-\boldsymbol{\mathcal{I}}u\|_{H_{\kappa}^r(\Omega)}
        \leq C_{1} \nu^{r-(\mu+1/2)}_{N_{\mathbf{e}}}\left(\| u\|_{H_{\kappa}^{\mu+1}(\Omega)}+\|\boldsymbol{\mathcal{V}}u\|_{H_{\kappa}^{\mu+1}(\mathcal{T}_h)}\right),
    \end{equation*}
    where $C_{1}>0$ is defined in \eqref{eq: constants main theorem 1}.
    Hence, applying Lemma \ref{lem: V continuity} to bound $\|\boldsymbol{\mathcal{V}}u\|_{H_\kappa^{\mu+1}(\mathcal{T}_h)}$, we obtain
    \begin{align*}
        \frac{\|u-\boldsymbol{\mathcal{I}}u\|_{H_{\kappa}^r(\Omega)}}{C_{1}\nu^{r-(\mu+1/2)}_{N_{\mathbf{e}}}}
        & \leq \| u\|_{H_{\kappa}^{\mu+1}(\Omega)}+C_{2}\chi_{N}(\mu+2)\left(N+C_{3}N^{\frac{3}{2}}(\rho\nu_{N})^{\mu+\frac{1}{2}}\right) \|u\|_{H_\kappa^{2N}(\Omega)}\\
        &\leq \left[1+C_{2}\chi_{N}(\mu+2)\left(N+C_{3}N^{\frac{3}{2}}\left(\frac{\pi \rho N}{\kappa h}\right)^{\mu+\frac{1}{2}} \right)\right]\|u\|_{H_\kappa^{\max(\mu+1,2N)}(\Omega)}\\
        &\leq \left[1+C_{2}\chi_{\lceil \mu/2\rceil}(\mu+2)\left(\lceil \tfrac{\mu}{2}\rceil+C_{3}\lceil \tfrac{\mu}{2}\rceil^{\frac{3}{2}}\left(\frac{\pi \rho\lceil \tfrac{\mu}{2}\rceil}{\kappa h}\right)^{\mu+\frac{1}{2}} \right)\right]\|u\|_{H_\kappa^{\mu+1}(\Omega)},
    \end{align*}
    where in the last inequality we exploit the fact that $N = \lceil \mu/2\rceil$ and $2N \leq \mu+1$.
\end{proof}

\paragraph{Analytic Helmholtz solution.}

As a consequence, when the exact solution $u$ is analytic on $\overline{\Omega}$, the approximation error achieves \textit{spectral convergence rates}.
This behavior is formalized in the following corollary.

\begin{corollary}[Analytic Helmholtz solution] \label{cor: corollary Cinfty}
Let $N_{\mathbf{e}}, N_{\mathbf{n}}\in \mathbb{N}^*$, $r\in\{0,1\}$, and let $u$ be a homogeneous Helmholtz solution analytic on $\overline{\Omega}$. Assume that the condition \eqref{eq: nuN} holds.
\begin{itemize}
    \item \emph{(Algebraic convergence in $N_{\mathbf{e}}$).} There exists an explicit $C=C(\rho,\kappa h,r,N_{\mathbf{n}})>0$ such that
    \begin{equation*}
        \inf_{v \in V_{\mathbf{N}}(\mathcal{T}_h)}\|u-v\|_{H_{\kappa}^r(\Omega)}\leq\|u-\boldsymbol{\mathcal{I}}u\|_{H_\kappa^r(\Omega)}\leq C N_{\mathbf{e}}^{-(2N_{\mathbf{n}}+\frac{1}{2}-r)}\|u\|_{H_\kappa^{2N_{\mathbf{n}}+1}(\Omega)}.
    \end{equation*}
    \item \emph{(Geometric convergence in $N_{\mathbf{n}}$).} If the number of basis functions $N_{\mathbf{e}}$ and $N_{\mathbf{n}}$ are such that
    \begin{equation*}
        N_{\mathbf{e}} = \tau_{\mathbf{e}} N_{\mathbf{n}}, \qquad \text{where} \qquad \tau_{\mathbf{e}} > \rho,
    \end{equation*}
    then there exists an explicit constant $C=C(\rho,\kappa h,r)>0$ such that
    \begin{equation*}
        \inf_{v \in V_{\mathbf{N}}(\mathcal{T}_h)}\|u-v\|_{H_{\kappa}^r(\Omega)}\leq\|u-\boldsymbol{\mathcal{I}}u\|_{H_\kappa^r(\Omega)}\leq C N_{\mathbf{n}}^{\frac{5}{2}+r}\left(\frac{\rho}{\tau_{\mathbf{e}}}\right)^{2N_{\mathbf{n}}+\frac{1}{2}-r}\|u\|_{H_\kappa^{2N_{\mathbf{n}}+1}(\Omega)}.
    \end{equation*}
\end{itemize}

\end{corollary}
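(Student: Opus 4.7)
The plan is to specialize Theorem~\ref{th: best approximation} to the analytic setting by pushing the Sobolev index $M$, and then to rearrange the resulting bound to expose the $N_\mathbf{e}$- or $N_\mathbf{n}$-dependence in each case. Since $u$ is analytic on $\overline{\Omega}$, it belongs to $H_\kappa^{M+1}(\Omega)$ for every $M \in \mathbb{N}^*$. I choose $M = 2N_\mathbf{n}$, which makes the exponent parameter $\mu = \min(2N_\mathbf{n},M)$ attain the value $\mu = 2N_\mathbf{n}$, and thus $\lceil\mu/2\rceil = N_\mathbf{n}$. Plugging this choice and $\nu_{N_\mathbf{e}} = N_\mathbf{e}\pi/(\kappa h)$ into Theorem~\ref{th: best approximation} gives an estimate of the schematic form
\begin{equation*}
\|u-\boldsymbol{\mathcal{I}}u\|_{H_\kappa^r(\Omega)} \leq C_1\bigl[1+A_{N_\mathbf{n}}\bigr]\,\nu_{N_\mathbf{e}}^{r-(2N_\mathbf{n}+1/2)}\|u\|_{H_\kappa^{2N_\mathbf{n}+1}(\Omega)},
\end{equation*}
where $A_{N_\mathbf{n}}$ collects the bracketed quantity from Theorem~\ref{th: best approximation} with $\mu=2N_\mathbf{n}$.

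For the algebraic part, $N_\mathbf{n}$ is fixed, so $[1+A_{N_\mathbf{n}}]$ reduces to a constant depending only on $(\rho,\kappa h, r, N_\mathbf{n})$. The factor $\nu_{N_\mathbf{e}}^{r-(2N_\mathbf{n}+1/2)}$ then yields directly the $N_\mathbf{e}^{-(2N_\mathbf{n}+1/2-r)}$ decay, once the $(\pi/(\kappa h))^{r-(2N_\mathbf{n}+1/2)}$ prefactor is absorbed into $C$.

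For the geometric part, I set $N_\mathbf{e} = \tau_\mathbf{e} N_\mathbf{n}$ and track powers carefully. The dominant contribution to $A_{N_\mathbf{n}}$ is of the form
\begin{equation*}
C_2 \chi_{N_\mathbf{n}}(2N_\mathbf{n}+2)\,C_3\, N_\mathbf{n}^{3/2}\Bigl(\tfrac{\pi\rho N_\mathbf{n}}{\kappa h}\Bigr)^{2N_\mathbf{n}+1/2}.
\end{equation*}
The crucial observation is that the first branch of the minimum in the definition of $\chi_N$ bounds $\chi_{N_\mathbf{n}} \leq \rho e^{\kappa h}/(\kappa h)$ uniformly in $N_\mathbf{n}$ at fixed $\kappa h$, so this factor contributes only through the $\kappa h$-dependent constant. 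Multiplying by $\nu_{N_\mathbf{e}}^{r-(2N_\mathbf{n}+1/2)} = (\tau_\mathbf{e}\pi N_\mathbf{n}/(\kappa h))^{r-(2N_\mathbf{n}+1/2)}$, the factors $(\pi N_\mathbf{n}/(\kappa h))^{2N_\mathbf{n}+1/2}$ and $(\pi N_\mathbf{n}/(\kappa h))^{-(2N_\mathbf{n}+1/2)}$ cancel exactly, leaving the harmless piece $(\pi N_\mathbf{n}/(\kappa h))^{r}$; meanwhile the separation of $\rho$ and $\tau_\mathbf{e}$ yields $\rho^{2N_\mathbf{n}+1/2}\tau_\mathbf{e}^{r-(2N_\mathbf{n}+1/2)} = \rho^r\,(\rho/\tau_\mathbf{e})^{2N_\mathbf{n}+1/2-r}$. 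Because $\tau_\mathbf{e}>\rho$, this supplies the geometric factor, while collecting the remaining polynomial contributions produces the $N_\mathbf{n}^{5/2+r}$ prefactor. The subdominant terms of $A_{N_\mathbf{n}}$, namely the $1$ and the $N_\mathbf{n}$-term without the $C_3$ power, are dominated by the geometric bound since $(\rho/\tau_\mathbf{e})^{2N_\mathbf{n}+1/2-r}\to 0$ times any polynomial is eventually bigger than any constant-times-polynomial decay in $\nu_{N_\mathbf{e}}^{-(2N_\mathbf{n}+1/2-r)}$, and they can be absorbed into $C$.

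The main obstacle is purely bookkeeping: one must resist bounding $(\pi\rho N_\mathbf{n}/(\kappa h))^{2N_\mathbf{n}+1/2}$ as a rapidly diverging quantity, and instead arrange for its $N_\mathbf{n}$- and $\kappa h$-scaling to cancel exactly against the corresponding factor hidden in $\nu_{N_\mathbf{e}}^{-(2N_\mathbf{n}+1/2)}$, so that only the ratio $(\rho/\tau_\mathbf{e})^{2N_\mathbf{n}+1/2-r}$ survives. Verifying that the hypothesis \eqref{eq: nuN} is consistent with $\tau_\mathbf{e}>\rho$ for $N_\mathbf{n}$ large enough (it is, since $\nu_{N_\mathbf{e}} = \tau_\mathbf{e}\pi N_\mathbf{n}/(\kappa h)\geq\sqrt{2}$) is the only additional sanity check needed.
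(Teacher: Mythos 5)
Your proposal is correct and follows essentially the same route as the paper: apply Theorem~\ref{th: best approximation} with $M=\mu=2N_{\mathbf{n}}$, treat the bracket as an $N_{\mathbf{n}}$-dependent constant for the algebraic bound, and for the geometric bound substitute $N_{\mathbf{e}}=\tau_{\mathbf{e}}N_{\mathbf{n}}$, use $\chi_{N_{\mathbf{n}}}\le \rho e^{\kappa h}/(\kappa h)$, and cancel $(\pi N_{\mathbf{n}}/(\kappa h))^{2N_{\mathbf{n}}+1/2}$ against $\nu_{N_{\mathbf{e}}}^{-(2N_{\mathbf{n}}+1/2)}$ to isolate $(\rho/\tau_{\mathbf{e}})^{2N_{\mathbf{n}}+1/2-r}$. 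The paper handles your ``subdominant'' terms by the explicit uniform bound $(\kappa h/(\rho N_{\mathbf{n}}\pi))^{2N_{\mathbf{n}}}\le \exp(2\kappa h/(\rho e\pi))$, which is the clean way to justify the absorption you describe.
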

\begin{proof}
    The first bound follows directly from Theorem \ref{th: best approximation} with $\mu = 2N_{\mathbf{n}}$, namely
    \begin{equation} \label{eq: AA}
        \frac{\|u-\boldsymbol{\mathcal{I}}u\|_{H_\kappa^r(\Omega)}}{\|u\|_{H_\kappa^{2N_{\mathbf{n}}+1}(\Omega)}}\leq C_{1}\left[1+4C_{2}\chi_{N_{\mathbf{n}}}\left(N_{\mathbf{n}}^{2}+C_{3}N_{\mathbf{n}}^{\frac{5}{2}}\left(\frac{\rho N_{\mathbf{n}}\pi}{\kappa h}\right)^{2N_{\mathbf{n}}+\frac{1}{2}} \right)\right]\left(\frac{\kappa h}{\pi N_{\mathbf{e}}}\right)^{2N_{\mathbf{n}}+\frac{1}{2}-r},
    \end{equation}
    where $\chi_{N_{\mathbf{n}}}$ and the constants $C_{i}>0$ for $i=1,2,3$ are defined in \eqref{eq:beta_pn0_p}, \eqref{eq: constants main theorem 1} and \eqref{eq: constants main theorem 2 and 3}, respectively.
    
    As for the second bound, setting $N_{\mathbf{e}} = \tau_{\mathbf{e}} N_{\mathbf{n}}$ in \eqref{eq: AA}, one obtains
    \begin{align} 
        &\frac{\|u-\boldsymbol{\mathcal{I}}u\|_{H_\kappa^r(\Omega)}}{\|u\|_{H_\kappa^{2N_{\mathbf{n}}+1}(\Omega)}}\leq C_{1}\left[1+4C_{2}\chi_{N_{\mathbf{n}}}\left(N_{\mathbf{n}}^{2}+C_{3}N_{\mathbf{n}}^{\frac{5}{2}}\left(\frac{\rho N_{\mathbf{n}}\pi}{\kappa h}\right)^{2N_{\mathbf{n}}+\frac{1}{2}} \right)\right]\left(\frac{\kappa h}{\pi \tau_{\mathbf{e}}N_{\mathbf{n}}}\right)^{2N_{\mathbf{n}}+\frac{1}{2}-r} \nonumber\\
        &\quad \leq C_{1}\left[1+4C_{2}\chi_{N_{\mathbf{n}}}\left(N_{\mathbf{n}}^{2}+C_{3}N_{\mathbf{n}}^{\frac{5}{2}}\left(\frac{N_{\mathbf{n}}\pi}{\kappa h}\right)^{2N_{\mathbf{n}}+\frac{1}{2}} \right)\right]\left(\frac{\kappa h}{N_{\mathbf{n}}\pi}\right)^{2N_{\mathbf{n}}+\frac{1}{2}}\left(\frac{\rho N_{\mathbf{n}}\pi}{\kappa h}\right)^{r}\left(\frac{\rho}{\tau_{\mathbf{e}}}\right)^{2N_{\mathbf{n}}+\frac{1}{2}-r} \nonumber\\
        &\quad= C_{1}\left[\left(1+4C_{2}\chi_{N_{\mathbf{n}}}N_{\mathbf{n}}^{2}\right)\left(\frac{\kappa h}{\rho N_{\mathbf{n}}\pi}\right)^{2N_{\mathbf{n}}+\frac{1}{2}}+4C_{2}C_{3}\chi_{N_{\mathbf{n}}}N_{\mathbf{n}}^{\frac{5}{2}} \right]\left(\frac{\rho N_{\mathbf{n}}\pi}{\kappa h}\right)^{r}\left(\frac{\rho}{\tau_{\mathbf{e}}}\right)^{2N_{\mathbf{n}}+\frac{1}{2}-r}. \label{eq: BB}
    \end{align}
    Moreover, for any $N_{\mathbf{n}}\in \mathbb{N}^*$,
    \begin{equation} \label{eq: bound chi Cinfty}
    \left(\frac{\kappa h}{\rho N_{\mathbf{n}}\pi}\right)^{2N_{\mathbf{n}}}\leq \exp\left(\frac{2\kappa h}{\rho e \pi}\right), \qquad \chi_{N_{\mathbf{n}}} = \min\Bigg(\frac{\rho e^{\kappa h}}{\kappa h},\, e\,\max(1,\kappa h)^{2(N_{\mathbf{n}}-1)}\Bigg)
    \le \frac{\rho e^{\kappa h}}{\kappa h},
    \end{equation}
    so from \eqref{eq: BB} it follows:
    \begin{align*}
        \frac{\|u-\boldsymbol{\mathcal{I}}u\|_{H_\kappa^r(\Omega)}}{\|u\|_{H_\kappa^{2N_{\mathbf{n}}+1}(\Omega)}}\!
        &\leq \! C_{1}\!\left[\left(\!1\!+\!\frac{4C_{2}\rho }{\kappa h e^{-\kappa h}}N_{\mathbf{n}}^{2}\right)\!\!\left(\frac{\kappa h}{\rho N_{\mathbf{n}}\pi}\right)^{\!\frac{1}{2}}\!\!\!e^{\frac{2\kappa h}{\rho e \pi}}\!+\!\frac{4C_{2}C_{3}\rho}{\kappa h e^{-\kappa h}}N_{\mathbf{n}}^{\frac{5}{2}} \right]\!\!\left(\frac{\rho N_{\mathbf{n}}\pi}{\kappa h}\right)^{\!r}\!\!\!\left(\frac{\rho}{\tau_{\mathbf{e}}}\right)^{\!2N_{\mathbf{n}}+\frac{1}{2}-r}\\
        &\leq \! C_{1}\!\left[\left(\!1\!+\!\frac{4C_{2}\rho }{\kappa h e^{-\kappa h}}\right)\!\!\left(\frac{\kappa h}{\rho\pi}\right)^{\!\frac{1}{2}}\!\!\!e^{\frac{2\kappa h}{\rho e \pi}}\!+\!\frac{4C_{2}C_{3}\rho }{\kappa h e^{-\kappa h}} \right]\!\left(\frac{\rho \pi}{\kappa h}\right)^{\!r}N_{\mathbf{n}}^{\frac{5}{2}+r}\left(\frac{\rho}{\tau_{\mathbf{e}}}\right)^{2N_{\mathbf{n}}+\frac{1}{2}-r}. \qedhere
    \end{align*}
\end{proof}

In Corollary \ref{cor: corollary Cinfty} two distinct regimes are identified:
\begin{itemize}
    \item \textit{Algebraic convergence in $N_\mathbf{e}$ with $N_\mathbf{n}$-dependent rate}: For any fixed nodal degree $N_\mathbf{n}$, the interpolation error decays algebraically in $N_\mathbf{e}$, with a rate that improves as $N_\mathbf{n}$ increases.
    \item \textit{Geometric convergence in $N_\mathbf{n}$ with $N_\mathbf{e}$-dependent rate}:
    If the edge degree $N_\mathbf{e}$ is sufficiently large relative to the nodal degree $N_\mathbf{n}$, for instance $N_\mathbf{e} = \tau_{\mathbf{e}} N_\mathbf{n}$ with $\tau_{\mathbf{e}} > \rho$, then the error decays geometrically in $N_\mathbf{n}$, with a rate that improves as $N_\mathbf{e}$ increases.    
\end{itemize}

\paragraph{Helmholtz solution in $H_\kappa^2(\Omega)$.}

If $u \in H_\kappa^2(\Omega)$, Theorem \ref{th: best approximation} guarantees \textit{algebraic convergence in $N_{\mathbf{e}}$, independently of $N_{\mathbf{n}}$}, as the rate is limited by the regularity of $u$.
\begin{corollary}[Helmholtz solution in $H_\kappa^2(\Omega)$] \label{cor: H2}
    Let $N_\mathbf{e},N_\mathbf{n}\in \mathbb{N}^*$, $r\in\{0,1\}$, and let $u\in H^2_\kappa(\Omega)$ be a homogeneous Helmholtz solution. Assume that the condition \eqref{eq: nuN} holds.
    Then, there exists an explicit constant $C=C(\rho, \kappa h,r)>0$ such that
    \begin{equation*}
        \inf_{v \in V_{\mathbf{N}}(\mathcal{T}_h)}\|u-v\|_{H_{\kappa}^r(\Omega)}\leq\|u-\boldsymbol{\mathcal{I}}u\|_{H_\kappa^r(\Omega)}\leq C N_{\mathbf{e}}^{-(\frac{3}{2}-r)}\|u\|_{H_\kappa^{2}(\Omega)}.
    \end{equation*}
\end{corollary}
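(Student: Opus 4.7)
The plan is to specialize Theorem~\ref{th: best approximation} to the lowest admissible regularity $M=1$, which matches exactly the assumption $u\in H_\kappa^2(\Omega)=H_\kappa^{M+1}(\Omega)$. The key observation is that since we must have $N_{\mathbf{n}}\geq 1$, we get $2N_{\mathbf{n}}\geq 2 > 1 = M$, hence $\mu=\min(2N_{\mathbf{n}},M)=1$ \emph{regardless of} $N_{\mathbf{n}}$. In particular $\lceil\mu/2\rceil = 1$, so every quantity that appears in the bracket of Theorem~\ref{th: best approximation} becomes independent of $N_{\mathbf{n}}$. This is what allows us to obtain a constant depending only on $\rho$, $\kappa h$, and $r$.

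First I would check that the hypotheses of Theorem~\ref{th: best approximation} hold: $r\in\{0,1\}$ is permitted, the condition \eqref{eq: nuN} is directly assumed, and $u$ is an homogeneous Helmholtz solution in $H_\kappa^{M+1}(\Omega)$ with $M=1$. Then I would substitute $\mu=1$ and $\lceil\mu/2\rceil=1$ into the bound, producing
\begin{equation*}
    \|u-\boldsymbol{\mathcal{I}}u\|_{H_\kappa^r(\Omega)}\leq C_1\left[1+3\,C_2\,\chi_1\left(1+C_3\left(\frac{\pi\rho}{\kappa h}\right)^{3/2}\right)\right]\nu_{N_{\mathbf{e}}}^{r-3/2}\,\|u\|_{H_\kappa^2(\Omega)}.
\end{equation*}

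Finally, using $\nu_{N_{\mathbf{e}}}=N_{\mathbf{e}}\pi/(\kappa h)$, I would rewrite $\nu_{N_{\mathbf{e}}}^{r-3/2}=(\kappa h/\pi)^{3/2-r}\,N_{\mathbf{e}}^{-(3/2-r)}$ and absorb the prefactor, together with $C_1$, $C_2$, $C_3$, and $\chi_1$ (all of which depend only on $\rho$, $\kappa h$, and geometric data of $\mathcal{T}_h$), into a single explicit constant $C(\rho,\kappa h,r)$. This yields the stated bound.

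There is no real obstacle: the corollary is a direct specialization of Theorem~\ref{th: best approximation}. The only subtle point is recognising that the $H_\kappa^2$-regularity cap forces $\mu=1$ and thereby eliminates all dependence of the constant on $N_{\mathbf{n}}$; without this observation, one might naively leave $\chi_{\lceil\mu/2\rceil}$ and the $N_{\mathbf{n}}$-dependent factors in the bound, obscuring that the rate $N_{\mathbf{e}}^{-(3/2-r)}$ is genuinely independent of the nodal enrichment.
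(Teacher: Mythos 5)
Your proposal is correct and follows exactly the paper's own argument: specialize Theorem~\ref{th: best approximation} with $M=1$, note that $\mu=\min(2N_{\mathbf{n}},1)=1$ forces $\lceil\mu/2\rceil=1$ so all $N_{\mathbf{n}}$-dependence drops out, and absorb $C_1$, $C_2$, $C_3$, $\chi_1$ and the factor $(\kappa h/\pi)^{3/2-r}$ into a single constant $C(\rho,\kappa h,r)$. The only cosmetic difference is that the paper additionally bounds $\chi_1\le e$, which is immaterial since $\chi_1$ already depends only on $\rho$ and $\kappa h$.
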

\begin{proof}
Choosing $\mu = M=1$ in Theorem \ref{th: best approximation}, and since $\chi_{1} = \min(\rho e^{\kappa h}/(\kappa h), e) \leq  e$, one has
\begin{equation} \label{eq: CC}
        \frac{\|u-\boldsymbol{\mathcal{I}}u\|_{H_\kappa^r(\Omega)}}{\|u\|_{H_\kappa^{2}(\Omega)}}\leq C_{1}\left[1+3C_{2}e\left(1
        +C_{3}\left(\frac{\pi \rho}{\kappa h}\right)^{\frac{3}{2}} \right)\right]\left(\frac{\kappa h}{\pi N_{\mathbf{e}}}\right)^{\frac{3}{2}-r},
\end{equation}
where $C_{i}>0$ are the constants defined in \eqref{eq: constants main theorem 1} and \eqref{eq: constants main theorem 2 and 3} for $i=1,2,3$.
\end{proof}

\subsection{Accuracy in the high-frequency regime} \label{sec: Accuracy in high-frequency and low-frequency regimes}

In what follows, we derive convergence estimates in the high-frequency regime, distinguishing as before between analytic and low-regularity homogeneous Helmholtz solutions.

The $\kappa$-explicit analysis is complicated by the presence of the resonant frequencies identified in \eqref{eq: resonant freq nodes}.
For any shape-regularity constant $\rho\ge 1$ in \eqref{eq: shape regularity} and $\kappa h \in (0,+\infty)$, we introduce
\begin{equation} \label{eq: D tilde big}
    \widetilde{D}(\rho,\kappa h):=1+\frac{1}{\widetilde{\textrm{\textup{d}}}(\rho,\kappa h)}+\frac{1}{\sqrt{\widetilde{\textrm{\textup{d}}}_0(\rho,\kappa h)}},
\end{equation}
where $\widetilde{\mathrm{d}} = \widetilde{\mathrm{d}}(\rho,\kappa h)$ and $\widetilde{\mathrm{d}}_0 = \widetilde{\mathrm{d}}_0(\rho,\kappa h)$ are defined in \eqref{eq: definition d_m tilde}.
The quantity $\widetilde{D}(\rho,\kappa h)$ measures how close $\kappa$ is to these resonant frequencies, and it blows up as $\kappa$ approaches any of them.

The next lemma, whose proof is given in Appendix~\ref{proof: lem: d tilde lemma}, will be instrumental in the subsequent derivation of accuracy and stability estimates in the high-frequency regime.

\begin{lemma}[High-frequency regime: $\kappa h \to +\infty$] \label{lem: d tilde lemma}
    For any shape-regularity constant $\rho\geq 1$ such that $\rho^2\in \mathbb{Q}$, there exists an unbounded set $\mathcal{K}\subset (0,+\infty)$ such that
    \begin{equation} \label{lem: d tilde lemma equation}
        \sup_{\kappa h \in \mathcal{K}}\frac{\widetilde{D}(\rho,\kappa h)}{(\kappa h)^2}<+\infty.
    \end{equation}
\end{lemma}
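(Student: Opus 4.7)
Write $\rho^2 = p/q$ in lowest terms with $p,q \in \mathbb{N}^*$, $\gcd(p,q) = 1$, and $p \geq q$ (since $\rho \geq 1$). The strategy is to exhibit an arithmetic progression $\mathcal{K}$ along which both $\widetilde{\mathrm{d}}$ and $\widetilde{\mathrm{d}}_0$ are bounded below by inverse polynomials in $\kappa h$, via a Diophantine integer-gap argument.

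The first step is to convert $\widetilde{\mathrm{d}}$ into a Diophantine expression. Setting $a_1 := \kappa h_1/\pi$ and using the identity $|\sqrt{Y} - Z| = |Y - Z^2|/(\sqrt{Y} + Z)$ together with $\rho^2 = p/q$, one obtains in the mesh orientation $h_1 = h$
\[
  \widetilde{\mathrm{d}}(\rho,\kappa h) \;\geq\; \frac{|\,4 q a_1^2 - q(2n-1)^2 - 4 p m^2\,|}{q \cdot 2 a_1 \cdot \bigl(\sqrt{4 a_1^2 - (2n-1)^2} + 2\rho m\bigr)},
\]
with the roles of $p$ and $q$ exchanged in the mirror orientation $h_1 = h/\rho$. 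The denominator is $O((\kappa h)^2)$, so the bound $\widetilde{\mathrm{d}} \gtrsim 1/(\kappa h)^2$ will follow as soon as the integer in the numerator is nonzero, in which case its absolute value is $\geq 1$. I would therefore propose $\mathcal{K} := \{\pi \ell : \ell \in \mathbb{N}^*\}$ so that $a_1 \in \{\ell, \ell/\rho\}$, reducing nonvanishing to unsolvability of the Diophantine equation $q(4\ell^2 - (2n-1)^2) = 4 p m^2$ (respectively $4 q(\ell^2 - m^2) = p(2n-1)^2$ in the mirror orientation). A reduction modulo $4$ shows that the former forces $4 \mid q$ and the latter forces $4 \mid p$; since $\gcd(p,q) = 1$, at most one of $p, q$ is divisible by $4$, so this obstruction already handles every orientation except possibly one. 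In the exceptional case I would either restrict $\ell$ to a sub-progression selected modulo a higher power of $2$, or rescale to $\mathcal{K}' := \{\pi \rho\, \ell : \ell \in \mathbb{N}^*\}$, which effectively swaps the roles of $p$ and $q$ in the integer equation.

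For $\widetilde{\mathrm{d}}_0$, along $\mathcal{K}$ (possibly restricted so that $2 a_1$ is an even integer, which can be arranged because $\rho^2$ is rational) the infimum is attained at the smallest odd integer above $2 a_1$, at unit distance. A direct computation then gives $\widetilde{\mathrm{d}}_0 \geq 1/\sqrt{\kappa h/\pi}$ and hence $1/\sqrt{\widetilde{\mathrm{d}}_0} = O((\kappa h)^{1/4})$, which is dominated by $(\kappa h)^2$. Substituting both bounds into the definition \eqref{eq: D tilde big} of $\widetilde{D}$ then yields the claim \eqref{lem: d tilde lemma equation}. The main obstacle is the Diophantine step in the exceptional parity case $4 \mid q$ (or symmetrically $4 \mid p$): once the simple mod-$4$ obstruction disappears, excluding solutions of the quadratic equation along an unbounded arithmetic progression requires a finer number-theoretic input, either a higher-power-of-$2$ congruence argument or a non-representability statement for the positive-definite binary form $q X^2 + 4 p Y^2$.
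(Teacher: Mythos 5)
Your proposal takes a genuinely different route from the paper, and as written it contains a gap that you yourself flag but do not close. Your strategy is to fix $\mathcal{K}$ in advance as the arithmetic progression $\{\pi\ell\}$ and then rule out exact resonances by showing that the integer $4q\ell^2-q(2n-1)^2-4pm^2$ never vanishes, so that its absolute value is at least $1$ and $\widetilde{\mathrm{d}}\gtrsim q^{-1}(\kappa h)^{-2}$. The mod-$4$ obstruction you invoke (the left-hand side of $q\bigl(4\ell^2-(2n-1)^2\bigr)=4pm^2$ is $\equiv -q \pmod 4$ because $(2n-1)^2$ is odd) indeed disposes of every case except $4\mid q$ (respectively $4\mid p$ in the mirror orientation), but the exceptional case is precisely where your proof stops: you list three possible repairs (a higher power-of-$2$ congruence, a rescaled progression, or a non-representability result for $qX^2+4pY^2$) without verifying any of them. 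This is a genuine gap, not a cosmetic one, because the statement must hold for \emph{every} rational $\rho^2$, including those with $4\mid q$. For the record, the congruence repair does go through: writing $v_2(q)=2k$ (forced, since $v_2(4pm^2)=2+2v_2(m)$ is even and $\ge 2$), the residual equation $q''\bigl(4\ell^2-(2n-1)^2\bigr)=p(m')^2$ with $q'',p,2n-1,m'$ odd reduces mod $8$ to $3q''\equiv p$ when $\ell$ is odd and $-q''\equiv p$ when $\ell$ is even; these two congruences cannot hold simultaneously (their difference forces $4q''\equiv 0\pmod 8$ with $q''$ odd), so one parity class of $\ell$ always works. But this verification is absent from your argument, and your third suggested repair (non-representability of a positive-definite binary form along an unbounded progression) is not an elementary statement one can simply cite.

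The paper avoids this entire Diophantine discussion by choosing $\mathcal{K}$ \emph{a posteriori}: it sorts the resonant radii $r_j$ defined by $r^2=\bigl((2n-1)\pi/2\bigr)^2+(m\pi\rho)^2$, takes $\kappa h=t_j:=(r_j+r_{j+1})/2$, and observes that $4q(r_{j+1}^2-r_j^2)/\pi^2$ is a nonzero integer, whence $r_{j+1}-r_j\ge \pi^2/(2qt_j)$ and both $\widetilde{\mathrm{d}}(\rho,t_j)$ and $\widetilde{\mathrm{d}}_0(\rho,t_j)$ are bounded below by half this gap (suitably normalized). This is a one-line integrality argument, uniform in $p$ and $q$, with no case analysis on orientations or on $v_2(q)$. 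Your treatment of $\widetilde{\mathrm{d}}_0$ (nearest odd integer above $2\ell$ is at unit distance, giving $\widetilde{\mathrm{d}}_0\ge \ell^{-1/2}$) is correct and gives a better power than the paper's $\widetilde{\mathrm{d}}_0\gtrsim (\kappa h)^{-1}$, but this does not compensate for the unresolved step in the bound on $\widetilde{\mathrm{d}}$.
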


\begin{remark}
For any $\rho \ge 1$ such that $\rho^2 \in \mathbb{Q}$, the previous lemma identifies an unbounded set $\mathcal{K}$ of positive values $\kappa h$ for which $\widetilde{D}(\rho,\kappa h)$ grows at most quadratically with $\kappa h$, with a constant $C(\rho)>0$ depending only on $\rho$. The assumption $\rho^2 \in \mathbb{Q}$ is crucial for controlling $C(\rho)$: as shown in the proof in Appendix~\textup{\ref{proof: lem: d tilde lemma}}, if $\rho^2 = p/q$ with $\gcd(p,q)=1$, the constant $C(\rho)$ increases with $q$. By Dirichlet’s theorem \textup{\cite[Cor.\ 1B]{Schmidt1980}}, approximating an irrational $\rho^2$ requires arbitrarily large denominators, so $C(\rho)$ becomes unbounded as $\rho^2$ approaches an irrational value.
\end{remark}

\begin{remark}
A key difficulty in proving the existence of an unbounded set $\mathcal{K}$ for which \eqref{lem: d tilde lemma equation} holds lies in showing that $\widetilde{\mathrm{d}}(\rho,\kappa h)\,(\kappa h)^2$ remains uniformly bounded from below in $\kappa h$ for arbitrarily large values of $\kappa h \gg 1$.
Assume for instance that $h_1 \geq h_2$, so that $h_1=h$ and $h_2=h/\rho$.
In this case, $\widetilde{\mathrm{d}}=0$ if and only if there exist $ n \in \mathbb{N}^*,m \in \mathbb{N}$ such that
\begin{equation*}
\left(\frac{(2n-1)\pi}{2}\right)^2 + (m\pi \rho)^2 = (\kappa h)^2,
\end{equation*}
situation which we excluded due to Assumption \textup{\ref{A1+A2}}.
Thus, $\widetilde{\mathrm{d}}$ can be interpreted as a distance-like quantity between the circle of radius $\kappa h$ and the lattice
\begin{equation} \label{eq: lattice}
\Bigg\{\left(\frac{(2n-1)\pi}{2},\, m\pi \rho\right)\Bigg\}_{n \in \mathbb{N}^*, m \in \mathbb{N}}.
\end{equation}
This is essentially the same kind of question as in the \emph{Gauss circle problem \cite{Gauss1834}}, which studies the remainder $E(r)$ in $\#\{(m,n) \in \mathbb{Z}^2 : m^2 + n^2 \le r^2\} = \pi r^2 + E(r)$, measuring the discrepancy between lattice points and the circle’s area. The core difficulty in both settings lies in understanding how lattice points are distributed near circles, making the analysis of $\mathcal{K}$ nontrivial.
\end{remark}

\paragraph{Analytic Helmholtz solution.}
Let us consider a Helmholtz solution $u$ analytic on $\overline{\Omega}$.
If $\rho^2 \in \mathbb{Q}$ and the numbers of nodal and edge basis functions, $N_{\mathbf{n}}$ and $N_{\mathbf{e}}$, scale linearly with respect to $\kappa h$, then the approximation error decays at \textit{a geometric rate} as $\kappa h \to +\infty$ within an unbounded set $\mathcal{K}$ satisfying \eqref{lem: d tilde lemma equation}.
Equivalently, for any fixed mesh $\mathcal{T}_h$, there exists a sequence $\kappa_j \to +\infty$ along which the best-approximation error converges to zero at a geometric rate.
This is made precise in the following result.

\begin{corollary}[Analytic Helmholtz solution -- High-frequency regime: $\kappa h \to +\infty$] \label{cor: high freq Cinfty}
Assume $\rho^2 \in \mathbb{Q}$ and $r \in \{0,1\}$, let $\mathcal{K}$ be an unbounded set satisfying \eqref{lem: d tilde lemma equation}, and let $u$ be a homogeneous Helmholtz solution analytic on $\overline{\Omega}$. Additionally, let $\tau_{\mathbf{e}},\tau_{\mathbf{n}} \geq 1$ be chosen so that
\begin{equation} \label{eq Ne Nn Cinfty}
N_{\mathbf{e}} = \frac{\tau_{\mathbf{e}}}{\pi} \kappa h\in \mathbb{N}^*, \qquad N_{\mathbf{n}} = \frac{\tau_{\mathbf{n}}}{\pi} \kappa h\in \mathbb{N}^*,\qquad \text{and} \qquad \eta:=\frac{\tau_{\mathbf{e}}}{\rho \tau_{\mathbf{n}}}\exp\left(-\frac{\pi}{2\tau_{\mathbf{n}}}\right)>1,
\end{equation}
which can always be ensured by an appropriate choice of $\tau_{\mathbf{e}}$ and $\tau_{\mathbf{n}}$.
Then, there exists an explicit constant $C=C(\rho,r,\tau_{\mathbf{e}},\tau_{\mathbf{n}})>0$, such that, for any $1 \ll \kappa h\in \mathcal{K}$,
\begin{equation*}
        \inf_{v \in V_{\mathbf{N}}(\mathcal{T}_h)}\|u-v\|_{H_{\kappa}^r(\Omega)}\leq\|u-\boldsymbol{\mathcal{I}}u\|_{H_\kappa^r(\Omega)}\leq C|\mathcal{N}_h|(\kappa h)^5\eta^{-\frac{2}{\pi}\tau_{\mathbf{n}}\kappa h}\|u\|_{H_\kappa^{2N_{\mathbf{n}}+1}(\Omega)}.
\end{equation*}
\end{corollary}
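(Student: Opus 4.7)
The plan is to specialize the best-approximation bound of Theorem~\ref{th: best approximation} to $\mu=2N_{\mathbf{n}}$ and then insert the linear scalings $N_{\mathbf{e}}=\tau_{\mathbf{e}}\kappa h/\pi$, $N_{\mathbf{n}}=\tau_{\mathbf{n}}\kappa h/\pi$ into the resulting expression. This is effectively the starting display \eqref{eq: AA} of the proof of Corollary~\ref{cor: corollary Cinfty}, except that $N_{\mathbf{e}}$ and $N_{\mathbf{n}}$ are no longer independent of $\kappa h$: the algebraic factors $(\pi\rho N_{\mathbf{n}}/(\kappa h))^{2N_{\mathbf{n}}+1/2}$ and $(\kappa h/(\pi N_{\mathbf{e}}))^{2N_{\mathbf{n}}+1/2-r}$ become purely exponential in $\kappa h$, respectively proportional to $(\rho\tau_{\mathbf{n}})^{2N_{\mathbf{n}}+1/2}$ and $\tau_{\mathbf{e}}^{-(2N_{\mathbf{n}}+1/2-r)}$. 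Combining them with the bound $\chi_{N_{\mathbf{n}}}\le \rho e^{\kappa h}/(\kappa h)$ from \eqref{eq: bound chi Cinfty} and taking the logarithm yields the key identity
\begin{equation*}
\chi_{N_{\mathbf{n}}}\left(\frac{\rho\tau_{\mathbf{n}}}{\tau_{\mathbf{e}}}\right)^{2N_{\mathbf{n}}}\le\frac{\rho}{\kappa h}\,\eta^{-(2\tau_{\mathbf{n}}/\pi)\kappa h},
\end{equation*}
where $\eta$ is precisely the rate defined in \eqref{eq Ne Nn Cinfty}. The condition $\eta>1$, i.e.\ $\tau_{\mathbf{e}}>\rho\tau_{\mathbf{n}}e^{\pi/(2\tau_{\mathbf{n}})}$, can always be fulfilled for any $\rho$ by taking $\tau_{\mathbf{n}}$ sufficiently large (so that $e^{\pi/(2\tau_{\mathbf{n}})}$ is close to $1$) and then $\tau_{\mathbf{e}}$ slightly above this threshold; in particular it is stronger than $\tau_{\mathbf{e}}>\rho$ already used in Corollary~\ref{cor: corollary Cinfty}, so the geometric convergence machinery is consistently in force.

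Next, I would track the polynomial-in-$\kappa h$ prefactor produced by $C_1 C_2 C_3\cdot N_{\mathbf{n}}^{5/2}$, which, together with the identity above, dominates the bracket in \eqref{eq: AA}. The decisive observation is that the quantity $\widetilde{\mathrm{d}}$ from \eqref{eq: definition d_m tilde} appears in the denominator of $C_2$ and in the numerator of $C_3$, so it cancels in the product, leaving only $1/\sqrt{\widetilde{\mathrm{d}}_0}$. This is where the restriction to $\kappa h\in\mathcal{K}$ enters through Lemma~\ref{lem: d tilde lemma}: the quantity $\widetilde{D}(\rho,\kappa h)$ of \eqref{eq: D tilde big} dominates $1/\sqrt{\widetilde{\mathrm{d}}_0}$ (and also $1/\widetilde{\mathrm{d}}$, though this is no longer needed after the cancellation) and is $O((\kappa h)^2)$ along $\mathcal{K}$. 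Using also $\tanh(\kappa h/\rho)\to 1$, a careful bookkeeping gives $C_1 C_2 C_3\lesssim |\mathcal{N}_h|(\kappa h)^{7/2}$; multiplied by $N_{\mathbf{n}}^{5/2}\sim (\kappa h)^{5/2}$ and by the extra $1/(\kappa h)$ that the $\chi_{N_{\mathbf{n}}}$ bound produces, one arrives at the advertised polynomial factor $|\mathcal{N}_h|(\kappa h)^5$. The two subdominant terms $1$ and $4C_2\chi_{N_{\mathbf{n}}}N_{\mathbf{n}}^2$ in \eqref{eq: AA} are then easily absorbed: the first decays at the faster rate $\tau_{\mathbf{e}}^{-2N_{\mathbf{n}}}$ (since $\log\tau_{\mathbf{e}}>\log\eta$ as soon as $\rho\tau_{\mathbf{n}}\ge 1$), and the second has the same geometric rate but without the $1/\sqrt{\widetilde{\mathrm{d}}_0}$-induced blow-up, so both fit into the same $|\mathcal{N}_h|(\kappa h)^5\eta^{-(2\tau_{\mathbf{n}}/\pi)\kappa h}$ bound for $\kappa h\gg 1$.

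The hard part is not the geometric-rate identity, which is essentially a one-line logarithm computation, but the polynomial bookkeeping: without the cancellation of $\widetilde{\mathrm{d}}$ between $C_2$ and $C_3$ one would lose two powers of $\kappa h$, and without recognizing that $\chi_{N_{\mathbf{n}}}$ carries a factor $1/(\kappa h)$ one would overcount by a further power, missing the sharp $(\kappa h)^5$. The other delicate point is that the entire argument is valid only along an unbounded set $\mathcal{K}$ of wavenumbers avoiding the resonances \eqref{eq: resonant freq nodes}, so one must invoke Lemma~\ref{lem: d tilde lemma} -- and hence the rationality hypothesis $\rho^2\in\mathbb{Q}$ -- to ensure such a set exists; a $\kappa h$ arbitrarily close to a resonance would make $1/\widetilde{\mathrm{d}}$ or $1/\sqrt{\widetilde{\mathrm{d}}_0}$ blow up and destroy the polynomial control.
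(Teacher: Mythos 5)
Your proposal is correct and follows essentially the same route as the paper: both start from the intermediate estimate \eqref{eq: AA}, insert the linear scalings \eqref{eq Ne Nn Cinfty}, absorb $e^{\kappa h}$ from the bound $\chi_{N_{\mathbf{n}}}\le \rho e^{\kappa h}/(\kappa h)$ into the geometric rate via $e^{\kappa h}=(e^{\pi/(2\tau_{\mathbf{n}})})^{2\tau_{\mathbf{n}}\kappa h/\pi}$ to produce $\eta$, exploit the cancellation of $\widetilde{\mathrm{d}}$ between $C_2$ and $C_3$, and invoke Lemma~\ref{lem: d tilde lemma} to bound the remaining $1/\widetilde{\mathrm{d}}$ and $1/\sqrt{\widetilde{\mathrm{d}}_0}$ terms by $\widetilde{D}(\rho,\kappa h)\lesssim(\kappa h)^2$ along $\mathcal{K}$. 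The power-counting you describe ($(\kappa h)^{3}$ from the explicit constants and $N_{\mathbf{n}}^{5/2}$, times $(\kappa h)^2$ from $\widetilde{D}$) matches the paper's chain of inequalities exactly.
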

\begin{proof}
    This result is a direct consequence of the estimate \eqref{eq: AA} in the proof of Corollary~\ref{cor: corollary Cinfty}, which applies with the parameter choices in \eqref{eq Ne Nn Cinfty}, as the condition \eqref{eq: nuN} is satisfied: $\nu_{N_{\mathbf{e}}}=N_{\mathbf{e}}\pi/(\kappa h)  = \tau _{\mathbf{e}}> \tau_{\mathbf{n}} \exp(\frac{\pi}{2\tau_{\mathbf{n}}})> \sqrt{2}$.
    Particularly, from the definitions of the constants $C_{i}>0$ in \eqref{eq: constants main theorem 1} and \eqref{eq: constants main theorem 2 and 3} for $i=1,2,3$, one can derive
    \begin{equation} \label{eq: constant bound infty}
        C_{1}\leq 8\sqrt{\frac{\rho \kappa h}{\tanh(1/\rho)}}, \qquad C_{2}\leq \frac{2\pi\rho^{\frac{3}{2}}|\mathcal{N}_h|\kappa h}{\sqrt{2}\widetilde{\mathrm{d}}}, \qquad C_{3}\leq \frac{2\widetilde{\mathrm{d}}}{\sqrt{\pi \rho \tanh(1/\rho)\widetilde{\mathrm{d}}_0}}.
    \end{equation}
    Hence, thanks moreover to the bound on $\chi_{N_{\mathbf{n}}}$ in \eqref{eq: bound chi Cinfty} and Lemma \ref{lem: d tilde lemma}, there exist explicit constants $C(\rho)>0$, only dependent on the shape parameter $\rho\geq1$, such that
    \begin{align*}
        \frac{\|u-\boldsymbol{\mathcal{I}}u\|_{H_{\kappa}^r(\Omega)}}{\|u\|_{H_\kappa^{2N_{\mathbf{n}}+1}(\Omega)}}&\leq C(\rho)C_{1}\left[1+C_{2}\tau^{5/2}_{\mathbf{n}}(\kappa h)^{3/2}e^{\kappa h}\left(1+C_{3}(\rho \tau_{\mathbf{n}})^{\frac{2}{\pi}\tau_{\mathbf{n}}\kappa h}\right)\right]\left(\frac{1}{\tau_{\mathbf{e}}}\right)^{\frac{2}{\pi}\tau_{\mathbf{n}}\kappa h+\frac{1}{2}-r}\\    
        &\leq C(\rho)\tau_{\mathbf{e}}^{r-\frac{1}{2}}\tau_{\mathbf{n}}^{5/2}(\kappa h)^3\left[
        1+\frac{|\mathcal{N}_h|e^{\kappa h}}{\widetilde{\textrm{d}}(\rho,\kappa h)}+\frac{|\mathcal{N}_h|e^{\kappa h}}{\sqrt{\widetilde{\textrm{d}}_0(\rho,\kappa h)}}(\rho \tau_{\mathbf{n}})^{\frac{2}{\pi}\tau_{\mathbf{n}}\kappa h}
        \right]\left(\frac{1}{\tau_{\mathbf{e}}}\right)^{\frac{2}{\pi}\tau_{\mathbf{n}}\kappa h}\\
        &\leq C(\rho)\tau_{\mathbf{e}}^{r-1/2}\tau_{\mathbf{n}}^{5/2}(\kappa h)^3|\mathcal{N}_h|\widetilde{D}(\rho,\kappa h)\left(\frac{\rho \tau_{\mathbf{n}}e^{\frac{\pi}{2\tau_{\mathbf{n}}}}}{\tau_{\mathbf{e}}}\right)^{\frac{2}{\pi}\tau_{\mathbf{n}}\kappa h}\\
        &=C(\rho)\tau_{\mathbf{e}}^{r-1/2}\tau_{\mathbf{n}}^{5/2}|\mathcal{N}_h|(\kappa h)^5 \eta^{-\frac{2}{\pi}\tau_{\mathbf{n}}\kappa h},
    \end{align*}
    where $\widetilde{D}(\rho,\kappa h)$ is defined in \eqref{eq: D tilde big}.
\end{proof}

\paragraph{Helmholtz solution in $H_\kappa^2(\Omega)$.}

Let us now consider a low Sobolev-regularity Helmholtz solution $u \in H^2_\kappa(\Omega)$.
When $\rho^2 \in \mathbb{Q}$ and $1 \ll \kappa h \in \mathcal{K}$, where $\mathcal{K}$ is an unbounded set satisfying \eqref{lem: d tilde lemma equation}, if the number of edge basis functions $N_{\mathbf{e}}$ scales according to a \textit{$r$-dependent polynomial degree} in $\kappa h$, for $r\in\{0,1\}$, the relative error $\|u-\boldsymbol{\mathcal{I}}u\|_{H^r_\kappa(\Omega)}/\|u\|_{H_\kappa^2(\Omega)}$ is bounded uniformly in $\kappa$, with its dependence on 
the mesh size $h$ appearing only via $|\mathcal{N}_h|$.
This is clarified in the next result.

\begin{corollary}[Helmholtz solution in $H_\kappa^2(\Omega)$ -- High-frequency regime: $\kappa h \to +\infty$] \label{cor: rough high freq}
Assume $\rho^2 \in \mathbb{Q}$, $N_{\mathbf{e}},N_{\mathbf{n}}\in \mathbb{N}^*$ and $r \in \{0,1\}$, let $\mathcal{K}$ be an unbounded set satisfying \eqref{lem: d tilde lemma equation}, and let $u\in H^2_\kappa(\Omega)$ be a homogeneous Helmholtz solution. Additionally, assume that condition \eqref{eq: nuN} holds.
Then, there exists an explicit constant $C=C(\rho)>0$ such that, for any $1 \ll \kappa h \in \mathcal{K}$,
\begin{equation} \label{eq: condition H2}
        \inf_{v \in V_{\mathbf{N}}(\mathcal{T}_h)}\|u-v\|_{H_{\kappa}^r(\Omega)}\leq\|u-\boldsymbol{\mathcal{I}}u\|_{H_\kappa^r(\Omega)}\leq C|\mathcal{N}_h|(\kappa h)^{5-r} N_{\mathbf{e}}^{-(\frac{3}{2}-r)}\|u\|_{H_\kappa^{2}(\Omega)}.
    \end{equation}
\end{corollary}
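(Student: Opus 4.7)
The plan is to revisit the proof of Corollary~\ref{cor: H2} and track the dependence on $\kappa h$ explicitly, then invoke Lemma~\ref{lem: d tilde lemma} to control the resonance-gap quantities uniformly along the unbounded set $\mathcal{K}$. The starting point is identical: apply Theorem~\ref{th: best approximation} with $\mu = M = 1$, which gives the intermediate estimate \eqref{eq: CC}, namely
\begin{equation*}
\frac{\|u-\boldsymbol{\mathcal{I}}u\|_{H_\kappa^r(\Omega)}}{\|u\|_{H_\kappa^{2}(\Omega)}}\leq C_{1}\left[1+3C_{2}e\left(1+C_{3}\left(\frac{\pi \rho}{\kappa h}\right)^{\!3/2}\right)\right]\left(\frac{\kappa h}{\pi N_{\mathbf{e}}}\right)^{\!3/2-r},
\end{equation*}
where the $\kappa h \gg 1$ regime now requires tracking how $C_1$, $C_2$, $C_3$ (introduced in \eqref{eq: constants main theorem 1} and \eqref{eq: constants main theorem 2 and 3}) grow with $\kappa h$, both explicitly and through $\widetilde{\mathrm{d}}$ and $\widetilde{\mathrm{d}}_0$.

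Next, I would reuse verbatim the large-$\kappa h$ bounds on the constants already derived in \eqref{eq: constant bound infty} during the proof of Corollary~\ref{cor: high freq Cinfty}, that is
\begin{equation*}
C_{1}\lesssim \sqrt{\rho\,\kappa h}, \qquad C_{2}\lesssim \frac{\rho^{3/2}|\mathcal{N}_h|\kappa h}{\widetilde{\mathrm{d}}}, \qquad C_{3}\lesssim \frac{\widetilde{\mathrm{d}}}{\sqrt{\rho\,\widetilde{\mathrm{d}}_0}},
\end{equation*}
with hidden constants depending only on $\rho$. A key simplification occurs in the product $C_2 C_3$, where the factor $\widetilde{\mathrm{d}}$ cancels, leaving only $\widetilde{\mathrm{d}}_0$. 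Substituting, the bracketed term is dominated by
\begin{equation*}
1 + \frac{|\mathcal{N}_h|\kappa h}{\widetilde{\mathrm{d}}} + \frac{|\mathcal{N}_h|}{\sqrt{\widetilde{\mathrm{d}}_0\,\kappa h}} \;\lesssim\; |\mathcal{N}_h|(\kappa h)\,\widetilde{D}(\rho,\kappa h),
\end{equation*}
where $\widetilde{D}$ is the composite quantity defined in \eqref{eq: D tilde big}. Multiplying by $C_1\lesssim\sqrt{\kappa h}$ yields a total prefactor bounded by $|\mathcal{N}_h|(\kappa h)^{3/2}\widetilde{D}(\rho,\kappa h)$, modulo a $\rho$-dependent constant.

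Finally, the decisive step is to restrict $\kappa h$ to the set $\mathcal{K}$ provided by Lemma~\ref{lem: d tilde lemma}: along that set, $\widetilde{D}(\rho,\kappa h) \lesssim (\kappa h)^2$ with a constant depending only on $\rho$ (this is exactly why $\rho^2 \in \mathbb{Q}$ is imposed). The prefactor then collapses to $|\mathcal{N}_h|(\kappa h)^{7/2}$, and combining with the explicit factor $(\kappa h / (\pi N_{\mathbf{e}}))^{3/2-r}$ delivers the claimed bound $C|\mathcal{N}_h|(\kappa h)^{5-r}N_{\mathbf{e}}^{-(3/2-r)}$. The main obstacle is bookkeeping rather than new analysis: one must verify that the leading $1$ in the bracket and the residual $\sqrt{\kappa h}$ coming from $C_1$ alone are dominated by the $|\mathcal{N}_h|(\kappa h)^{3/2}\widetilde{D}$ term once $\kappa h \gg 1$, so that the single clean bound in \eqref{eq: condition H2} absorbs every subleading contribution. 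No additional ingredient is needed beyond Theorem~\ref{th: best approximation}, the explicit constant bounds in \eqref{eq: constant bound infty}, and Lemma~\ref{lem: d tilde lemma}.
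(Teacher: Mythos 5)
Your proposal is correct and follows essentially the same route as the paper: start from the intermediate estimate \eqref{eq: CC} of Corollary~\ref{cor: H2}, substitute the large-$\kappa h$ constant bounds \eqref{eq: constant bound infty} (with the same cancellation of $\widetilde{\mathrm{d}}$ in the product $C_2 C_3$), absorb the bracket into $|\mathcal{N}_h|\,\kappa h\,\widetilde{D}(\rho,\kappa h)$, and invoke Lemma~\ref{lem: d tilde lemma} to conclude. The bookkeeping you flag (the leading $1$ and the residual $\sqrt{\kappa h}$) is handled exactly as you anticipate, since $1\leq\widetilde{D}$ by definition.
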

\begin{proof}
    The result follows directly from the estimate \eqref{eq: CC} in the proof of Corollary \ref{cor: H2}. Thanks to the bounds in \eqref{eq: constant bound infty} and Lemma \ref{lem: d tilde lemma}, there exist explicit constants $C(\rho)>0$ such that
    \begin{align*}
        \frac{\|u-\boldsymbol{\mathcal{I}}u\|_{H_{\kappa}^r(\Omega)}}{\|u\|_{H_\kappa^{2}(\Omega)}}&
        \leq C(\rho)\sqrt{\kappa h}\left[1+\frac{|\mathcal{N}_h|\kappa h}{\widetilde{\mathrm{d}}(\rho, \kappa h)}+\frac{|\mathcal{N}_h|\kappa h}{\sqrt{\widetilde{\mathrm{d}}_0(\rho, \kappa h)}}\right]\left(\frac{\kappa h}{\pi N_{\mathbf{e}}}\right)^{\frac{3}{2}-r}\\
        & \leq C(\rho)|\mathcal{N}_h|(\kappa h)^{\frac{3}{2}}\widetilde{D}(\rho,\kappa h)\left(\frac{\kappa h}{\pi N_{\mathbf{e}}}\right)^{\frac{3}{2}-r}\leq C(\rho)|\mathcal{N}_h|(\kappa h)^{5-r} N_{\mathbf{e}}^{-(\frac{3}{2}-r)},
    \end{align*}
    with $\widetilde{D}(\rho,\kappa h)$ given by \eqref{eq: D tilde big}.
\end{proof}

\begin{remark}
    Corollary \emph{\ref{cor: rough high freq}} shows that, for Helmholtz solutions of regularity $H^2_\kappa(\Omega)$, achieving a best-approximation error independent of $\kappa h$ requires the edge parameter $N_{\mathbf{e}}\in \mathbb{N}^*$ to scale according to a sufficiently high polynomial power of $\kappa h$; see \eqref{eq: condition H2}. Nevertheless, the numerical results in Figure \emph{\ref{fig: corner sing 2}} of Section \emph{\ref{sec: Numerical experiments}} suggest that a merely linear scaling in $\kappa h$ already suffices, even for solutions of lower regularity than $H^2_\kappa(\Omega)$. 
\end{remark}

\subsection{Stability with respect to edge and node parameters} \label{sec: Stability with respect to edge and node parameters}

Alongside the approximation results in Section \ref{sec: Accuracy with respect to edge and node parameters}, we establish corresponding stability bounds, stated in full generality and then specialized to the two main cases considered previously: a Helmholtz solution analytic on $\overline{\Omega}$ and a low Sobolev-regularity solution in $H^2_\kappa(\Omega)$.

Let $\boldsymbol{\alpha}\!=\!(\alpha_{\mathbf{s},n})_{\mathbf{s},n}$ and $\boldsymbol{\beta}\!=\!(\beta_{\mathbf{p},m})_{\mathbf{p},m}$ denote the edge-based and node-based coefficient vectors associated with the interpolant $\boldsymbol{\mathcal{I}}u$, as introduced in \eqref{eq: coefficient final alpha} and \eqref{eq: local collocation operator condition}. The following result shows that the norms of $\boldsymbol{\alpha}$ and $\boldsymbol{\beta}$ grow at most quadratically with $\mu = \min(2N_{\mathbf{n}},M)$, while remaining independent of the edge degree $N_{\mathbf{e}}$. This ensures that the expansion coefficients remain moderate and can be computed, thus contributing to the numerical stability of the scheme; see \cite{Adcock2019,Adcock2020}.

\begin{theorem}[Edge-and-node based stability result] \label{th: general coefficient}
    Let $M\in \mathbb{N}^*$ and $u \in H_\kappa^{M+1}(\Omega)$ be a homogeneous Helmholtz solution.
    Then, for any $N_\mathbf{e},N_\mathbf{n} \in \mathbb{N}^*$, the following estimates hold: 
    \begin{align}
        \|\boldsymbol{\alpha}\|_{\ell^2}&\leq \frac{8\rho \max(1,\kappa h)}{h}\left[\frac{1}{2}+C_{2}\chi_{\lceil \mu/2 \rceil}  \left(\lceil \tfrac{\mu}{2} \rceil+C_{3}\lceil \tfrac{\mu}{2} \rceil^2\sqrt{\frac{\pi \rho }{\kappa h}}\right) \right]\|u\|_{H_\kappa^{\mu+1}(\Omega)}, \label{eq: coefficient alpha beta1}\\
        \|\boldsymbol{\beta}\|_{\ell^2}&\leq \frac{3\rho \max(1,\kappa h)^2}{h}\chi_{\lceil \mu/2 \rceil}\sqrt{|\mathcal{N}_h|\lceil \tfrac{\mu}{2} \rceil} \|u\|_{H_\kappa^{\mu+1}(\Omega)}, \label{eq: coefficient alpha beta2}
    \end{align}
    where $\mu=\min(2N_\mathbf{n},M)$, and
    $\chi_{\lceil \mu/2 \rceil}$ and the constants $C_{2},C_{3}>0$ are defined in \eqref{eq:beta_pn0_p} and \eqref{eq: constants main theorem 2 and 3}.
\end{theorem}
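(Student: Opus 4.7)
The plan is to bound the two coefficient vectors separately, using essentially independent tools. For $\boldsymbol{\beta}$ we rely exclusively on Proposition \ref{th: nodal collocation}, whereas for $\boldsymbol{\alpha}$ we combine Bessel's inequality on each edge with a trace estimate and the continuity result for $\boldsymbol{\mathcal{V}}$ established in Lemma \ref{lem: V continuity}.

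For $\boldsymbol{\beta}$, the components coincide with the solution of the collocation system \eqref{eq: local collocation operator condition} at nodal order $N=\min(N_\mathbf{n},\lceil M/2\rceil)=\lceil\mu/2\rceil$. Proposition \ref{th: nodal collocation} already delivers the sharper $\ell^\infty$-bound $\|\boldsymbol{\beta}\|_\infty\leq \tfrac{2\rho\max(1,\kappa h)^2}{h}\chi_N\|u\|_{H_\kappa^{2N}(\Omega)}$. Since $\boldsymbol{\beta}\in\mathbb{C}^{N|\mathcal{N}_h|}$ and $2N\leq \mu+1$, the elementary inequality $\|\boldsymbol{\beta}\|_{\ell^2}\leq \sqrt{N|\mathcal{N}_h|}\,\|\boldsymbol{\beta}\|_\infty$ together with the monotonicity of the weighted Sobolev norms immediately yields \eqref{eq: coefficient alpha beta2}, up to absorbing the constant $2\leq 3$.

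For $\boldsymbol{\alpha}$, the key observation is that, by Lemma \ref{lem: orthogonality L2}, the traces $\{\phi_{\mathbf{s},n}|_\mathbf{s}\}_{n\in\mathbb{N}^*}$ form (up to normalization) an orthogonal Fourier-sine basis of $L^2(\mathbf{s})$. Applying Bessel's inequality to the trace of $u-\boldsymbol{\mathcal{V}}u$, using $\|\phi_{\mathbf{s},n}\|_{L^2(\mathbf{s})}^2=\widehat{h}_1/2\geq h/(2\rho)$, and summing over $\mathbf{s}\in\Sigma_h$ produces
\begin{equation*}
\|\boldsymbol{\alpha}\|_{\ell^2}^2\leq \frac{2\rho}{h}\sum_{\mathbf{s}\in\Sigma_h}\|u-\boldsymbol{\mathcal{V}}u\|_{L^2(\mathbf{s})}^2.
\end{equation*}
I would then invoke the trace inequality (Lemma~\ref{lem: trace inequality} with $\mu=0$) cell by cell to dominate the skeleton $L^2$-mass by a broken $H^1_\kappa$-norm on $\mathcal{T}_h$, at the cost of a factor of order $\max(1,\kappa h)^2/h$. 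A triangle inequality then splits $\|u-\boldsymbol{\mathcal{V}}u\|_{H^1_\kappa(\mathcal{T}_h)}$ into $\|u\|_{H^1_\kappa(\Omega)}\leq \|u\|_{H_\kappa^{\mu+1}(\Omega)}$, which yields the constant $1/2$ inside the bracket of \eqref{eq: coefficient alpha beta1}, and $\|\boldsymbol{\mathcal{V}}u\|_{H^1_\kappa(\mathcal{T}_h)}$, which is controlled by Lemma~\ref{lem: V continuity} with $M=1$. Unfolding $\nu_N=N\pi/(\kappa h)$ regroups the factor $N^{3/2}(\rho\nu_N)^{1/2}$ into $N^2\sqrt{\pi\rho/(\kappa h)}$, and setting $N=\lceil\mu/2\rceil$ recovers the explicit form of \eqref{eq: coefficient alpha beta1}.

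The main obstacle here is not analytical but a matter of careful bookkeeping: every ingredient is a direct invocation of an already established result, but isolating the precise prefactor $\tfrac{8\rho\max(1,\kappa h)}{h}$ in \eqref{eq: coefficient alpha beta1}, together with the correct powers of $\lceil\mu/2\rceil$, requires tracking and combining the explicit constants $C_{2}$, $C_{3}$, and $\chi_N$ through the successive applications of Bessel, the trace inequality, and Lemma~\ref{lem: V continuity}.
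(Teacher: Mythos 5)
Your proposal is correct and follows essentially the same route as the paper's proof: Bessel's inequality on the edge traces combined with the trace inequality (Lemma~\ref{lem: trace inequality}) and Lemma~\ref{lem: V continuity} with $M=1$ for $\boldsymbol{\alpha}$, and the $\ell^\infty$-bound of Proposition~\ref{th: nodal collocation} scaled by $\sqrt{N|\mathcal{N}_h|}$ for $\boldsymbol{\beta}$. The constant bookkeeping you describe (including $\|\phi_{\mathbf{s},n}\|^2_{L^2(\mathbf{s})}\geq h/(2\rho)$, the regrouping $N^{3/2}(\rho\nu_N)^{1/2}=N^2\sqrt{\pi\rho/(\kappa h)}$, and absorbing $2\leq 3$) matches the paper's computation.
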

\begin{proof}
    We begin by considering the bound \eqref{eq: coefficient alpha beta1}.
    For any $\mathbf{s}\in \Sigma_h$, the traces $\phi_{\mathbf{s},n}|_{\mathbf{s}}$ form a Hilbert basis of $L^2(\mathbf{s})$, as they coincides, up to a rigid transformation, with the functions $\widehat{\varphi}_{n}|_{\hat{\mathbf{s}}}$ -- taking $(\widehat{h}_1,\widehat{h}_2)=(h_1,h_2)$ or $(\widehat{h}_1,\widehat{h}_2)=(h_2,h_1)$ -- which form a Hilbert basis of $L^2(\hat{\mathbf{s}})$ by Lemma \ref{lem: orthogonality L2}.
    Hence, as $\| \phi_{\mathbf{s},n}\|^2_{L^2(\mathbf{s})}\geq\frac{h}{2\rho}$ for any $\mathbf{s}\in \Sigma_h$ and $n \in \mathbb{N}^*$, we have
    \begin{equation*}
        \|\boldsymbol{\alpha}\|^2_{\ell^2}=\sum_{\mathbf{s}\in \Sigma_h}\sum_{n=1}^{N_{\mathbf{e}}}\frac{|( (u-\boldsymbol{\mathcal{V}}u),\phi_{\mathbf{s},n})_{L^2(\mathbf{s})}|^2}{\| \phi_{\mathbf{s},n}\|^4_{L^2(\mathbf{s})}}\leq \frac{2\rho}{h}\sum_{\mathbf{s}\in \Sigma_h}\| (u-\boldsymbol{\mathcal{V}}u)\|^2_{L^2(\mathbf{s})},
    \end{equation*}
    and, thanks to Lemma \ref{lem: trace inequality}, it follows
    \begin{equation*}
        \left(\frac{h}{4\rho\max(1,\kappa h)}\right)^{2}\|\boldsymbol{\alpha}\|^2_{\ell^2}\leq\sum_{K \in \mathcal{T}_h}\|u-\boldsymbol{\mathcal{V}}u\|^2_{H^1_\kappa(K)}\leq \left(\|u\|^2_{H^1_\kappa(\Omega)}+\|\boldsymbol{\mathcal{V}}u\|^2_{H^1_\kappa(\mathcal{T}_h)}\right).
    \end{equation*}
    Moreover, recalling that $\nu_N=\frac{N\pi}{\kappa h}$, where $N=\min(N_{\mathbf{n}},\lceil M/2\rceil)$, Lemma \ref{lem: V continuity} implies that
    \begin{align*}
        \left(\frac{h}{4\rho\max(1,\kappa h)}\right)^{2}\|\boldsymbol{\alpha}\|^2_{\ell^2}&
        \leq\left(1+\left[2C_{2}\chi_{N} \left(N+C_{3}\sqrt{N^3 \rho \nu_N}\right)  \right]^2\right)\|u\|^2_{H_\kappa^{2N}(\Omega)}\\
        &\leq \left(1+2C_{2}\chi_{\lceil \mu/2 \rceil} \left(\lceil \tfrac{\mu}{2} \rceil+C_{3}\lceil \tfrac{\mu}{2} \rceil^2\sqrt{\frac{\pi \rho }{\kappa h}}\right) \right)^2\|u\|^2_{H_\kappa^{\mu+1}(\Omega)},
    \end{align*}
    where in the last step we exploit the fact that $N = \lceil \mu/2\rceil$ and $2N \leq \mu+1$.

    The bound \eqref{eq: coefficient alpha beta2} follows directly from the coefficient estimate in \eqref{eq:beta_pn0_p}, in fact
    \begin{equation*}
        \|\boldsymbol{\beta}\|_{\ell^2}\leq \sqrt{|\mathcal{N}_h|N}\|\boldsymbol{\beta}\|_{\ell^{\infty}}\leq \frac{3\rho \max(1,\kappa h)^2}{h}\chi_{\lceil \mu/2 \rceil}\sqrt{|\mathcal{N}_h|\lceil \tfrac{\mu}{2} \rceil} \|u\|_{H_\kappa^{\mu+1}(\Omega)}. \qedhere
    \end{equation*}
\end{proof}

\paragraph{Analytic Helmholtz solution.}

If $u$ is analytic on $\overline{\Omega}$, Theorem \ref{th: general coefficient} implies at most \textit{quadratic growth} of edge coefficients $\boldsymbol{\alpha}$ and \textit{square-root growth} of node coefficients $\boldsymbol{\beta}$ with respect to the node parameter $N_{\mathbf{n}}$, while the bounds remain \textit{independent of the edge parameter $N_{\mathbf{e}}$}.

\begin{corollary}[Analytic Helmholtz solution] \label{cor: coefficient Cinfty}
    Let $u$ be a homogeneous Helmholtz solution analytic on $\overline{\Omega}$.
    There exists an explicit constant $C=C(\rho,\kappa h)>0$ such that, for any $N_\mathbf{e},N_\mathbf{n}\in \mathbb{N}^*\!$,
    \begin{equation*}
        \|\boldsymbol{\alpha}\|_{\ell^2}\leq \frac{C}{h} N_{\mathbf{n}}^2\|u\|_{H_\kappa^{2N_{\mathbf{n}}+1}(\Omega)}, \qquad 
        \|\boldsymbol{\beta}\|_{\ell^2}\leq \frac{C}{h} \sqrt{N_{\mathbf{n}}} \|u\|_{H_\kappa^{2N_{\mathbf{n}}+1}(\Omega)}.
    \end{equation*}
\end{corollary}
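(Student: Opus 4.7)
The plan is to derive both bounds as a direct specialization of Theorem \ref{th: general coefficient}. Since $u$ is analytic on $\overline{\Omega}$, we have $u \in H_\kappa^{M+1}(\Omega)$ for every $M \in \mathbb{N}$, so we are free to apply the theorem with the choice $M = 2N_{\mathbf{n}}$. This yields $\mu = \min(2N_{\mathbf{n}}, M) = 2N_{\mathbf{n}}$ and $\lceil \mu/2 \rceil = N_{\mathbf{n}}$, matching exactly the Sobolev index $H_\kappa^{2N_{\mathbf{n}}+1}(\Omega)$ appearing on the right-hand side of the target estimates.

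Next, I need to collapse the $N_{\mathbf{n}}$-dependent factors in \eqref{eq: coefficient alpha beta1} and \eqref{eq: coefficient alpha beta2} into the stated growth rates $N_{\mathbf{n}}^2$ and $\sqrt{N_{\mathbf{n}}}$. The only ingredient besides elementary polynomial dominations is a uniform bound on $\chi_{N_{\mathbf{n}}}$. From its definition in \eqref{eq:beta_pn0_p}, the minimum is always no larger than its first argument, so
\begin{equation*}
    \chi_{N_{\mathbf{n}}} \;\leq\; \frac{\rho \, e^{\kappa h}}{\kappa h},
\end{equation*}
independently of $N_{\mathbf{n}}$. Plugging this into the bound \eqref{eq: coefficient alpha beta1} with $\lceil \mu/2 \rceil = N_{\mathbf{n}}$, the bracketed factor is of the form $\frac{1}{2} + c_1(\rho,\kappa h)\,N_{\mathbf{n}} + c_2(\rho,\kappa h)\,N_{\mathbf{n}}^2$; for $N_{\mathbf{n}} \geq 1$ this is dominated by a constant $C(\rho,\kappa h)$ times $N_{\mathbf{n}}^2$, which yields the first estimate after gathering $\tfrac{8\rho\max(1,\kappa h)}{h}$ into the overall constant. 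Similarly, \eqref{eq: coefficient alpha beta2} directly gives a factor $\sqrt{|\mathcal{N}_h|\,N_{\mathbf{n}}}$, so the same uniform bound on $\chi_{N_{\mathbf{n}}}$ together with absorbing $\tfrac{3\rho\max(1,\kappa h)^2}{h}\sqrt{|\mathcal{N}_h|}$ into a constant depending only on $\rho$ and $\kappa h$ (the mesh being fixed) produces the announced $\sqrt{N_{\mathbf{n}}}$ growth.

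No real obstacle arises here: the analyticity assumption is used exclusively to justify choosing $M$ arbitrarily large so that $\mu = 2N_{\mathbf{n}}$, and the rest is bookkeeping. The only subtle point worth highlighting in the write-up is that the second argument $e\max(1,\kappa h)^{2(N_{\mathbf{n}}-1)}$ of the minimum defining $\chi_{N_{\mathbf{n}}}$ grows with $N_{\mathbf{n}}$, but plays no role since the first argument already provides an $N_{\mathbf{n}}$-uniform bound; this is precisely what prevents the appearance of additional $N_{\mathbf{n}}$-dependent factors and is what makes the polynomial growth rates $N_{\mathbf{n}}^2$ and $\sqrt{N_{\mathbf{n}}}$ sharp in $N_{\mathbf{n}}$.
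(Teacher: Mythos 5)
Your proposal is correct and follows essentially the same route as the paper: apply Theorem \ref{th: general coefficient} with $\mu = 2N_{\mathbf{n}}$ (legitimate since analyticity gives $u \in H_\kappa^{M+1}(\Omega)$ for all $M$), bound $\chi_{N_{\mathbf{n}}}$ uniformly by its first argument $\rho e^{\kappa h}/(\kappa h)$ as in \eqref{eq: bound chi Cinfty}, and absorb the resulting polynomial factors into $N_{\mathbf{n}}^2$ and $\sqrt{N_{\mathbf{n}}}$ respectively. This is exactly the paper's argument, including the (implicit, mesh-fixed) absorption of $|\mathcal{N}_h|$ into the constant.
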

\begin{proof}
    The result follows from Theorem \ref{th: general coefficient} with $\mu=2N_{\mathbf{n}}$ and the bound in \eqref{eq: bound chi Cinfty}, namely
    \begin{align}
        \frac{\|\boldsymbol{\alpha}\|_{\ell^2}}{\|u\|_{H_\kappa^{2N_{\mathbf{n}}+1}(\Omega)}}&\leq \frac{8\rho \max(1,\kappa h)}{h}\left[\frac{1}{2}+C_{2}\chi_{N_{\mathbf{n}}} N_{\mathbf{n}} \left(1+C_{3}N_{\mathbf{n}}\sqrt{\frac{\pi \rho }{\kappa h}}\right) \right] \nonumber\\
        &\leq \frac{8\rho \max(1,\kappa h)}{h}\left[\frac{1}{2}+\frac{C_{2}\rho e^{\kappa h}}{\kappa h} \left(1+C_{3}\sqrt{\frac{\pi \rho }{\kappa h}}\right) \right]N_{\mathbf{n}}^2, \label{A}
    \end{align}
    and
    \begin{equation} \label{B}
        \frac{\|\boldsymbol{\beta}\|_{\ell^2}}{\|u\|_{H_\kappa^{2N_{\mathbf{n}}+1}(\Omega)}}\leq \frac{3\rho \max(1,\kappa h)^2}{ h}\chi_{N_{\mathbf{n}}}\sqrt{|\mathcal{N}_h|N_{\mathbf{n}}}\leq \frac{3\rho^2\sqrt{|\mathcal{N}_h|} \max(1,\kappa h)^2e^{\kappa h}}{ \kappa h^2}\sqrt{N_{\mathbf{n}}},
    \end{equation}
    where $\chi_{N_{\mathbf{n}}}$ and the constants $C_{2},C_{3}>0$ are defined in \eqref{eq:beta_pn0_p} and \eqref{eq: constants main theorem 2 and 3}.
\end{proof}

Combined with Corollary \ref{cor: corollary Cinfty}, the previous result shows that the family of Trefftz spaces $\{V_{\mathbf{N}}(\mathcal{T}_h)\}_{\mathbf{N}\in (\mathbb{N^*})^2}$ admits the following properties:
\begin{itemize}
\item \textit{Algebraic convergence in $N_\mathbf{e}$ with uniformly bounded coefficient norms}:
For any fixed nodal degree $N_\mathbf{n}$, the coefficient norms are bounded uniformly with respect to $N_\mathbf{e}$.
\item \textit{Geometric convergence in $N_\mathbf{n}$ with at most quadratic growth of the coefficient norm}:  
If $N_\mathbf{e} = \tau_{\mathbf{e}} N_\mathbf{n}$ for $\tau_{\mathbf{e}} > \rho$, $\|\boldsymbol{\alpha}\|_{\ell^2}$ and $\|\boldsymbol{\beta}\|_{\ell^2}$ grow at most as $N_{\mathbf{n}}^2$ and $\sqrt{N_{\mathbf{n}}}$, respectively.
\end{itemize}

\paragraph{Helmholtz solution in $H_\kappa^2(\Omega)$.}

If $u \in H_\kappa^2(\Omega)$, Theorem \ref{th: general coefficient} ensures that the growth of the expansion coefficients of $\boldsymbol{\mathcal{I}}u$ is \textit{independent of both $N_{\mathbf{e}}$ and $N_{\mathbf{n}}$}.

\begin{corollary}[Helmholtz solution in $H_\kappa^2(\Omega)$] \label{cor: coefficient H2}
    Let $u \in H^2_\kappa(\Omega)$ be a homogeneous Helmholtz solution.
    There exists an explicit constant $C=C(\rho,\kappa h)>0$ such that, for any $N_\mathbf{e},N_\mathbf{n}\in \mathbb{N}^*$, 
    \begin{equation*}
        \|\boldsymbol{\alpha}\|_{\ell^2}\leq \frac{C}{h} \|u\|_{H_\kappa^{2}(\Omega)}, \qquad 
        \|\boldsymbol{\beta}\|_{\ell^2}\leq \frac{C}{h} \|u\|_{H_\kappa^{2}(\Omega)}.
    \end{equation*}
\end{corollary}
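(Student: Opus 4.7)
The proof will be a direct specialization of Theorem \ref{th: general coefficient} to the case of minimal Sobolev regularity, paralleling the strategy used for the analytic case in Corollary \ref{cor: coefficient Cinfty}. Since $u \in H^2_\kappa(\Omega)$, the theorem applies with $M=1$, and consequently with $\mu = \min(2N_{\mathbf{n}}, 1) = 1$, irrespective of the nodal parameter $N_{\mathbf{n}}$. With this choice, $\lceil \mu/2 \rceil = 1$, so the bounds in \eqref{eq: coefficient alpha beta1} and \eqref{eq: coefficient alpha beta2} lose their polynomial dependence on $N_{\mathbf{n}}$, and it suffices to estimate the constant $\chi_{1}$ uniformly.

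The key observation is the elementary bound
\begin{equation*}
    \chi_{1} = \min\!\left(\frac{\rho e^{\kappa h}}{\kappa h},\, e\right) \leq e,
\end{equation*}
coming directly from definition \eqref{eq:beta_pn0_p}. This makes the factor $\chi_{\lceil \mu/2\rceil}$ appearing in Theorem \ref{th: general coefficient} a pure constant, independent of both $N_{\mathbf{e}}$ and $N_{\mathbf{n}}$.

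Substituting $\mu = 1$ and $\chi_{1} \leq e$ into \eqref{eq: coefficient alpha beta1} immediately yields
\begin{equation*}
    \|\boldsymbol{\alpha}\|_{\ell^2} \leq \frac{8\rho \max(1,\kappa h)}{h}\left[\frac{1}{2} + C_{2}\,e\left(1 + C_{3}\sqrt{\tfrac{\pi\rho}{\kappa h}}\right)\right]\|u\|_{H_\kappa^{2}(\Omega)},
\end{equation*}
while substituting into \eqref{eq: coefficient alpha beta2} gives
\begin{equation*}
    \|\boldsymbol{\beta}\|_{\ell^2} \leq \frac{3\rho\max(1,\kappa h)^2}{h}\,e\,\sqrt{|\mathcal{N}_h|}\,\|u\|_{H_\kappa^{2}(\Omega)}.
\end{equation*}
Both right-hand sides collapse to a constant $C = C(\rho, \kappa h) > 0$ divided by $h$, times $\|u\|_{H^2_\kappa(\Omega)}$, which is precisely the claimed estimate. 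No genuine obstacle arises here: the only subtlety is recognizing that truncating $\mu$ at $M = 1$, rather than at $2N_{\mathbf{n}}$, is what kills the $N_{\mathbf{n}}$-dependence, and that $\chi_N$ is defined as a minimum precisely so that its first value $\chi_1$ admits the clean uniform bound $e$, independent of $\kappa h$.
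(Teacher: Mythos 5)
Your proposal is correct and follows exactly the paper's own argument: specialize Theorem \ref{th: general coefficient} to $\mu=M=1$, note $\chi_1=\min(\rho e^{\kappa h}/(\kappa h),e)\leq e$, and read off the two bounds, which coincide with \eqref{eq: bounds rough high freq 1} and \eqref{eq: bounds rough high freq 2} in the paper. Nothing is missing.
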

\begin{proof}
    The result follows from Theorem \ref{th: general coefficient} with $\mu=M=1$ and the simple bound $\chi_{1} = \min(\rho e^{\kappa h}/(\kappa h), e) \leq  e$, with $\chi_{N}$ as introduced in \eqref{eq:beta_pn0_p}, namely
    \begin{align}
        \|\boldsymbol{\alpha}\|_{\ell^2}&\leq \frac{8\rho \max(1,\kappa h)}{h}\left[\frac{1}{2}+C_{2}e \left(1+C_{3}\sqrt{\frac{\pi \rho }{\kappa h}}\right) \right]\|u\|_{H_\kappa^{2}(\Omega)}, \label{eq: bounds rough high freq 1}\\
        \|\boldsymbol{\beta}\|_{\ell^2}&\leq \frac{3\rho e \max(1,\kappa h)^2}{h}\sqrt{|\mathcal{N}_h|} \|u\|_{H_\kappa^{2}(\Omega)}, \label{eq: bounds rough high freq 2}
    \end{align}
    where the constants $C_{2},C_{3}>0$ are defined in \eqref{eq: constants main theorem 2 and 3}.
\end{proof}

Together with Corollary \ref{cor: H2}, this result shows that for every homogeneous Helmholtz solution $u \in H^2_\kappa(\Omega)$ and $N_{\mathbf{n}} \in \mathbb{N}^*$, we achieve \textit{algebraic convergence in $N_{\mathbf{e}}$ with coefficient norms uniformly bounded with respect to both $N_{\mathbf{e}}$ and $N_{\mathbf{n}}$}.

\subsection{Stability in the high-frequency regime}

We now present some stability estimates in the high-frequency regime, corresponding to the results of Section \ref{sec: Accuracy in high-frequency and low-frequency regimes}. Following the same structure as before, we distinguish between analytic and low-regularity homogeneous Helmholtz solutions.

\paragraph{Analytic Helmholtz solution.}

Let $u$ be an analytic Helmholtz solution on $\overline{\Omega}$, and consider the setting of Corollary \ref{cor: high freq Cinfty}, namely $\rho^2 \in \mathbb{Q}$ and $1 \ll \kappa h \in \mathcal{K}$, where $\mathcal{K}$ is an unbounded set satisfying \eqref{lem: d tilde lemma equation}, with the edge and node parameters $N_{\mathbf{e}}, N_{\mathbf{n}} \in \mathbb{N}^*$ chosen to scale linearly with $\kappa h$. We now show that the coefficient norm grows \textit{at most exponentially with $\kappa h$}.

\begin{corollary}[Analytic Helmholtz solution -- High-frequency regime: $\kappa h \to +\infty$] \label{cor: high freq smooth coeff}
Assume $\rho^2 \in \mathbb{Q}$, let $\mathcal{K}$ be an unbounded set satisfying \eqref{lem: d tilde lemma equation}, and let $u$ be a homogeneous Helmholtz solution analytic on $\overline{\Omega}$. Additionally, let $\tau_{\mathbf{e}}, \tau_{\mathbf{n}} \geq 1$ be chosen so that $N_{\mathbf{e}},N_{\mathbf{n}}\in \mathbb{N}^*$ are defined as in \eqref{eq Ne Nn Cinfty}.
Then, there exists an explicit constant $C=C(\rho,\tau_{\mathbf{n}})>0$, such that, for any $1 \ll \kappa h \in \mathcal{K}$,
\begin{equation*}
        \|\boldsymbol{\alpha}\|_{\ell^2}\leq \frac{C}{h}|\mathcal{N}_h|(\kappa h)^5e^{\kappa h}\|u\|_{H_\kappa^{2N_{\mathbf{n}}+1}(\Omega)} , \qquad 
        \|\boldsymbol{\beta}\|_{\ell^2}\leq \frac{C}{h}\sqrt{|\mathcal{N}_h|(\kappa h)^3}e^{\kappa h} \|u\|_{H_\kappa^{2N_{\mathbf{n}}+1}(\Omega)}.
    \end{equation*}
\end{corollary}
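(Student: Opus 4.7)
The plan is to take the bounds of Corollary \ref{cor: coefficient Cinfty} and render every implicit $\kappa h$-dependence explicit, exploiting the same machinery already deployed in the proof of Corollary \ref{cor: high freq Cinfty}. Concretely, I would substitute the linear scaling $N_{\mathbf{n}} = \tau_{\mathbf{n}}\kappa h/\pi$ into the intermediate bounds \eqref{A} and \eqref{B}, then resolve every remaining $\kappa h$-dependence through the available controls on $\chi_{N_{\mathbf{n}}}$, $C_{2}$ and $C_{3}$ restricted to the set $\mathcal{K}$.

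More precisely, I would work with the explicit intermediate estimates \eqref{A} and \eqref{B} from the proof of Corollary \ref{cor: coefficient Cinfty} rather than with the statement itself, so that the constants $C_{2}$, $C_{3}$ remain unabsorbed and their $\kappa h$-scaling can be tracked. Then \eqref{eq: bound chi Cinfty} gives $\chi_{N_{\mathbf{n}}} \leq \rho e^{\kappa h}/(\kappa h)$ -- which is the source of the stated $e^{\kappa h}$ factor -- while \eqref{eq: constant bound infty} yields $C_{2} \lesssim |\mathcal{N}_h|\kappa h/\widetilde{\mathrm{d}}$ and $C_{3} \lesssim \widetilde{\mathrm{d}}/\sqrt{\widetilde{\mathrm{d}}_0}$ up to $\rho$-dependent prefactors, valid for $\kappa h \geq 1$ since $\tanh(\kappa h/\rho) \geq \tanh(1/\rho)$. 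Lemma \ref{lem: d tilde lemma} then provides the decisive input on $\mathcal{K}$: the quantity $\widetilde{D}(\rho,\kappa h)$ grows at most quadratically in $\kappa h$, so both $1/\widetilde{\mathrm{d}}$ and $1/\sqrt{\widetilde{\mathrm{d}}_0}$ are dominated by $C(\rho)(\kappa h)^2$.

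Putting the pieces together is then pure bookkeeping. For $\boldsymbol{\alpha}$, the dominant term in the bracket of \eqref{A} is $C_{2}\chi_{N_{\mathbf{n}}}(1+C_{3}\sqrt{\rho/\kappa h})$, which the previous estimates control by $|\mathcal{N}_h|(\kappa h)^{3}\cdot e^{\kappa h}/(\kappa h)$ on $\mathcal{K}$; multiplying by the prefactor $\rho\max(1,\kappa h)/h \sim \kappa h/h$ and by $N_{\mathbf{n}}^2 \sim (\kappa h)^2$ then yields the announced $|\mathcal{N}_h|(\kappa h)^5 e^{\kappa h}/h$. For $\boldsymbol{\beta}$ the calculation is even more direct: inserting $\max(1,\kappa h)^2 = (\kappa h)^2$ and $\sqrt{N_{\mathbf{n}}} \sim \sqrt{\kappa h}$ into \eqref{B} immediately produces $\sqrt{|\mathcal{N}_h|(\kappa h)^3}\,e^{\kappa h}/h$, up to a $\rho$- and $\tau_{\mathbf{n}}$-dependent constant. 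The only non-routine ingredient is Lemma \ref{lem: d tilde lemma}, which is invoked as a black box and constitutes the genuine analytic obstacle underlying both the accuracy and the stability bounds in this high-frequency regime.
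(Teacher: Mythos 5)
Your proposal is correct and follows essentially the same route as the paper's own proof: both start from the intermediate estimates \eqref{A} and \eqref{B}, use the bounds \eqref{eq: bound chi Cinfty} and \eqref{eq: constant bound infty} on $\chi_{N_{\mathbf{n}}}$, $C_2$ and $C_3$, substitute the linear scalings from \eqref{eq Ne Nn Cinfty}, and conclude by invoking Lemma \ref{lem: d tilde lemma} to control $\widetilde{D}(\rho,\kappa h)$ by $C(\rho)(\kappa h)^2$ on $\mathcal{K}$. The bookkeeping you outline reproduces the exponents $(\kappa h)^5$ and $(\kappa h)^{3/2}$ exactly as in the paper.
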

\begin{proof}
    The result follows from Corollary \ref{cor: coefficient Cinfty}, with the parameter choices in \eqref{eq Ne Nn Cinfty}.
    In fact, from the estimates \eqref{A} and \eqref{eq: constant bound infty}, there exist explicit constants $C(\rho)>0$ such that
    \begin{align*}
        \frac{\|\boldsymbol{\alpha}\|_{\ell^2}}{\|u\|_{H_\kappa^{2N_{\mathbf{n}}+1}(\Omega)}}&\leq C(\rho) \kappa \left[
        1+\frac{|\mathcal{N}_h|e^{\kappa h}}{\widetilde{\textrm{d}}(\rho,\kappa h)}+\frac{|\mathcal{N}_h|e^{\kappa h}}{\sqrt{\widetilde{\textrm{d}}_0(\rho,\kappa h)}}\right](\tau_{\mathbf{n}}\kappa h)^2\\
        & \leq \frac{C(\rho)\tau_{\mathbf{n}}^2}{h} |\mathcal{N}_h| \widetilde{D}(\rho,\kappa h)(\kappa h)^3e^{\kappa h}
        \leq \frac{C(\rho)\tau_{\mathbf{n}}^2}{h}|\mathcal{N}_h|(\kappa h)^5e^{\kappa h},
    \end{align*}
    where we rely on Lemma \ref{lem: d tilde lemma} to bound $\widetilde{D}(\rho,\kappa h)$ introduced in \eqref{eq: D tilde big}.
    Similarly, from \eqref{B} it follows:
    \begin{equation*}
        \|\boldsymbol{\beta}\|_{\ell^2}\leq \frac{C(\rho)\tau_{\mathbf{n}}^{1/2}}{h} \sqrt{|\mathcal{N}_h| (\kappa h)^3}e^{\kappa h}\|u\|_{H_\kappa^{2N_{\mathbf{n}}+1}(\Omega)}. \qedhere
    \end{equation*}
\end{proof}

\begin{remark}
    As the previous corollary shows, the coefficient growth can be at most exponential in $\kappa h$, potentially affecting the numerical stability of the method. Nevertheless, in practice such behavior is not observed in our numerical experiments, as illustrated in Figure \emph{\ref{fig: plane wave 2}} of Section \emph{\ref{sec: Numerical experiments}}.
\end{remark}

\paragraph{Helmholtz solution in $H_\kappa^2(\Omega)$.}

Let $u \in H^2_\kappa(\Omega)$ be a homogeneous Helmholtz solution, and consider the setting of Corollary \ref{cor: rough high freq}, where $\rho^2 \in \mathbb{Q}$ and $1 \ll \kappa h \in \mathcal{K}$, with $\mathcal{K}$ an unbounded set satisfying \eqref{lem: d tilde lemma equation}. In this setting, the coefficient norms grow \textit{no faster than $\kappa^4 h^3$}, as shown next.

\begin{corollary}[Helmholtz solution in $H_\kappa^2(\Omega)$ -- High-frequency regime: $\kappa h \to +\infty$]
Assume $\rho^2 \in \mathbb{Q}$, $N_\mathbf{e},N_\mathbf{n}\in \mathbb{N}^*$, let $\mathcal{K}$ be an unbounded set satisfying \eqref{lem: d tilde lemma equation}, and $u \in H^2_\kappa(\Omega)$ be a homogeneous Helmholtz solution.
Then, there exists an explicit $C=C(\rho)>0$ such that, for any $1 \ll \kappa h \in \mathcal{K}$,
\begin{equation} \label{eq: bounds}
        \|\boldsymbol{\alpha}\|_{\ell^2}\leq \frac{C}{h}|\mathcal{N}_h|(\kappa h)^4\|u\|_{H_\kappa^{2}(\Omega)} , \qquad 
        \|\boldsymbol{\beta}\|_{\ell^2}\leq \frac{C}{h} \sqrt{|\mathcal{N}_h|} (\kappa h)^2  \|u\|_{H_\kappa^{2}(\Omega)}.
\end{equation}
\end{corollary}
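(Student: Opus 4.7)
The strategy is entirely parallel to that used to derive Corollary~\ref{cor: high freq smooth coeff} from Corollary~\ref{cor: coefficient Cinfty}: we reuse the explicit estimates obtained in the proof of Corollary~\ref{cor: coefficient H2}, track the $\kappa h$-dependence of every constant, and conclude by invoking Lemma~\ref{lem: d tilde lemma} to control $\widetilde{D}(\rho,\kappa h)$ on the set $\mathcal{K}$.

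For the first bound, we start from \eqref{eq: bounds rough high freq 1}, which does not require any restriction on $N_\mathbf{e}$ or $N_\mathbf{n}$:
\begin{equation*}
\|\boldsymbol{\alpha}\|_{\ell^2}\leq \frac{8\rho\max(1,\kappa h)}{h}\left[\tfrac{1}{2}+C_{2}e\Bigl(1+C_{3}\sqrt{\tfrac{\pi\rho}{\kappa h}}\Bigr)\right]\|u\|_{H_\kappa^2(\Omega)}.
\end{equation*}
Substituting the bounds \eqref{eq: constant bound infty} for $C_{2}$ and $C_{3}$, the products $C_{2}e$ and $C_{2}e\,C_{3}\sqrt{\pi\rho/(\kappa h)}$ produce terms of order $|\mathcal{N}_h|\kappa h/\widetilde{\mathrm{d}}$ and $|\mathcal{N}_h|\sqrt{\kappa h}/\sqrt{\widetilde{\mathrm{d}}_0}$, respectively, up to factors depending only on $\rho$. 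For $\kappa h\gg 1$ one has $\max(1,\kappa h)=\kappa h$, and so there exists $C(\rho)>0$ with
\begin{equation*}
\|\boldsymbol{\alpha}\|_{\ell^2}\leq \frac{C(\rho)}{h}|\mathcal{N}_h|(\kappa h)^2\,\widetilde{D}(\rho,\kappa h)\,\|u\|_{H_\kappa^2(\Omega)},
\end{equation*}
where $\widetilde{D}$ is the quantity defined in \eqref{eq: D tilde big}. Applying Lemma~\ref{lem: d tilde lemma} on the unbounded set $\mathcal{K}$ yields $\widetilde{D}(\rho,\kappa h)\leq C(\rho)(\kappa h)^2$, which gives the first estimate in \eqref{eq: bounds}.

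For the second bound, we start from \eqref{eq: bounds rough high freq 2}, namely
\begin{equation*}
\|\boldsymbol{\beta}\|_{\ell^2}\leq \frac{3\rho e\max(1,\kappa h)^2}{h}\sqrt{|\mathcal{N}_h|}\,\|u\|_{H_\kappa^2(\Omega)}.
\end{equation*}
Since this estimate does not involve $\widetilde{\mathrm{d}}$ nor $\widetilde{\mathrm{d}}_0$, no invocation of Lemma~\ref{lem: d tilde lemma} is required: the high-frequency bound follows immediately by replacing $\max(1,\kappa h)^2$ with $(\kappa h)^2$ for $\kappa h\gg 1$, yielding the second estimate in \eqref{eq: bounds}. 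The main (mild) bookkeeping obstacle is simply checking that the various powers of $\kappa h$ combine correctly in the $\boldsymbol{\alpha}$-estimate; this is where the quadratic growth of $\widetilde{D}(\rho,\kappa h)$ afforded by Lemma~\ref{lem: d tilde lemma} raises the leading order from $(\kappa h)^2$ to $(\kappa h)^4$.
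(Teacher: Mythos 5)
Your proposal is correct and follows essentially the same route as the paper: both start from the intermediate estimates \eqref{eq: bounds rough high freq 1}--\eqref{eq: bounds rough high freq 2} in Corollary~\ref{cor: coefficient H2}, substitute the high-frequency bounds \eqref{eq: constant bound infty} for $C_2$ and $C_3$, collect the resulting terms into $\widetilde{D}(\rho,\kappa h)$, and invoke Lemma~\ref{lem: d tilde lemma} on $\mathcal{K}$ to obtain the $(\kappa h)^4$ rate for $\boldsymbol{\alpha}$, while the $\boldsymbol{\beta}$ bound needs no resonance control. The power bookkeeping matches the paper's computation exactly.
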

\begin{proof}
    The statement is a consequence of Corollary \ref{cor: coefficient H2}.
    Thanks to the estimate \eqref{eq: bounds rough high freq 1}, together with \eqref{eq: constant bound infty} and Lemma \ref{lem: d tilde lemma}, there exist explicit constants $C(\rho)>0$ such that
    \begin{align*}
        \|\boldsymbol{\alpha}\|_{\ell^2}&\leq  C(\rho) \kappa \left[
        1+\frac{|\mathcal{N}_h|\kappa h}{\widetilde{\textrm{d}}(\rho,\kappa h)}+\frac{|\mathcal{N}_h|\sqrt{\kappa h}}{\sqrt{\widetilde{\textrm{d}}_0(\rho,\kappa h)}}\right]\|u\|_{H_\kappa^{2}(\Omega)}\\
        & \leq \frac{C(\rho)}{h} |\mathcal{N}_h| \widetilde{D}(\rho,\kappa h)(\kappa h)^2 \|u\|_{H_\kappa^{2}(\Omega)}
        \leq \frac{C(\rho)}{h}|\mathcal{N}_h|(\kappa h)^4\|u\|_{H_\kappa^{2}(\Omega)}.
    \end{align*}
     Similarly, from the estimate \eqref{eq: bounds rough high freq 2}, one obtains the upper bound on $\|\boldsymbol{\beta}\|_{\ell^2}$ stated in \eqref{eq: bounds}.
\end{proof}

\section{Discrete scheme and numerical results} \label{sec: Discrete scheme and numerical results}

This section introduces the discrete scheme used to compute numerical solutions within the conforming Trefftz framework developed above.
We describe the construction and assembly of the corresponding linear system, noting that for polygonal domains, the integrals involved in the matrix assembly can be computed exactly in closed form due to the local explicit decomposition of the basis functions in $V_{\mathbf{N}}(\mathcal{T}_h)$ into EPWs.
The section closes with a set of numerical experiments, showing the stability and effectiveness of the proposed approach in practice.

\subsection{Least-square Petrov--Galerkin with regularization} \label{sec: Least-square Petrov--Galerkin with regularization}

\begin{figure}
\centering
\includegraphics[trim=140 250 140 240,clip,width=.27\textwidth]{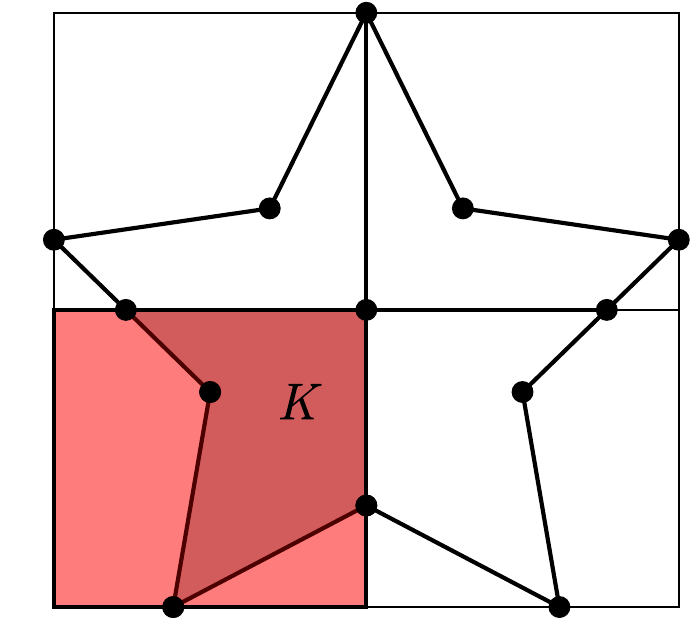}
\caption{A star-shaped domain $\Omega$ covered by a rectangular mesh $\mathcal{T}_h$. The red square highlights a mesh element $K \in \mathcal{T}_h$, while the darker region represents the intersection $K \cap \Omega$.}
\label{fig: cropped star}
\end{figure}

To tackle the redundancy of the edge-and-node approximation space $V_{\mathbf{N}}(\mathcal{T}_h)$ introduced in Definition \ref{def: edge-and-node Trefftz space}, we adopt a \textit{least-squares Petrov–Galerkin} formulation combined with a \textit{regularized Singular Value Decomposition} (SVD), in line with \cite{Parolin2023,Galante2024}.
We use a regularized SVD for its stability and robustness, despite the higher computational cost, while scalable alternatives, such as element-wise SVD or QR regularization \cite{Barucq2021,Barucq2024}, remain a topic for future study.

As a test case, we consider the homogeneous Helmholtz equation \eqref{eq:helmholtz_equation} with impedance boundary conditions $(\partial_{\mathbf{n}}-\imath \kappa)u=g$ on $\partial \Omega$, where $g \in L^2(\partial \Omega)$ is given. Hence, we introduce the following sesquilinear form and the antilinear functional
\begin{equation*}
    \mathcal{A}(v,w):=\int_{\Omega}\left(\nabla v \cdot \nabla \overline{w}-\kappa^2 v\overline{w}\right)-\imath\kappa\int_{\partial \Omega} v  \overline{w}, \qquad \mathcal{F}(w):=\int_{\partial \Omega}g \overline{w}, \qquad v,w \in H_\kappa^1(\Omega).
\end{equation*}
The variational formulation and its conforming least-squares Petrov--Galerkin approximation are
\begin{gather}
    \text{find} \quad u \in H_\kappa^1(\Omega) \quad \text{such that} \quad \mathcal{A}(u,v) = \mathcal{F}(v), \quad \forall v \in H_\kappa^1(\Omega), \label{eq: robin problem}
    \\[1ex]
    \text{find} \quad  u_{\mathbf{N}} \in V_{\mathbf{N}}(\mathcal{T}_h) \quad \text{such that} \quad \mathcal{A}(u_{\mathbf{N}}, v_{\mathbf{M}}) = \mathcal{F}(v_{\mathbf{M}}), \quad \forall v_{\mathbf{M}} \in V_{\mathbf{M}}(\mathcal{T}_h), \label{eq: galerkin problem}
\end{gather}
where $V_{\mathbf{N}}(\mathcal{T}_h)$ with $\mathbf{N}=(N_{\mathbf{e}},N_{\mathbf{n}})$ is taken as the trial space, and $V_{\mathbf{M}}(\mathcal{T}_h)$ with
$\mathbf{M}=2\mathbf{N}$
as the test space. This choice corresponds to oversampling the trial space by a factor of 2.

If $\Omega \subset \mathbb{R}^2$ is a general Lipschitz domain that cannot be exactly discretized by a mesh of rectangular elements, we can still define the Trefftz space $V_{\mathbf{N}}(\mathcal{T}_h)$ generated by basis functions associated with a rectangular mesh $\mathcal{T}_h$ covering $\Omega$; see Figure \ref{fig: cropped star}. In this case, the basis functions are restricted to the intersections $\mathcal{T}_h \cap \Omega$.

The Galerkin problem \eqref{eq: galerkin problem} is implemented by assembling the matrix $\mathbf{A}$ and the right-hand side vector $\mathbf{F}$, defined by evaluating, respectively, 
$\mathcal{A}$ and $\mathcal{F}$ to both the edge and nodal basis functions in $V_{\mathbf{N}}(\mathcal{T}_h)$ and $V_{\mathbf{M}}(\mathcal{T}_h)$.
The approximate solution is then obtained by approximately solving the rectangular linear system
\begin{equation} \label{eq: linear system}
\mathbf{A}\boldsymbol{\xi} = \mathbf{F},
\end{equation}
where $\boldsymbol{\xi}$ collects the coefficients corresponding to the two families of basis functions.
Since the basis functions have compact support and are localized in the physical domain $\Omega$, the matrix $\mathbf{A}$ is generally sparse.

In finite-precision arithmetic, the matrix $\mathbf{A}$ is often ill-conditioned due to redundancy in the basis functions, that leads to numerical non-uniqueness of the coefficient vector $\boldsymbol{\xi}$. Nevertheless, the associated different expansions may still approximate the exact solution with similar accuracy, and if some have small coefficient norms, an accurate approximation can be computed numerically.
We achieve this using a regularized SVD in combination with the oversampling in \eqref{eq: linear system}, following the ideas in \cite{Adcock2019,Adcock2020}.
The regularized solution procedure follows the steps below:
\begin{itemize}
\item Firstly, the reduced SVD of the matrix $\mathbf{A}$ is performed, namely $\mathbf{A}=\mathbf{U}\boldsymbol{\Sigma} \mathbf{V}^*$, where $\boldsymbol{\Sigma} = \operatorname{diag}(\sigma_1,\sigma_2,\dots)$ with $\sigma_1 \geq \sigma_2 \geq \dots$, and $\mathbf{U},\mathbf{V}$ are semi-unitary matrices.
\item Then, the regularization involves discarding the relatively small singular values by setting them to zero.
A threshold parameter $\epsilon \in (0,1]$ is selected, and $\boldsymbol{\Sigma}$ is replaced by  
$\boldsymbol{\Sigma}_{\epsilon}$ by zeroing all $\sigma_n$ such that $\sigma_n < \epsilon \sigma_{1}$.
This results in an approximate factorization of $\mathbf{A}$, that is $\mathbf{A}_{\epsilon}:=\mathbf{U}\boldsymbol{\Sigma}_{\epsilon}\mathbf{V}^*$.
\item Lastly, an approximate solution $u_{\mathbf{N}, \epsilon} \in V_{\mathbf{N}}(\mathcal{T}_h)$ for the linear system in \eqref{eq: linear system} is given by
\begin{equation*}
    u_{\mathbf{N}, \epsilon}:=\sum_{\mathbf{s}\in \Sigma_h}\sum_{n=1}^{N_{\mathbf{e}}}\alpha^{(\epsilon)}_{\mathbf{s},n}\phi_{\mathbf{s},n}+\sum_{\mathbf{p}\in \mathcal{N}_h}\sum_{m=1}^{N_{\mathbf{n}}}\beta^{(\epsilon)}_{\mathbf{p},m}\psi_{\mathbf{p},m},
\end{equation*}
where
\begin{equation*}
\boldsymbol{\xi}^{(\epsilon)}:=\left(\left(\alpha^{(\epsilon)}_{\mathbf{s},n}\right)_{\substack{\mathbf{s}\in \Sigma_h \\ 1\leq n \leq N_{\mathbf{e}}}},\left(\beta^{(\epsilon)}_{\mathbf{p},m}\right)_{\substack{\mathbf{p}\in \mathcal{N}_h \\ 1\leq m \leq N_{\mathbf{n}}}}\right):=\mathbf{A}^{\dagger}_{\epsilon}\mathbf{F}=\mathbf{V}\boldsymbol{\Sigma}_{\epsilon}^{\dagger}\mathbf{U}^*\mathbf{F}.
\end{equation*}
Here $\boldsymbol{\Sigma}_{\epsilon}^{\dagger}$ denotes the pseudo-inverse of the matrix $\boldsymbol{\Sigma}_{\epsilon}$, i.e.\ the diagonal matrix defined by $(\boldsymbol{\Sigma}_{\epsilon}^{\dagger})_{nn}=\boldsymbol{\Sigma}_{nn}^{-1}$ if $\boldsymbol{\Sigma}_{nn} \geq \epsilon \sigma_{1}$ and $(\boldsymbol{\Sigma}_{\epsilon}^{\dagger})_{nn}=0$ otherwise.
\end{itemize}

\subsection{Assembly of the linear system}

All edge and node basis functions, introduced in Definitions \ref{def: edge basis functions} and \ref{def: ndoe basis functions} respectively, can be expanded as a superposition of four plane waves $e^{\imath\kappa\mathbf{d}\cdot \mathbf{x}}$, $\mathbf{d}\in \mathbb{C}^2$, in each cell $K \in \mathcal{T}_h$, by simply using the trigonometric identity $2\imath\sin(z)=e^{\imath z}-e^{-\imath z}$, for $z \in \mathbb{C}$. For instance, if we take a basis function $\phi_{\mathbf{s},n}$ in \eqref{eq: hor edge functions}, associated to the horizontal edge $\mathbf{s}=[x_{\mathbf{s}},x_{\mathbf{s}}+h_1]\times\{y_{\mathbf{s}}\}$, where $(x_{\mathbf{s}},y_{\mathbf{s}})\in \mathcal{N}_h$, the complex-valued direction vectors involved in its expansion are
\begin{equation*}
    \mathbf{d}_{\boldsymbol{\sigma},n}:=\begin{pmatrix}
\sigma_1 \nu_n \\
\sigma_2 \sqrt{1-\nu_n^2}
\end{pmatrix}\in \mathbb{C}^2, \qquad \nu_n=\frac{n\pi}{\kappa h_1}, \qquad \boldsymbol{\sigma}=(\sigma_1,\sigma_2)\in \{\pm1\}^2,\qquad n \in \mathbb{N}^*,
\end{equation*}
and we get
\begin{equation*}
    \phi_{\mathbf{s},n}(\mathbf{x})=\sum_{\boldsymbol{\sigma}\in \{\pm1\}^2}c_{\boldsymbol{\sigma},n}e^{\imath \kappa \mathbf{d}_{\boldsymbol{\sigma},n}\cdot \mathbf{x}}, \qquad \text{where} \qquad c_{\boldsymbol{\sigma},n}:=\pm\frac{\sigma_1\sigma_2 e^{-\imath\kappa\mathbf{d}_{\boldsymbol{\sigma},n}\cdot(x_{\mathbf{s}},y_{\mathbf{s}}\pm h_2)}}{4\sin(h_2\kappa \sqrt{1-\nu_n^2})}, \qquad \mathbf{x} \in K_{\mathbf{s}}^{\pm},
\end{equation*}
and $K_{\mathbf{s}}^{\pm}$ are the cell adjacent to $\mathbf{s}$ defined in \eqref{eq: adj cell hor edge functions}.
A similar expansion holds both for functions $\phi_{\mathbf{s},n}$ associated to vertical edges and node basis function $\psi_{\mathbf{p},n}$.
Numerically, it is crucial to handle the coefficients $c_{\boldsymbol{\sigma},n} \in \mathbb{C}$ with care to avoid introducing potential instabilities.
As for the function $\phi_{\mathbf{s},n}$, the plane waves $e^{\imath \kappa\mathbf{d}_{\boldsymbol{\sigma},n} \cdot \mathbf{x}}$ are propagative when $\nu_n<1$, and evanescent when $\nu_n>1$.

Thanks to this property, all integrals involved in the assembly of the matrix in \eqref{eq: linear system} -- and possibly of the right-hand side -- can be computed exactly in closed form whenever $\Omega$ is a polygonal domain. This follows directly from \cite[Eq.\ (15)]{Hiptmair2016}, to which we refer for details.

\subsection{Numerical experiments} \label{sec: Numerical experiments}

We examine the approximation properties of the proposed method by solving a set of Helmholtz problems using the least-squares Petrov–Galerkin formulation \eqref{eq: galerkin problem} combined with the regularization strategy detailed in Section \ref{sec: Least-square Petrov--Galerkin with regularization}, where we fix the truncation parameter to $\epsilon = 10^{-14}$.

\paragraph{Propagative plane wave.}

\begin{figure}
\centering
\includegraphics[trim=0 0 0 0,clip,height=.298\textwidth]{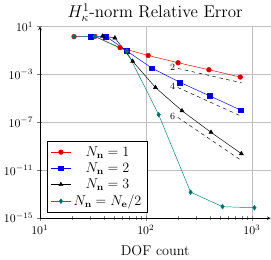}
\hspace{.01\textwidth}
\includegraphics[trim=130 179 40 195,clip,width=.31\textwidth]{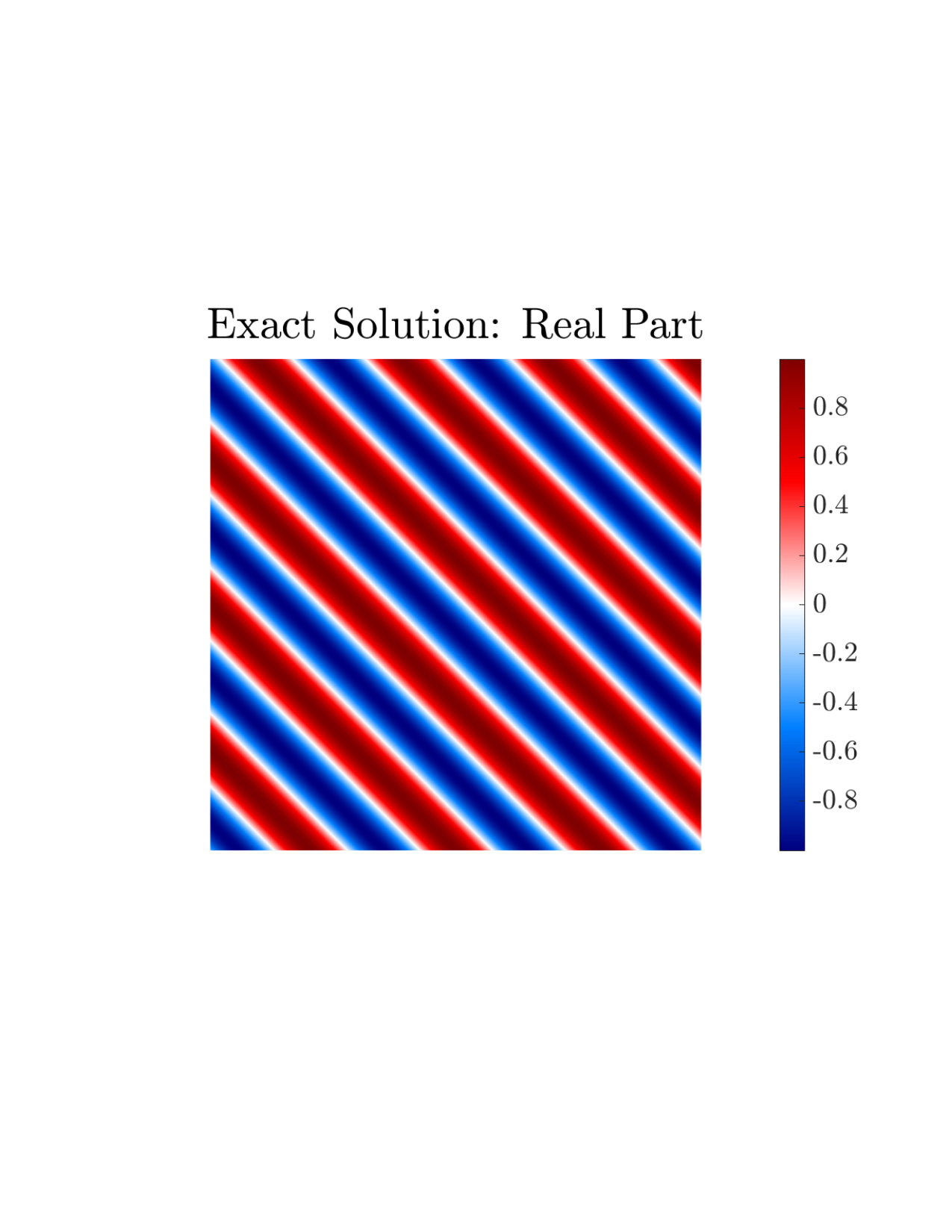}
\includegraphics[trim=130 179 40 195,clip,width=.31\textwidth]{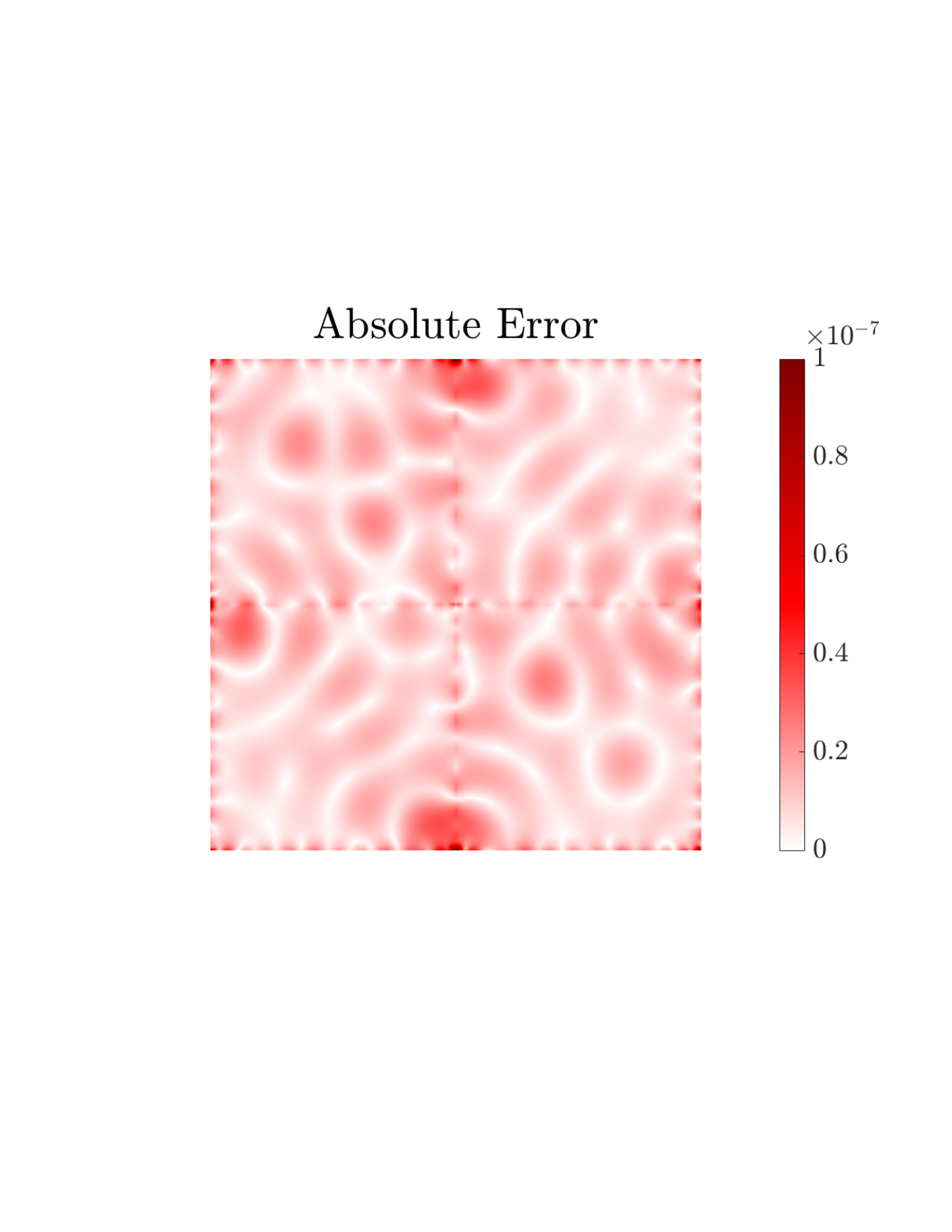}
\vspace{-.2cm}

\caption{Approximation of $u(\mathbf{x}) = e^{\imath \kappa \mathbf{d}\cdot \mathbf{x}}$, with $\mathbf{d} = (1/\sqrt{2}, 1/\sqrt{2})$ and $\kappa=30$, in $\Omega=(0,1)^2$. $\mathcal{T}_h$ has 4 square elements with $h=\frac{1}{2}$.
Left: $H_\kappa^1(\Omega)$-norm relative error vs.\ \#DOFs, varying $N_{\mathbf{e}}$ and for different $N_{\mathbf{n}}$.
Center and right: real part $\Re u$ and absolute error $|u - u_{\mathbf{N}, \epsilon}|$, for $\mathbf{N} = (8,5)$.}
\label{fig: plane wave}
\end{figure}

We consider the domain $\Omega = (0,1)^2$ with a mesh $\mathcal{T}_h$ of four square elements, as in the projection test of Section \ref{sec: approximation limitations}.
Accordingly, $|\Sigma_h| = 12$ and $|\mathcal{N}_h| = 9$. The target function is again the PPW $u(\mathbf{x})=e^{\imath \kappa \mathbf{d}\cdot \mathbf{x}}$ with direction $\mathbf{d} = (1/\sqrt{2}, 1/\sqrt{2})$.

Figure \ref{fig: plane wave} shows the numerical results for a fixed wavenumber $\kappa = 30$.
The central and right sections display the real part $\Re u$ and the absolute error $|u - u_{\mathbf{N}, \epsilon}|$, obtained using $N_{\mathbf{e}} = 8$ basis functions per edge and $N_{\mathbf{n}} = 5$ per node.
On the left, the relative error in the $H_\kappa^1(\Omega)$-norm is plotted against the total number of DOFs, given by $\#\textup{DOFs}=N_{\mathbf{e}} \times |\Sigma_h| + N_{\mathbf{n}} \times |\mathcal{N}_h|$. We vary $N_{\mathbf{e}}$ for different choices of $N_{\mathbf{n}}$. As $u$ is analytic on $\overline{\Omega}$, we observe that the convergence order predicted by the approximation estimates in Corollary \ref{cor: corollary Cinfty} is confirmed, with a $\frac{1}{2}$ gain.

In Figure~\ref{fig: plane wave 2} we report numerical results for different wavenumbers $\kappa$ on the fixed mesh $\mathcal{T}_h$. Both edge and node parameters, $N_{\mathbf{e}}$ and $N_{\mathbf{n}}$, are chosen to scale linearly with $\kappa h$; specifically $N_{\mathbf{n}}=\kappa h$ and $N_{\mathbf{e}}=\tau_{\mathbf{e}}\kappa h$ for several values of the tuning parameter $\tau_{\mathbf{e}}$.
On the left, we display the relative error in the $H_\kappa^1(\Omega)$-norm as a function of $\kappa h$. These results are consistent with Corollary~\ref{cor: high freq Cinfty}, exhibiting geometric convergence as $\kappa h \to +\infty$, with a higher convergence rate for larger values of $\tau_{\mathbf{e}}$.
On the right, we report the normalized coefficient norm $\|\boldsymbol{\xi}^{(\epsilon)}\|_{\ell^2}/\|u\|_{H^1_\kappa(\Omega)}$ as a measure of stability, plotted against $\kappa h$. The coefficients remain small across all tested regimes. The exponential upper bound of Corollary~\ref{cor: high freq smooth coeff} is not realized in practice and largely overestimates the actual behavior, which in fact does not result in any numerical instabilities.

\begin{figure}
\centering
\includegraphics[trim=0 0 0 0,clip,height=.3\textwidth]{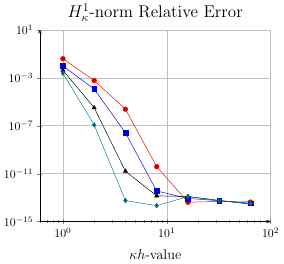}
\hspace{.1\textwidth}
\centering
\includegraphics[trim=0 0 0 0,clip,height=.3\textwidth]{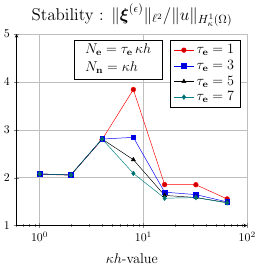}
\vspace{-.1cm}

\caption{Approximation of the plane wave $u(\mathbf{x}) = e^{\imath \kappa \mathbf{d}\cdot \mathbf{x}}$ with $\mathbf{d} = (1/\sqrt{2}, 1/\sqrt{2})$ in $\Omega=(0,1)^2$, where $\mathcal{T}_h$ consists of 4 square elements with $h=\frac{1}{2}$.
$H_\kappa^1(\Omega)$-norm relative error (left) and relative coefficient norm $\|\boldsymbol{\xi}^{(\epsilon)}\|_{\ell^2}/\|u\|_{H^1_\kappa(\Omega)}$ (right) vs.\ $\kappa h$, with $N_{\mathbf{e}}$ and $N_{\mathbf{n}}$ scaling linearly w.r.t.\ $\kappa h$.}
\label{fig: plane wave 2}
\end{figure}

\begin{figure}[t]
\centering
\includegraphics[trim=0 0 0 0,clip,height=.298\textwidth]{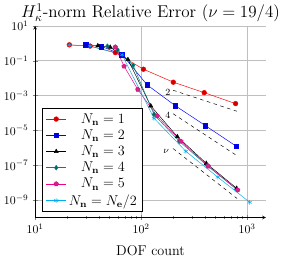}
\hspace{.01\textwidth}
\includegraphics[trim=130 179 40 195,clip,width=.31\textwidth]{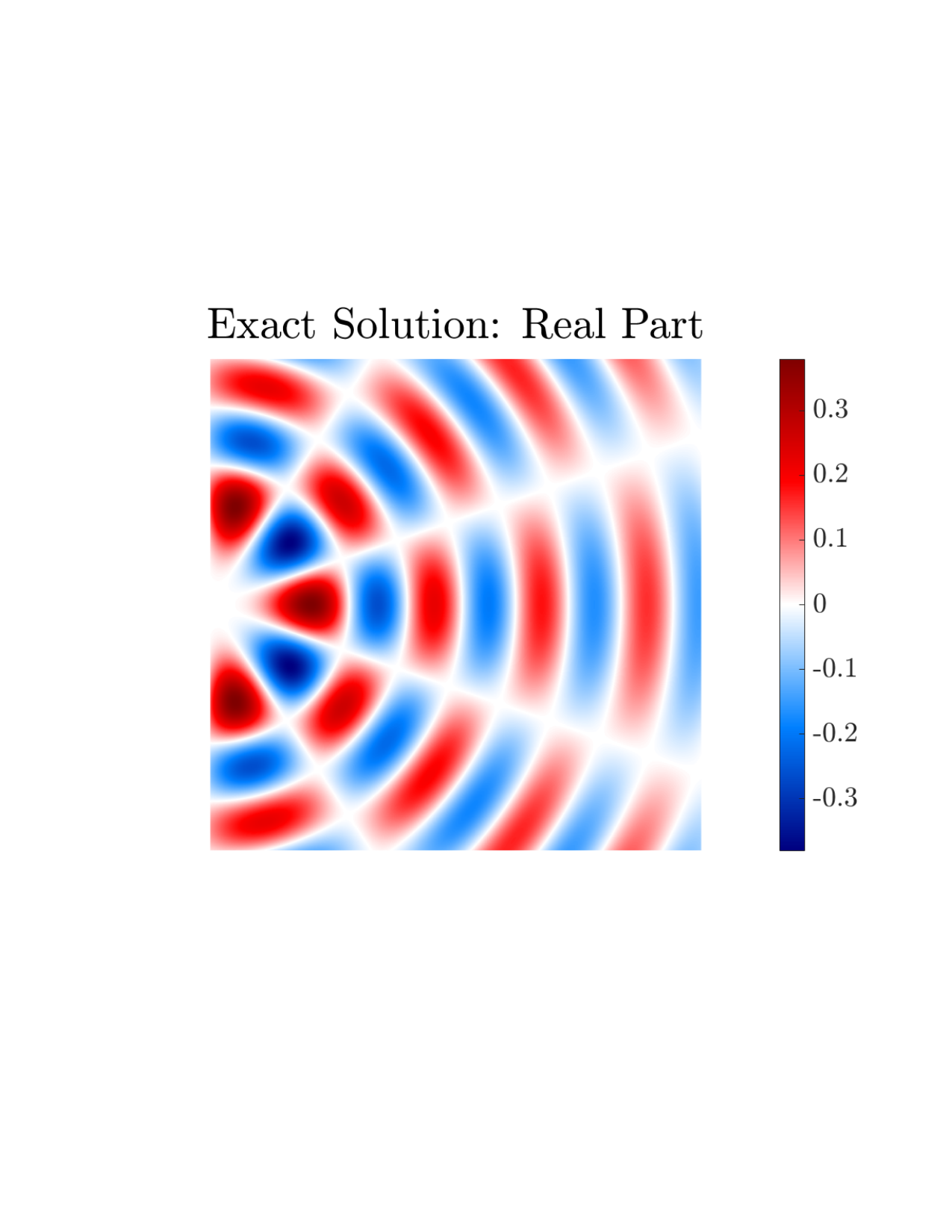}
\includegraphics[trim=130 179 40 195,clip,width=.31\textwidth]{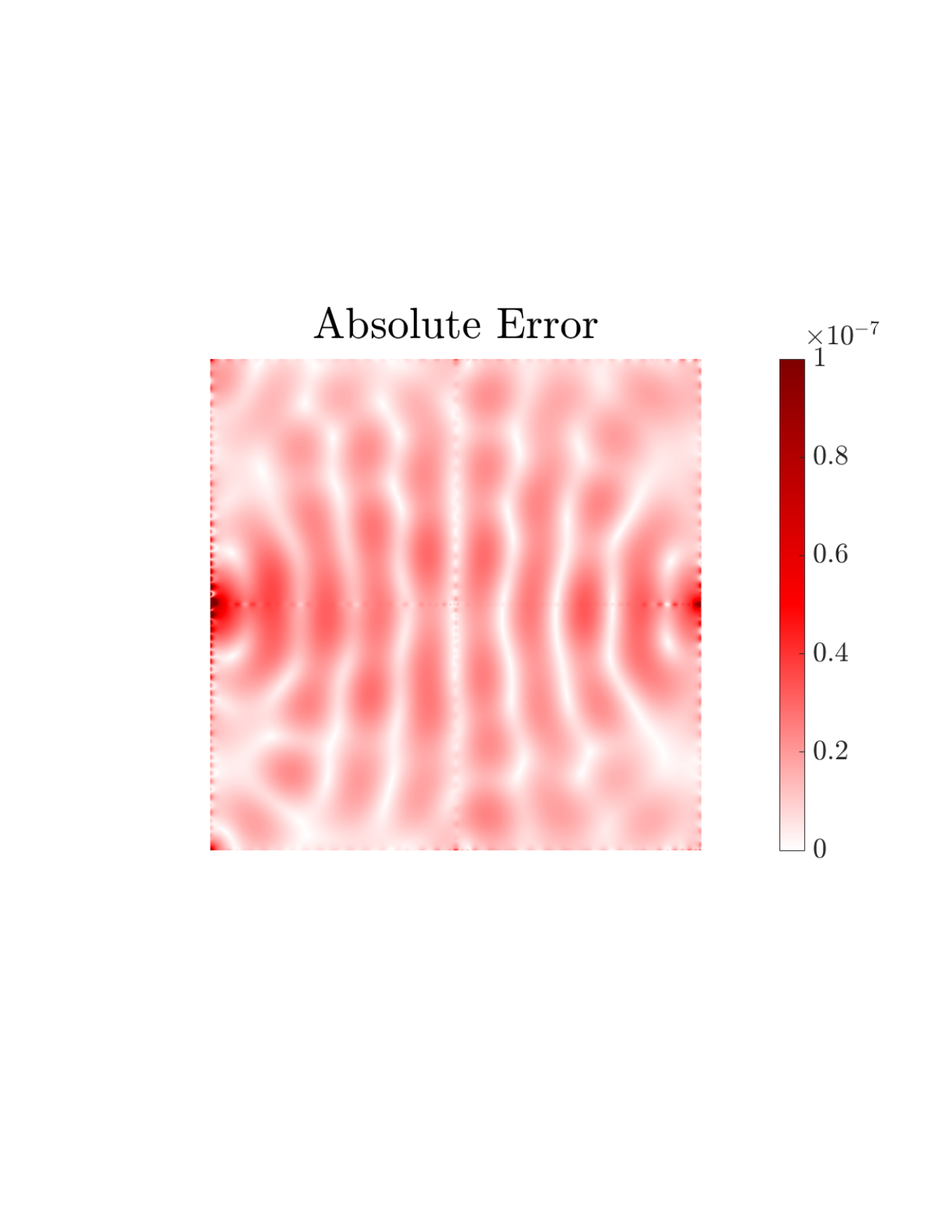}

\includegraphics[trim=0 0 0 0,clip,height=.298\textwidth]{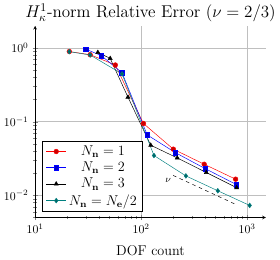}
\hspace{.01\textwidth}
\includegraphics[trim=130 179 40 180,clip,width=.31\textwidth]{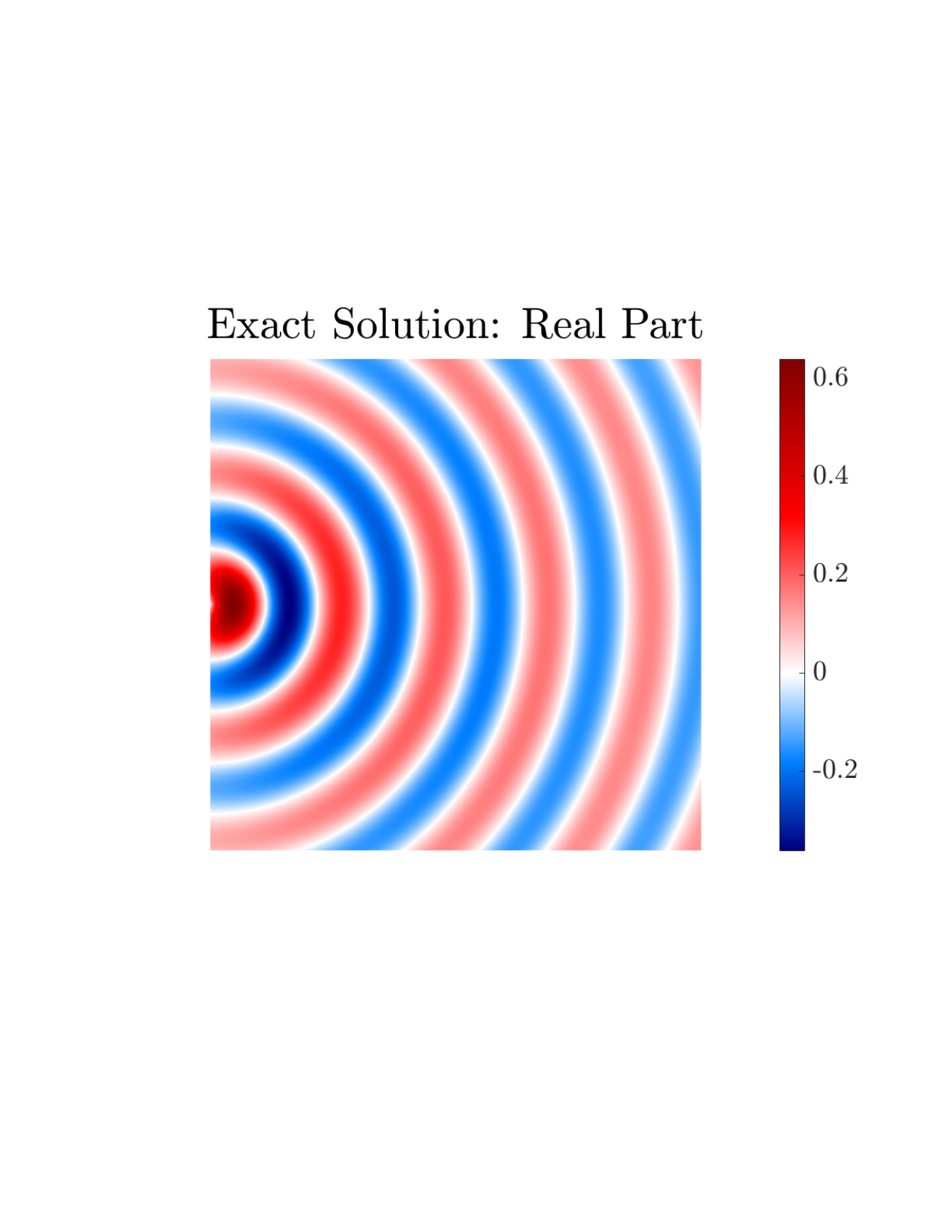}
\includegraphics[trim=130 179 40 180,clip,width=.31\textwidth]{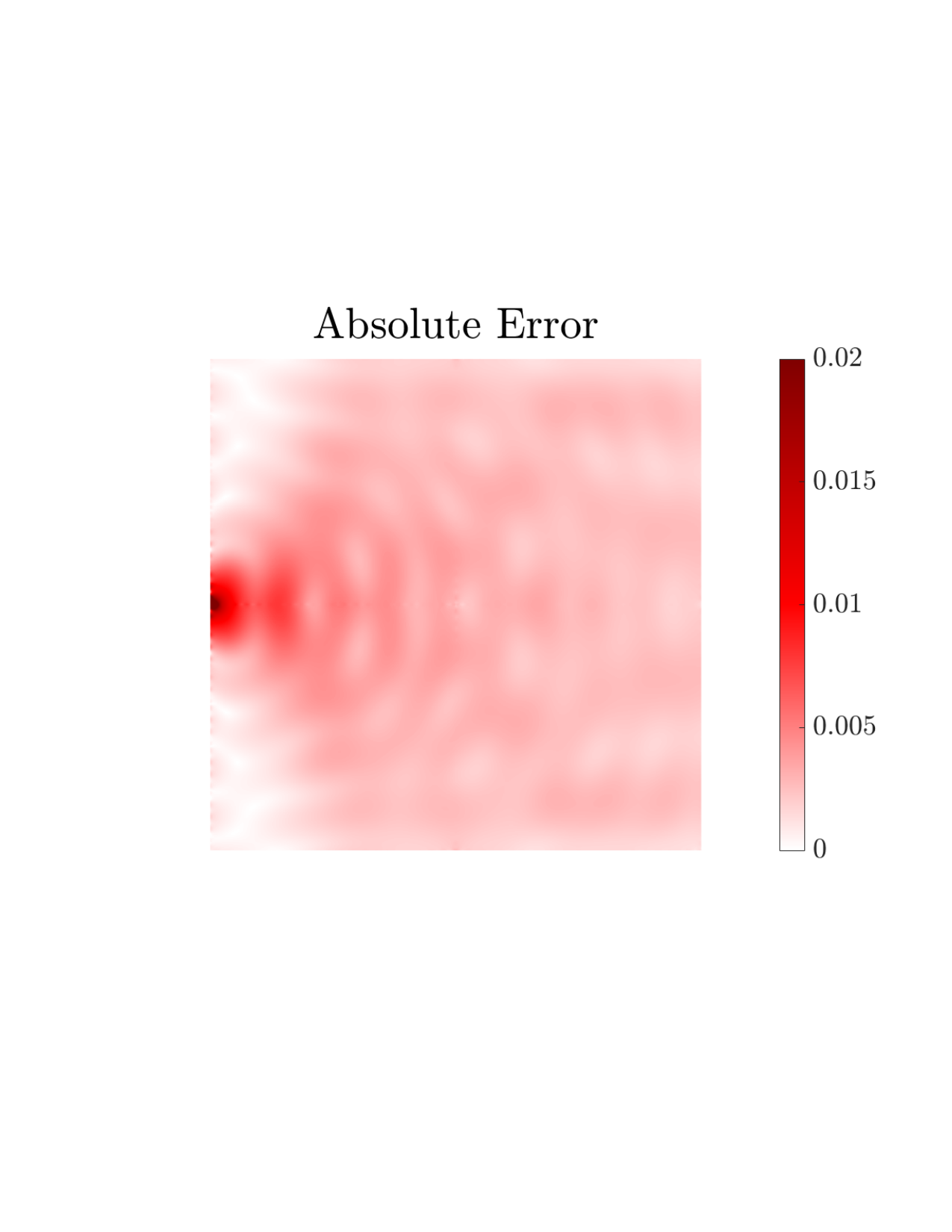}
\vspace{-.2cm}

\caption{Approximation of corner singularities $u_{\nu}$ in \eqref{eq: corner sing}, with $\nu=19/4$ (top) and $\nu=2/3$ (bottom), in $\Omega=(0,1)\times(-\frac{1}{2},\frac{1}{2})$, where $\mathcal{T}_h$ consists of 4 square elements with $h=\frac{1}{2}$.
Left column: $H_\kappa^1(\Omega)$-norm relative error vs.\ \#DOFs, varying $N_{\mathbf{e}}$ and for different $N_{\mathbf{n}}$.
Central and right columns: real part $\Re u_{\nu}$ and absolute error $|u_{\nu} - u_{\mathbf{N}, \epsilon}|$, for $\mathbf{N}=(30,3)$.}
\label{fig: corner sing}
\end{figure}

\paragraph{Corner singularities.}
We consider the domain $\Omega = (0,1) \times (-\tfrac{1}{2}, \tfrac{1}{2})$, discretized using the same computational mesh $\mathcal{T}_h$ of 4 square elements as in the previous example.
We now approximate more challenging solutions, specifically
\begin{equation} \label{eq: corner sing}
u_\nu(\mathbf{x}) := J_{\nu}(\kappa r)e^{\imath \nu \theta}, \qquad \mathbf{x} = (r \cos\theta, r \sin\theta), \qquad \text{where} \quad \nu \in \mathbb{R} \setminus \mathbb{Z},
\end{equation}
and $J_\nu$ is the Bessel function of the first kind \cite[Eq.\ (10.2.2)]{DLMF}. As $\nu \not \in \mathbb{Z}$, these functions exhibit a singularity at the origin on the boundary $\partial \Omega$, which becomes stronger as $\nu$ decreases. In fact, the function \eqref{eq: corner sing} belongs to $H_\kappa^{1+\nu-\lambda}(\Omega)$ for any $\lambda > 0$, but $u_\nu \notin H_\kappa^{1+\nu}(\Omega)$.

In Figure \ref{fig: corner sing}, we report the results for a fixed wavenumber $\kappa = 30$. In the first row, we set $\nu = 19/4$, while in the second row, $\nu = 2/3$.
The central and right columns show the real part $\Re u_\nu$ and the point-wise absolute error $|u_\nu - u_{\mathbf{N}, \epsilon}|$.
In the left column, we plot the relative error in the $H_\kappa^1(\Omega)$-norm as a function of the number of DOFs, varying $N_{\mathbf{e}}$ and for various choices of $N_{\mathbf{n}}$.
We observe that the convergence rate initially grows with $2N_{\mathbf{n}}$ when $2N_{\mathbf{n}} < \nu$, but saturates at the rate $\nu$ when $2N_{\mathbf{n}} > \nu$. This behavior indicates that the convergence rate in Theorem \ref{th: best approximation} may be sharpened by a factor of $\frac{1}{2}$, and that the result may extend to all Helmholtz solutions $u \in H_\kappa^{M+1}(\Omega)$ with a real, non-integer $M>0$.

In Figure~\ref{fig: corner sing 2} we report numerical results for the case $\nu = 2/3$, and various wavenumbers $\kappa$ on the same mesh $\mathcal{T}_h$. The edge parameter $N_{\mathbf{e}}$ is set to scale linearly with $\kappa h$ -- specifically $N_{\mathbf{e}} = \tau_{\mathbf{e}} \kappa h$ for several values of the tuning parameter $\tau_{\mathbf{e}}$ -- while the node parameter is fixed at $N_{\mathbf{n}} = 1$, as suggested by the previous analysis due to the limited Sobolev regularity of the target solution.
On the left, we show the relative error in the $H_\kappa^1(\Omega)$-norm as a function of $\kappa h$. We observe that the error slightly decreases in $\kappa h$ and eventually plateaus. This suggests that the polynomial growth predicted by Corollary~\ref{cor: rough high freq} is not necessary in practice, and that a linear scaling of $N_{\mathbf{e}}$ with $\kappa h$ suffices to ensure $\kappa$-independent accuracy -- even for solutions with regularity below $H^2_\kappa(\Omega)$.
On the right, we report the normalized coefficient norm $\|\boldsymbol{\xi}^{(\epsilon)}\|_{\ell^2}/\|u_{2/3}\|_{H^1_\kappa(\Omega)}$ as a measure of numerical stability, plotted against $\kappa h$. The coefficients remain small across all tested regimes, confirming that stability is preserved when $N_{\mathbf{e}}$ grows linearly with $\kappa h$ without requiring any stronger increase of the edge basis functions.

\begin{figure}
\centering
\includegraphics[trim=0 0 0 0,clip,height=.3\textwidth]{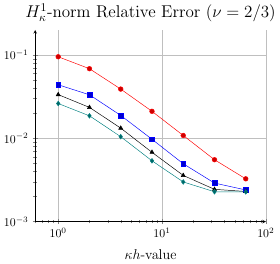}
\hspace{.1\textwidth}
\centering
\includegraphics[trim=0 0 0 0,clip,height=.3\textwidth]{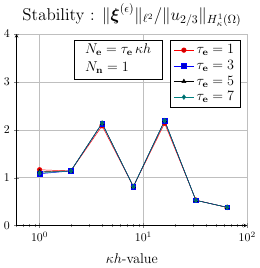}
\vspace{-.1cm}

\caption{Approximation of the corner singularity $u_{\nu}$ in \eqref{eq: corner sing} with $\nu=2/3$ in $\Omega=(0,1)\times(-\frac{1}{2},\frac{1}{2})$, where $\mathcal{T}_h$ consists of 4 square elements with $h=\frac{1}{2}$.
$H_\kappa^1(\Omega)$-norm relative error (left) and relative coefficient norm $\|\boldsymbol{\xi}^{(\epsilon)}\|_{\ell^2}/\|u_{2/3}\|_{H^1_\kappa(\Omega)}$ (right) vs.\ $\kappa h$, with $N_{\mathbf{e}}$ and $N_{\mathbf{n}}$ scaling linearly w.r.t.\ $\kappa h$.}
\label{fig: corner sing 2}
\end{figure}

\paragraph{Star shaped domain.} We now consider a star-shaped domain $\Omega$ inscribed in the unit circle $\{\mathbf{x} \in \mathbb{R}^2 : |\mathbf{x}| = 1\}$. The mesh $\mathcal{T}_h$ consists of 4 rectangular cells, covering $\Omega$ as shown in Figure \ref{fig: cropped star}, with $|\Sigma_h| = 12$ and $|\mathcal{N}_h| = 9$. The wavenumber is fixed at $\kappa = 30$.

First, we approximate the same PPW $u(\mathbf{x})=e^{\imath \kappa \mathbf{d}\cdot \mathbf{x}}$ with $\mathbf{d} = (1/\sqrt{2}, 1/\sqrt{2})$, as in Figure \ref{fig: plane wave}.
The results are shown in Figure \ref{fig: plane wave star}.
The center and right panels show the real part $\Re u$ and the absolute error $|u - u_{\mathbf{N}, \epsilon}|$, obtained with $\mathbf{N}=(8,8)$. The left panel plots the relative $H_\kappa^1$-error against the number of DOFs, varying $N_{\mathbf{e}}$ and for different values of $N_{\mathbf{n}}$.
Despite the generality of the domain, which limits the ability of the $V_{\mathbf{N}}(\mathcal{T}_h)$ basis functions to adapt near the boundary, we still observe good approximation properties, with convergence plateauing at about $10^{-8}$.

\begin{figure}
\centering
\includegraphics[trim=0 0 0 0,clip,height=.298\textwidth]{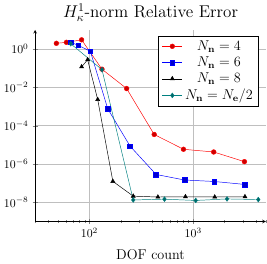}
\hspace{.01\textwidth}
\includegraphics[trim=130 179 40 195,clip,width=.31\textwidth]{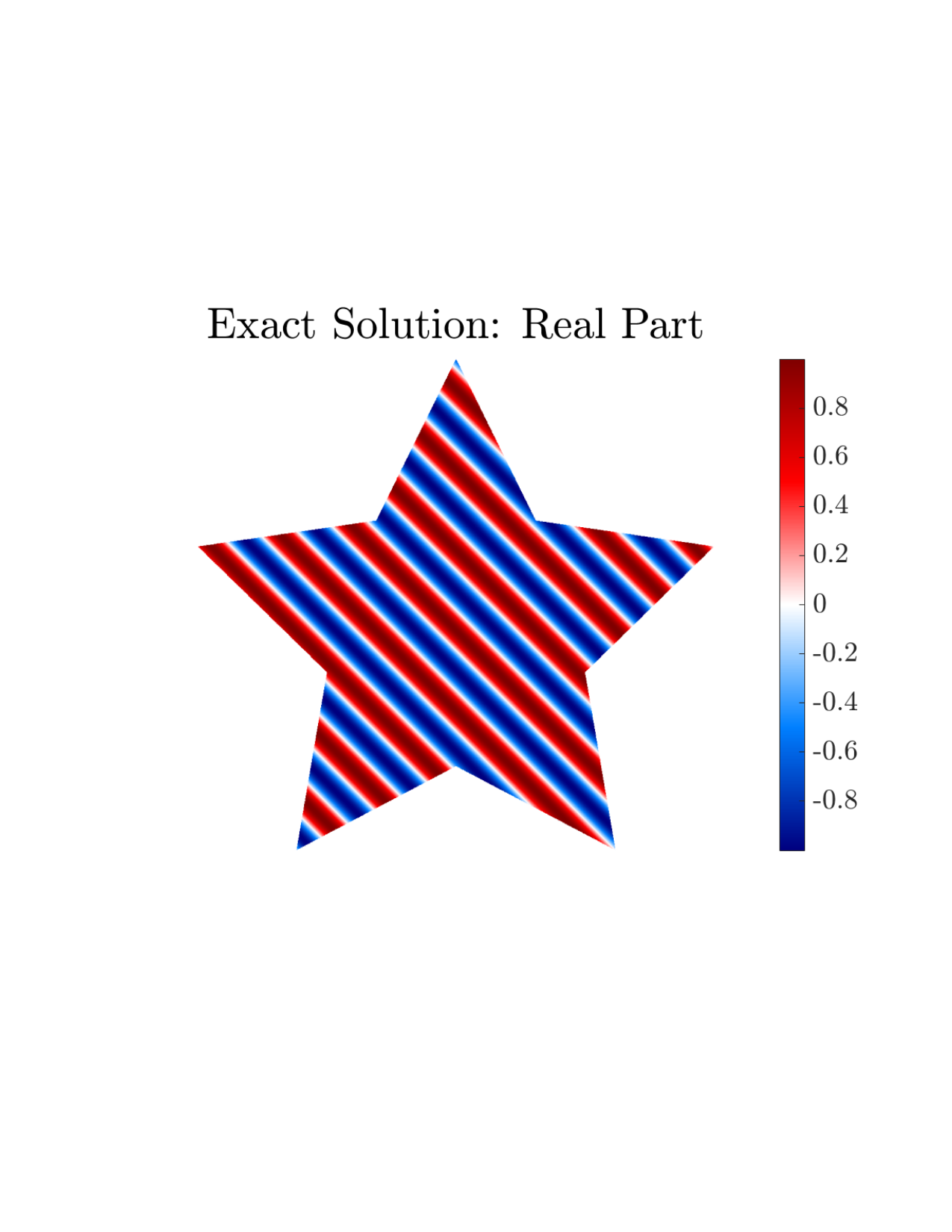}
\includegraphics[trim=130 179 40 195,clip,width=.31\textwidth]{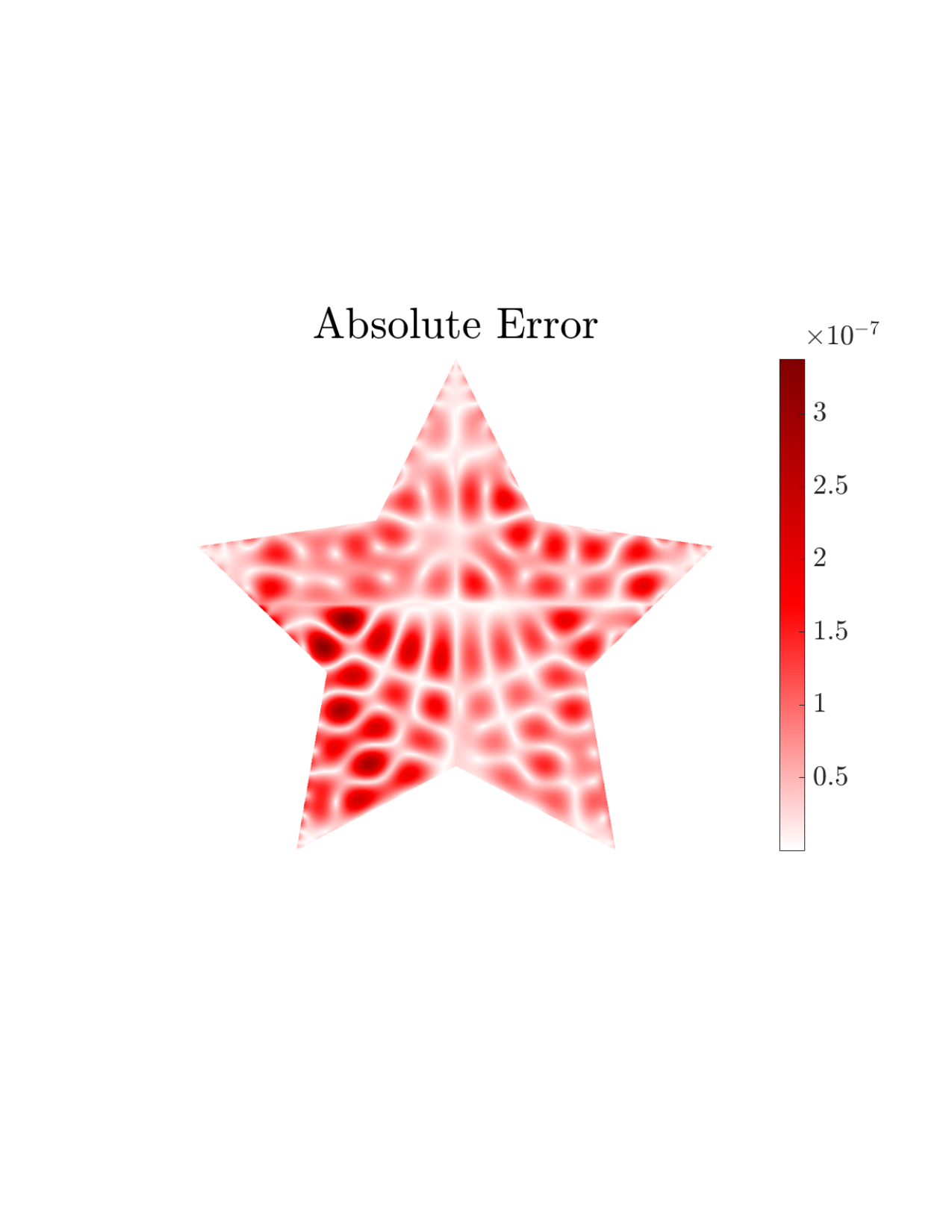}
\vspace{-.2cm}

\caption{Approximation of the plane wave $u(\mathbf{x}) = e^{\imath \kappa \mathbf{d}\cdot \mathbf{x}}$ with $\mathbf{d} = (1/\sqrt{2}, 1/\sqrt{2})$ in a star-shaped domain $\Omega$, where $\mathcal{T}_h$ consists of 4 rectangular elements with $h=\sqrt{2+\varphi}/2$, where $\varphi$ is the golden ratio.
Left: $H_\kappa^1(\Omega)$-norm relative error vs.\ \#DOFs, varying $N_{\mathbf{e}}$ and for different values of $N_{\mathbf{n}}$.
Center and right: real part $\Re u$ and absolute error $|u - u_{\mathbf{N}, \epsilon}|$, for $\mathbf{N} = (8,8)$.
}
\label{fig: plane wave star}
\vspace{.6cm}

\includegraphics[trim=0 0 0 0,clip,height=.298\textwidth]{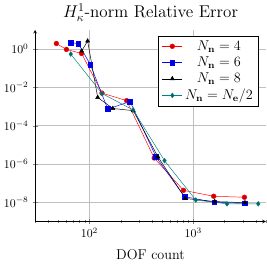}
\hspace{.01\textwidth}
\includegraphics[trim=130 179 40 190,clip,width=.31\textwidth]{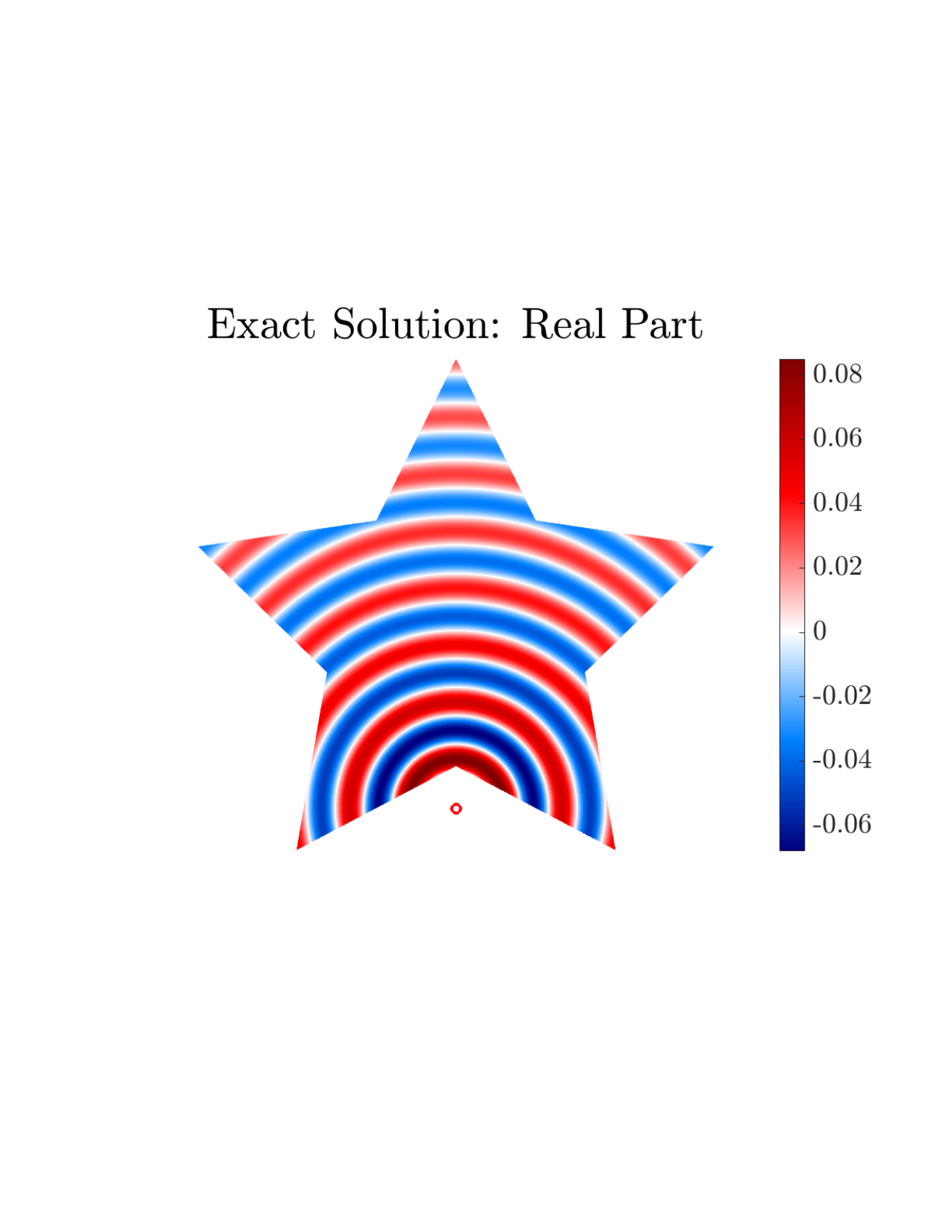}
\includegraphics[trim=130 179 40 190,clip,width=.31\textwidth]{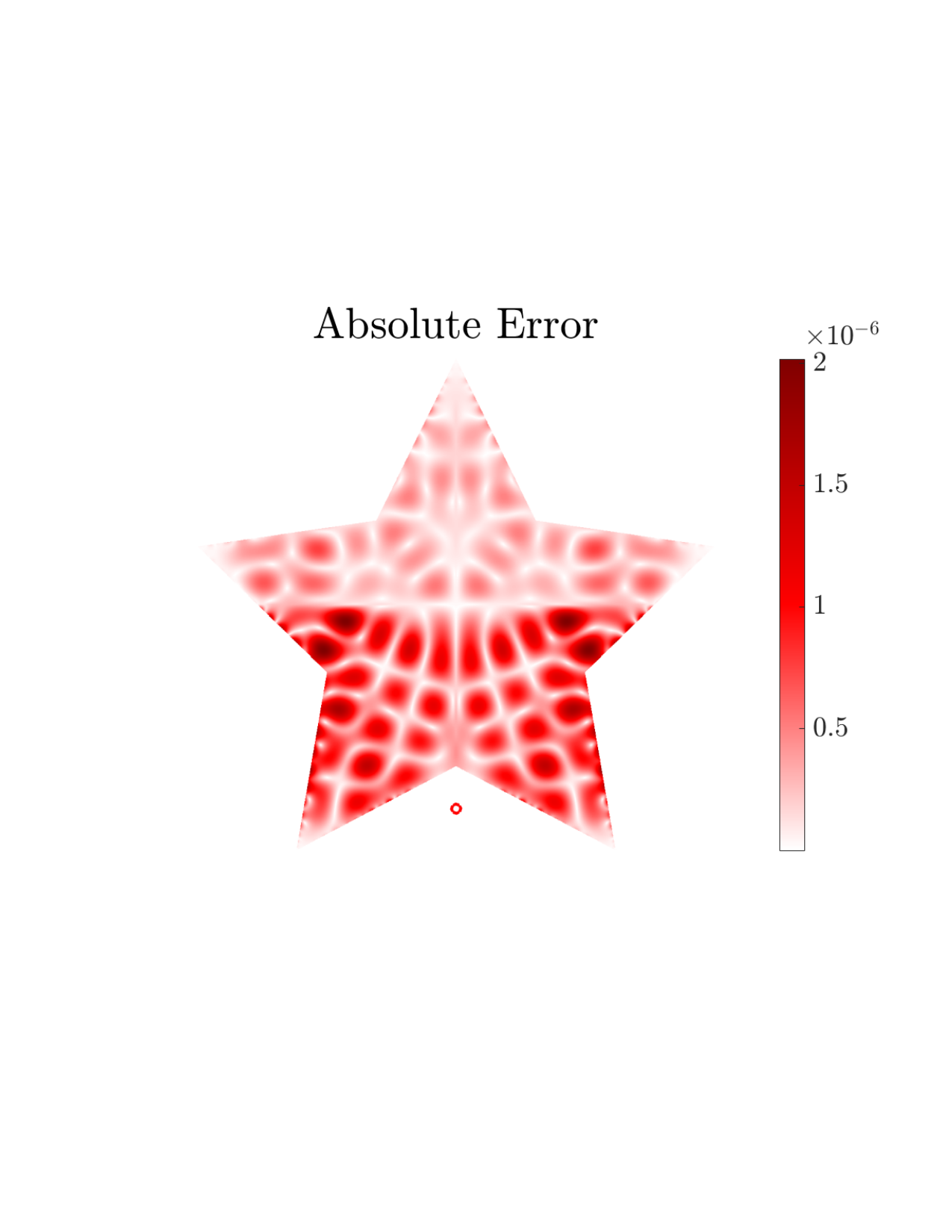}
\vspace{-.2cm}

\caption{Approximation of the fundamental solution $\Phi_{\mathbf{x}_0}$ in \eqref{eq: fundamental solution}  with $\kappa=30$ in a star-shaped domain $\Omega$, where $\mathcal{T}_h$ consists of 4 rectangular elements with $h=\sqrt{2+\varphi}/2$, where $\varphi$ is the golden ratio. The singularity $\mathbf{x}_0$, marked by a red circle, lies on one edge of $\Sigma_h$ at a distance $3\lambda/4$ from the domain boundary.
Left: $H_\kappa^1(\Omega)$-norm relative error vs.\ \#DOFs, varying $N_{\mathbf{e}}$ and for different $N_{\mathbf{n}}$.
Center and right: real part $\Re\Phi_{\mathbf{x}_0}$ and absolute error $|\Phi_{\mathbf{x}_0} - u_{\mathbf{N}, \epsilon}|$, for $\mathbf{N} = (30,5)$.
}
\label{fig: fund star}
\vspace{.6cm}

\centering
\includegraphics[trim=0 0 0 0,clip,height=.298\textwidth]{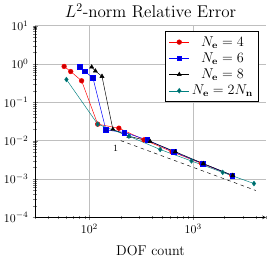}
\hspace{.01\textwidth}
\includegraphics[trim=130 179 40 195,clip,width=.31\textwidth]{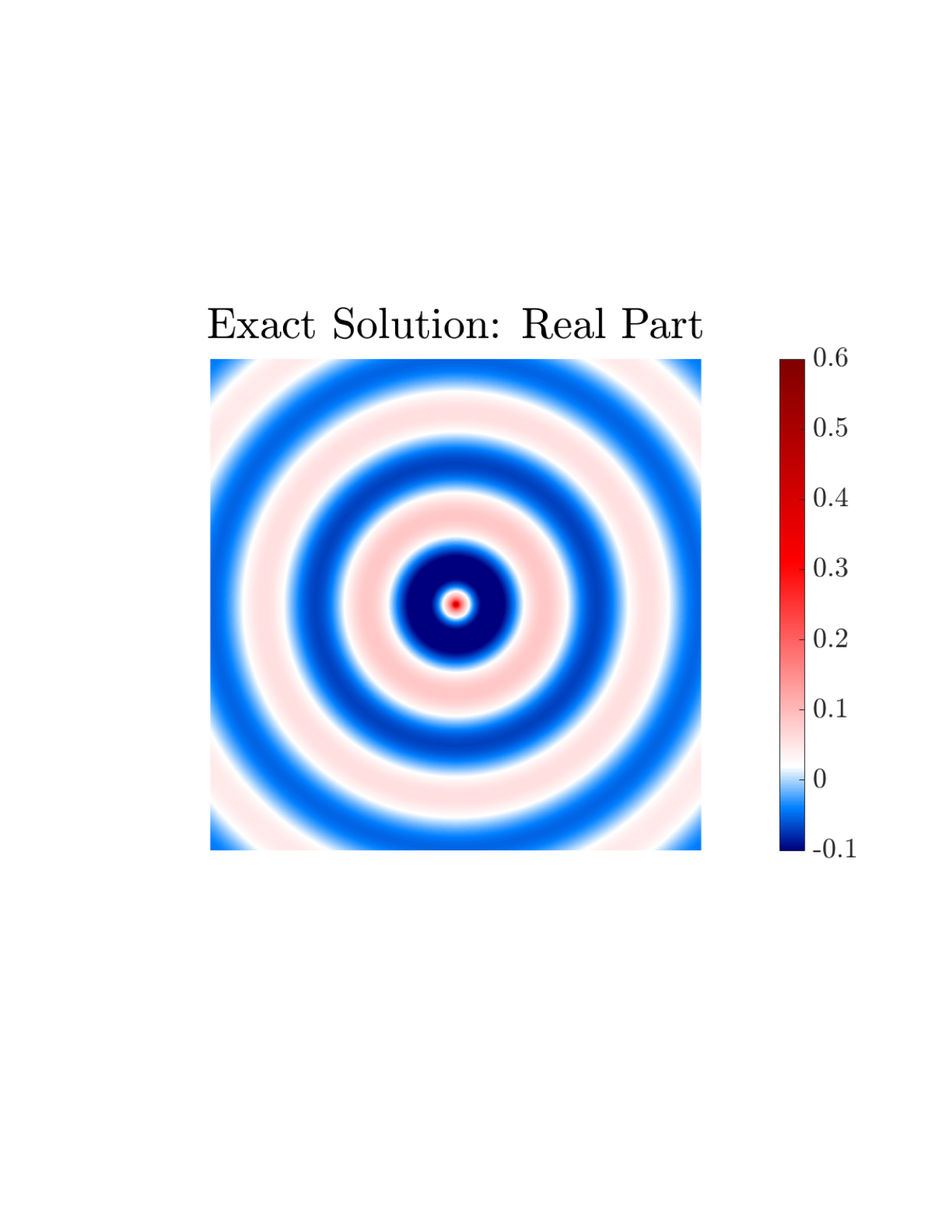}
\includegraphics[trim=130 179 40 195,clip,width=.31\textwidth]{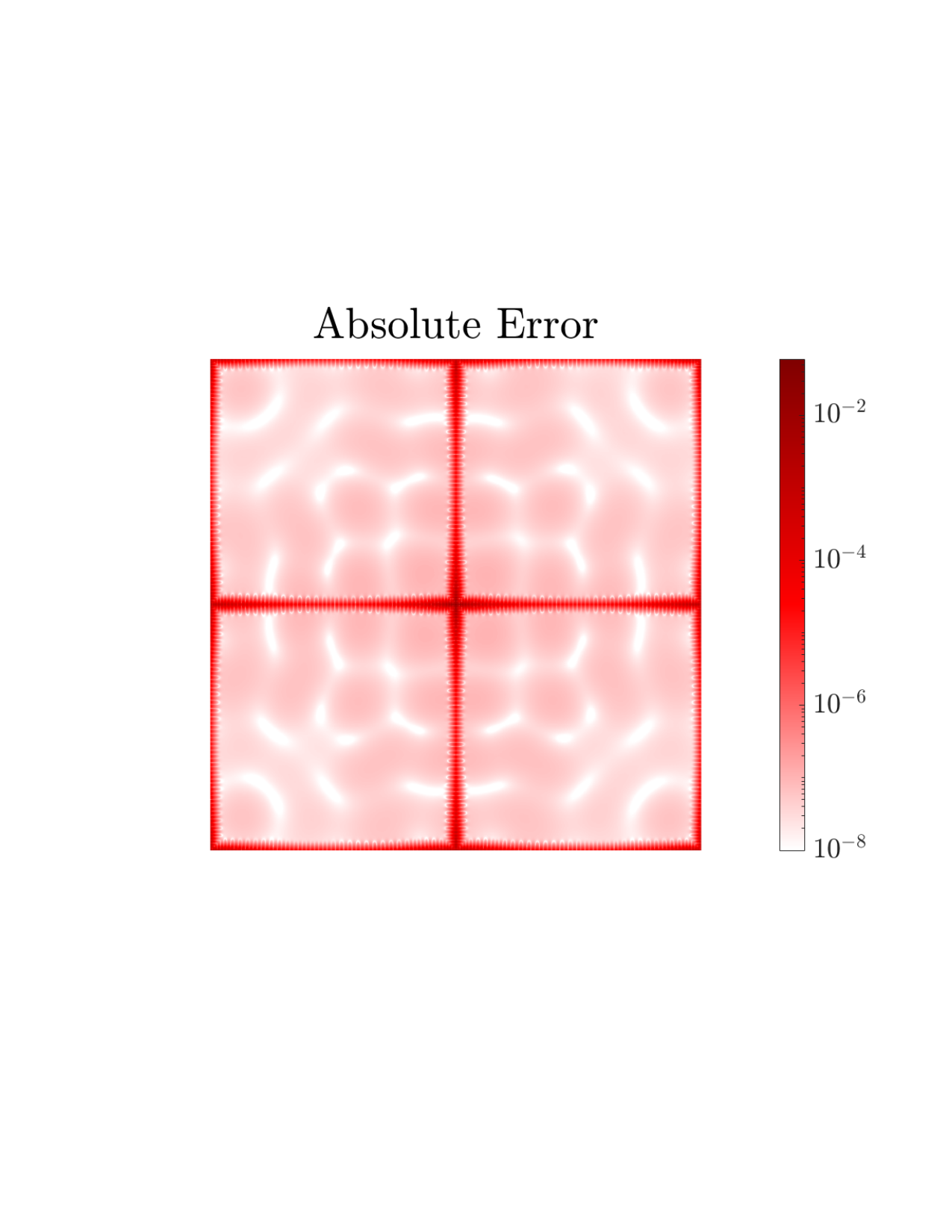}
\vspace{-.2cm}

\caption{Approximation of the fundamental solution $\Phi_{\mathbf{x}_0}$ in \eqref{eq: fundamental solution},  with $\kappa=30$ and $\mathbf{x}_0\in \mathcal{N}_h$ located at the domain center, in $\Omega=(-\frac{1}{2},\frac{1}{2})^2$, where $\mathcal{T}_h$ consists of 4 square elements with $h=\frac{1}{2}$.
Left: $L^2(\Omega)$-norm relative error vs.\ \#DOFs, varying $N_{\mathbf{n}}$ and for different $N_{\mathbf{e}}$.
Center and right: real part $\Re\Phi_{\mathbf{x}_0}$ and absolute error $|\Phi_{\mathbf{x}_0} - u_{\mathbf{N}, \epsilon}|$, for $\mathbf{N} = (30,50)$.
}
\label{fig: middle delta}
\end{figure}

Then, we consider another numerical test, where we approximate the fundamental solution to the Helmholtz equation, given by
\begin{equation} \label{eq: fundamental solution}
    \Phi_{\mathbf{x}_0}(\mathbf{x}) := \frac{\imath}{4} H_0^{(1)}(\kappa|\mathbf{x}-\mathbf{x}_0|), \qquad \mathbf{x}_0 \notin \overline{\Omega},
\end{equation}
where $H_0^{(1)}$ is the Hankel function of the first kind \cite[Eq.\ (10.2.5)]{DLMF}. 
We place the singularity $\mathbf{x}_0$ on an edge of the skeleton $\Sigma_h$ at distance $3\lambda/4$ from the domain boundary, where $\lambda = 2\pi/\kappa$ denotes the wavelength.
The results are shown in Figure \ref{fig: fund star}.
The center and right panels show the real part $\Re\Phi_{\mathbf{x}_0}$ and the absolute error $|\Phi_{\mathbf{x}_0} - u_{\mathbf{N}, \epsilon}|$, obtained with $\mathbf{N}=(30,5)$; the singularity $\mathbf{x}_0$ is marked by a red circle. The left panel plots the relative $H_\kappa^1(\Omega)$-error against the number of DOFs, for different values of $N_{\mathbf{n}}$ while varying $N_{\mathbf{e}}$.
Despite the domain’s complexity, which hinders the ability of the $V_{\mathbf{N}}(\mathcal{T}_h)$ basis to capture boundary behavior, we observe good approximation. Nevertheless convergence stagnates around $10^{-8}$. The test is challenging, as the singularity lies outside $\Omega$ but inside the surrounding rectangular mesh.

\paragraph{Internal nodal point source.}
We return to the domain $\Omega = (0,1)^2$ with wavenumber $\kappa = 30$, using the same computational mesh $\mathcal{T}_h$ of four square elements as in the previous examples.
We approximate again $\Phi_{\mathbf{x}_0}$ in \eqref{eq: fundamental solution}, but this time with a singularity $\mathbf{x}_0$ located at the center of $\Omega$, so that $\mathbf{x}_0 \in \mathcal{N}_h$.
In this setting, $\Phi_{\mathbf{x}_0}$ is no longer a solution to the Helmholtz equation \eqref{eq:helmholtz_equation}, and hence not a solution to the variational problem \eqref{eq: robin problem}. Instead, it weakly satisfies the inhomogeneous equation $-\Delta u - \kappa^2 u = \delta_{\mathbf{x}_0}$, where $\delta_{\mathbf{x}_0}$ denotes the Dirac distribution centered at $\mathbf{x}_0$.
We approximate $\Phi_{\mathbf{x}_0}$ by solving the Galerkin problem \eqref{eq: galerkin problem} with a modified right-hand side $\mathcal{F} + \delta_{\mathbf{x}_0}$, which is well defined on the discrete space $V_{\mathbf{N}}(\mathcal{T}_h) \subset C^0(\overline{\Omega})$. However, as $\delta_{\mathbf{x}_0} \notin (H^1_\kappa(\Omega))^*$, it follows that $\Phi_{\mathbf{x}_0} \notin H^1_\kappa(\Omega)$, and no convergence in the $H^1_\kappa(\Omega)$-norm can be expected.

For this reason, the left panel of Figure \ref{fig: middle delta} reports the relative $L^2(\Omega)$-error versus the total number of degrees of freedom. Unlike the previous tests, where the number of DOFs was increased by raising $N_{\mathbf{e}}$ while keeping $N_{\mathbf{n}}$ fixed, here we keep $N_{\mathbf{e}}$ fixed and increase $N_{\mathbf{n}}$, as no convergence is observed numerically when increasing $N_{\mathbf{e}}$ alone.
Note that the error analysis of Section \ref{sec: Combined edge-and-node Trefftz space} does not cover this case.
Besides, the center and right panels of Figure \ref{fig: middle delta} show the real part $\Re\Phi_{\mathbf{x}_0}$ and the absolute error $|\Phi_{\mathbf{x}_0} - u_{\mathbf{N}, \epsilon}|$, respectively, computed with $\mathbf{N} = (30,50)$. The error is clearly concentrated along the mesh skeleton $\Sigma_h$.

\paragraph{Square-tiled domain.} Finally, we consider a conforming mesh $\mathcal{T}_h$ for a polygonal domain $\Omega$, tessellated with square cells; see Figure \ref{fig: space invader plot}.
$\Omega$ is contained within the rectangle $(0,9) \times (0,8)$, and the mesh is composed of $|\mathcal{T}_h|=56$ square cells, each of unit side length, it has $|\Sigma_h| = 141$ edges and $|\mathcal{N}_h| = 84$ nodes, and the domain is contained within the rectangle $(0,9) \times (0,8)$.
We focus on a high-frequency regime by setting $\kappa = 100$, aiming to approximate the fundamental solution $\Phi_{\mathbf{x}_0}$ given in \eqref{eq: fundamental solution}. The characteristic diameter of the domain is $\textup{diam}(\Omega) \approx 11$, which together with the high wavenumber makes this a numerically challenging problem.
Besides, the singularity $\mathbf{x}_0$ is placed at a distance $\textup{dist}(\mathbf{x}_0, \Omega) = \lambda/4$ from the domain, with $\lambda = 2\pi/\kappa$ denoting the wavelength; it is marked by a red circle in Figure \ref{fig: space invader plot}. The left panel of the figure shows the real part $\Re\Phi_{\mathbf{x}_0}$, while the right one displays the absolute error obtained with $\mathbf{N} = (40,40)$, corresponding to a total of $\#\textup{DOFs} = N_{\mathbf{e}} |\Sigma_h| + N_{\mathbf{n}} |\mathcal{N}_h| = 9\times 10^3$.
In this setting, the number of DOFs per wavelength in each direction can be estimated as
\begin{equation*}
\lambda \frac{\sqrt{\#\textup{DOFs}}}{\textup{diam}(\Omega)} = \frac{2\pi}{\kappa} \frac{\sqrt{N_{\mathbf{e}}|\Sigma_h| + N_{\mathbf{n}}|\mathcal{N}_h|}}{\textup{diam}(\Omega)} \approx 0.5.
\end{equation*}
Low-order methods usually demand 6-10 DOFs per wavelength for 1-2 digits of accuracy; in contrast, the presented Trefftz method attains an $L^{\infty}$-error of order $10^{-8}$ with only a small fraction of that resolution.

\begin{figure}
\centering
\includegraphics[trim=120 240 40 195,clip,width=.49\textwidth]{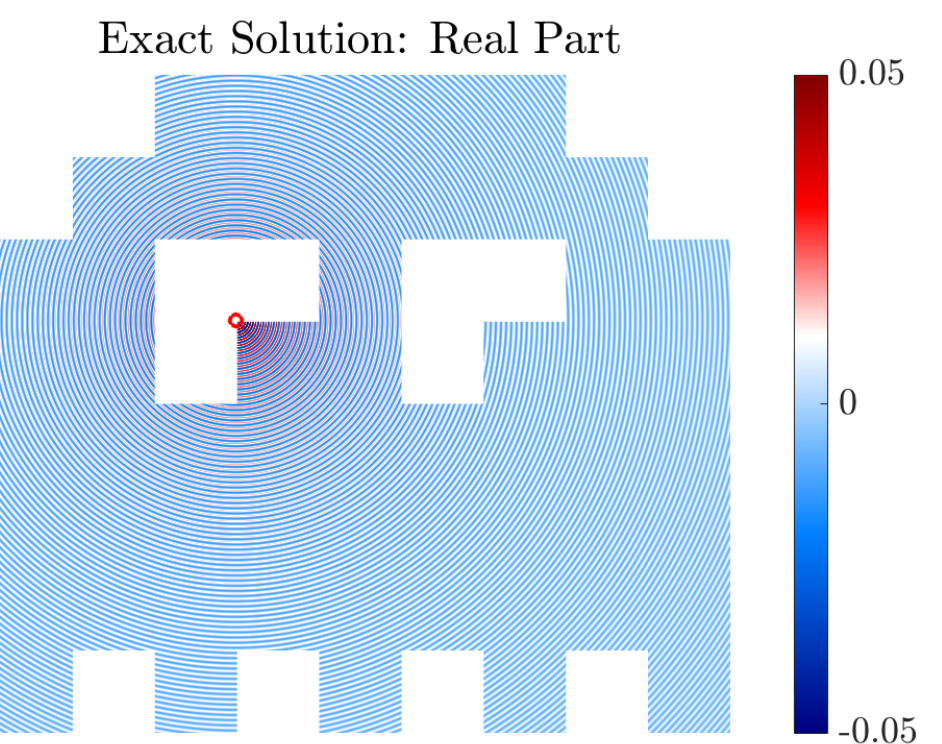}
\hfill
\includegraphics[trim=120 240 40 195,clip,width=.49\textwidth]{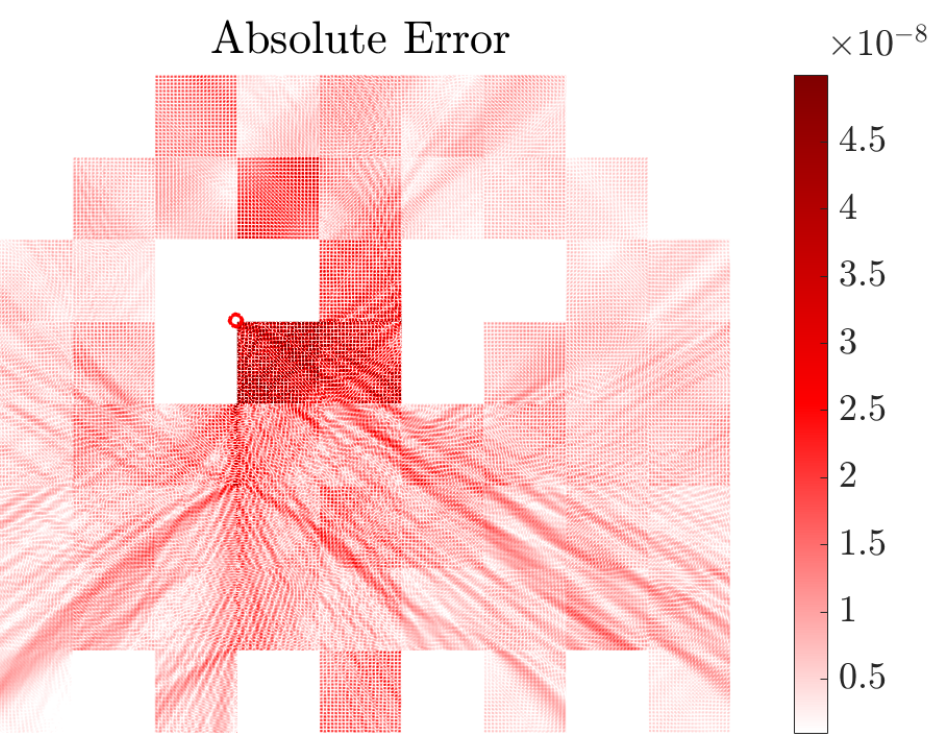}
\caption{Approximation of the fundamental solution $\Phi_{\mathbf{x}_0}$ in \eqref{eq: fundamental solution} with $\kappa=100$ in the space invader domain $\Omega$, where $\mathcal{T}_h$ consists of 56 square elements with $h=1$. The singularity $\mathbf{x}_0$ is located at a distance $\lambda/4$ from the domain boundary and marked by a red circle. Left: real part $\Re\Phi_{\mathbf{x}_0}$. Right: absolute error $|\Phi_{\mathbf{x}_0} - u_{\mathbf{N},\epsilon}|$, computed with $\mathbf{N} = (40,40)$, and hence $9 \times 10^3$ DOFs.
}
\label{fig: space invader plot}
\end{figure}

\section{Conclusions}

This paper presents a novel Trefftz Continuous Galerkin method. The discrete space, built on a Cartesian grid, is $H^1_\kappa(\Omega)$-conforming, and spanned by compactly supported basis functions given locally as simple linear combinations of EPWs. For this space, we establish wavenumber-explicit stability and error bounds. Numerical results for the full Galerkin discretization confirm the theoretical bounds and even exhibit better performance, pointing to possible improvements.

Building on these results, several natural directions for further development can be considered.
In particular, a discontinuous Trefftz space can be constructed from the local approximation space on each cell, offering greater flexibility for problems with piecewise-constant coefficients.
This approach could also allow for the definition of local Trefftz spaces on elements with more general geometries, potentially enabling the method to handle more complex meshes.
Similarly, while the focus of Section \ref{sec: Local edge-based Trefftz space} is on Dirichlet boundary conditions, an analogous local approach can be applied to Neumann problems. By defining edge-based reference modes through a Hilbert basis and solving local Helmholtz--Neumann problems within each cell, new families of basis functions can be constructed.
This enables the construction of globally conforming spaces in $H(\mathrm{div};\Omega)$, suitable, for instance, for formulations of the Helmholtz equation involving both scalar and vector fields.
Other interesting directions include deriving full Galerkin error estimates, generalizing the numerical scheme and analysis to the 3D settings, and further extending the approach to time-harmonic Maxwell and elastic wave equations.

\paragraph{Acknowledgements.} The authors are grateful to Bernardo Cockburn for asking, at the 2024 Trefftz Workshop in Oaxaca, whether Trefftz Continuous Methods were possible, a question that led to this paper.

\appendix
\section{Appendix}

\paragraph{A trigonometric inequality.}
We begin with a basic trigonometric inequality, which is a key tool in the proof of Lemma~\ref{lem:orthogonality lemma}.

\begin{lemma} \label{lem: coth inequality}
    The following inequalities hold:
    \begin{equation} \label{eq: coth inequality}
        \left|\frac{\cot(x)}{x}\right|\leq \frac{1}{\sin^2(x)}\leq \frac{\pi^2}{4\min_{m \in \mathbb{N}}|x-m\pi |^2}, \qquad  0<x\notin \pi\mathbb{Z}.
    \end{equation}
\end{lemma}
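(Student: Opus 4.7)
The proof splits naturally into two independent inequalities, handled by standard trigonometric bounds.

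For the first inequality $|\cot(x)/x| \le 1/\sin^2(x)$, I would rewrite it (for $x>0$ with $\sin x \ne 0$) as $|\sin(x)\cos(x)|\le x$. Using the double-angle identity $\sin(x)\cos(x) = \tfrac{1}{2}\sin(2x)$ and the elementary bound $|\sin(t)|\le|t|$ for all real $t$, this becomes $\tfrac{1}{2}|\sin(2x)|\le\tfrac{1}{2}\cdot 2x = x$, which is exactly what is needed. This step is routine and presents no real obstacle.

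For the second inequality $1/\sin^2(x) \le \pi^2/(4\,\min_{m\in\mathbb{N}}|x-m\pi|^2)$, I would exploit the $\pi$-periodicity of $\sin^2$. Let $m_0 \in \mathbb{N}$ achieve the minimum, and set $y := x - m_0\pi$. Since $x > 0$ and the minimum is over non-negative integer multiples of $\pi$, one checks that $|y|\le \pi/2$: for $0<x\le\pi/2$ the minimizer is $m_0=0$ giving $|y|=x\le\pi/2$, and for $x>\pi/2$ the nearest multiple of $\pi$ (among the non-negative ones) is within distance $\pi/2$. The desired inequality then reduces to
\begin{equation*}
\sin^2(y)\;\ge\;\frac{4y^2}{\pi^2},\qquad |y|\le \frac{\pi}{2},
\end{equation*}
i.e.\ Jordan's inequality $|\sin(y)|\ge (2/\pi)|y|$ on $[-\pi/2,\pi/2]$, a consequence of the concavity of $\sin$ on $[0,\pi/2]$ (the chord from $(0,0)$ to $(\pi/2,1)$ lies below the graph).

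The only mildly delicate point is verifying that the minimizer $m_0\in\mathbb{N}$ always gives $|y|\le\pi/2$, but this is immediate from the case analysis above, since $\mathbb{N}$ includes $0$ so that values $x\in(0,\pi/2]$ are correctly handled by $m_0=0$. No step here is really hard; the proposal is essentially a careful bookkeeping of the two classical inequalities $|\sin t|\le|t|$ and $|\sin t|\ge (2/\pi)|t|$ on $[-\pi/2,\pi/2]$.
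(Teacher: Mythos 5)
Your proposal is correct and follows essentially the same route as the paper: the first inequality is reduced to $|\sin(x)\cos(x)|\le x$ (which the paper simply asserts and you justify via the double-angle formula), and the second uses the $\pi$-periodicity of $\sin^2$ to reduce to Jordan's inequality $\sin(t)\ge 2t/\pi$ on $[0,\pi/2]$. No substantive difference.
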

\begin{proof}
    The first inequality in \eqref{eq: coth inequality} follows directly from the bound $|\sin(x)\cos(x)| \leq |x|$ for $x>0$. For the second inequality, note that by periodicity one can write $x = n\pi \pm d$, with $n \in \mathbb{N}$ and
    \begin{equation*}
    d := \min_{m \in \mathbb{N}} |x - m\pi | \in (0,\pi/2],
    \end{equation*}
    since $x \notin \pi\mathbb{Z}$.
    Using the elementary estimate $\sin(t) \geq \tfrac{2t}{\pi}$ for $t \in [0,\pi/2]$, we obtain $|\sin(x)| = |\sin(d)| \geq \frac{2d}{\pi}$, which yields the second inequality in \eqref{eq: coth inequality}.
\end{proof}

\paragraph{Trace inequalities on the reference cell.}

We next recall a couple of trace inequalities on the reference cell $\widehat{K}$ and its edge $\hat{\mathbf{s}}$ defined in \eqref{eq: reference cell and edge}. These results, though stated on $\widehat{K}$, extend to any mesh cell $K$ and edge $\mathbf{s}\subset \partial K$ up to a rigid transformation.

\begin{lemma} \label{lem: trace inequality}
    Let $M\in \mathbb{N}$, and $u \in H_\kappa^{M+1}(\widehat{K})$. Then,
    \begin{equation*}
        | u|_{H^{M}_{\kappa}(\hat{\mathbf{s}})}\leq \sqrt{\frac{2}{\widehat{h}_2}}\max(1,\kappa \widehat{h}_2)\|u\|_{H^{M+1}_{\kappa}(\widehat{K})}.
    \end{equation*}
\end{lemma}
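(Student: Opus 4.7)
The plan is to first prove a $\kappa$-weighted $L^2$-trace inequality on the reference cell, and then apply it to the tangential derivative $\partial_1^M u$. By density, it suffices to argue for smooth $u$.

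\textbf{Step 1: $\kappa$-weighted $L^2$-trace inequality.} I want to show that for any $v \in H^1_\kappa(\widehat K)$,
\begin{equation*}
\|v\|_{L^2(\hat{\mathbf s})}^2 \;\le\; \frac{2\max(1,\kappa\widehat h_2)}{\widehat h_2}\,\|v\|_{H^1_\kappa(\widehat K)}^2.
\end{equation*}
The starting point is the identity
\begin{equation*}
v(x,\widehat h_2)^2 = v(x,y)^2 + 2\int_y^{\widehat h_2} v(x,s)\,\partial_2 v(x,s)\,\textup{d}s.
\end{equation*}
Integrating over $y \in (0,\widehat h_2)$ and applying Fubini on the double integral, I obtain
\begin{equation*}
\widehat h_2\, v(x,\widehat h_2)^2 = \int_0^{\widehat h_2}\!\!\! v(x,y)^2\,\textup{d}y + 2\int_0^{\widehat h_2} s\, v(x,s)\,\partial_2 v(x,s)\,\textup{d}s.
\end{equation*}
For the second term I use Young's inequality $2|ab| \le \kappa a^2 + \kappa^{-1}b^2$ with $a = v$, $b = \partial_2 v$, and bound $s \le \widehat h_2$. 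Integrating in $x$ and collecting coefficients yields
\begin{equation*}
\widehat h_2\,\|v\|_{L^2(\hat{\mathbf s})}^2 \;\le\; (1+\kappa\widehat h_2)\,\|v\|_{L^2(\widehat K)}^2 + \kappa\widehat h_2\,\|\kappa^{-1}\partial_2 v\|_{L^2(\widehat K)}^2 \;\le\; (1+\kappa\widehat h_2)\,\|v\|_{H^1_\kappa(\widehat K)}^2,
\end{equation*}
and the elementary bound $1+\kappa\widehat h_2 \le 2\max(1,\kappa\widehat h_2)$ gives the claim.

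\textbf{Step 2: Application to $\partial_1^M u$.} Taking $v = \partial_1^M u$ and multiplying both sides by $\kappa^{-2M}$,
\begin{equation*}
|u|_{H^M_\kappa(\hat{\mathbf s})}^2 = \kappa^{-2M}\|\partial_1^M u\|_{L^2(\hat{\mathbf s})}^2 \;\le\; \frac{2\max(1,\kappa\widehat h_2)}{\widehat h_2}\,\kappa^{-2M}\|\partial_1^M u\|_{H^1_\kappa(\widehat K)}^2.
\end{equation*}
Since $\kappa^{-2M}\|\partial_1^M u\|_{H^1_\kappa(\widehat K)}^2 = \kappa^{-2M}\|\partial_1^M u\|_{L^2(\widehat K)}^2 + \kappa^{-2M-2}\|\nabla\partial_1^M u\|_{L^2(\widehat K)}^2$, which collects only two of the terms in $\|u\|_{H^{M+1}_\kappa(\widehat K)}^2$, it is bounded by $\|u\|_{H^{M+1}_\kappa(\widehat K)}^2$.

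\textbf{Step 3: Conclusion.} Taking square roots and using $\sqrt{\max(1,\kappa\widehat h_2)} \le \max(1,\kappa\widehat h_2)$ yields the stated estimate
\begin{equation*}
|u|_{H^M_\kappa(\hat{\mathbf s})} \;\le\; \sqrt{\tfrac{2}{\widehat h_2}}\,\max(1,\kappa\widehat h_2)\,\|u\|_{H^{M+1}_\kappa(\widehat K)}.
\end{equation*}

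There is no real obstacle here: the argument is the standard one-dimensional fundamental-theorem-of-calculus trace inequality carried out fiber-by-fiber in the $y$-direction, with the Young splitting tuned at scale $\kappa$ so that the two pieces $\|v\|_{L^2}^2$ and $\kappa^{-2}\|\partial_2 v\|_{L^2}^2$ appear with matching coefficients. The only delicate point is verifying that the resulting constant $\sqrt{(1+\kappa\widehat h_2)/\widehat h_2}$ is controlled by $\sqrt{2/\widehat h_2}\,\max(1,\kappa\widehat h_2)$ uniformly in the low- and high-frequency regimes, which the elementary two-case inequality above settles.
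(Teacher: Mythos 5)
Your proof is correct and follows essentially the same route as the paper's: a fiber-wise one-dimensional trace inequality in the $y$-direction on $\widehat K$, then applied to the tangential derivative $\partial_1^M u$ and reweighted by $\kappa^{-2M}$. The only differences are cosmetic — the paper splits the product term via the mean value theorem plus Cauchy--Schwarz and lands on the constant $2\max(1,\kappa\widehat h_2)^2/\widehat h_2$, whereas your averaged FTC identity with a Young splitting at scale $\kappa$ gives the marginally sharper $2\max(1,\kappa\widehat h_2)/\widehat h_2$; both are absorbed into the stated bound (for complex-valued $u$ your pointwise identity should read $|v|^2$ with $2\Re(\overline v\,\partial_2 v)$ in the integral, which changes nothing).
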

\begin{proof}
    Let us first assume $u\in C^{\infty}(\overline{\widehat{K}})$ and $M=0$. From the mean value theorem, there exists $y_0\in[0,\widehat{h}_2]$ such that
    \begin{equation*}
         u(x,y_0)=\frac{1}{\widehat{h}_2}\int_0^{\widehat{h}_2} u(x,t)\,\textup{d}t, \qquad x\in[0,\widehat{h}_1],
    \end{equation*}
    and, from the fundamental theorem of calculus,
    \begin{equation*}
         u(x,y)= u(x,y_0)+\int_{y_0}^{y}\partial_2 u (x,t)\,\textup{d}t, \qquad (x,y)\in \partial \widehat{K}.
    \end{equation*}
    Hence, it follows that
    \begin{align}
        \frac{| u(x,y)|^2}{2}&\leq | u(x,y_0)|^2+\left|\int_{y_0}^{y}\partial_2 u (x,t)\,\textup{d}t\right|^2
        \leq \frac{1}{\widehat{h}_2^2}\left(\int_0^{\widehat{h}_2}| u(x,t)|\,\textup{d}t\right)^2+\left(\int_{0}^{\widehat{h}_2}|\partial_2 u (x,t)|\,\textup{d}t\right)^2 \nonumber\\
        &\leq \frac{1}{\widehat{h}_2}\int_0^{\widehat{h}_2}| u(x,t)|^2\,\textup{d}t+\widehat{h}_2\int_{0}^{\widehat{h}_2}|\partial_2 u (x,t)|^2\,\textup{d}t, \label{eq: lemma trace useful}
    \end{align}
    where we used the Cauchy--Schwarz inequality in the last step. Choosing $y=\widehat{h}_2$ and integrating in $x$ over $(0,\widehat{h}_1)$, we conclude
    \begin{equation} \label{eq: L2 case}
        \| u\|^2_{L^{2}(\hat{\mathbf{s}})}\leq\frac{2}{\widehat{h}_2}\left(\|u\|^2_{L^{2}(\widehat{K})}+\widehat{h}_2^2\|\partial_2u\|^2_{L^{2}(\widehat{K})}\right)\leq \frac{2\max(1,\kappa \widehat{h}_2)^2}{\widehat{h}_2}\|u\|^2_{H^{1}_{\kappa}(\widehat{K})}.
    \end{equation}

    The formula in \eqref{eq: L2 case} readily extends to $u \in H_\kappa^1(\widehat{K})$ by a density argument.
    Moreover, if we now assume $u \in H_\kappa^{M+1}(\widehat{K})$, it follows
    \begin{equation*}
        | u|^2_{H^{M}_{\kappa}(\hat{\mathbf{s}})}=\frac{1}{\kappa^{2M}}\|\partial_1^M u\|^2_{L^{2}(\hat{\mathbf{s}})}\leq\frac{2\max(1,\kappa \widehat{h}_2)^2}{\widehat{h}_2\kappa^{2M}}\|\partial^M_1u\|^2_{H^{1}_{\kappa}(\widehat{K})} \leq \frac{2\max(1,\kappa \widehat{h}_2)^2}{\widehat{h}_2}\|u\|^2_{H^{M+1}_{\kappa}(\widehat{K})}. \qedhere
    \end{equation*}
\end{proof}
\begin{lemma} \label{lem: Linfty trace inequality}
    Let $u \in H_\kappa^{2}(\widehat{K})$. Then,
    \begin{equation*}
        \| u\|_{L^{\infty}(\partial\widehat{K})}\leq \frac{2\max(1,\kappa \widehat{h}_1,\kappa \widehat{h}_2)^2}{\min(\widehat{h}_1,\widehat{h}_2)}\|u\|_{H^{2}_{\kappa}(\widehat{K})}.
    \end{equation*}
\end{lemma}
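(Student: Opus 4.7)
The plan is to reduce the two-dimensional $L^\infty$-trace bound on $\partial\widehat{K}$ to a chain of two one-dimensional Sobolev embeddings, using the same mean-value / fundamental-theorem-of-calculus recipe that yielded \eqref{eq: lemma trace useful} in the proof of Lemma \ref{lem: trace inequality}. Since $\partial\widehat{K}$ is the disjoint union of its four edges and, by symmetry between the $x$- and $y$-directions, the argument on each edge is obtained from the one on the top edge $\hat{\mathbf{s}}$ of \eqref{eq: reference cell and edge} by relabeling, it is enough to prove the bound for $\|u\|_{L^\infty(\hat{\mathbf{s}})}$ and then take the maximum.

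The basic one-dimensional ingredient, essentially \eqref{eq: lemma trace useful} specialized to a single endpoint $y=L$ before the integration over $x$, is
\begin{equation*}
    \|v\|_{L^\infty(0,L)}^2 \leq \tfrac{2}{L}\|v\|_{L^2(0,L)}^2 + 2L\,\|v'\|_{L^2(0,L)}^2, \qquad v\in H^1(0,L).
\end{equation*}
First I would apply this inequality in the $x$-direction to the trace $x\mapsto u(x,\widehat{h}_2)$, obtaining
\begin{equation*}
    \|u\|_{L^\infty(\hat{\mathbf{s}})}^2 \leq \tfrac{2}{\widehat{h}_1}\|u\|_{L^2(\hat{\mathbf{s}})}^2 + 2\widehat{h}_1\,\|\partial_1 u\|_{L^2(\hat{\mathbf{s}})}^2.
\end{equation*}
Then I would bound each of the two edge $L^2$-norms by reapplying the same inequality in the $y$-direction (and integrating over $x\in(0,\widehat{h}_1)$), which gives
\begin{equation*}
    \|u\|_{L^2(\hat{\mathbf{s}})}^2 \leq \tfrac{2}{\widehat{h}_2}\|u\|_{L^2(\widehat{K})}^2 + 2\widehat{h}_2\|\partial_2 u\|_{L^2(\widehat{K})}^2, \quad \|\partial_1 u\|_{L^2(\hat{\mathbf{s}})}^2 \leq \tfrac{2}{\widehat{h}_2}\|\partial_1 u\|_{L^2(\widehat{K})}^2 + 2\widehat{h}_2\|\partial_1\partial_2 u\|_{L^2(\widehat{K})}^2.
\end{equation*}

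The last step is a careful algebraic consolidation. Substituting the previous two bounds and factoring $4/(\widehat{h}_1\widehat{h}_2)$ produces
\begin{equation*}
    \|u\|_{L^\infty(\hat{\mathbf{s}})}^2 \leq \tfrac{4}{\widehat{h}_1\widehat{h}_2}\Big(\|u\|_{L^2(\widehat{K})}^2 + \widehat{h}_1^2\|\partial_1 u\|_{L^2(\widehat{K})}^2 + \widehat{h}_2^2\|\partial_2 u\|_{L^2(\widehat{K})}^2 + \widehat{h}_1^2\widehat{h}_2^2\|\partial_1\partial_2 u\|_{L^2(\widehat{K})}^2\Big).
\end{equation*}
Setting $\alpha := \max(1,\kappa\widehat{h}_1,\kappa\widehat{h}_2)\geq 1$, the elementary estimates $\widehat{h}_i^2\leq \kappa^{-2}\alpha^2$ and $\widehat{h}_1^2\widehat{h}_2^2\leq \kappa^{-4}\alpha^4$ rewrite the four summands in the parenthesis as $\alpha^{0},\,\alpha^{2},\,\alpha^{2},\,\alpha^{4}$ times the $\kappa$-weighted terms indexed by the multi-indices $(0,0),(1,0),(0,1),(1,1)$ that appear in the definition of $\|u\|_{H^2_\kappa(\widehat{K})}^2$. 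Extracting the uniform factor $\alpha^4$ and discarding the non-negative missing contributions from multi-indices $(2,0)$ and $(0,2)$ collapses the bracket into $\alpha^4\|u\|_{H^2_\kappa(\widehat{K})}^2$, giving $\|u\|_{L^\infty(\hat{\mathbf{s}})}^2 \leq 4\alpha^4\|u\|_{H^2_\kappa(\widehat{K})}^2/(\widehat{h}_1\widehat{h}_2)$. A square root together with $\sqrt{\widehat{h}_1\widehat{h}_2}\geq\min(\widehat{h}_1,\widehat{h}_2)$ then produces exactly the stated constant $2\alpha^2/\min(\widehat{h}_1,\widehat{h}_2)$, and taking the maximum over the four edges of $\partial\widehat{K}$ completes the argument. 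The whole proof is routine; the one place requiring care is this final consolidation, since a naive term-by-term bound via $(1+a)(1+b)\leq 4\max(1,a)\max(1,b)$ would degrade the advertised constant by an extra factor of $\sqrt{2}$.
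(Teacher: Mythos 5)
Your proposal is correct and follows essentially the same route as the paper's proof: reduce to a single edge by symmetry, apply the one-dimensional mean-value/FTC estimate in the $x$-direction to get an $L^\infty$ bound on the edge in terms of edge $L^2$-norms of $u$ and $\partial_1 u$, push those to the cell via the $L^2$-trace bound in the $y$-direction, and consolidate the four resulting terms into $\max(1,\kappa\widehat{h}_1,\kappa\widehat{h}_2)^4\,\|u\|^2_{H^2_\kappa(\widehat{K})}/(\widehat{h}_1\widehat{h}_2)$. The only cosmetic difference is that you keep the sharper factor $1/(\widehat{h}_1\widehat{h}_2)$ until the very end, whereas the paper passes to $1/\min(\widehat{h}_1,\widehat{h}_2)^2$ one step earlier; both yield the stated constant.
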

\begin{proof}
    By symmetry, it is sufficient to prove the estimate on $\hat{\mathbf{s}}$. Reasoning as in \eqref{eq: lemma trace useful}, one obtains
    \begin{equation*}
        \frac{| u(x,y)|^2}{2}\leq \frac{1}{\widehat{h}_1}\int_0^{\widehat{h}_1}| u(t,y)|^2\,\textup{d}t+\widehat{h}_1\int_{0}^{\widehat{h}_1}|\partial_1 u (t,y)|^2\,\textup{d}t, \qquad (x,y)\in \partial \widehat{K},
    \end{equation*}
    and hence, setting $y=\widehat{h}_2$, it follows
    \begin{equation*}
        \| u\|^2_{L^{\infty}(\hat{\mathbf{s}})}\leq \frac{2}{\widehat{h}_1}\left(\| u\|^2_{L^{2}(\hat{\mathbf{s}})}+\widehat{h}_1^2\|\partial_1 u\|^2_{L^{2}(\hat{\mathbf{s}})}\right).
    \end{equation*}
    Combining this formula with \eqref{eq: L2 case} yields
    \begin{align*}
        \| u\|^2_{L^{\infty}(\hat{\mathbf{s}})}&\leq \frac{4}{\widehat{h}_1\widehat{h}_2}\|u\|^2_{L^2(\widehat{K})}+\frac{4\widehat{h}_1}{\widehat{h}_2}\|\partial_1u\|^2_{L^2(\widehat{K})}+\frac{4\widehat{h}_2}{\widehat{h}_1}\|\partial_2u\|^2_{L^2(\widehat{K})}+4\widehat{h}_1\widehat{h}_2\|\partial_1\partial_2u\|^2_{L^2(\widehat{K})}\\
        &\leq \frac{4}{\min(\widehat{h}_1,\widehat{h}_2)^2}\left(\|u\|^2_{L^2(\widehat{K})}+\max(\kappa\widehat{h}_1,\kappa\widehat{h}_2)^2 |u|^2_{H_\kappa^1(\widehat{K})}+\max(\kappa\widehat{h}_1,\kappa\widehat{h}_2)^4 |u|^2_{H_\kappa^2(\widehat{K})}\right)
        \\
        &\leq \frac{4\max(1,\kappa\widehat{h}_1,\kappa\widehat{h}_2)^4}{\min(\widehat{h}_1,\widehat{h}_2)^2} \|u\|^2_{H_\kappa^2(\widehat{K})},
    \end{align*}
    so that
    \begin{equation*}
        \| u\|_{L^{\infty}(\partial\widehat{K})}^2=\max_{\mathbf{s}\subset \partial \widehat{K}}\| u\|^2_{L^{\infty}(\mathbf{s})}\leq \frac{4\max(1,\kappa\widehat{h}_1,\kappa\widehat{h}_2)^4}{\min(\widehat{h}_1,\widehat{h}_2)^2} \|u\|^2_{H_\kappa^2(\widehat{K})}. \qedhere
    \end{equation*}
\end{proof}

\paragraph{Proofs of high-frequency regime lemma.} \label{proof: lem: d tilde lemma}

Let us now turn to the proof of Lemma~\ref{lem: d tilde lemma}.
We recall from \eqref{eq: shape regularity} that the mesh size is defined as $h=\max(h_1,h_2)$, while the shape parameter is given by $\rho=h/\min(h_1,h_2)\geq 1$.
In what follows, without loss of generality, we assume $h_1\geq h_2$, so that $h_1=h$ and $h_2=h/\rho$. The arguments remain unchanged if $h_1$ and $h_2$ are swapped.

\begin{proof}[Proof of Lemma \textup{\ref{lem: d tilde lemma}}]
Let $\rho\geq1$, and assume that there exist $p,q \in \mathbb{N}^*$ such that $\gcd(p,q)=1$ and $\rho^2=p/q \in \mathbb{Q}$.
In order to prove that \eqref{lem: d tilde lemma equation} holds on an unbounded set $\mathcal{K}\subset (0,+\infty)$, we will exhibit a positive sequence $t_j\nearrow\infty$, for $j\in\mathbb{N}^*$, and an explicit constant $C(q)>0$, such that
\begin{equation*}
    \widetilde{D}(\rho,t_j) =1+\frac{1}{\widetilde{\textrm{d}}(\rho,t_j)}+\frac{1}{\sqrt{\widetilde{\textrm{d}}_0(\rho,t_j)}}\leq C(q) t_j^2,
\end{equation*}
where we recall that, for $t>0$,
\begin{equation*}
    \widetilde{\mathrm{d}}(\rho,t)=\!\!\!\inf_{\substack{m \in \mathbb{N},n \in \mathbb{N}^*:\\\widetilde{\nu}_n(t)<1}}\left|\sqrt{1-\widetilde{\nu}_n^2(t)}-\frac{m\pi\rho}{t}\right|, \qquad 
    \widetilde{\mathrm{d}}_0(\rho,t)=\!\!\!\inf_{\substack{n \in \mathbb{N}^*:\\\widetilde{\nu}_n(t)>1}}\!\!\sqrt{\widetilde{\nu}_n^2(t)-1},
    \qquad \widetilde{\nu}_n(t)=\frac{(2n-1)\pi}{2t}.
\end{equation*}
Let us consider the sorted set
\begin{equation*}
    \Big\{r_1<r_2<\dots\Big\}:=\Bigg\{r\in(0,+\infty):r^2=r^2(n,m):=\left(\frac{(2n-1)\pi}{2}\right)^2+(m\pi\rho)^2, (n,m) \in \mathbb{N}^* \times \mathbb{N}\Bigg\}.
\end{equation*}
For any $j\in\mathbb{N}^*$, define
\begin{equation*}
    t_j:=\frac{r_j+r_{j+1}}{2},
\end{equation*}
and the two sets
\begin{equation*}
    \mathcal{R}_j^-:=\Big\{(n,m)\in \mathbb{N}^* \times \mathbb{N}: r(n,m)\leq r_j\Big\}, \qquad
    \mathcal{R}_j^+:=\Big\{(n,m)\in \mathbb{N}^* \times \mathbb{N}: r(n,m)\geq r_{j+1}\Big\}.
\end{equation*}
For any $j\in\mathbb{N}^*$, observe that $r_j<t_j<r_{j+1}$, and that $\mathbb{N}^*\times \mathbb{N}=\mathcal{R}_j^-\cup \mathcal{R}_j^+$.

Let us first focus on $\widetilde{\mathrm{d}}(\rho,t_j)$. For any $(n,m)\in \mathcal{R}_j^-$, set $r=r(n,m)$. Since $(2n-1)\pi/2\leq r\leq r_j<t_j$,
one has that $\widetilde{\nu}_n(t_j)<1$. Besides,
\begin{equation*}
    m\pi\rho<r\leq r_j<t_j, \qquad \text{and} \qquad t_j^2-r^2> t_j(t_j-r_j)=t_j(r_{j+1}-r_j)/2.
\end{equation*}
Hence, it follows that
\begin{equation} \label{eq: App1}
    \left|\sqrt{t_j^2-\left(\frac{(2n-1)\pi}{2}\right)^2}-m\pi\rho\right|=\frac{t_j^2-r^2}{\sqrt{t_j^2-((2n-1)\pi/2)^2}+m\pi\rho}\geq \frac{t_j^2-r^2}{2t_j}\geq\frac{r_{j+1}-r_j}{4}.
\end{equation}
Similarly, for any $(n,m)\in \mathcal{R}_j^+$ such that $\widetilde{\nu}_n(t_j)<1$, if $r=r(n,m)$,
\begin{equation*}
    \sqrt{t_j^2-((2n-1)\pi/2)^2}<t_j<r_{j+1}\leq r, \qquad \text{and} \qquad r^2-t_j^2> r(r_{j+1}-t_j)=r(r_{j+1}-r_j)/2,
\end{equation*}
thus, one obtains
\begin{equation} \label{eq: App2}
    \left|\sqrt{t_j^2-\left(\frac{(2n-1)\pi}{2}\right)^2}-m\pi\rho\right|=\frac{r^2-t_j^2}{\sqrt{t_j^2-((2n-1)\pi/2)^2}+m\pi\rho}\geq \frac{r^2-t_j^2}{2r}\geq\frac{r_{j+1}-r_j}{4}.
\end{equation}
Assume now that $r_j = r(n_j,m_j)$ for some $(n_j,m_j)\in \mathbb{N}^*\times \mathbb{N}$. Then, by direct computation,
\begin{align*} 
    \frac{r_{j+1}^2-r_j^2}{\pi^2}&=\frac{1}{4}\left[(2n_{j+1}-1)^2-(2n_{j}-1)^2\right]+\rho^2\left[m_{j+1}^2-m_j^2\right]\\
    & = \frac{q(n_{j+1}-n_j)(n_{j+1}+n_j-1)+p\left(m_{j+1}^2-m_j^2\right)}{q}\geq \frac{1}{q}.
\end{align*}
In particular, this implies:
\begin{equation} \label{eq: App3}
r_{j+1}-r_{j}=\frac{r_{j+1}^2-r_j^2}{2t_j} \geq \frac{\pi^2}{2qt_j}, \qquad j\in\mathbb{N}^*.
\end{equation}
Therefore, thanks to the bounds in \eqref{eq: App1}, \eqref{eq: App2}, and \eqref{eq: App3}, one obtains:
\begin{equation} \label{eq: Fin1}
    t_j\widetilde{\mathrm{d}}(\rho,t_j)=\inf_{\substack{(n,m)\in \mathcal{R}_j^-\cup\mathcal{R}_j^+: \\ \widetilde{\nu}_n(t_j)<1}}\left|\sqrt{t_j^2-\left(\frac{(2n-1)\pi}{2}\right)^2}-m\pi\rho\right|\geq\frac{r_{j+1}-r_j}{4}\geq \frac{\pi^2}{8qt_j}.
\end{equation}

Concerning $\widetilde{\mathrm{d}}_0(\rho,t_j)$, observe that the set over which the infimum is taken can be equivalently expressed as
\begin{equation*}
    \Big\{n \in \mathbb{N}^*: \widetilde{\nu}_n(t_j)>1\Big\}=\Big\{n \in \mathbb{N}^*: (n,0)\in \mathcal{R}_j^+\Big\},
\end{equation*}
as, by definition, $r_{j+1}=\inf\{r(n,m)>t_j:(n,m)\in \mathbb{N}^*\times \mathbb{N}\}$.
In addition, for any $n \in \mathbb{N}^*$ such that $(n,0)\in \mathcal{R}_j^+$, one has that
\begin{equation*}
    \left((2n-1)\pi/2\right)^2-t_j^2\geq r_{j+1}^2-t_j^2\geq t_j(r_{j+1}-t_j)=t_j(r_{j+1}-r_j)/2,
\end{equation*}
and so from \eqref{eq: App3} it follows:
\begin{equation} \label{eq: Fin2}
    t_j\widetilde{\mathrm{d}}_0(\rho,t_j)=\inf_{\substack{n \in \mathbb{N}^*:(n,0)\in \mathcal{R}_j^+}}\sqrt{\left(\frac{(2n-1)\pi}{2}\right)^2-t_j^2}\geq \sqrt{t_j\left(\frac{r_{j+1}-r_j}{2}\right)}\geq \frac{\pi}{2\sqrt{q}}.
\end{equation}

Since $t_j>1$ for any $j \in \mathbb{N}^*$, from \eqref{eq: Fin1} and \eqref{eq: Fin2}, one finally obtains:
\begin{equation*}
    \widetilde{D}(\rho,t_j) =1+\frac{1}{\widetilde{\textrm{d}}(\rho,t_j)}+\frac{1}{\sqrt{\widetilde{\textrm{d}}_0(\rho,t_j)}}\leq \left(1+\frac{8q}{\pi^2}+\sqrt{\frac{2\sqrt{q}}{\pi}}\right)t_j^2, \qquad j \in \mathbb{N}^*. \qedhere
\end{equation*}

\end{proof}

\bibliographystyle{plain}
\bibliography{biblio}

@article{Gautschi1962,
  author    = {Walter Gautschi},
  title     = {On inverses of Vandermonde and confluent Vandermonde matrices},
  journal   = {Numerische Mathematik},
  volume    = {4},
  number    = {1},
  pages     = {117--123},
  year      = {1962}
}

@book{Evans2010,
  address = {Providence, R.I.},
  author = {Evans, Lawrence C.},
  publisher = {American Mathematical Society},
  title = {Partial differential equations},
  year = 2010
}

@misc{DLMF,
         key = "{\relax DLMF}",
       title = "{\it NIST Digital Library of Mathematical Functions}",
howpublished = "\url{https://dlmf.nist.gov/}, Release 1.2.4 of 2025-03-15",
        note = "F.~W.~J. Olver, A.~B. {Olde Daalhuis}, D.~W. Lozier, B.~I. Schneider,
                R.~F. Boisvert, C.~W. Clark, B.~R. Miller, B.~V. Saunders,
                H.~S. Cohl, and M.~A. McClain, eds."}

@article{Adcock2019,
    AUTHOR = {Adcock, Ben and Huybrechs, Daan},
     TITLE = {Frames and numerical approximation},
   JOURNAL = {SIAM Rev.},
  FJOURNAL = {SIAM Review},
    VOLUME = {61},
      YEAR = {2019},
    NUMBER = {3},
     PAGES = {443--473}
}

@article{Adcock2020,
    AUTHOR = {Adcock, Ben and Huybrechs, Daan},
     TITLE = {Frames and numerical approximation {II}: {G}eneralized sampling},
   JOURNAL = {J. Fourier Anal. Appl.},
  FJOURNAL = {The Journal of Fourier Analysis and Applications},
    VOLUME = {26},
      YEAR = {2020},
    NUMBER = {6},
     PAGES = {Paper No. 87, 34}
}

@article{Parolin2023,
	author = {Emile Parolin and Daan Huybrechs and Andrea Moiola},
	title = {Stable approximation of Helmholtz solutions in the disk by evanescent plane waves},
	journal = {ESAIM: M2AN},
	year = 2023,
	volume = 57,
	number = 6,
	pages = "3499-3536",
}

@article{Galante2024,
  author    = {Nicola Galante and Andrea Moiola and Emile Parolin},
  title     = {Stable approximation of {H}elmholtz solutions in the 3D ball using evanescent plane waves},
  journal   = {The SMAI Journal of Computational Mathematics},
  volume    = {11},
  year      = {2025},
  pages     = {435--472},
}

@phdthesis{Desmet1998,
  author = {Desmet, Wim},
  title  = {A wave-based prediction technique for coupled vibro-acoustic analysis},
  school = {Katholieke Universiteit Leuven}, 
  year   = {1998},
}

@incollection{Spence2015,
  author    = {E. A. Spence},
  title     = {``When all else fails, integrate by parts'' -- an overview of new and old variational formulations for linear elliptic PDEs},
  booktitle = {Unified Transform Method for Boundary Value Problems: Applications and Advances},
  editor    = {A. S. Fokas and B. Pelloni},
  publisher = {SIAM},
  year      = {2015},
  pages     = {195--230},
  address = {Philadelphia, PA}
}

@article {Cessenat1998,
    AUTHOR = {Cessenat, Olivier and Despres, Bruno},
     TITLE = {Application of an ultra weak variational formulation of
              elliptic {PDE}s to the two-dimensional {H}elmholtz problem},
   JOURNAL = {SIAM J. Numer. Anal.},
  FJOURNAL = {SIAM Journal on Numerical Analysis},
    VOLUME = {35},
      YEAR = {1998},
    NUMBER = {1},
     PAGES = {255--299},
}

@article {Gittelson2009,
    AUTHOR = {Gittelson, Claude J. and Hiptmair, Ralf and Perugia, Ilaria},
     TITLE = {Plane wave discontinuous {G}alerkin methods: analysis of the
              {$h$}-version},
   JOURNAL = {M2AN Math. Model. Numer. Anal.},
  FJOURNAL = {M2AN. Mathematical Modelling and Numerical Analysis},
    VOLUME = {43},
      YEAR = {2009},
    NUMBER = {2},
     PAGES = {297--331},
}

@article {Gittelson2014,
    AUTHOR = {Gittelson, Claude J. and Hiptmair, Ralf},
     TITLE = {Dispersion analysis of plane wave discontinuous {G}alerkin
              methods},
   JOURNAL = {Internat. J. Numer. Methods Engrg.},
  FJOURNAL = {International Journal for Numerical Methods in Engineering},
    VOLUME = {98},
      YEAR = {2014},
    NUMBER = {5},
     PAGES = {313--323},
}

@incollection{Hiptmair2016,
    AUTHOR = {Hiptmair, Ralf and Moiola, Andrea and Perugia, Ilaria},
     TITLE = {A survey of {T}refftz methods for the {H}elmholtz equation},
 BOOKTITLE = {Building bridges: connections and challenges in modern
              approaches to numerical partial differential equations},
    SERIES = {Lect. Notes Comput. Sci. Eng.},
    VOLUME = {114},
     PAGES = {237--278},
 PUBLISHER = {Springer},
   ADDRESS = {Cham},
      YEAR = {2016}
}

@article{Moiola2011,
    AUTHOR = {Moiola, A. and Hiptmair, R. and Perugia, I.},
     TITLE = {Plane wave approximation of homogeneous {H}elmholtz solutions},
   JOURNAL = {Z. Angew. Math. Phys.},
  FJOURNAL = {Zeitschrift f\"{u}r Angewandte Mathematik und Physik. ZAMP.
              Journal of Applied Mathematics and Physics. Journal de
              Math\'{e}matiques et de Physique Appliqu\'{e}es},
    VOLUME = {62},
      YEAR = {2011},
    NUMBER = {5},
     PAGES = {809--837}
}

@article {Deckers2014,
    AUTHOR = {Deckers, Elke and Atak, Onur and Coox, Laurens and D'Amico,
              Roberto and Devriendt, Hendrik and Jonckheere, Stijn and Koo,
              Kunmo and Pluymers, Bert and Vandepitte, Dirk and Desmet, Wim},
     TITLE = {The wave based method: an overview of 15 years of research},
   JOURNAL = {Wave Motion},
  FJOURNAL = {Wave Motion. An International Journal Reporting Research on
              Wave Phenomena},
    VOLUME = {51},
      YEAR = {2014},
    NUMBER = {4},
     PAGES = {550--565},
}

@article{Riou2011,
  TITLE = {{The multiscale {VTCR} approach applied to acoustics problems}},
  AUTHOR = {Riou, Herv{\'e} and Ladev{\`e}ze, Pierre and Sourcis, Benjamin},
  JOURNAL = {{Journal of Computational Acoustics}},
  PUBLISHER = {{World Scientific Publishing}},
  VOLUME = {16},
  NUMBER = {04},
  PAGES = {487-505},
  YEAR = {2011},
  MONTH = Nov
}

@book{Gauss1834,
  author    = {Carl F. Gauss},
  title     = {De nexu inter multitudinem classium, in quas formae binariae secundi gradus distribuuntur, earumque determinantem, {C}ommentatio prior {S}ocietati {R}egiae {E}xhibita 1834},
  year      = {1876},
  series    = {Werke},
  volume    = {2},
  publisher = {Königlichen Gesellschaft der Wissenschaften},
  address   = {Göttingen},
  pages     = {269--291}
}

@book{Schmidt1980,
  author       = {Wolfgang M. Schmidt},
  title        = {{D}iophantine {A}pproximation},
  series       = {Lecture Notes in Mathematics},
  volume       = {XII},
  edition      = {1},
  publisher    = {Springer Berlin, Heidelberg},
  address      = {Berlin, Heidelberg},
  year         = {1980},
  pages        = {300}
}

@article{MelenkSauter2010,
  author    = {Melenk, Jens Markus and Sauter, Stefan A.},
  title     = {{C}onvergence {A}nalysis for {F}inite {E}lement {D}iscretizations of the {H}elmholtz {E}quation with {D}irichlet-to-{N}eumann {B}oundary {C}onditions},
  journal   = {Mathematics of Computation},
  volume    = {79},
  number    = {272},
  pages     = {1871--1914},
  year      = {2010}
}

@article{MelenkSauter2011,
  author    = {Melenk, Jens Markus and Sauter, Stefan A.},
  title     = {{W}avenumber {E}xplicit {C}onvergence {A}nalysis for {G}alerkin {D}iscretizations of the {H}elmholtz {E}quation},
  journal   = {SIAM Journal on Numerical Analysis},
  volume    = {49},
  number    = {3},
  pages     = {1210--1243},
  year      = {2011}
}

@article{Lieu2016,
title = {{A} comparison of high-order polynomial and wave-based methods for {H}elmholtz problems},
journal = {Journal of Computational Physics},
volume = {321},
pages = {105-125},
year = {2016},
author = {Alice Lieu and Gwénaël Gabard and Hadrien Bériot}
}

@article{Barucq2021,
  author    = {Barucq, H. and Bendali, A. and Diaz, J. and Tordeux, S.},
  title     = {{L}ocal {S}trategies for {I}mproving the {C}onditioning of the {P}lane-{W}ave {U}ltra-{W}eak {V}ariational {F}ormulation},
  journal   = {Journal of Computational Physics},
  volume    = {441},
  pages     = {110449},
  year      = {2021}
}

@phdthesis{Melenk1995,
  author    = {Melenk, Jens Markus},
  title     = {{O}n {G}eneralized {F}inite {E}lement {M}ethods},
  school    = {University of Maryland},
  year      = {1995},
  type      = {PhD thesis}
}

@mastersthesis{Robert2024,
  TITLE = {{Solveurs it{\'e}ratifs pour des m{\'e}thodes de {T}refftz par ondes planes {\'e}vanescentes pour l'{\'e}quation de {H}elmholtz}},
  AUTHOR = {Robert, Vincent},
  PAGES = {84},
  SCHOOL = {{Sorbonne Universit{\'e}}},
  YEAR = {2024},
  MONTH = Sep,
}

@article{Farhat2001,
  author    = {Farhat, C. and Harari, I. and Franca, L. P.},
  title     = {{T}he {D}iscontinuous {E}nrichment {M}ethod},
  journal   = {Computer Methods in Applied Mechanics and Engineering},
  volume    = {190},
  number    = {48},
  pages     = {6455--6479},
  year      = {2001}
}

@ARTICLE{Luostari2013,
    AUTHOR = {Luostari, T. and Huttunen, T. and Monk, P.},
     TITLE = {{I}mprovements for the ultra weak variational formulation},
   JOURNAL = {Internat. J. Numer. Methods Engrg.},
  FJOURNAL = {International Journal for Numerical Methods in Engineering},
    VOLUME = {94},
      YEAR = {2013},
    NUMBER = {6},
     PAGES = {598--624}
}

@article{huybrechs2019,
    AUTHOR = {Huybrechs, Daan and Olteanu, Anda-Elena},
     TITLE = {{A}n oversampled collocation approach of the wave based method for {H}elmholtz problems},
   JOURNAL = {Wave Motion},
  FJOURNAL = {Wave Motion. An International Journal Reporting Research on
              Wave Phenomena},
    VOLUME = {87},
      YEAR = {2019},
     PAGES = {92--105}
}

@article{Davis2014,
  author    = {Davis, C. I. and Fornberg, B.},
  title     = {{A} {S}pectrally {A}ccurate {N}umerical {I}mplementation of the {F}okas {T}ransform {M}ethod for {H}elmholtz-{T}ype {PDE}s},
  journal   = {Complex Variables and Elliptic Equations},
  volume    = {59},
  number    = {5-6},
  pages     = {564--577},
  year      = {2014}
}

@article{Babuska2000,
  author    = {Ivo M. Babu{\v{s}}ka and Stefan A. Sauter},
  title     = {Is the pollution effect of the {FEM} avoidable for the {Helmholtz} equation?},
  journal   = {SIAM Review},
  year      = {2000},
  volume    = {42},
  number    = {3},
  pages     = {451--484}
}

@article{Ainsworth2004,
  author    = {Mark Ainsworth},
  title     = {Discrete dispersion relation for hp-version finite element approximation at high wave number},
  journal   = {SIAM Journal on Numerical Analysis},
  year      = {2004},
  volume    = {42},
  number    = {2},
  pages     = {553--575}
}

@article{Ihlenburg1997,
  author  = {F. Ihlenburg and I. Babu\v{s}ka},
  title   = {Finite element solution of the {H}elmholtz equation with high wave number. Part {II}: {T}he {$h$-$p$} version of the {F}EM},
  journal = {SIAM J. Numer. Anal.},
  year    = {1997},
  volume  = {34},
  number  = {1},
  pages   = {315--358}
}

@article{Chaumont-Frelet2024,
  author  = {T. Chaumont-Frelet and V. Dolean and M. Ingremeau},
  title   = {Efficient approximation of high-frequency Helmholtz solutions by Gaussian coherent states},
  journal = {Numerische Mathematik},
  year    = {2024},
  volume  = {156},
  number  = {4},
  pages   = {1385--1426}
}

@article{Ihlenburg1995,
title = {Finite element solution of the Helmholtz equation with high wave number Part I: The h-version of the FEM},
journal = {Computers \& Mathematics with Applications},
volume = {30},
number = {9},
pages = {9-37},
year = {1995},
author = {F. Ihlenburg and I. Babuška}
}

@misc{Barucq2024,
  TITLE = {{Revisiting the Plane-Wave Ultra-Weak Variational Formulation}},
  AUTHOR = {Barucq, H{\'e}l{\`e}ne and Bendali, Abderrahmane and Diaz, Julien and Tordeux, S{\'e}bastien},
  howpublished = {HAL preprint hal-04591459},
  YEAR = {2024}
}

\end{document}